\newtheorem{theorem}{Theorem}[section]
\newtheorem{lemma}[theorem]{Lemma}
\newtheorem{corollary}{Corollary}[section]
\theoremstyle{definition}
\newtheorem{definition}[theorem]{Definition}
\theoremstyle{remark}
\numberwithin{equation}{section}
\let\hide\iffalse
\newcommand{\bq}{\begin{equation}}
\newcommand{\eq}{\end{equation}}
\newcommand{\reqnomode}{\tagsleft@false}
\begin{document}
\title[Large amplitude solution with Large external potential ]{On the Large Amplitude Solution of the Boltzmann equation with Large External Potential and Boundary Effects} 

\author{Jong-in Kim}
\address{Department of Mathematics, Pohang University of Science and Technology, South Korea }
\email{kimjim@postech.ac.kr}
\author{Donghyun Lee}
\address{Department of Mathematics, Pohang University of Science and Technology, South Korea }
\email{donglee@postech.ac.kr}

\subjclass[2010]{35Q20,82C40,35A01,35A02,35F30}


\keywords{Boltzmann equation, Large amplitude, External potential, Maxwellian}

\begin{abstract}
	The Boltzmann equation is a fundamental equation in kinetic theory that describes the motion of rarefied gases. In this study, we examine the Boltzmann equation within a $C^{1}$ bounded domain, subject to a large external potential $\Phi(x)$ and diffuse reflection boundary conditions. Initially, we prove the asymptotic stability of small perturbations near the local Maxwellian $\mu_{E}(x,v)$. Subsequently, we demonstrate the asymptotic stability of large amplitude solutions with initial data that is arbitrarily large in (weighted) $L^{\infty}$, but sufficiently small in the sense of relative entropy. Specifically, we extend the results for large amplitude solutions of the Boltzmann equation (with or without external potential) \cite{DHWZ2019,DKL2020,DW2019,KLP2022} to scenarios involving significant external potentials \cite{ChanBELP,WWGS2019} under diffuse reflection boundary conditions.
\end{abstract}

\maketitle
\tableofcontents

\section{Introduction}
The Boltzmann equation with an external potential $\Phi$ is 
\begin{align} \label{BE}
	\partial_t F+v \cdot \nabla_x F-\nabla_x \Phi(x) \cdot \nabla_v F =Q(F,F),\quad  F(0,x,v) = F_0(x,v), 
\end{align}
where $F(t,x,v)$ is a distribution function for the gas particles at time $t$, a position $x\in \Omega$, and a velocity $v\in \mathbb{R}^3$ and the external force $\Phi(x)$ is a given function which depends only on the spatial variable $x \in \Omega$. The collision operator $Q$ is the bilinear form
\begin{align}
	Q(F_1,F_2):=\int_{\mathbb{R}^3} \int_{\mathbb{S}^2}B(v-u, \omega) \left[F_1(u')F_2(v')-F_1(u)F_2(v)\right] d\omega du,
\end{align} 
where the post-collision velocity pair $(v',u')$ and the pre-collision velocity pair $(v,u)$ satisfy the relation
\begin{align*}
	u'=u+[(v-u)\cdot \omega]\omega, \quad v'= v-[(v-u)\cdot \omega]\omega
\end{align*} 
with $\omega \in \mathbb{S}^2$, according to the conservation of momentum and energy of two particles
\begin{align*}
	u+v = u'+v', \quad |u|^2+|v|^2 = |u'|^2+|v'|^2.
\end{align*}
The collision kernel $B$ for the hard potential model with angular cutoff is of the form
\begin{align*}
	B(v-u,\omega) = |v-u|^\gamma b(\cos\theta),
\end{align*}
where $0 \le \gamma \le 1$, $\cos \theta = \frac{(v-u) \cdot \omega}{|v-u|}$, and $0\le b(\cos\theta) \le C_b|\cos \theta| $ for some constant $C_b$.
For the angular cutoff case, we can write the collision operator $Q$ as
\begin{align*}
	Q(F_1,F_2)&=\int_{\mathbb{R}^3} \int_{\mathbb{S}^2}B(v-u, \omega) F_1(u')F_2(v')d\omega du -\int_{\mathbb{R}^3} \int_{\mathbb{S}^2}B(v-u, \omega)F_1(u)F_2(v) d\omega du\\
	&=: Q_+(F_1,F_2) -Q_-(F_1,F_2),
\end{align*}
where $Q_+(F_1,F_2)$ and $Q_-(F_1,F_2)$ are the gain term and the loss term, respectively.\\
For a given external potential $\Phi$, the equation \eqref{BE} has a local maxwellian $\mu_E(x,v) = e^{-\Phi(x)}e^{-\frac{|v|^2}{2}} =  e^{-\Phi(x)}\mu(v)$. The boundary condition for equation \eqref{BE} is given by
\begin{align} \label{DRBC}
	F(t,x,v)|_{\gamma_-} = c_\mu \mu(v) \int_{n(x)\cdot v' >0} F(t,x,v')\{n(x)\cdot v'\}dv',
\end{align}
where $c_\mu$ is given by
\begin{align*}
	c_\mu \int_{n(x)\cdot v' >0} \mu(v')\{n(x)\cdot v'\}dv' =1.
\end{align*}
We consider the perturbation by
\begin{align*}
	F(t,x,v)= \mu_E(x,v) + \mu_E^{\frac{1}{2}}(x,v)f(t,x,v).
\end{align*}
Then we can derive the perturbed equation as follows:
\begin{align} \label{PBE}
	\partial_t f + v\cdot \nabla_x f - \nabla_x \Phi(x) \cdot \nabla_vf + e^{-\Phi(x)}Lf = e^{-\frac{\Phi(x)}{2}}\Gamma(f,f)
\end{align}
with the following boundary condition
\begin{align} \label{PDRBC}
	f(t,x,v)|_{\gamma_-} = c_\mu \mu^{\frac{1}{2}}(v)\int_{n(x)\cdot v' >0} f(t,x,v')\mu^{\frac{1}{2}}(v')\{n(x)\cdot v'\}dv'.
\end{align}
Here the linear operator $L$ is
\begin{align*}
	Lf = -\frac{1}{\sqrt{\mu}}\left\{Q(\mu,\sqrt{\mu}f )+Q(\sqrt{\mu}f,\mu) \right\} = \nu(v)f -Kf,
\end{align*}
with the collision frequency $\nu(v) = \int_{\mathbb{R}^3}\int_{\mathbb{S}^2} B(v-u,\omega)\mu(u) d\omega du \sim (1+|v|)^\gamma$. Note that $\nu(v)$ has the greatest lower bound, denoted by $\nu_0$. It is well-known that the operator $L$ has a kernel
\begin{align} \label{kernelL}
	\text{Ker}(L) = \text{span}\left\{\mu^{1/2}, v_1\mu^{1/2}, v_2\mu^{1/2}, v_3\mu^{1/2}, \frac{|v|^2-3}{\sqrt{6}} \mu^{1/2} \right\}.
\end{align}
The nonlinear operator $\Gamma$ is
\begin{align*}
	\Gamma(f_1,f_2)= \frac{1}{\sqrt{\mu}}Q(\sqrt{\mu}f_1, \sqrt{\mu}f_2 )&= \frac{1}{\sqrt{\mu}}Q_+(\sqrt{\mu}f_1, \sqrt{\mu}f_2 )-\frac{1}{\sqrt{\mu}}Q_-(\sqrt{\mu}f_1, \sqrt{\mu}f_2 )\\
	&=: \Gamma_+(f_1,f_2)-\Gamma_-(f_1,f_2).
\end{align*}
\indent For the diffuse reflection boundary condition \eqref{DRBC}, it is easy to check mass conservation of \eqref{BE}. Therefore, we may assume 
\begin{align} \label{massconserv}
	\int_{\Omega \cross \mathbb{R}^3} f(t,x,v) \mu^{\frac{1}{2}}_E(x,v)dxdv = 0 \quad \text{for all } t\ge 0.
\end{align}
by imposing initial data $F_0$ such that
\begin{align} \label{initialmass}
	\int_{\Omega \cross \mathbb{R}^3} F_0(x,v) dxdv = 	\int_{\Omega \cross \mathbb{R}^3} \mu_{E} dxdv.
\end{align}
\indent Without loss of generality, we may assume that $\Phi \in C^3(\bar{\Omega})$ satisfies $ \Phi(x) \ge 0$ for $x \in \Omega$. Otherwise, we can replace $\Phi$ by $\Phi+C \ge 0$ for some constant $C$. Then we have the bounds for $\Phi$ on $\bar{\Omega}$ as follows: 
\begin{align*}
	0 \le \Phi(x) \le \|\Phi\|_\infty \quad \text{for}\ x \in \bar{\Omega}.
\end{align*} 

For general initial data $F_0(x,v)\geq 0$ with bounded physical quantities, the global existence of renormalized solution of the Boltzmann equation is well-known by the seminal work DiPerna-Lions \cite{DLCP1989}. However, important properties such as uniqueness, conservation, and convergence to equilibrium for that solution are not known. Of course, it should be noted that there are better results such as \cite{MouhotL1} for the spatially homogeneous Boltzmann equation. \\
\indent On the other hand, regarding the convergence to equilibrium, Desvillettes-Villani \cite{DV} proved that if there exists a unique global solution satisfying properties such as smoothness and a Gaussian lower bound, then that solution converges to the global equilibrium. \\
\indent Rigorous mathematical results concerning the existence, uniqueness, and convergence to equilibrium are, as of now, known for cases where the initial data is sufficiently close to the equilibrium state. In this direction, Ukai \cite{Ukai} first solved the problem in periodic box when perturbation $f_0 = \frac{F_0-\mu}{\sqrt{\mu}}$ is sufficiently small in some weighted Sobolev spaces. And subsequently, the theory of the Boltzmann equation in the high-order Sobolev space framework was significantly developed by Guo \cite{GuoVPB, GuoVMB, GuoLandau} in the case of periodic box problems. We also refer to \cite{SGMR2012, SGSR2004, SGED2008} for other works in this framework. \\ 
\indent For general bounded domain problems, unfortunately, the methodologies utilizing such high order Sobolev spaces have not been very helpful. Instead, Guo \cite{GDCB2010} provided a comprehensive proof by adopting an $L^2$-$L^\infty$ bootstrap argument, establishing global well-posedness and asymptotic stability in (weighted) $L^{\infty}$ for small perturbations. The methodology of this paper has evolved in various ways. \cite{GDCB2010} restricted their results to real analytic uniformly convex domains with specular reflection boundary conditions. However, this real analytic condition was later extended to general $C^3$ domains in Kim-Lee \cite{KimLee}, and the problem has also been addressed for some domains with non-convex boundaries in  Ko-Kim-Lee \cite{KimLeeNonconvex,KKL}. We also refer to  Briant-Guo \cite{Guo-Briant} for Maxwell boundary condition with polynomial tail. \\
\indent On the other hand, in Duan-Huang-Wang-Yang \cite{DHWY2017}, the authors successfully replaced the initial data $L^{\infty}$ smallness condition with an $L^{p}$ type smallness condition, while maintaining global well-posedness and asymptotic stability in the $L^{\infty}$ space. Research in this direction has also been conducted for boundary condition problems. We refer Duan-Wang \cite{DW2019} for diffuse boundary conditions and Duan-Ko-Lee \cite{DKL2020} for specular reflection boundary conditions in $C^3$ convex domains. We also refer \cite{BKLY_BGK,DHWZ2019,KLP2022} for related works. \\
\indent For the Boltzmann equation with external forces, there are not many results when general boundary conditions are imposed. This is because analyzing the characteristics of the Hamiltonian used to construct 
$L^\infty$ solutions is quite challenging. Small perturbation problems for the Boltzmann equation with \textit{non-self-consistent} external potentials have been studied in \cite{ChanBELP,KimLee,YSEB2018}. 
G. Wang-Y. Wang \cite{WWGS2019} also extends the result of \cite{ChanBELP} to a class of large oscillation initial data in the $L^\infty$ space under the smallness assumption of initial data in the $L^2$ sense. In this paper, we extend the result of \cite{WWGS2019} to the diffuse boundary condition problem in general $C^{1}$ domain under the assumption that initial data has sufficiently small relative entropy. \\
 \indent Meanwhile, for the Vlasov-Poisson-Boltzmann (VPB) equation with a self-consistent external force, the problem with some boundary conditions was studied in \cite{YCRV2021,CKLVPB, HCQL2020}. In particular, it was essential to use the fact that, near equilibrium, the perturbation of characteristics due to the self-consistent force can be sufficiently small. From this perspective, the question of whether large amplitude solutions of the Boltzmann equation with external forces can exhibit global well-posedness and global stability is a very intriguing topic. Naturally, the most intriguing problem would be the results concerning large amplitude solutions of the VPB (Vlasov-Poisson-Boltzmann) equation under boundary conditions. However, this is currently considered a very difficult problem. Even for the Boltzmann equation without boundary conditions, the result remains unknown. In this paper, we investigate large amplitude solutions of the Boltzmann equation under given time-independent external forces and diffuse boundary conditions as a starting point for research in this direction. This will undoubtedly be an important step towards solving the VPB problem with large amplitude solutions.

\subsection{Notation} \label{notation}

 We denote the closure of $\Omega$ by $\bar{\Omega}$.
 We describe the notations for function spaces we shall use in this paper. As a convention, we denote the following function spaces for $p \in [1,\infty]$,
\begin{align} \label{fntspace1}
	L^\infty_t=L^\infty([0,\infty)) ,\quad L^p_{x,v}=L^{p}(\Omega \cross \mathbb{R}^3), \quad L^p_x=L^p(\Omega), \quad L^p_v=L^p(\mathbb{R}^3).
\end{align}
Similarly, we denote the Sobolev space $H^{k}_x$ as $H^k(\Omega)$, which is equivalent to $W^{k,2}(\Omega)$.
In particular, we abbreviate the norm in the space $L^\infty(\Omega)$ as $\|\cdot\|_\infty:=\|\cdot\|_{L^\infty_x}$.  We also set $(f,g)_{L^2_v} = \int_{\mathbb{R}^3} f(v)g(v)dv$ the inner product in $L^2(\mathbb{R}^3)$.
 For a positive Lebesgue measurable function $w$ on $\Omega \cross \mathbb{R}^3$, we define the weighted space $L^\infty_{x,v}(w)$ given by the norm
\begin{align} \label{winftynorm}
	\|f\|_{L^\infty_{x,v}(w)} := \|wf\|_{L^\infty_{x,v}}= \sup_{(x,v)\in \Omega \cross \mathbb{R}^3} \left[w(x,v)|f(x,v)| \right].
\end{align}
We also define the space $L^2_\gamma = L^2(\partial\Omega\cross \mathbb{R}^3)$ with the norm
\begin{align*}
	\|f\|_{L^2_\gamma}:= \left[\int_{\gamma}|f(x,v)|^2|n(x) \cdot v| dS(x)dv\right]^{\frac{1}{2}}.
\end{align*}
Usually, 
we denote the norm on the boundary $\gamma_+$ in $L^2$ by
\begin{align} \label{L2bnorm}
	\|f\|_{L_{\gamma_+}^2}:=\left[\int_{\gamma_+}|f(x,v)|^2\{n(x) \cdot v\} dS(x)dv\right]^{\frac{1}{2}}.
\end{align}
If not specifically mentioned, $C_a$ or $C(a)$ is the generic positive constant depending on $a$, while $C_0, C_1, C_2, \cdots$ denote some specific positive constants.\\
$\partial_{x_i}$ and $\partial_{v_i}$ mean the partial derivative with respect to $x_i$ and $v_i$, respectively. We also abbreviate $\partial_{ij}=\partial_{x_i}\partial_{x_j}$.\\
We define the function $\mathbf{1}_A(x)$, which is 1 on $A$ and 0 otherwise. We will frequently use the following notations for summation:
\begin{align*}
	\sum_{i}^{M}:=\sum_{i=1}^{M} \quad \text{and} \quad \sum_{I}^{(M)^3}:=\sum_{i_1=1}^{M}\sum_{i_2=1}^{M}\sum_{i_3=1}^{M},
\end{align*}
where $M$ is an integer and $I=(i_1,i_2,i_3)$ is a tuple.

\bigskip

\section{Domain and Characteristic}
\subsection{Domain and Back-time cycles}
\label{SDBC}
Throughout this paper, we assume $\Omega:=\{x\in \mathbb{R}^3 : \xi(x) <0\}$ is connected and bounded, where $\xi(x)$ is a $C^1$ function. Suppose that $\nabla_x \xi(x) \not= 0$ at the boundary $\partial \Omega=\{x: \xi(x) =0\}$, and the outward normal vector at $x \in \partial \Omega$ is given by $n(x) = \frac{\nabla_x \xi(x)}{|\nabla_x \xi(x)|}$. We denote the phase boundary in the space $\Omega \cross \mathbb{R}^3$ by $\gamma:=\partial \Omega \cross \mathbb{R}^3$. We decompose $\gamma$ into the outgoing set $\gamma_+$, the grazing set $\gamma_0$, and the incoming set $\gamma_-$ :
\begin{align*}
	&\gamma_+=\{(x,v) \in \partial \Omega \cross \mathbb{R}^3 : n(x) \cdot v>0 \},\\
	&\gamma_0=\{(x,v) \in \partial \Omega \cross \mathbb{R}^3 : n(x) \cdot v=0 \},\\
	&\gamma_-=\{(x,v) \in \partial \Omega \cross \mathbb{R}^3 : n(x) \cdot v<0 \}.
\end{align*}

Given $(t,x,v) \in [0,\infty) \cross \Omega \cross \mathbb{R}^3$, let $[X(s;t,x,v),V(s;t,x,v)]=[X_{\mathbf{cl}}(s;t,x,v),V_{\mathbf{cl}}(s;t,x,v)]$ be the position and velocity of a particle at time $s$, which was at $(t,x,v)$. The backward characteristic $[X(s;t,x,v),V(s;t,x,v)]$ for the Boltzmann equation \eqref{BE} is determined by the Hamiltonian ODE:
\begin{align} \label{Hacon}
	\frac{dX(s;t,x,v)}{ds} = V(s;t,x,v), \quad \frac{dV(s;t,x,v)}{ds} = -\nabla_x \Phi(X(s;t,x,v))
\end{align}
with $[X(t;t,x,v),V(t;t,x,v)] = [x,v]$.\\
Unless otherwise stated about the characteristic in this paper, we abbreviate $X(s):=X(s;t,x,v)$ and $V(s):=V(s;t,x,v)$. \begin{definition} \label{Backwardexit}
	(Backward exit time) For $(t,x,v) \in [0,\infty) \cross \Omega \cross \mathbb{R}^3$, we define its backward exit time $t_\mathbf{b}(x,v)>0$ to be the last moment at which the back-time characteristic curve $[X(s;t,x,v),V(s;t,x,v)]$ remains in the interior of $\Omega$ : 
	\begin{align*}
		t_\mathbf{b}(x,v) = \sup\{\tau>0 : X(s;t,x,v) \in \Omega \ \text{for all }t-\tau \le s \le t \}.
	\end{align*}
 For any $(x,v)$, we use $t_\mathbf{b}(x,v)$ whenever it is well-defined. We have $X(t-t_\mathbf{b};t,x,v)\in \partial \Omega$ and $\xi(X(t-t_\mathbf{b};t,x,v)) = 0$. We also define
	\begin{align*}
		x_\mathbf{b}(x,v)=X(t-t_\mathbf{b}(x,v);t,x,v) \in \partial \Omega, \quad v_\mathbf{b}(x,v)=V(t-t_\mathbf{b}(x,v);t,x,v) .
	\end{align*}
\end{definition}
It is well-known that $t_\mathbf{b}(x,v)$ is lower semicontinuous.\\
For each $x \in \partial \Omega$, we introduce the velocity space for the outgoing particles
\begin{align*}
	\mathcal{V}(x):= \{v\in \mathbb{R}^3 : n(x) \cdot v >0\}.
\end{align*}
Fix any point $(x,v) \notin \gamma_0 \cup \gamma_-$, and let $(t_0,x_0,v_0) = (t,x,v)$ and $k \ge 0$. For $v_{k+1} \in \mathcal{V}_{k+1}:=\{v\in \mathbb{R}^3 : n(x_{k+1}) \cdot v >0\}$, we define the back-time cycle as
\begin{align*}
	&X_{\mathbf{cl}}(s;t,x,v) = \sum_k \mathbf{1}_{[t_{k+1},t_k)}(s) X_k(s), \\
	&V_{\mathbf{cl}}(s;t,x,v) = \sum_k \mathbf{1}_{[t_{k+1},t_k)}(s) V_k(s),
\end{align*}
with
\begin{align}\label{timecycle}
	(t_{k+1},x_{k+1},v_{k+1})= (t_{k+1}(x_k,v_k),x_{k+1},v_{k+1}) := (t_k-t_\mathbf{b}(x_k,v_k), x_\mathbf{b}(x_k,v_k),v_{k+1})
\end{align}
and $X_k(s)$ and $V_k(s)$ satisfy the followings
\begin{align} \label{Xtrajcycle}
	&X_k(s) = x_k-\int_{s}^{t_k}V_k(\tau)d\tau,\\ \label{Vtrajcycle}
	&V_k(s) = v_k+\int_{s}^{t_k} \nabla_x \Phi(X_k(\tau))d\tau.
\end{align}
We note that each of $v_j$ are independent variables, and $t_k, x_k$ depend on $t_j, x_j, v_j$ for $1\le j \le k-1$. However, the phase space $\mathcal{V}_j$ depends on $(t,x,v,v_1,v_2,..., v_{j-1})$.\\
We use the follwing notations. $[X'(s'),V'(s')]$ means the position and velocity of a particle at time $s'$, which was at $(s,x',v')$. Similarly, $[X_l(s),V_l(s)]$ and $[X'_{l'}(s'),V'_{l'}(s')]$ present the position and velocity of a particle at time $s$ and $s'$, respectively, where the particle was at $(t_l,x_l,v_l)$ and $(t'_{l'}, x'_{l'},v'_{l'})$.

\subsection{Transversality} \label{Transversalitysec}
The Hamiltonian of the system \eqref{Hacon} is given by
\begin{align*}
	H(x,v) = \frac{|v|^2}{2}+\Phi(x).
\end{align*}
Note that the Hamiltonian $H$ is constant along the characteristics. This implies the crucial property
\begin{align}
	\frac{|v|^2}{2}+\Phi(x) = \frac{|V(s)|^2}{2}+\Phi(X(s)) \quad \text{for }t-t_\mathbf{b}(x,v) \le s \le t.
\end{align}
From the above fact, we derive that
\begin{align*}
	&|V(s)|=\sqrt{|v|^2+2\Phi(x)-2\Phi(X(s))} \le \sqrt{|v|^2+2\Phi(x)} \le  |v|+\sqrt{2\|\Phi\|_\infty},\\
	&|v|=\sqrt{|V(s)|^2+2\Phi(X(s))-2\Phi(x)} \le \sqrt{|V(s)|^2+2\Phi(X(s))} \le |V(s)|+\sqrt{2\|\Phi\|_\infty}.
\end{align*}
This follows that 
\begin{align} \label{sese}
	\left||v|-|V(s)|\right| \le \sqrt{2\|\Phi\|_\infty}.
\end{align}

\bigskip

In Section \ref{Linftydecay} and \ref{LASEC}, we will handle the following term which comes from the double Duhamel iteration : 
\begin{align} \label{fot}
	\int_0^{t}\int_0^{s}\exp\left\{-e^{-\|\Phi\|_\infty}\nu_0(t-s')\right\}\int_{|v'| \le 2R} \int_{|v''| \le 3R}|h(s',X(s';s,X(s;t,x,v),v'),v'')|dv''dv'ds'ds.
\end{align}
In particular, non-degeneracy of mapping $v' \mapsto X(s';s,X(s;t,x,v),v')$ in \eqref{fot} is very important in deriving $L^{2}_{x,v}$ of perturbation $f$. Therefore, we focus on the degeneracy condition
\begin{align*}
	\det\left(\frac{dX(s';s,X(s;t,x,v),v')}{dv'}\right)=0.
\end{align*}
 Now we introduce some lemmas to specify degeneracy in phase space.
\begin{lemma} \cite{AsanoAlmost, ChanBELP} 
	Assume that $\Phi(x) \in C^3(\bar{\Omega})$. Suppose that $\det\left(\frac{dX}{dv}(s_0;T_0,x_0,v_0)\right)=0$ for some $(s_0;T_0,x_0,v_0)$ in $(0,\infty) \cross (0,\infty) \cross \Omega \cross \mathbb{R}^3$. Then there exist $\delta_0 >0$, an open neighborhood $U_{x_0,v_0}$ of $(x_0,v_0) \in \Omega \cross \mathbb{R}^3$, and a family of Lipschitz continuous functions $\{\psi_j\}_{j=1}^3$ on $U_{x_0,v_0}$ with $\psi_j(x_0,v_0) = 0$ such that
	\begin{align*}
		\det\left(\frac{dX}{dv}(s;T_0,x,v)\right)=0
	\end{align*}
	if and only if
	\begin{align*}
		s=s_0+\psi_j(x,v) \quad  \text{for some} \quad j=1,2,3
	\end{align*}
	for $(s,x,v)$ in $(s_0-\delta_0, s_0+\delta_0)\cross U_{x_0,v_0}$.
\end{lemma}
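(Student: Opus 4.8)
The plan is to transfer the question to the scalar function $g(s,x,v):=\det\big(\tfrac{dX}{dv}(s;T_0,x,v)\big)$ and to analyse its zero set in $s$ through the variational (Jacobi) equation of the Hamiltonian flow~\eqref{Hacon}. Setting $M(s)=\tfrac{dX}{dv}(s;T_0,x,v)$ and $N(s)=\tfrac{dV}{dv}(s;T_0,x,v)$ and differentiating~\eqref{Hacon} in $v$, one gets the linear system $\dot M=N$, $\dot N=-\nabla_x^2\Phi(X(s))\,M$ with $M(T_0)=0$, $N(T_0)=I$. Since $\Phi\in C^3$ the vector field of~\eqref{Hacon} is $C^2$, so the flow is $C^2$ in $(s,x,v)$; hence $M,N$ are $C^1$ jointly and $g$ is $C^1$ in $(x,v)$ and $C^3$ in $s$. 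Two structural facts will drive the argument: (i) because $\nabla_x^2\Phi$ is symmetric, the Wronskian $M(s)^{\top}N(s)-N(s)^{\top}M(s)$ is constant in $s$ and vanishes (as $M(T_0)=0$), i.e. $(M,N)$ is a Lagrangian frame for every $s$; (ii) if $w\neq0$ satisfies $M(s_1)w=0$ and $N(s_1)w=0$ for some $s_1$, then the solution $s\mapsto(M(s)w,N(s)w)$ of the linear system vanishes at $s_1$, hence identically, contradicting $(M(T_0)w,N(T_0)w)=(0,w)$; so $N(s)$ is injective on $\ker M(s)$ for every $s$.

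Next I would pin down the order of vanishing of $g(\cdot,x_0,v_0)$ at $s_0$. Put $m:=\dim\ker M(s_0)\in\{1,2,3\}$. From the Lagrangian relation, for $w\in\ker M(s_0)$ and arbitrary $b$ one has $\langle N(s_0)w,\,M(s_0)b\rangle=\langle M(s_0)w,\,N(s_0)b\rangle=0$, so $N(s_0)\big(\ker M(s_0)\big)\perp\operatorname{im} M(s_0)$; combined with (ii), $N(s_0)\big(\ker M(s_0)\big)$ is an $m$-dimensional subspace complementary to $\operatorname{im} M(s_0)$. Writing the columns of $M(s)$ in a constant basis of the velocity space adapted to $\ker M(s_0)$, the $m$ columns vanishing at $s_0$ have first $s$-derivatives at $s_0$ spanning this complementary subspace; expanding the determinant then yields $g(s,x_0,v_0)=c\,(s-s_0)^{m}+O\big((s-s_0)^{m+1}\big)$ with $c\neq0$. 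Thus the order is exactly $m\le 3$ — the transversality we need — and this is precisely where self-adjointness of $\nabla_x^2\Phi$ enters (in the absence of it the order can exceed $\dim\ker M(s_0)$).

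With this in hand I would localise: after the same constant change of the $v$-variable, $g$ factors near $(s_0,x_0,v_0)$ as $g(s,x,v)=e(s,x,v)\,\det B(s,x,v)$, where $e$ is continuous and nonvanishing and $B$ is an $m\times m$ matrix of class $C^1$ with $B(s_0,x_0,v_0)=0$ and $\partial_sB(s_0,x_0,v_0)$ invertible; the Lagrangian structure (this is the classical crossing form) lets $B$ be taken symmetric. The zeros of $g$ near $(s_0,x_0,v_0)$ are then exactly the $s$ with $0\in\operatorname{spec} B(s,x,v)$. Let $\lambda_1(s,x,v)\le\cdots\le\lambda_m(s,x,v)$ be the ordered eigenvalues of $B$; by Weyl's inequality they are Lipschitz in $(s,x,v)$, and since $\partial_sB(s_0,x_0,v_0)$ is invertible (after splitting $\mathbb{R}^m$ according to its inertia) each $\lambda_i(\cdot,x,v)$ is strictly monotone in $s$ near $s_0$ with a one-signed lower Dini derivative bounded away from $0$. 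Hence for each $i$ the equation $\lambda_i(s,x,v)=0$ has a unique solution $s=s_0+\psi_i(x,v)$ near $s_0$, with $\psi_i$ Lipschitz in $(x,v)$ and $\psi_i(x_0,v_0)=0$. This yields $m$ Lipschitz functions so that, near $(s_0,x_0,v_0)$, $\det\big(\tfrac{dX}{dv}(s;T_0,x,v)\big)=0$ if and only if $s=s_0+\psi_i(x,v)$ for some $i$; setting $\psi_{m+1}=\cdots=\psi_3:=\psi_m$ produces the family $\{\psi_j\}_{j=1}^{3}$ in the statement.

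The main obstacle is the step controlling the order of vanishing and producing the \emph{symmetric} reduced matrix $B$. Bounding the order by $3$ relies essentially on the Hamiltonian structure — for a general linear second-order system the determinant of the fundamental matrix may vanish to higher order than $\dim\ker M(s_0)$. Moreover, symmetry of $B$ (morally: focal points of a Hamiltonian system cannot become ``complex'' under perturbation of $(x,v)$) is exactly what upgrades the a priori only H\"older dependence of polynomial roots on their coefficients to the Lipschitz regularity of the $\psi_j$ asserted here. A secondary technical point is bookkeeping the limited smoothness ($\Phi\in C^3$ gives only $C^1$ joint regularity of $g$), which is why one argues through the eigenvalue/monotonicity mechanism above rather than through a Weierstrass- or Malgrange-type preparation theorem.
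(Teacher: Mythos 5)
You should first note that the paper itself contains no proof of this lemma: it is quoted verbatim from \cite{AsanoAlmost, ChanBELP}, so there is no in-paper argument to compare with, and your proposal has to be judged on its own. Most of its skeleton is correct and is indeed the standard one for conjugate-point statements: the variational system $\dot M=N$, $\dot N=-\nabla_x^2\Phi(X)M$ with $M(T_0)=0$, $N(T_0)=I$; the constancy (hence vanishing) of $M^{\top}N-N^{\top}M$; the injectivity of $N(s)$ on $\ker M(s)$; and the conclusion that $s\mapsto\det M(s;x_0,v_0)$ vanishes at $s_0$ to order exactly $m=\dim\ker M(s_0)\le 3$, with the $C^1$ joint regularity bookkeeping for $\Phi\in C^3$ done correctly.

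The genuine gap is the step you yourself flag as the crux: the existence of a \emph{symmetric} $m\times m$ matrix $B(s,x,v)$, $C^1$ (or Lipschitz) in all variables, with $\det M=e\,\det B$ and $e\neq0$. You assert this ``by the classical crossing form,'' but the crossing form is only a quadratic form on $\ker M(s_0,x_0,v_0)$ at the single crossing; it does not produce a symmetric family in $(s,x,v)$. The construction you hint at (a constant linear change of the $v$-variable plus a Schur complement of $M$ itself) yields a reduced matrix whose $s$-derivative at the base point is symmetric, namely $\langle N(s_0)w_1,N(s_0)w_2\rangle$, but the matrix itself is in general \emph{not} symmetric for nearby $(x,v)$, because the pairing freezes $N(s_0,x_0,v_0)$ instead of using the moving frame; and without exact symmetry the whole mechanism fails (non-symmetric perturbations can push roots off the real axis, and the Weyl Lipschitz bound for eigenvalues is unavailable). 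The missing idea is to pass to the Lagrangian plane $\Lambda(s,x,v)=\operatorname{range}\binom{M}{N}$ and represent it, locally near the base point, as the graph of a symmetric matrix $Q(s,x,v)$ over a fixed Lagrangian transverse to the vertical; then $\det M$ equals a nonvanishing factor times $\det Q$, and the Schur complement of the symmetric $Q$ with respect to the spectral splitting of $Q(s_0,x_0,v_0)$ is automatically symmetric, vanishes at the base point, and has $s$-derivative there equal to the crossing form. A secondary flaw: you only claim $\partial_s B$ is invertible and propose to ``split by inertia''; with an indefinite symmetric derivative the ordered eigenvalues are not monotone (branches $\pm(s-s_0)$ give $\lambda_1(s)=-|s-s_0|$), and the inertia splitting does not factor the determinant for nearby $(x,v)$. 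What rescues the argument is that here the crossing form is positive definite, $\partial_sQ|_{\ker Q}(w,w)=|N(s_0)w|^2$, so you should state and use this definiteness rather than mere invertibility; with it, the monotone-eigenvalue construction of the Lipschitz $\psi_j$ goes through as you describe.
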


\bigskip

The following lemma gives that
\begin{align*}
	\det\left(\frac{dX}{dv}(s;T_0,x,v)\right) > \delta_*>0
\end{align*}
except for small time intervals. Thus we can overcome the degeneracy case by separating small time intervals. We note that in the proof of the lemma, the compactness of domain $\bar{\Omega}$ is  a crucial point to partition the space $\Omega$ into some open neighborhoods. The poof of this lemma is similar to \cite[Lemma\ 2]{ChanBELP}.
\begin{lemma}\cite{ChanBELP} \label{cpl}
	Assume that $\Phi(x) \in C^3(\bar{\Omega})$. Let $T_0 >0$, $R>0$, and $\epsilon >0$. There are open interval partitions of the time interval $[0,T_0]$, $\mathcal{P}_{i_1}^{T_0}$ for $i_1 = 1,2, ..., M_1$, open partitions of the space $\Omega$, $\mathcal{P}_{I_2}^{\Omega}$ for multi-index $I_2=(i_1,i_2, i_3) \in \{1,2, ..., M_2\}^3$, and open partitions of $[-4R,4R]^3 \subset \mathbb{R}^3$, $\mathcal{P}_{I_3}^v$ for multi-index $I_3 = 	(i_1, i_2, i_3) \in \{1,2, ..., M_3\}^3$ satisfying as follows: For each $i_1$, $I_2$, and  $I_3$, we have $t_{j,i_1,I_2,I_3}$ in $\mathcal{P}_{i_1}^{T_0}$ for $j=1,2,3$ such that
	\begin{align*}
		\left\{s\in \mathcal{P}_{i_1}^{T_0} : \det\left(\frac{dX}{dv}(s;T_0,x,v)\right)=0\right\} \subset \bigcup_{j=1}^3 \left\{s\in \left(t_{j,i_1,I_2,I_3}-\frac{\epsilon}{4M_1},t_{j,i_1,I_2,I_3}+\frac{\epsilon}{4M_1}\right)\right\}.
	\end{align*}
	for all $(x,v)$ in $\mathcal{P}_{I_2}^{\Omega}\cross \mathcal{P}_{I_3}^v$ and
	\begin{align*}
		\det\left(\frac{dX}{dv}(s;T_0,x,v)\right) > \delta_* \quad  \text{for} \quad s\notin \bigcup_{j=1}^3 \left(t_{j,i_1,I_2,I_3}-\frac{\epsilon}{4M_1},t_{j,i_1,I_2,I_3}+\frac{\epsilon}{4M_1}\right)
	\end{align*}
	if $(s,x,v)$ in $\mathcal{P}_{i_1}^{T_0} \cross \mathcal{P}_{I_2}^{\Omega}\cross \mathcal{P}_{I_3}^v$ for all $i_1 , I_2,$ and $I_3$.
\end{lemma}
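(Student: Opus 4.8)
The plan is to deduce Lemma~\ref{cpl} from the local normal form of the preceding lemma by a Lebesgue-number covering argument on the compact set $K:=[0,T_0]\times\bar\Omega\times[-4R,4R]^3$. Since $\Phi\in C^3$ (extended to $\mathbb R^3$ if necessary), the Hamiltonian flow \eqref{Hacon} is globally defined and $C^2$ in its arguments, so $D(s,x,v):=\det\big(\tfrac{dX}{dv}(s;T_0,x,v)\big)$ is continuous on $K$. First I would attach to each point $p_0=(s_0,x_0,v_0)\in K$ a \emph{good box}. If $D(p_0)\neq0$, continuity gives an open box $N_{p_0}\ni p_0$ with $|D|>\tfrac12|D(p_0)|=:c_{p_0}$ on $N_{p_0}$. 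If $D(p_0)=0$, the preceding lemma gives $\delta_0>0$, a neighborhood $U_{x_0,v_0}$ and Lipschitz functions $\psi_1,\psi_2,\psi_3$ vanishing at $(x_0,v_0)$ with $D(s,x,v)=0\iff s\in\{s_0+\psi_1(x,v),\,s_0+\psi_2(x,v),\,s_0+\psi_3(x,v)\}$ on $(s_0-\delta_0,s_0+\delta_0)\times U_{x_0,v_0}$; I shrink to a box $N_{p_0}:=(s_0-\delta_0',s_0+\delta_0')\times U'$ with $\delta_0'<\delta_0$ and $\overline{U'}\subset U_{x_0,v_0}$, so that this equivalence still holds on $\overline{N_{p_0}}$, and I record the (finite) Lipschitz constants of the $\psi_j$ on $U'$.

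Next I extract a finite subcover $N_1,\dots,N_L$ of $K$, let $\rho>0$ be a Lebesgue number for it, and let $\Lambda$ be the maximum of the finitely many recorded Lipschitz constants. The partitions are then uniform grids, chosen in this order: an integer $M_1>3T_0/\rho$, with $\mathcal{P}_{i_1}^{T_0}$ the resulting $M_1$ open subintervals of $[0,T_0]$; and then integers $M_2,M_3$ large enough that the cells of a grid subdividing a cube containing $\bar\Omega$, and the cells of a grid subdividing $[-4R,4R]^3$, all have diameter less than $\min\big(\rho/3,\ \epsilon/(100\,M_1\Lambda)\big)$, with $\mathcal{P}_{I_2}^{\Omega}$ the intersections of the spatial cells with $\Omega$ and $\mathcal{P}_{I_3}^{v}$ the velocity cells (these open cells miss only a null set of faces, which is harmless for \eqref{fot}). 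By construction each product cell $P:=\mathcal{P}_{i_1}^{T_0}\times\mathcal{P}_{I_2}^{\Omega}\times\mathcal{P}_{I_3}^{v}$ has diameter $<\rho$, so $P\subset N_\ell$ for some $\ell=\ell(i_1,I_2,I_3)$.

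It then remains to read off the conclusion cell by cell. If $N_\ell$ is of the first type, $D$ does not vanish on $\overline P$; take $t_{1,i_1,I_2,I_3},t_{2,i_1,I_2,I_3},t_{3,i_1,I_2,I_3}$ to be any points of $\mathcal{P}_{i_1}^{T_0}$ (the exclusion condition is then vacuous). If $N_\ell$ is of the second type with data $s_0,\psi_j,U'$, the $(x,v)$-projection of $P$ lies in $U'$, so every zero $(s,x,v)\in P$ of $D$ satisfies $s=s_0+\psi_j(x,v)$ for some $j$. Fixing a reference point $(\hat x,\hat v)$ in the $(x,v)$-cell, I set $t_{j,i_1,I_2,I_3}:=s_0+\psi_j(\hat x,\hat v)$ when this lies in $\mathcal{P}_{i_1}^{T_0}$, and otherwise take it to be a point of $\mathcal{P}_{i_1}^{T_0}$ within $\epsilon/(50M_1)$ of it (such a point exists precisely when the $j$-th branch meets $\mathcal{P}_{i_1}^{T_0}$); since $\psi_j$ is $\Lambda$-Lipschitz and the $(x,v)$-cell has diameter $<\epsilon/(100\,M_1\Lambda)$, every zero $(s,x,v)\in P$ of $D$ obeys $|s-t_{j,i_1,I_2,I_3}|<\epsilon/(100M_1)+\epsilon/(50M_1)<\epsilon/(4M_1)$ for the relevant $j$, which is the asserted inclusion $\{s\in\mathcal{P}_{i_1}^{T_0}:D(s,x,v)=0\}\subset\bigcup_{j=1}^3(t_{j,i_1,I_2,I_3}-\tfrac{\epsilon}{4M_1},t_{j,i_1,I_2,I_3}+\tfrac{\epsilon}{4M_1})$ for every $(x,v)\in\mathcal{P}_{I_2}^{\Omega}\times\mathcal{P}_{I_3}^{v}$. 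Finally, for each of the finitely many cells, the set of $(s,x,v)\in\overline P$ with $s$ outside those three open intervals is compact and — since the equivalence of the preceding lemma persists on $\overline{N_\ell}$ and the branch values stay strictly inside the intervals even on the closure — carries no zero of $D$; hence $|D|$ has a positive minimum there. Letting $\delta_*$ be the least of all these minima together with the $c_\ell$ completes the proof. (The argument in fact yields $|D|>\delta_*$ off the exclusion intervals, which is the form actually used in the change of variables for \eqref{fot}; on each connected component of the complement the sign of $D$ is constant, matching the stated conclusion.)

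I expect the only genuinely delicate point to be the order of the quantifiers in that last step. The Lebesgue number $\rho$ and the Lipschitz constants of the $\psi_j$ are produced from $K$, $\Phi$, $T_0$, $R$ and the preceding lemma, none of which involve $\epsilon$; hence $M_1$ can be fixed from $\rho$ before $\epsilon$ enters, and only afterward are $M_2,M_3$ chosen fine enough — relative to $M_1$, $\epsilon$ and $\Lambda$ — that each Lipschitz graph $s=s_0+\psi_j(x,v)$ becomes nearly flat over a single $(x,v)$-cell, i.e.\ squeezed into an $s$-window far shorter than $\epsilon/(4M_1)$; there is no circularity. Everything else, namely the covering bookkeeping and the open-versus-closed-cell technicalities, is routine, essentially as in \cite[Lemma~2]{ChanBELP}.
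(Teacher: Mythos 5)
Your proposal is correct and takes essentially the approach the paper intends: the paper gives no proof of this lemma, deferring to \cite[Lemma 2]{ChanBELP} with the remark that compactness of $\bar{\Omega}$ is the crucial point for producing the finite partitions, and your Lebesgue-number covering of $[0,T_0]\times\bar{\Omega}\times[-4R,4R]^3$ combined with the local Lipschitz normal form of the preceding lemma is exactly that argument, with the correct quantifier order ($M_1$ fixed from the covering before $\epsilon$ enters, only $M_2,M_3$ chosen $\epsilon$-dependently). The only minor mismatch is that your $t_{j,i_1,I_2,I_3}$ may vary with $\epsilon$ through the reference point of the $(x,v)$-cell, whereas the paper's remark after the lemma declares them $\epsilon$-independent; neither the statement nor its later applications use that independence, and your reading of the conclusion as a lower bound on $\left|\det\left(\frac{dX}{dv}\right)\right|$ is the form actually used, so this is harmless.
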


\bigskip

In Lemma \ref{cpl}, $R>0$ is a sufficiently large constant and $\epsilon>0$ is a small enough constant, which are choosing later. Note that $t_{j,i_1,I_2,I_3}$ and $M_1$ are independent of $\epsilon>0$, but the spatial partitions $\mathcal{P}_{I_2}^{\Omega}$ and the velocity partitions $\mathcal{P}_{I_3}^v$ depend on $\epsilon>0$. \\

\subsection{Main result} \label{mainresult}
We introduce the weight function
\begin{align} \label{weightfnt}
	w(x,v) = \left\{1+\frac{|v|^2}{2}+\Phi(x)\right\}^{\frac{\beta}{2}},\quad \beta >5.
\end{align}
We define $h(t,x,v)= w(x,v) f(t,x,v)$. From the definition of $h$ and the equation \eqref{PBE}, we can derive the full perturbed Boltzmann equation with the external force:
\begin{align} \label{WPBE2}
	\partial_t h + v\cdot \nabla_x h - \nabla_x \Phi(x) \cdot \nabla_v h + e^{-\Phi(x)}\nu(v) h- e^{-\Phi(x)}K_w h = e^{-\frac{\Phi(x)}{2}}w\Gamma\left(\frac{h}{w},\frac{h}{w}\right),
\end{align}
with the diffuse reflection boundary condition for $h$ and where the weighted operator $K_w$ is defined by
\begin{align} \label{woperK}
	K_w h \coloneqq wK\left(\frac{h}{w}\right).
\end{align}
Applying Duhamel principle to the equation \eqref{WPBE2}, we obtain the mild form for $h$
\begin{equation} \label{mildsolh}
	\begin{aligned}
	h(t,x,v) &= S_{G_\nu}(t)h_0(x,v)+\int_0^t S_{G_\nu}(t-s)(e^{-\Phi}K_wh(s))(x,v)ds\\
	& \quad + \int_0^t S_{G_\nu}(t-s)\left(e^{-\frac{\Phi}{2}}w\Gamma\left(\frac{h}{w}, \frac{h}{w}\right)(s)\right)(x,v)ds,
\end{aligned}
	\end{equation}
where $S_{G_\nu}(t)$ is the semigroup of a solution to the equation
\begin{align*}
	\partial_t h +v\cdot \nabla_x h -\nabla_x \Phi(x) \cdot \nabla_v h + e^{-\Phi(x)} \nu(v)h=0
\end{align*}
with the diffuse reflection boundary condition for $h$.\\
Before achieving our main goal, we need to demonstrate the global existence of a solution to the Boltzmann equation \eqref{BE} with the smallness of $\|wf_0\|_{L^\infty_{x,v}}$.

\begin{theorem} [Small perturbation problem] \label{mainresult1}
	Let $w(x,v) = \left\{1+\frac{|v|^2}{2}+\Phi(x)\right\}^\frac{\beta}{2}$ with $\beta >5$. Assume that $F_0(x,v)= \mu_E(x,v)+\mu_E^{\frac{1}{2}}(x,v)f_0(x,v) \ge 0$ satisfying the mass conservation \eqref{initialmass}. Then there exists $\delta_0 >0$ such that if $F_0(x,v) = \mu_E(x,v) + \mu^{\frac{1}{2}}_E(x,v) f_0(x,v) \ge 0$ and $\|wf_0\|_{L^\infty_{x,v}} \le \delta_0$, then there exists a unique (mild) solution $F(t,x,v) = \mu_E(x,v) + \mu^{\frac{1}{2}}_E(x,v) f(t,x,v) \ge 0$ for the Boltzmann equation \eqref{BE} with initial datum $F_0$ and the diffuse reflection boundary condition \eqref{DRBC} such that
	\begin{align*}
		\sup_{0 \le t <\infty}\left\{e^{\lambda_0 t}\|wf(t)\|_{L^\infty_{x,v}}\right\} \le C_0\|wf_0\|_{L^\infty_{x,v}}
	\end{align*}
	for some $\lambda_0 >0$ and $C_0>0$.
\end{theorem}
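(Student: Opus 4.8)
The plan is to establish Theorem \ref{mainresult1} via the now-standard $L^2$--$L^\infty$ bootstrap, adapted to the presence of the external potential $\Phi$ and the Hamiltonian characteristics \eqref{Hacon}. The starting point is the mild formulation \eqref{mildsolh} together with the semigroup $S_{G_\nu}(t)$, whose decay in weighted $L^\infty$ we must first understand. Since along the trajectories the Hamiltonian $H(x,v)=\tfrac{|v|^2}{2}+\Phi(x)$ is conserved, the damping factor coming from $e^{-\Phi}\nu(v)$ is uniformly comparable to $e^{-\|\Phi\|_\infty}\nu_0$; iterating the diffuse boundary condition \eqref{PDRBC} and exploiting that a particle leaving through $\gamma_-$ re-enters with a Gaussian weight (the Ukai-type small-boundary-measure estimate) yields, for the linear problem $\partial_t h + v\cdot\nabla_x h - \nabla_x\Phi\cdot\nabla_v h + e^{-\Phi}\nu h = g$, a bound of the form $\|h(t)\|_{L^\infty_{x,v}(w)} \lesssim e^{-\lambda t}\|h_0\|_{L^\infty_{x,v}(w)} + \int_0^t e^{-\lambda(t-s)}\|g(s)\|_{L^\infty_{x,v}(w)}\,ds$ for some $\lambda>0$; crucially $w(x,v)^{\beta/2}$ is also essentially conserved along characteristics, which is exactly why the weight in \eqref{weightfnt} was chosen as a function of $H$. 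This replaces the role played by $e^{-\langle v\rangle^2/\text{something}}$ weights in the no-potential case and is what makes the Hamiltonian-flow version of the argument go through.

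Next I would treat the full linearized-plus-source problem. Writing \eqref{mildsolh} and estimating $K_w$ by splitting into the nearly-singular part and a truncated kernel with compact velocity support, one reduces the $L^\infty$ control of $h$ to controlling the integral \eqref{fot}, i.e.\ a space--time integral of $h$ over a bounded velocity ball along the double Duhamel (double characteristic) iteration. Here Lemma \ref{cpl} enters decisively: partitioning $[0,T_0]\times\Omega\times[-4R,4R]^3$ as in that lemma, on the "good" pieces the Jacobian $\det\big(\tfrac{dX}{dv}(s;T_0,x,v)\big)>\delta_*$ lets us change variables from $v'$ (and $v''$) to the spatial variable $X(s';\dots)$, converting the velocity integral of $|h|$ into an $L^1_{x}$-in-space integral bounded by $\|h(s')\|_{L^2_{x,v}(w)}$ (after Cauchy--Schwarz and absorbing the Jacobian into the constant); on the finitely many "bad" time slivers of total measure $O(\epsilon)$ we use the trivial $L^\infty$ bound times $\epsilon$, which is absorbed for $\epsilon$ small. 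Thus, modulo the boundary contributions (handled again by the small-measure estimate near $\gamma_0$ and the diffuse kernel's Gaussian decay), one gets
\begin{align*}
	\|wf(t)\|_{L^\infty_{x,v}} \lesssim e^{-\lambda t}\|wf_0\|_{L^\infty_{x,v}} + \sup_{0\le s\le t}\|f(s)\|_{L^2_{x,v}} + (\text{nonlinear terms}).
\end{align*}

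For the $L^2$ piece, I would run the standard energy method on \eqref{PBE}: multiply by $f$, integrate over $\Omega\times\mathbb{R}^3$, and use that the Hamiltonian-transport terms $v\cdot\nabla_x - \nabla_x\Phi\cdot\nabla_v$ are skew-adjoint up to the boundary term, that $e^{-\Phi}Lf$ yields coercivity $\gtrsim e^{-\|\Phi\|_\infty}\|(\mathbf{I}-\mathbf{P})f\|_\nu^2$ modulo the hydrodynamic part, and that the diffuse boundary term is non-positive and in fact controls $\|(1-\mathbf{P}_\gamma)f\|_{L^2_{\gamma_+}}^2$. The loss of coercivity on $\mathrm{Ker}(L)$ is recovered by the usual macroscopic estimates / elliptic or Korn-type argument near the boundary (adapted so the extra drift $-\nabla_x\Phi\cdot\nabla_v f$ and the mass constraint \eqref{massconserv} are accounted for), giving $\tfrac{d}{dt}\|f\|_{L^2}^2 + \lambda\|f\|_{L^2}^2 \lesssim \|wf\|_{L^\infty}^{2}\cdot(\text{l.o.t.})$ from $\Gamma$. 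Combining this with the $L^\infty$ estimate in a Gronwall loop, and using $\|wf_0\|_{L^\infty_{x,v}}\le\delta_0$ to close the nonlinearity (the $\Gamma$ term being quadratic and respecting the weight since $w\Gamma(h/w,h/w)\lesssim \nu\|h\|_\infty\,|h|/w$-type bounds hold for $\beta>5$), yields the claimed exponential decay with some $\lambda_0>0$, $C_0>0$. Existence and uniqueness follow from the same a priori estimates applied to the standard iteration scheme $h^{n+1}$ solving \eqref{WPBE2} with the nonlinearity evaluated at $h^n$, whose well-posedness in $L^\infty_{x,v}(w)$ is guaranteed by the semigroup bound. The main obstacle I anticipate is not the energy estimate but the geometric step: making the change of variables in \eqref{fot} genuinely uniform requires the stratification of Lemma \ref{cpl} together with the transversality structure, and one must carefully track that the potential-dependent damping $e^{-e^{-\|\Phi\|_\infty}\nu_0(t-s')}$ and the weight are both controlled along the composed characteristics $X(s';s,X(s;t,x,v),v')$ — a bookkeeping that is substantially heavier here than in the potential-free boundary case.
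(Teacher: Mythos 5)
Your overall route coincides with the paper's: linear $L^2$ decay from the coercivity of $L$ plus the macroscopic elliptic test-function estimate (Lemma \ref{coer}), linear $L^\infty$ decay via the double Duhamel iteration with kernel truncation and the change of variables $v'\mapsto X(s';\cdot)$ justified by the stratification of Lemma \ref{cpl} away from small time slivers (the bad slivers contributing $O(\tilde\epsilon)$), and then a Picard iteration closed by the Gamma estimate (Lemma \ref{Gamest}) and the smallness of $\|wf_0\|_{L^\infty_{x,v}}$. One bookkeeping point you gloss over: since $|w\Gamma(h/w,h/w)|\le C\nu(v)\|h\|_{L^\infty_{x,v}}^2$, the factor $\nu(v)$ must be absorbed; for the single Duhamel term this is done by the damped semigroup (Corollary \ref{L43C}), but for the iterated term $\int_0^t\int_s^t S_{G_\nu}(t-s')e^{-\Phi}K_w S_G(s'-s)\bigl[e^{-\Phi/2}w\Gamma\bigr]\,ds'ds$ the paper introduces the auxiliary semigroup $S_{\tilde G}$ attached to the weight $w/(1+|v|)$ and uses $S_G(t)h_0=(1+|v|)\,S_{\tilde G}(t)\bigl(h_0/(1+|v|)\bigr)$ so that the extra $(1+|v|)$ is absorbed into $\int k_w(V(s'),v')(1+|v'|)\,dv'<\infty$; this is fixable but needs to be said.

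The genuine gap is positivity. The theorem asserts $F=\mu_E+\mu_E^{1/2}f\ge 0$, and nothing in your argument produces a sign: the contraction iteration on the perturbation $h^{(m)}$ (equivalently on $f^{(m)}$) does not preserve nonnegativity of $F^{(m)}$, and the exponential decay of $\|wf\|_{L^\infty_{x,v}}$ only gives $F\ge \mu_E-\mu_E^{1/2}w^{-1}\|wf\|_{L^\infty_{x,v}}$, which fails to be nonnegative for large $|v|$ no matter how small $\delta_0$ is, because $\mu_E^{1/2}w^{-1}\gg\mu_E$ there. The paper closes this by a separate positivity-preserving scheme \eqref{NIS2}, namely $\{\partial_t+v\cdot\nabla_x-\nabla_x\Phi\cdot\nabla_v\}F^{(m+1)}+\nu(F^{(m)})F^{(m+1)}=Q_+(F^{(m)},F^{(m)})$ with the diffuse boundary condition, showing inductively along the back-time cycles (with Lemma \ref{Lsmall} making the $k$-fold boundary remainder arbitrarily small) that $F^{(m+1)}\ge 0$ on a short time interval, proving this second iteration converges to the same solution, and concluding $F\ge 0$ by uniqueness. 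You need to supply an argument of this type; without it the statement as claimed is not fully proved.
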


The main goal of the paper is to prove the global existence of the solution to the Boltzmann equation \eqref{BE} with a large external force and a large oscillation initial datum near the local maxwellian $\mu_E(x,v)$. Instead of overcoming the smallness of $\|wf_0\|_{L^\infty_{x,v}}$, we pay the price that the initial relative entropy $\mathcal{E}(F_0)$ is suitably small,
where a relative entropy $\mathcal{E}(F)$ is given by
\begin{align*}
	\mathcal{E}(F)= \int_{\Omega \cross \mathbb{R}^3} \left(\frac{F}{\mu_E}\log\frac{F}{\mu_E}-\frac{F}{\mu_E}+1\right) \mu_E dxdv.
\end{align*}

\begin{theorem} [Large amplitude problem] \label{mainresult2}
		Let $w(x,v) = \left\{1+\frac{|v|^2}{2}+\Phi(x)\right\}^\frac{\beta}{2}$ with $\beta >5$. Assume that $F_0 (x,v) = \mu_E(x,v) + \mu_E^{\frac{1}{2}}(x,v)f_0(x,v) \ge 0$ satisfying the mass conservation \eqref{initialmass}. For given $M_0 \ge 1$, there exists $\bar{\epsilon}_0 >0$, depending on $\delta_0$ and $M_0$, such that if $\|wf_0\|_{L^\infty_{x,v}} \le M_0$ and $\mathcal{E}(F_0) \le \bar{\epsilon}_0$, then there is a unique (mild) solution $F(t,x,v) = \mu_E(x,v) + \mu_E^\frac{1}{2}(x,v) f(t,x,v) \ge 0$ to the Boltzmann equation \eqref{BE} with initial datum $F_0$ and the diffuse reflection boundary condition \eqref{DRBC} satisfying
		\begin{align*}
			\|wf(t)\|_{L^\infty_{x,v}} \le \tilde{C}_L M_0^5 \exp{\frac{\tilde{C}_L M_0^5}{\nu_0 e^{-\|\Phi\|_\infty}}}e^{-\lambda_L t}
		\end{align*}
		for all $t \ge 0$, where $\tilde{C}_L \ge 1$ is a generic constant and $\lambda_L := \min\left\{\lambda_0, e^{-\|\Phi\|_\infty}\frac{\nu_0}{16}\right\} >0$ with $\nu_0=\inf_{v \in \mathbb{R}^3}\nu(v)$.
\end{theorem}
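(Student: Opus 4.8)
The plan is to run an $L^2$--$L^\infty$ bootstrap in which the $L^\infty$-smallness of the initial datum used in Theorem~\ref{mainresult1} is replaced by the smallness of the relative entropy, the latter being propagated for all time by the $H$-theorem with the diffuse-reflection boundary condition. First I would construct, by the iteration on the mild equation \eqref{mildsolh} used for Theorem~\ref{mainresult1} but tracking the constant $M_0$ explicitly, a unique nonnegative mild solution on a short interval $[0,T_*]$ with $T_*=T_*(M_0)>0$ and $\|wf(t)\|_{L^\infty_{x,v}}\le 2M_0$ there; nonnegativity and the crude $L^\infty$ bound follow from the gain-term/$\nu$-damping structure of \eqref{mildsolh}. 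Then, writing $\mathcal{N}_0:=\tilde C_L M_0^5\exp\!\big(\tilde C_L M_0^5/(\nu_0 e^{-\|\Phi\|_\infty})\big)$ and $T^{**}:=\sup\{T>0:\ \|wf(t)\|_{L^\infty_{x,v}}\le \mathcal{N}_0\ \text{on}\ [0,T]\}$, the whole argument amounts to showing $T^{**}=\infty$ while establishing the claimed decay; the generic constant $\tilde C_L$ is fixed at the end.

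Second, on $[0,T^{**})$ I would use that the diffuse condition \eqref{DRBC} is entropy-dissipating (Darroz\`es--Guiraud inequality), so $\mathcal{E}(F(t))\le\mathcal{E}(F_0)\le\bar\epsilon_0$; combining the convexity bound $z\log z-z+1\gtrsim\min\{(z-1)^2,|z-1|\}$ with the a priori pointwise control of $|f|\mu_E^{-1/2}$ by $\mathcal{N}_0$ (which confines the region where the quadratic lower bound degenerates), one gets $\|f(t)\|_{L^2_{x,v}}^2\le C(\mathcal{N}_0)\,\bar\epsilon_0=:\epsilon_1^2$, with $\epsilon_1$ as small as desired once $\bar\epsilon_0$ is chosen small depending on $M_0$ and $\delta_0$. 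Next I would run the $L^2$ energy estimate for \eqref{PBE}: testing against $f$ produces $\tfrac12\tfrac{d}{dt}\|f\|_{L^2_{x,v}}^2+e^{-\|\Phi\|_\infty}c_L\|(I-P)f\|_\nu^2\le|\langle e^{-\Phi/2}\Gamma(f,f),(I-P)f\rangle|+(\text{boundary})$, where the boundary term has a favourable sign for diffuse reflection (Jensen applied to \eqref{PDRBC}) and where, by $\Gamma(f,f)\perp\text{Ker}(L)$ and $\|\nu^{-1/2}\Gamma(f,f)\|_{L^2_{x,v}}\lesssim\|wf\|_{L^\infty_{x,v}}\|f\|_{L^2_{x,v}}$, the nonlinearity is bounded by $\mathcal{N}_0\,\|f\|_{L^2_{x,v}}\,\|(I-P)f\|_\nu$ --- a product of the large factor with the entropy-small factor. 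Together with the macroscopic (hydrodynamic) estimate for $Pf$, adapted from the proof of Theorem~\ref{mainresult1} to include the $\nabla_x\Phi$ correction terms on the $C^1$ domain and using the mass constraint \eqref{massconserv}, this gives a Lyapunov inequality $\tfrac{d}{dt}\mathcal{G}+\lambda_0\mathcal{G}\lesssim\mathcal{N}_0^2\|f\|_{L^2_{x,v}}^2$ with $\mathcal{G}\sim\|f\|_{L^2_{x,v}}^2$; a Gr\"onwall/bootstrap on $\sup_{[0,t]}e^{\lambda_0 s}\|f(s)\|_{L^2_{x,v}}$, again absorbing the large $\mathcal{N}_0$ against the small $\epsilon_1$, yields $\|f(t)\|_{L^2_{x,v}}\le C\epsilon_1 e^{-\lambda_0 t}$ on $[0,T^{**})$.

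Third, I would close the $L^\infty$ bound by the double Duhamel iteration on \eqref{mildsolh}: iterating twice, the $K_w$ self-interaction produces precisely the integral \eqref{fot}, and Lemma~\ref{cpl} is used to split time, space and velocity so that on the $O(\epsilon/M_1)$-sized sub-intervals the degenerate set is negligible, while on the complement the Jacobian of $v'\mapsto X(s';s,X(s;t,x,v),v')$ exceeds $\delta_*$, allowing a change of variables that turns \eqref{fot} into a term $\lesssim\sup_s e^{\lambda_L s}\|f(s)\|_{L^2_{x,v}}$; the nonlinear remainders are treated with one $L^\infty$ factor and one $L^2$ factor. Combined with the semigroup damping $e^{-e^{-\|\Phi\|_\infty}\nu_0 t}$, this gives $\|wf(t)\|_{L^\infty_{x,v}}\lesssim e^{\tilde C_L M_0^5/(\nu_0 e^{-\|\Phi\|_\infty})}\,e^{-\lambda_L t}\big(M_0+\mathcal{N}_0\sup_s e^{\lambda_L s}\|f(s)\|_{L^2_{x,v}}\big)$, the exponential-in-$M_0$ factor arising from the Gr\"onwall needed on the transient interval where $\|wf\|_{L^\infty_{x,v}}$ is only known to be $\lesssim M_0$; inserting the decay of Step~2 and fixing $\tilde C_L$ and then $\bar\epsilon_0$ small enough, the right side is $\le\mathcal{N}_0 e^{-\lambda_L t}<\mathcal{N}_0$, so $T^{**}=\infty$ and the asserted estimate holds. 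Uniqueness follows from the contraction estimate for \eqref{mildsolh} with the now-global $L^\infty$ and $L^2$ bounds.

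The main obstacle is the bookkeeping in Steps~2--3: every occurrence of the large factor $\mathcal{N}_0$ must be paired with either the entropy-small quantity $\epsilon_1$ or a genuinely decaying term, and never with the dissipation $\|(I-P)f\|_\nu^2$, or the $L^2$ estimate fails. In particular the macroscopic estimate for $Pf$ must be re-derived with the large potential $\Phi$ present and with only $C^1$ boundary regularity, and the $\epsilon$-dependent partitions of Lemma~\ref{cpl} must be propagated uniformly through the double Duhamel so as to produce the clean final constant $\tilde C_L M_0^5\exp\!\big(\tilde C_L M_0^5/(\nu_0 e^{-\|\Phi\|_\infty})\big)$.
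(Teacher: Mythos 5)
Your Step 2 is the place where the argument breaks, and it is precisely the step this paper is built to avoid. Under the continuation hypothesis you only know $\|wf(t)\|_{L^\infty_{x,v}}\le \mathcal{N}_0$ with $\mathcal{N}_0$ large, and then the nonlinear contribution in the $L^2$ energy identity, $|\langle e^{-\Phi/2}\Gamma(f,f),(I-P_L)f\rangle|\lesssim \mathcal{N}_0\,\|f\|_{L^2_{x,v}}\,\|\nu^{1/2}(I-P_L)f\|_{L^2_{x,v}}$, cannot be closed: splitting $\|f\|_{L^2_{x,v}}\le\|P_Lf\|_{L^2_{x,v}}+\|(I-P_L)f\|_{L^2_{x,v}}$ produces a term $\mathcal{N}_0\|\nu^{1/2}(I-P_L)f\|_{L^2_{x,v}}^2$ that the $O(1)$ coercivity of $L$ cannot absorb, while replacing $\|f\|_{L^2_{x,v}}$ by the entropy bound $\epsilon_1$ and using Young leaves a constant source of size $(\mathcal{N}_0\epsilon_1)^2$, which yields at best decay to a small constant, not the exponential decay $\|f(t)\|_{L^2_{x,v}}\lesssim\epsilon_1 e^{-\lambda_0 t}$ you feed into Step 3. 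The macroscopic estimate you plan to adapt has the same obstruction built in: Lemma~\ref{coer} carries the term $C_\perp\int_0^t\|wf(s)\|_{L^\infty_{x,v}}\|P_L(f)(s)\|_{L^2_{x,v}}^2\,ds$, which is absorbed only under the smallness $\eta C_\perp<\tfrac12$ of \eqref{AAL2}; with $\|wf\|_{L^\infty_{x,v}}\le\mathcal{N}_0$ it cannot be. This is why the paper never proves (or uses) $L^2$ decay of the nonlinear solution in the large-amplitude regime: it uses only the static consequence of the entropy bound, Lemma~\ref{L1L2cont} (through the estimate \eqref{L2cont}), to control $L^2$ quantities of $h$ on bounded velocity sets inside the $L^\infty$ machinery.

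A second, related gap is in Step 3: keeping the full $\Gamma$ in the $S_{G_\nu}$ Duhamel does not work at amplitude $\mathcal{N}_0$, because the loss part obeys only $|w\Gamma_-(f,f)|\lesssim\nu(v)\|h\|_{L^\infty_{x,v}}\sup_x\int e^{-|u|^2/8}|f(\cdot,x,u)|\,du\lesssim \nu(v)\mathcal{N}_0^2$ pointwise, and a bootstrap at threshold $\mathcal{N}_0$ cannot beat its own square; the "one $L^\infty$ factor, one $L^2$ factor" pairing (Lemma~\ref{Gam+est}) is available for $\Gamma_+$ only, and the needed smallness of the velocity average of $f$ is exactly what has to be \emph{proved}, not assumed. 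The paper's essential device is to fold $\Gamma_-$ together with $\nu f$ into the damping $R(f)$ in \eqref{WPBBE}, use $R(f)\ge0$ (from $F\ge0$) on a transient interval $[0,\tilde t]$ with $\tilde t\sim\log(C_4M_0)$, prove $R(f)\ge\tfrac12 e^{-\Phi}\nu$ for $t\ge\tilde t$ from the entropy smallness (Lemma~\ref{Rfest}), and then run the Duhamel with the semigroup $S_{G_f}$ and a Gr\"onwall on $1+\int_0^t\|h(s)\|_{L^\infty_{x,v}}ds$ (Lemmas~\ref{Linfty1}--\ref{Linfty2}); this is where the factor $M_0^5\exp\{\tilde C_LM_0^5/(\nu_0e^{-\|\Phi\|_\infty})\}$ actually comes from. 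Finally, the global statement and the rate $\lambda_0$ in $\lambda_L$ are obtained by a handoff: the large-amplitude estimates are only needed up to a finite time $T_1$ (with constants allowed to depend on $T_1$), after which $\|h(T_1)\|_{L^\infty_{x,v}}<\delta_0$ and Theorem~\ref{mainresult1}, together with the local theory of Theorem~\ref{LocExist} to reach $T_1$, supplies existence, uniqueness and decay for $t\ge T_1$; your plan to prove decay directly for all time through a single bootstrap leans on the $L^2$ decay of Step 2 and therefore does not close as written.
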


\subsection{Strategy of the proof}
We will first demonstrate the small perturbation problem (Theorem \ref{mainresult1}) through Section \ref{L2decay} to \ref{Nonlineardecay}. Next, the large amplitude problem (Theorem \ref{mainresult2}), which is our main goal, will be proven in Section \ref{LASEC}. As in \cite{GDCB2010}, we will use a $L^2-L^\infty$ bootstrap argument to derive the exponential $L^\infty$ decay to the linearized Boltzmann equation. In Section \ref{L2decay}, we will use the a priori estimate to smallness for $\|wf\|_{L^\infty_{x,v}}$ to derive
\begin{align*}
	\int_0^t\|P_Lf(s)\|_{L^2_{x,v}}^2ds \lesssim \|f(t)\|_{L^2_{x,v}}^2+\|f(0)\|_{L^2_{x,v}}^2 +\int_0^t\|(I-P_L)f(s)\|_{L^2_{x,v}}^2ds+\text{(boundary effect)},
\end{align*}
where $f$ is a solution to the linearized Boltzmann equation and $P_L$ is defined in \eqref{LPro}. See \eqref{AAL2}.
Using this fact and the $L^2$ coercivity for the linear operator $L$, we can show the linear $L^2$ decay $\|f(t)\|_{L^2_{x,v}} \lesssim e^{-\lambda t}\|f_0\|_{L^2_{x,v}}$ for some $\lambda >0$. We set $h(t,x,v) =w(x,v)f(t,x,v)$. In Section \ref{Linftydecay}, we will apply the double Duhamel principle to the linearized Boltzmann equation in order to get roughly the following form:
\begin{align}
	S_G(t) h_0 &\sim \text{(initial datum's contribution)} \nonumber\\
	&+\int_0^{t}\int_0^{s}\exp\left\{-e^{-\|\Phi\|_\infty}\nu_0(t-s')\right\}\int_{|v'| \le 2R} \int_{|v''| \le 3R}|h(s',X(s';s,X(s;t,x,v),v'),v'')|dv''dv'ds'ds \label{nondp}\\ 
	&+\text{(remainder part)}\nonumber ,
\end{align}
where $S_G(t)$ is the solution operator to the linearized Boltzmann equation. Here, to address a non-degeneracy problem mentioned in subsection \ref{Transversalitysec}, we will use Lemma \ref{cpl} to the term \eqref{nondp}. Additionally, we obtain the linear $L^\infty$ decay $\|S_G(t)h_0\|_{L^\infty_{x,v}} \lesssim e^{-\lambda_
\infty t}\|h_0\|_{L^\infty_{x,v}}$ for some $\lambda_\infty>0$ by using the linear $L^2$ decay. In Section \ref{Nonlineardecay}, we can overcome the difficulty of dealing with the nonlinear term $\Gamma$ thanks to the smallness for the initial data and the Gamma estimate (Lemma \ref{Gamest}). Therefore, we can conclude the small perturbation problem, which is one of our main results.\\
\indent In Section \ref{LASEC}, we will deal with the large amplitude problem. To overcome the velocity growth in the loss term $\Gamma_-(f,f)$ like a factor $\nu(v)$, we decompose the nonlinear term $\Gamma$ into $\Gamma_+$ and $\Gamma_-$. We will then combine $\Gamma_-(f,f)$ and $\nu(v)f$, denoted by $R(f):= \int_{\mathbb{R}^3 \times \mathbb{S}^2}B(u-v,\omega)F(u)d\omega du$. Unlike the small perturbation problem, the problem in this section may involve an initial data condition with large amplitude, and we need a different approach to handle the nonlinear gain term. We will overcome this problem by using the smallness for the relative entropy $\mathcal{E}(F_0)$ and introducing an estimate to the gain term $\Gamma_+$. (See Lemma \ref{Gam+est}.) As in \cite{DW2019}, to derive the lower bound for $R(f)$, under the a priori assumption $\sup_{0\le t\le T}\|h(t)\|_{L^\infty_{x,v}} \le \bar{M}$, we will prove
\begin{align} \label{zz11}
	\int_{\mathbb{R}^3}e^{-\frac{|u|^2}{8}}|f(t,x,u)|du \le C \quad \text{for all }t\ge \tilde{t}\ \text{and }x\in \Omega,
\end{align}
for some generic small constant $C$ and and time $\tilde{t}$. In the process of proving it, we use the mild formulation:
\begin{equation} \label{mildsolh11}
	\begin{aligned}
	h(t,x,v) &= S_{G_\nu}(t)h_0(x,v)+\int_0^t S_{G_\nu}(t-s)(e^{-\Phi}K_wh(s))(x,v)ds\\
	& \quad + \int_0^t S_{G_\nu}(t-s)\left(e^{-\frac{\Phi}{2}}w\Gamma\left(\frac{h}{w}, \frac{h}{w}\right)(s)\right)(x,v)ds,
\end{aligned}
	\end{equation}
where $S_{G_\nu}(t)$ is the semigroup of a solution to the equation
\begin{align*}
	\partial_t h +v\cdot \nabla_x h -\nabla_x \Phi(x) \cdot \nabla_v h + e^{-\Phi(x)} \nu(v)h=0
\end{align*}
with the diffuse reflection boundary condition. Similar to the proof of Theorem \ref{LT}, the second and third terms of the right-hand side of \eqref{mildsolh11} can be controlled by $\mathcal{S}(\tilde{\epsilon},\lambda,R)\bar{M}^3+C_{\tilde{\epsilon},\lambda,R}[\mathcal{E}(F_0)^{\frac{1}{2}}+\mathcal{E}(F_0)]$, where $\mathcal{S}(\tilde{\epsilon},\lambda,R)$ can be small enough. Thus, thanks to the smallness for $\mathcal{E}(F_0)$, we can derive the inequality \eqref{zz11}. From this process, we obtain the exponential time-decay to the equation $\partial_t f + v\cdot \nabla_x f-\nabla_x\Phi \cdot \nabla_v f +R(f)f=0$. Next, we will obtain the $L^\infty$ estimate to the full perturbed Boltzmann equation in the large amplitude problem. Here, we will use the following mild formulation:
\begin{equation} \label{mildsolh11}
	\begin{aligned}
	h(t,x,v) &= S_{G_{f}}(t)h_0(x,v)+\int_0^t S_{G_f}(t-s)(e^{-\Phi}K_wh(s))(x,v)ds\\
	& \quad + \int_0^t S_{G_f}(t-s)\left(e^{-\frac{\Phi}{2}}w\Gamma_+\left(\frac{h}{w}, \frac{h}{w}\right)(s)\right)(x,v)ds,
\end{aligned}
	\end{equation}
where $S_{G_f}(t)$ is the semigroup of a solution to the equation
\begin{align*}
	\partial_t h +v\cdot \nabla_x h -\nabla_x \Phi(x) \cdot \nabla_v h + R(f)h=0.
\end{align*}
Through a similar approach to the proof of Theorem \ref{LT} and Lemma \ref{Rfest}, and thanks to the exponential time-decay of the solution operator $S_{G_f}(t)$, we can derive the $L^\infty$ estimate to the full perturbed Boltzmann equation (Theorem \ref{Linfty2}). Under the smallness assumption on $\mathcal{E}(F_0)$, this gives the following Gr\"{o}nwall type:
\begin{align*}
	\|h(t)\|_{L^\infty_{x,v}} \le C_{M_0} \left(1+\int_0^t \|h(s)\|_{L^\infty_{x,v}}ds\right)\exp{-e^{-\|\Phi\|_\infty} \frac{\nu_0}{8}t}+E \quad \text{for all } 0\le t \le T,
\end{align*} 
where $E$ can be small enough. From this inequality, when sufficient time $T_1$ has passed, the amplitude of the solution $h=wf$ becomes smaller than the small amplitude $\delta_0$ in Theorem \ref{mainresult1}. By the local existence theorem (Theorem \ref{LocExist}), the existence of the solution is guaranteed up to time $T_1$. On the other hand, based on the small perturbation problem (Theorem \ref{mainresult2}), we obtain the solution existence and its asymptotic stability after time $T_1$. 

\subsection{Organizaiton of the paper}
The subsequent sections are organized as follows. 
In Section \ref{L2decay}, we present the exponential decay in $L^2_{x,v}$ for solutions to the linearized Boltzmann equation. We will use the a priori assumption to derive the exponential decay. 
In Section \ref{Linftydecay}, we use the $L^2-L^\infty$ bootstrap argument to derive the exponential decay in $L^\infty_{x,v}$ of the solution to the Linearized Boltzmann equation from the result in the previous section. We also close the a priori estimate for the small amplitude problem in the previous section. 
In Section \ref{Nonlineardecay}, we will handle one of the main results. We show the global existence of the solution to the full perturbed Boltzmann equation with a small amplitude initial datum.
In Section \ref{LASEC} as a main part, given a large oscillation initial datum, we solve the global existence of the solution to the Boltzmann equation. To achieve our main aim, we will introduce the estimates to derive a main goal under the a priori assumption and then from a Gr\"onwall type we apply the global-in time existence of a solution given by the small amplitude initial datum to prove our main goal. In Section \ref{Appendix}, we provide an appendix for  the proof of Lemma \ref{coer}.

\section{Linear $L^2$ decay and A priori estimate} 
\label{L2decay}

In this section, we consider the linearized Boltzmann equation of \eqref{PBE}:
\begin{align} \label{LBE}
	\partial_t f + v\cdot \nabla_x f - \nabla_x \Phi(x) \cdot \nabla_vf + e^{-\Phi(x)}Lf = 0,
\end{align}
and our aim is to prove the exponential $L^2$ decay to the linearized Boltzmann equation under the a priori assumption.
We define the $L^2_{v}$ projection $P_L$ of $f$ corresponding to operator $L$ as
\begin{align} \label{LPro}
	P_L(f)(t,x,v) = a(t,x) \mu^{\frac{1}{2}}(v) + b(t,x)\cdot v\mu^{\frac{1}{2}}(v) + c(t,x)\frac{|v|^2-3}{\sqrt{6}}\mu^{\frac{1}{2}}(v),
\end{align}
where
\begin{align*}
	& a(t,x) = \int_{\mathbb{R}^3}f(t,x,v) \mu^{\frac{1}{2}}(v)dv,\\
	& b(t,x) = \int_{\mathbb{R}^3}vf(t,x,v) \mu^{\frac{1}{2}}(v)dv,\\
	& c(t,x) = \int_{\mathbb{R}^3}\frac{|v|^2-3}{\sqrt{6}}f(t,x,v) \mu^{\frac{1}{2}}(v)dv.
\end{align*}
It is well-known the operator $L$ satisfies the $L^2$ coercivity $(Lf,f)_{L_v^2} \ge C_L\|(I-P_L)f\|_{L_v^2}^2$ for all $f$ in $L_v^2$, where $C_L$ is a generic constant.\\
We also define the $L^2_v$ projection $P_\gamma$ of $f$ on the boundary $\gamma$ as
\begin{align} \label{Pgamm}
	P_\gamma f = c_\mu \mu^{\frac{1}{2}}(v)\int_{n(x)\cdot v' >0} f(t,x,v')\mu^{\frac{1}{2}}(v')\{n(x)\cdot v'\}dv'.
\end{align}
\newline
\indent The following lemma states the $L^2_{x,v}$ bound for $P_Lf$ by $(I-P_L)f$ and the effects of the boundary. The lemma gives the key estimate to derive the exponential decay in $L^2_{x,v}$. The proof of this lemma is left in Section \ref{Appendix}.
\begin{lemma} \label{coer}
	Let $f_0(x,v)$ and $g(t,x,v)$ be in $L^2_{x,v}$ such that $f_0$ and $g$ satisfy the mass conservation $\int_{\Omega \cross \mathbb{R}^3} f_0 \mu_E^{\frac{1}{2}}dxdv=0$ and $\int_{\Omega \cross \mathbb{R}^3} g \mu_E^{\frac{1}{2}}dxdv=0$. Suppose that $f(t,x,v) \in L^2_{x,v}$ is a solution to 
	\begin{align} \label{SLBE}
		\partial_t f + v\cdot \nabla_x f - \nabla_x \Phi(x) \cdot \nabla_vf + e^{-\Phi(x)}Lf =g
	\end{align} 
	with initial datum $f_0$, diffuse reflection boundary condition \eqref{PDRBC}, and satisfying the mass conservation \eqref{massconserv}. Assume that $f|_\gamma$ belongs to $L^2_\gamma$. Then there exist a constant $C_\perp >0$, depending on $\Phi$, and a function $G_f(t)$ such that for all $t\ge 0$,
	\begin{align*}
		&(\romannumeral 1)\quad \left|G_f(t)\right| \le C\|f(t)\|^2_{L^2_{x,v}},\\
		&(\romannumeral 2)\quad \int_0^t\left\|P_{L}(f)(s)\right\|^2_{L^2_{x,v}}ds \le G_f(t) - G_f(0) +C_\perp\int_0^t\left[\left\|\left(I-P_{L}\right)(f)(s)\right\|^2_{L^2_{x,v}}+\left\|\left(I-P_\gamma\right)(f)(s)\right\|^2_{L_{\gamma_+}^2} \right] ds\\
		& \qquad \qquad \qquad \qquad \qquad \qquad \quad +C_\perp\int_0^t\left\|g(s)\right\|^2_{L^2_{x,v}}ds + C_\perp\int_0^t \|wf(s)\|_{L^\infty_{x,v}}\|P_L(f)(s)\|_{L^2_{x,v}}^2ds,
	\end{align*} 
	where $P_L$ and $P_\gamma$ are defined in \eqref{LPro} and \eqref{Pgamm}, respectively. Recall the norm $\|\cdot\|_{L^2_{\gamma_+}}$ is defined in subsection \ref{notation}.
\end{lemma}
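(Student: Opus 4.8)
The plan is to use the classical macroscopic estimate (Guo's argument) adapted to the presence of the external potential $\Phi$. The starting point is to test equation \eqref{SLBE} against carefully chosen velocity moments of $f$ multiplied by functions of $x$, so as to extract coercive control of the hydrodynamic coefficients $a(t,x)$, $b(t,x)$, $c(t,x)$ appearing in $P_L f$. Concretely, I would write the local conservation laws obtained by integrating \eqref{SLBE} against $\mu^{1/2}$, $v_i \mu^{1/2}$, and $\frac{|v|^2-3}{\sqrt{6}}\mu^{1/2}$; because these lie in $\mathrm{Ker}(L)$, the operator $L$ drops out and we get transport-type equations for $a,b,c$ with source terms that are spatial derivatives of moments of $(I-P_L)f$ plus contributions of $g$ and of the potential term $\nabla_x\Phi\cdot\nabla_v f$. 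The potential term, after integration by parts in $v$, produces lower-order factors of $\nabla_x\Phi$ times moments of $f$, hence is harmless and contributes to the $\Phi$-dependence of $C_\perp$.

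The heart of the argument is to construct a test function of the form $\psi = \psi_a + \psi_b + \psi_c$, built from solutions of elliptic problems on $\Omega$ with right-hand sides $a$, $b$, $c$ (the standard choice $\Delta\phi_a = a$, etc., together with suitable polynomial-in-$v$ weights $\mu^{1/2}$), so that plugging $\psi$ into the weak formulation of \eqref{SLBE} reproduces $\|a\|_{L^2_x}^2 + \|b\|_{L^2_x}^2 + \|c\|_{L^2_x}^2 = \|P_L f\|_{L^2_{x,v}}^2$ up to controllable errors. Integrating in time over $[0,t]$, the term $\int_0^t (\partial_t f, \psi)\,ds$ integrates by parts in $t$ to yield a boundary-in-time term $G_f(t) - G_f(0)$ with $G_f(t) = (f(t),\psi(t))_{L^2_{x,v}}$; since $\psi$ is bounded linearly in terms of $a,b,c$, which are in turn bounded by $\|f(t)\|_{L^2_{x,v}}$ via elliptic regularity, we get $(\romannumeral 1)$. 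The remaining terms are estimated by Cauchy--Schwarz and Young's inequality: every occurrence of $(I-P_L)f$, of $g$, and of the boundary trace $f|_\gamma$ gets absorbed into the stated right-hand side, the boundary trace being split into its diffuse part $P_\gamma f$ (which, by the normalization of $c_\mu$ and the boundary condition, can be shown to contribute nothing beyond $\|(I-P_\gamma)f\|_{L^2_{\gamma_+}}$ after using mass conservation) and the orthogonal part $(I-P_\gamma)f$.

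The genuinely new term, compared with the potential-free boundary case, is the nonlinear-looking last term $C_\perp\int_0^t \|wf(s)\|_{L^\infty_{x,v}}\|P_L f(s)\|_{L^2_{x,v}}^2\,ds$. This does not arise from $L$ or $\Phi$ but is kept deliberately: in the application $g$ will be the nonlinear term $e^{-\Phi/2}\Gamma(f,f)$ (or part of it), and when one estimates $\int_0^t(g,\psi)\,ds = \int_0^t(e^{-\Phi/2}\Gamma(f,f),\psi)\,ds$, the Gamma estimate (Lemma \ref{Gamest}) bounds $\Gamma(f,f)$ by $\|wf\|_{L^\infty}$ times $\nu^{1/2}$-weighted $L^2$ norms of $f$; isolating the hydrodynamic part of the second $f$ yields precisely a term $\|wf\|_{L^\infty}\|P_L f\|_{L^2}\cdot\|\psi\|$, hence the stated form after Young's inequality. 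I would therefore carry the source term $g$ in \eqref{SLBE} abstractly, record the bound $\int_0^t|(g,\psi)|\,ds \lesssim \int_0^t\|g\|_{L^2_{x,v}}^2\,ds + \text{(small)}\int_0^t\|P_L f\|_{L^2_{x,v}}^2\,ds$, and separately note that when $g$ has the special structure above one may trade part of $\|g\|_{L^2}^2$ for $\|wf\|_{L^\infty}\|P_L f\|_{L^2}^2$.

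The main obstacle I anticipate is the boundary analysis: on a merely $C^1$ domain one must justify the weak formulation with test functions whose normal traces are controlled, handle the grazing set $\gamma_0$ (measure zero, but the trace theorem must be invoked carefully), and show that the diffuse reflection operator $P_\gamma$ interacts with the chosen $\psi$ so that only $\|(I-P_\gamma)f\|_{L^2_{\gamma_+}}$ survives — this uses the fact that constants (the span of $\mu^{1/2}$ on the boundary) are annihilated after integrating the normal velocity against the reflection kernel, together with the global mass conservation \eqref{massconserv} to kill the zero-frequency part of $a$. The potential $\Phi\in C^3(\bar\Omega)$ enters only through bounded multiplicative factors $e^{-\Phi}$, $e^{-\Phi/2}$ and through $\nabla_x\Phi$, all of which are bounded on $\bar\Omega$, so it affects only the size of $C_\perp$ and $C$, not the structure of the estimate; I would track this dependence explicitly since Theorem \ref{mainresult2} needs the $\Phi$-dependence of the constants.
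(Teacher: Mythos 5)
Your overall skeleton does match the paper's proof: test functions $\psi_a,\psi_b,\psi_c$ built from elliptic problems $-\Delta_x\phi=a,b,c$ with polynomial-in-$v$ weights, the time-boundary terms producing $G_f(t)-G_f(0)$, the boundary integral split into $P_\gamma f$ (which vanishes by parity and the choice of $\beta_a,\beta_b,\beta_c$) plus $(I-P_\gamma)f$ controlled through the trace theorem, and mass conservation used to solve the Neumann problem for $a$. However, there is a genuine gap at the one point where the external potential actually matters. You assert that the term $\nabla_x\Phi\cdot\nabla_v\psi$ is ``harmless and contributes to the $\Phi$-dependence of $C_\perp$,'' and that the last term $C_\perp\int_0^t\|wf\|_{L^\infty_{x,v}}\|P_Lf\|_{L^2_{x,v}}^2\,ds$ does not arise from $\Phi$ but is inserted in anticipation of the nonlinear application $g=e^{-\Phi/2}\Gamma(f,f)$. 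Both claims are wrong for this lemma. Splitting $f=P_Lf+(I-P_Lf)$ in $\int(\nabla_x\Phi\cdot\nabla_v\psi)f$, the part against $(I-P_L)f$ is indeed absorbable, but the part against $P_Lf$ is quadratic in the macroscopic quantities (since $\psi$ itself is built from $a,b,c$) and is of size $C_\Phi\int_0^t\|P_Lf\|_{L^2_{x,v}}^2\,ds$ with a constant proportional to $\|\nabla_x\Phi\|_{L^\infty_x}$, which is \emph{not} small; it can neither be absorbed into the left-hand side nor hidden in $C_\perp$ multiplying the dissipative terms. This is exactly why the paper keeps it in the form $C_\Phi\|wf\|_{L^\infty_{x,v}}\|P_Lf\|_{L^2_{x,v}}^2$ (see \eqref{c7}, \eqref{b7}, \eqref{b15}, \eqref{a7}), so that the smallness of $\|wf\|_{L^\infty_{x,v}}$ coming from the a priori assumption \eqref{AAL2} (with $\eta C_\perp<\tfrac12$) allows it to be absorbed later.

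Your alternative mechanism also cannot replace it: in the lemma the source $g$ is an arbitrary $L^2_{x,v}$ function and is estimated only through $\int_0^t\|g\|_{L^2_{x,v}}^2\,ds$, and in the actual application (Theorem \ref{T31}, estimate \eqref{ddd2}) the source is the \emph{linear} term $\tilde\lambda g$, not $\Gamma(f,f)$, so no $\|wf\|_{L^\infty}$ factor would ever be generated from $g$ there. With your accounting, the potential--hydrodynamic interaction term is left uncontrolled (or controlled only with a large, non-absorbable constant), and the coercivity estimate $(\romannumeral 2)$ fails for large $\Phi$. Secondarily, you pass over the $\partial_t\psi$ contribution: since $\phi_a,\phi_b,\phi_c$ depend on $t$ through $a,b,c$, one must estimate $\|\nabla_x\partial_t\phi\|_{L^2_x}$ by testing the equation once more (dual-norm/elliptic arguments as in the appendix); this is standard but needs to be done, as it is where the $g$- and $(I-P_L)f$-terms re-enter.
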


\bigskip

From Section \ref{L2decay} to Section \ref{Linftydecay}, we make the following a priori assumption :
\begin{align} \label{AAL2}
	\sup_{0\le t \le \tilde{T}_0} \left\{ e^{\tilde{\lambda} t}\|wf(t)\|_{L^\infty_{x,v}}\right\} \le \eta,
\end{align}
where $\tilde{T}_0>0$, $\eta>0$, and $\tilde{\lambda} >0$ is choosing later. These constants will be determined in subection \ref{Apeisdp}. We use the a priori assumption to guarantee the smallness of $\|wf(s)\|_{L^\infty_{x,v}}$ over the time interval $[0,\tilde{T}_0]$. Through this assumption, we can deduce the exponential decay in $L^2_{x,v}$ by using Lemma \ref{coer}.
\begin{theorem} \label{T31}
	Let $f_0(x,v)$ be in $L^2_{x,v}$ such that $f_0$ satisfies the mass conservation  $\int_{\Omega\times\mathbb{R}^3} f_0 \mu_{E}^{\frac{1}{2}}dxdv=0$. Suppose that $f(t,x,v) \in L^2_{x,v}$ be a solution to \eqref{LBE} with initial datum $f_0$, diffuse reflection boundary condition \eqref{PDRBC}, and satisfying the mass conservation $\int_{\Omega\times\mathbb{R}^3} f \mu_{E}^{\frac{1}{2}}dxdv=0$. Assume that $f|_\gamma$ belongs to $L^2_\gamma$. Then,  under the a priori assumption \eqref{AAL2}, there exist $C_G,\lambda_G>0$, independent of $f_0$ and $f$, such that for all $0<\lambda < \lambda_G$,
	\begin{align*}
		 \|f(t)\|_{L^2_{x,v}} \le C_Ge^{-\lambda t} \|f_0\|_{L^2_{x,v}}\quad \text{for all } t\ge 0.
	\end{align*}
\end{theorem}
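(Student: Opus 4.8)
The plan is to combine a basic energy dissipation identity for \eqref{LBE} with the macroscopic coercivity estimate of Lemma \ref{coer}, and then to upgrade the resulting time-averaged bound to genuine exponential decay by a monotonicity-and-iteration argument. The two estimates complement each other: testing \eqref{LBE} against $f$ controls the \emph{microscopic} part $(I-P_L)f$ and the boundary non-equilibrium part $(I-P_\gamma)f$ on $\gamma_+$, while Lemma \ref{coer} converts exactly these two quantities (together with a functional $G_f$ bounded by $C\|f\|_{L^2_{x,v}}^2$) into control of the \emph{macroscopic} part $P_L f$.

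First I would test \eqref{LBE} against $f$ and integrate over $\Omega\times\mathbb{R}^3$ (rigorous for mild solutions after the usual regularization). Since the Hamiltonian transport $v\cdot\nabla_x-\nabla_x\Phi\cdot\nabla_v$ is divergence free in $(x,v)$, it contributes only the boundary flux $\tfrac12\int_\gamma|f|^2(n(x)\cdot v)\,dS\,dv$, which by the diffuse reflection condition \eqref{PDRBC} and Cauchy--Schwarz equals $\tfrac12\|(I-P_\gamma)f\|_{L^2_{\gamma_+}}^2\ge0$; together with $\int_\Omega e^{-\Phi(x)}(Lf,f)_{L^2_v}\,dx\ge e^{-\|\Phi\|_\infty}C_L\|(I-P_L)f\|_{L^2_{x,v}}^2$ this yields
\[
\tfrac12\tfrac{d}{dt}\|f(t)\|_{L^2_{x,v}}^2+\tfrac12\|(I-P_\gamma)f(t)\|_{L^2_{\gamma_+}}^2+e^{-\|\Phi\|_\infty}C_L\|(I-P_L)f(t)\|_{L^2_{x,v}}^2\le0 .
\]
Hence $t\mapsto\|f(t)\|_{L^2_{x,v}}^2$ is non-increasing, and integration over $[s,t]$ gives $\int_s^t\big(\|(I-P_L)f\|_{L^2_{x,v}}^2+\|(I-P_\gamma)f\|_{L^2_{\gamma_+}}^2\big)d\tau\le C(\Phi)\|f(s)\|_{L^2_{x,v}}^2$.

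Next I would apply Lemma \ref{coer} with $g\equiv0$ to the time-shifted solution $f(s+\cdot)$, whose hypotheses (mass conservation, $f|_\gamma\in L^2_\gamma$) are inherited. The term $C_\perp\int_s^t\|wf\|_{L^\infty_{x,v}}\|P_Lf\|_{L^2_{x,v}}^2\,d\tau$ there is controlled by the a priori assumption \eqref{AAL2}, which gives $\|wf(\tau)\|_{L^\infty_{x,v}}\le\eta$, so that choosing $\eta$ small enough that $C_\perp\eta\le\tfrac12$ (one of the constraints that will pin down $\eta$) lets this term be absorbed into the left-hand side. Combined with $|G_f|\le C\|f\|_{L^2_{x,v}}^2$, the dissipation bound from the energy identity, and monotonicity $\|f(t)\|_{L^2_{x,v}}\le\|f(s)\|_{L^2_{x,v}}$, this gives $\int_s^t\|P_Lf\|_{L^2_{x,v}}^2\,d\tau\le C(\Phi)\|f(s)\|_{L^2_{x,v}}^2$; adding the microscopic part and letting $t\to\infty$ yields
\[
\int_s^\infty\|f(\tau)\|_{L^2_{x,v}}^2\,d\tau\le C_1\|f(s)\|_{L^2_{x,v}}^2\qquad\text{for every }s\ge0 ,
\]
with $C_1$ depending only on $\Phi$ and generic constants, hence independent of $f_0$ and $f$.

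Finally I would fix $T:=2C_1$; by monotonicity $T\|f(s+T)\|_{L^2_{x,v}}^2\le\int_s^{s+T}\|f\|_{L^2_{x,v}}^2\,d\tau\le C_1\|f(s)\|_{L^2_{x,v}}^2$, so $\|f(s+T)\|_{L^2_{x,v}}^2\le\tfrac12\|f(s)\|_{L^2_{x,v}}^2$ for all $s\ge0$. Iterating from $s=0$ gives $\|f(nT)\|_{L^2_{x,v}}^2\le2^{-n}\|f_0\|_{L^2_{x,v}}^2$, and for $t\in[nT,(n+1)T)$ monotonicity again gives $\|f(t)\|_{L^2_{x,v}}^2\le2^{-n}\|f_0\|_{L^2_{x,v}}^2\le2\,e^{-(\ln2/T)t}\|f_0\|_{L^2_{x,v}}^2$, i.e. the claimed decay with $\lambda_G:=\tfrac{\ln2}{2T}$ and $C_G:=\sqrt2$; any $0<\lambda<\lambda_G$ is then admissible. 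The only genuinely heavy input is Lemma \ref{coer} (established separately in the appendix, using the mass-conservation hypothesis); within this theorem the points to watch are that the diffuse-reflection boundary flux comes out with a non-negative sign, and that every constant entering $C_1$ and $T$ depends on $\Phi$ alone, so the rate is uniform in the solution --- the smallness of $\eta$ in \eqref{AAL2} being dictated precisely by the absorption of the $\|wf\|_{L^\infty_{x,v}}\|P_Lf\|_{L^2_{x,v}}^2$ term.
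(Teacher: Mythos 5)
Your argument is sound in its core mechanism but takes a genuinely different route from the paper. The paper conjugates by the exponential weight, setting $g=e^{\tilde\lambda t}f$, treats $\tilde\lambda g$ as the source term in Lemma \ref{coer}, combines the Green identity with $\delta\times$(Lemma \ref{coer}), absorbs the $\|wg\|_{L^\infty_{x,v}}\|P_Lg\|^2_{L^2_{x,v}}$ term via $\eta C_\perp<\tfrac12$, and then needs the smallness condition \eqref{cc4} on $\tilde\lambda$ so that the $(2\tilde\lambda+\delta C_\perp\tilde\lambda^2)\int\|g\|^2$ term is dominated; this gives a uniform bound on $g$, i.e.\ decay at rate $\tilde\lambda=\lambda_G$ on $[0,\tilde T_0]$, which is then propagated by iterating over intervals of length $\tilde T_0$. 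You instead work with $f$ directly: the energy identity gives monotonicity of $\|f(t)\|_{L^2_{x,v}}$ plus an integrated bound on the microscopic and boundary parts, Lemma \ref{coer} (with $g\equiv 0$, same absorption $C_\perp\eta\le\tfrac12$) upgrades this to $\int_s^t\|f\|^2_{L^2_{x,v}}d\tau\le C_1\|f(s)\|^2_{L^2_{x,v}}$, and the halving argument with $T=2C_1$ converts the time-averaged bound into exponential decay with an explicit rate $\lambda_G=\ln 2/(2T)$. Your route is more elementary — no smallness condition on the decay rate, no choice of $\delta$ balancing two coercivity constants, and the rate comes out of a single fixed constant — while the paper's route produces decay at the specific rate $\tilde\lambda$ appearing in the a priori weight, which is then convenient for the bookkeeping of subsection \ref{Apeisdp}. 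Either yields a valid $\lambda_G$ for Theorem \ref{Fest}.

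The one genuine issue is the time range over which you invoke \eqref{AAL2}. That assumption gives $\|wf(\tau)\|_{L^\infty_{x,v}}\le\eta$ only for $\tau\in[0,\tilde T_0]$, so the absorption step inside Lemma \ref{coer} is available only on that interval; your claims that $\int_s^\infty\|f(\tau)\|^2_{L^2_{x,v}}d\tau\le C_1\|f(s)\|^2_{L^2_{x,v}}$ holds ``for every $s\ge 0$'' and that you may let $t\to\infty$ are therefore not justified as stated. The fix is the same device the paper uses: run your argument on $[0,\tilde T_0]$ only (since $\tilde T_0$ is at one's disposal and may be taken much larger than $T=2C_1$, the halving applies to all $s$ with $s+T\le\tilde T_0$ and yields the stated decay up to time $\tilde T_0$ with constants independent of $\tilde T_0$), and then extend to all $t\ge 0$ by repeating the estimate interval by interval, exactly as in the last lines of the paper's proof (which is ultimately legitimized when the a priori assumption is closed in subsection \ref{Apeisdp}). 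With that restriction-and-iteration step made explicit, your proof is complete.
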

\begin{proof}
	Let $0\le t \le \tilde{T}_0$ and set $g(t,x,v) = e^{\tilde{\lambda} t}f(t,x,v)$, where $\tilde{T}_0$ and $\tilde{\lambda}$ are  constants in the a priori assumption \eqref{AAL2}\\
	Then $g$ satisfies the mass conservation and $g$ is a solution of
	\begin{align*}
		\partial_t g + v\cdot \nabla_x g-\nabla_x \Phi(x) \cdot \nabla_v g +e^{-\Phi(x)} L (g) = \tilde{\lambda} g.
	\end{align*}
	Using the Green identity, we get 
	\begin{align*}
		\frac{1}{2} \frac{d}{dt}\|g(t)\|^2_{L^2_{x,v}}
		& = -\int_{\Omega\cross \mathbb{R}^3}(v\cdot \nabla_x g)gdxdv +\int_{\Omega\cross \mathbb{R}^3}(\nabla_x\Phi(x)\cdot \nabla_v g)gdxdv - \int_{\Omega\cross \mathbb{R}^3} e^{-\Phi(x)}L (g)g dxdv  \\
		&\quad + \int_{\Omega\cross \mathbb{R}^3} \tilde{\lambda} g^2 dxdv \\
		& = -\frac{1}{2}\int_{\gamma} \left(g^2\right)\{n(x) \cdot v\}dS(x)dv- \int_{\Omega}e^{-\Phi(x)} \left(L (g),g\right)_{L_v^2} dx+ \tilde{\lambda} \|g(t)\|^2_{L^2_{x,v}}.
	\end{align*}
	From the coercivity $\left(L (g),g\right)_{L_v^2} \ge C_L \left\|\left(I-P_{L}\right)g\right\|_{L^2_v}^2$, we deduce
	\begin{align*}
		\frac{1}{2} \frac{d}{dt}\|g(t)\|^2_{L^2_{x,v}} \le -\frac{1}{2}\int_{\gamma} \left(g^2\right)\{n(x) \cdot v\}dS(x)dv -C_L e^{-\|\Phi\|_\infty}\left\|\left(I-P_{L}\right)g(t)\right\|_{L^2_{x,v}}^2 + \tilde{\lambda} \|g(t)\|^2_{L^2_{x,v}}.
	\end{align*}
	Here, we can compute
	\begin{align*}
		\int_{\gamma} \left(g^2\right)\{n(x) \cdot v\}dS(x)dv &= \int_{\gamma_+}\left(g^2\right)\{n(x) \cdot v\}dS(x)dv+\int_{\gamma_-}\left(g^2\right)\{n(x) \cdot v\}dS(x)dv\\
		& = \int_{\gamma_+} \left[g^2-\left(P_\gamma g\right)^2\right]\{n(x) \cdot v\}dS(x)dv\\
		&= \left\|(I-P_\gamma)g(t)\right\|_{L^2_{\gamma_+}}^2 -2\int_{\gamma_+}\left(P_\gamma g\right)\left(\left(I-P_\gamma\right)g\right)\{n(x) \cdot v\}dS(x)dv\\
		& = \left\|(I-P_\gamma)g(t)\right\|_{L^2_{\gamma_+}}^2,
	\end{align*}
	where we have used the change of variables $\gamma_{-} \mapsto \gamma_+$ and $P_\gamma $ is defined in \eqref{Pgamm}.\\
	Taking the integral from $0$ to $t$, we deduce
	\begin{equation} \label{ddd1}
	\begin{aligned}
		\|g(t)\|^2_{L^2_{x,v}}-\|g(0)\|^2_{L^2_{x,v}} &\le -\int_0^t\left\|(I-P_\gamma)g(s)\right\|_{L^2_{\gamma_+}}^2ds -2C_Le^{-\|\Phi\|_\infty}\int_0^t \left\|\left(I-P_{L}\right)g(s)\right\|_{L^2_{x,v}}^2 ds\\
		& \quad + 2\tilde{\lambda}\int_0^t \|g(s)\|^2_{L^2_{x,v}}ds.
	\end{aligned}
	\end{equation}
	Applying Lemma \ref{coer} to $g$, we have
	\begin{equation} \label{ddd2}
	\begin{aligned}
		\int_0^t\left\|P_{L}(g)(s)\right\|^2_{L^2_{x,v}}ds &\le C_\perp\|g(t)\|^2_{L^2_{x,v}} + C_\perp\|g(0)\|^2_{L^2_{x,v}}\\
		& \quad +C_\perp\int_0^t\left\{\left\|\left(I-P_{L}\right)g(s)\right\|^2_{L^2_{x,v}}+\left\|(I-P_\gamma)g(s)\right\|_{L^2_{\gamma_+}}^2+\tilde{\lambda}^2\left\|g(s)\right\|^2_{L^2_{x,v}}\right\}ds\\
		&\quad + C_\perp\int_0^t \|wg(s)\|_{L^\infty_{x,v}}\|P_L(g)(s)\|_{L^2_{x,v}}^2ds.
	\end{aligned}
	\end{equation}
	For $\delta >0$, \eqref{ddd1} + $\delta \times$\eqref{ddd2} yields
	\begin{align*}
		&(1-\delta C_\perp) \|g(t)\|^2_{L^2_{x,v}}+ (2C_Le^{-\|\Phi\|_\infty}-\delta C_\perp) \int_0^t \left\|\left(I-P_{L}\right)g(s)\right\|_{L^2_{x,v}}^2 ds+ \delta \int_0^t\left\|P_{L}(g)(s)\right\|^2_{L^2_{x,v}}ds\\ &+ (1-\delta C_\perp)\int_0^t\left\|(I-P_\gamma)g(s)\right\|_{L^2_{\gamma_+}}^2ds\\ 
		&\le (1+\delta C_\perp) \|g(0)\|^2_{L^2_{x,v}} + (2\tilde{\lambda} + \delta C_\perp\tilde{\lambda}^2)\int_0^t \|g(s)\|^2_{L^2_{x,v}}ds + \delta C_\perp \sup_{0\le s \le \tilde{T}_0}\|wg(s)\|_{L^\infty_{x,v}} \int_0^t  \left\|P_{L}(g)(s)\right\|_{L^2_{x,v}}^2ds\\
		&\le (1+\delta C_\perp) \|g(0)\|^2_{L^2_{x,v}} + (2\tilde{\lambda} + \delta C_\perp\tilde{\lambda}^2)\int_0^t \|g(s)\|^2_{L^2_{x,v}}ds + \delta C_\perp \eta \int_0^t \left\|P_{L}(g)(s)\right\|_{L^2_{x,v}}^2ds,
	\end{align*}
	where $\eta>0$ is a constant in the a priori assumption \eqref{AAL2}.\\
	Choosing $\eta >0$ such that
	\begin{align} \label{cc3}
		\eta C_\perp < \frac{1}{2},
	\end{align} 
	 the above inequality becomes
	\begin{align*}
		&(1-\delta C_\perp) \|g(t)\|^2_{L^2_{x,v}}+ (2C_Le^{-\|\Phi\|_\infty}-\delta C_\perp) \int_0^t \left\|\left(I-P_{L}\right)g(s)\right\|_{L^2_{x,v}}^2 ds+ \frac{\delta}{2} \int_0^t\left\|P_{L}(g)(s)\right\|^2_{L^2_{x,v}}ds\\ &+ (1-\delta C_\perp)\int_0^t\left\|(I-P_\gamma)g(s)\right\|_{L^2_{\gamma_+}}^2ds\\ 
		&\le (1+\delta C_\perp) \|g(0)\|^2_{L^2_{x,v}} + (2\tilde{\lambda} + \delta C_\perp\tilde{\lambda}^2)\int_0^t \|g(s)\|^2_{L^2_{x,v}}ds.
	\end{align*}
	Note that $\left\|\left(I-P_{L}\right)g(s)\right\|_{L^2_{x,v}}^2 + \left\|P_{L}(g)(s)\right\|^2_{L^2_{x,v}} = \|g(s)\|^2_{L^2_{x,v}} $.\\ 
	Firstly, choosing sufficiently small $\delta >0$ such that $2C_Le^{-\|\Phi\|_\infty}-\delta C_\perp >\frac{\delta}{2}$, we get
	\begin{align*}
		&(1-\delta C_\perp) \|g(t)\|^2_{L^2_{x,v}}+C\delta \int_0^t\left\|g(s)\right\|^2_{L^2_{x,v}}ds + (1-\delta C_\perp)\int_0^t\left\|(I-P_\gamma)g(s)\right\|_{L^2_{\gamma_+}}^2ds \\
		&\le (1+\delta C_\perp) \|g(0)\|^2_{L^2_{x,v}} + (2\tilde{\lambda} + \delta C_\perp\tilde{\lambda}^2)\int_0^t \|g(s)\|^2_{L^2_{x,v}}ds.
	\end{align*}
	Next, choosing small enough $\tilde{\lambda}>0$ such that 
	\begin{align} \label{cc4}
		\frac{\delta}{2} > 2\tilde{\lambda} + \delta C_\perp\tilde{\lambda}^2,
	\end{align}
	we obtain
	\begin{align*}
		(1-\delta C_\perp) \|g(t)\|^2_{L^2_{x,v}} \le (1+\delta C_\perp) \|g(0)\|^2_{L^2_{x,v}}.
	\end{align*}
	Hence we conclude that
	\begin{align*}
		\|f(t)\|_{L^2_{x,v}} \le Ce^{-\tilde{\lambda} t} \|f_0\|_{L^2_{x,v}} \ \text{for all } 0\le t \le \tilde{T}_0.
	\end{align*}
	Put $\lambda_G := \tilde{\lambda}$. Choosing sufficiently large $\tilde{T}_0>0$, for all $0<\lambda < \lambda_G$,
	\begin{align} \label{cc5}
		\|f(\tilde{T}_0)\|_{L^2_{x,v}} \le Ce^{-\lambda_G \tilde{T}_0} \|f_0\|_{L^2_{x,v}} \le e^{-\lambda \tilde{T}_0}\|f_0\|_{L^2_{x,v}}
	\end{align}
	and applying repeatedly the process \eqref{cc5}, we get
	\begin{align*}
		\|f(l\tilde{T}_0)\|_{L^2_{x,v}} \le e^{-\lambda \tilde{T}_0} \|f((l-1)\tilde{T}_0)\|_{L^2_{x,v}} \le e^{-l\lambda \tilde{T}_0} \|f_0\|_{L^2_{x,v}}.
	\end{align*}
	Thus for $l\tilde{T}_0 \le t \le (l+1)\tilde{T}_0$ with $l \ge 1$, we obtain
	\begin{align*}
		\|f(t)\|_{L^2_{x,v}} \le C_{\tilde{T}_0} e^{-l\lambda \tilde{T}_0}\|f_0\|_{L^2_{x,v}} \le C_{\tilde{T}_0} e^{-\lambda t}e^{\lambda \tilde{T}_0} \|f_0\|_{L^2_{x,v}} \le C_{\tilde{T}_0} e^{-\lambda t}\|f_0\|_{L^2_{x,v}}
	\end{align*}
	since $0\le t-l\tilde{T}_0 \le \tilde{T}_0$. We complete the proof of this theorem.
\end{proof}

\bigskip

\section{Linear $L^\infty$ estimate}
\label{Linftydecay} 
Setting $h(t,x,v)=w(x,v)f(t,x,v)$, it follows from \eqref{PBE} and \eqref{PDRBC} that
\begin{align} \label{WPBE}
	\partial_t h + v\cdot \nabla_x h - \nabla_x \Phi(x) \cdot \nabla_v h + e^{-\Phi(x)}\nu(v) h- e^{-\Phi(x)}K_w h = e^{-\frac{\Phi(x)}{2}}w\Gamma\left(\frac{h}{w},\frac{h}{w}\right)
\end{align}
with the diffuse reflection boundary condition
\begin{align} \label{WWPBEBC}
	h(t,x,v)|_{\gamma_-} = w(x,v) \mu^{\frac{1}{2}}(v) \int_{n(x) \cdot v' >0} h(t,x,v') \frac{1}{w(x,v')\mu^{\frac{1}{2}}(v')}d\sigma(x).
\end{align}
Here, the probability measure $d\sigma = d\sigma(x)$ is given by
\begin{align*}
	d\sigma(x) = c_\mu \mu(v')\{n(x) \cdot v'\}dv'.
\end{align*}
\indent We denote the iterated integral by
\begin{align} \label{iterint}
	\int_{\prod_{l=1}^{k-1}\mathcal{V}_l}\prod_{l=1}^{k-1}d\sigma_l :=\int_{\mathcal{V}_1} \cdots \left\{\int_{\mathcal{V}_{k-1}}d\sigma_{k-1} \right\}\cdots d\sigma_1,
\end{align}
where $\mathcal{V}_{j}= \{v\in \mathbb{R}^3 : n(x_j) \cdot v>0\}$ is defined in subsection \ref{SDBC} and $d\sigma_j = c_\mu \mu(v_j) \{n(x_j) \cdot v_j\}dv_j$ for $v_j \in \mathcal{V}_j$.\\
\newline
\indent Motivated by \cite{GDCB2010}, the next lemma states the phase space $\prod_{l=1}^{k-1}\mathcal{V}_l$ not reaching $t=0$ is sufficiently small when $k$ is large enough.  
\begin{lemma} \label{Lsmall}
	For any $\epsilon > 0$, there exists $k_0(\epsilon,T_0)>0$ such that for $k \ge k_0$, for all $(t,x,v)$, $0 \le t \le T_0$, $x \in \bar{\Omega}$ and $v \in \mathbb{R}^3$,
	\begin{align} \label{Lsmall1}
		\int_{\prod_{l=1}^{k-1}\mathcal{V}_l} \mathbf{1}_{\{t_k(t,x,v,v_1,v_2,...,v_{k-1})>0\}} \prod_{l=1}^{k-1}d\sigma_l \le \epsilon.
	\end{align}
Furthermore, for $T_0$ sufficiently large, there exist constant $C_1,C_2>0$, independent of $T_0$, such that for $k=C_1T_0^{\frac{5}{4}}$,
\begin{align} \label{Lsmall2}
	\int_{\prod_{l=1}^{k-1}\mathcal{V}_l} \mathbf{1}_{\{t_k(t,x,v,v_1,v_2,...,v_{k-1})>0\}} \prod_{l=1}^{k-1}d\sigma_l \le \left\{\frac{1}{2}\right\}^{C_2T_0^{\frac{5}{4}}}.
\end{align}
\end{lemma}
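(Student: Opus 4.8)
The plan is to adapt the Guo-type estimate from \cite{GDCB2010} to the Hamiltonian characteristics generated by \eqref{Hacon}. The starting point is the geometric observation that the characteristic speed stays comparable to $|v|$: by \eqref{sese}, $|V(s)| \le |v| + \sqrt{2\|\Phi\|_\infty}$ along the whole trajectory. I would first reduce to controlling trajectories with moderate speed. Fix a large constant $N$ (to be chosen depending on $T_0$) and split each $\mathcal V_l$ into $\{|v_l|\le N\}$ and $\{|v_l|>N\}$. On the high-velocity part, the Gaussian measure $d\sigma_l = c_\mu \mu(v_l)\{n(x_l)\cdot v_l\}\,dv_l$ contributes a factor $\int_{|v_l|>N}d\sigma_l \le C e^{-N^2/4}$, which is already exponentially small; by a union bound over $l=1,\dots,k-1$ this handles every cycle in which at least one bounce velocity is large, at cost $Ck e^{-N^2/4}$.

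The heart of the argument is therefore the region where all $|v_l|\le N$. Here I would show that each ``leg'' of the back-time cycle — the time $t_l - t_{l+1} = t_{\mathbf b}(x_l,v_l)$ the particle spends between consecutive bounces — consumes a definite amount of time, say at least $\delta = \delta(N)>0$, \emph{unless} $(x_l,v_l)$ lies in a grazing region of small measure. Concretely, since $\Omega$ is bounded with $C^1$ boundary and $\nabla_x\Phi$ is bounded on $\bar\Omega$ (because $\Phi\in C^3(\bar\Omega)$), the trajectory starting at $x_l\in\partial\Omega$ with velocity $v_l$, $|v_l|\le N$, and $n(x_l)\cdot v_l>0$ small cannot immediately leave $\Omega$ again too quickly; more precisely, for $v_l$ with $n(x_l)\cdot v_l \ge \delta^{1/2}$ one gets $t_{\mathbf b}(x_l,v_l)\gtrsim \delta^{1/2}/N$ uniformly, using that curvature effects from $\nabla_x\Phi$ over a time of order $\delta^{1/2}/N$ are lower-order. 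The complementary grazing set $\{v_l : n(x_l)\cdot v_l < \delta^{1/2}\}$ has $d\sigma_l$-measure $O(\delta^{1/2})$. Splitting each factor this way, a cycle with $t_k>0$ must have \emph{all} $k-1$ legs short, i.e.\ each $v_l$ grazing (measure $O(\delta^{1/2})$) — because $(k-1)\delta > T_0 \ge t$ would otherwise force $t_k \le 0$ once enough legs are non-grazing. A careful bookkeeping (as in \cite[Lemma~23]{GDCB2010}) shows that among any $k$ consecutive legs, the number of non-grazing ones is bounded by $T_0/\delta$, so the integral in \eqref{Lsmall1} is at most a sum of products in which at least $k-1 - T_0/\delta$ of the factors carry a $C\delta^{1/2}$ bound; choosing $\delta$ small and then $k$ large makes this $\le \epsilon$, giving \eqref{Lsmall1} with $k_0 = k_0(\epsilon,T_0)$.

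For the quantitative version \eqref{Lsmall2}, I would optimize the above counting. Taking $N \sim T_0^{1/4}$ (so $e^{-N^2/4}\sim e^{-T_0^{1/2}/4}$ is super-exponentially small in $T_0^{1/4}$) and $\delta \sim 1/N \sim T_0^{-1/4}$, the number of non-grazing legs in a cycle reaching $t=0$ is at most $\sim T_0/\delta \sim T_0^{5/4}$, so for $k = C_1 T_0^{5/4}$ with $C_1$ large, at least half the legs, i.e.\ $\gtrsim C_1 T_0^{5/4}$ of them, are grazing, each contributing a factor $C\delta^{1/2} \le 1/2$ once $T_0$ is large enough (absorbing the constant $C$ into the exponent). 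Summing the finitely many combinatorial choices of which legs are non-grazing contributes only a $2^{k}$-type factor that is dominated after adjusting constants, yielding the bound $\{1/2\}^{C_2 T_0^{5/4}}$ with $C_2$ independent of $T_0$. The main obstacle I anticipate is the uniform lower bound $t_{\mathbf b}(x_l,v_l)\gtrsim \delta^{1/2}/N$ on non-grazing legs: with a merely $C^1$ boundary one cannot use curvature as in the convex case, so I would argue instead via a compactness/covering argument on $\partial\Omega$ (flattening the boundary locally and using uniform continuity of $n(\cdot)$ together with the bound $|V(s)-v_l|\le s\|\nabla\Phi\|_\infty$) to get the lower bound on each coordinate patch, then patch finitely many together — this is exactly the place where boundedness of $\bar\Omega$ enters, as emphasized after Lemma~\ref{cpl}.
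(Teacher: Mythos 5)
Your overall strategy is the same as the paper's (and as Guo's original argument): at each bounce split $\mathcal{V}_l$ into a ``bad'' set of small $d\sigma_l$-measure (grazing and/or large velocities) and a non-grazing set on which the leg duration $t_l-t_{l+1}=t_{\mathbf b}(x_l,v_l)$ has a definite lower bound, then count combinatorially how many legs can be non-grazing before the total time exceeds $T_0$. For \eqref{Lsmall1} this is fine: since $\epsilon$ is arbitrary and $k_0$ may depend on $(\epsilon,T_0)$, the binomial sum with finitely many non-grazing slots and a geometric factor per grazing slot goes to zero as $k\to\infty$, exactly as in the paper. Your worry about the $C^1$ boundary is also the right place to worry: the paper does not invoke curvature but the quantified flatness inequality $|x_l-x_{l+1}|^2\ge C_\xi\,|(x_l-x_{l+1})\cdot n(x_l)|$, combined with \eqref{Xtrajcycle}--\eqref{Vtrajcycle} and the boundedness of $\nabla_x\Phi$, to conclude that a non-grazing leg (normal component $\ge\delta$, speed $\le 1/\delta$) lasts at least $\delta^3/C_\xi$.

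The genuine gap is in the quantitative part \eqref{Lsmall2}, where your parameter choices do not deliver the stated bound. First, the leg-duration estimate scales like (normal component)/(speed)$^2$, not (normal component)/(speed): from $\delta^{1/2}\le C|t_l-t_{l+1}|\{N^2+O(1)\}$ you only get $t_l-t_{l+1}\gtrsim\delta^{1/2}/N^2$. With your choices $N\sim T_0^{1/4}$, $\delta\sim T_0^{-1/4}$ this allows up to $\sim T_0N^2/\delta^{1/2}\sim T_0^{13/8}$ non-grazing legs, which exceeds $C_1T_0^{5/4}$, so the key claim that ``at least half the legs are grazing'' when $k=C_1T_0^{5/4}$ does not follow. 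Second, treating the high-velocity event additively by a union bound costs $\sim k\,e^{-N^2/4}\sim T_0^{5/4}e^{-T_0^{1/2}/4}$, which for large $T_0$ is vastly larger than the target $(1/2)^{C_2T_0^{5/4}}$ (recall $C_1,C_2$ must be independent of $T_0$); enlarging $N$ to cure this further degrades the leg-duration bound. The paper avoids both problems by using a single parameter: the bad set at each bounce is $\{n(x_l)\cdot v_l<\delta\}\cup\{|v_l|>1/\delta\}$, of measure $C\delta$, so large velocities enter multiplicatively per leg rather than additively, and the choice $\delta\lesssim T_0^{-1/12}$ (together with $\delta\le 1/(T_0(1+\|\nabla_x\Phi\|_{L^\infty_x}))$ to control the force term) gives legs of length $\gtrsim\delta^3\sim T_0^{-1/4}$, hence at most $\sim T_0^{5/4}$ non-grazing legs, and then $k\sim T_0^{5/4}$ with the binomial count produces exactly the exponent $T_0^{5/4}$ in \eqref{Lsmall2}. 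Your scheme can be repaired along these lines, but as written the proof of \eqref{Lsmall2} does not close.
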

\begin{proof}
	Take $0<\delta<1$ sufficiently small. We define non-grazing sets for $1\le l \le k-1$ as
	\begin{align*}
		\mathcal{V}_l^\delta = \left\{v_l \in \mathcal{V}_l: n(x_l) \cdot v_l \ge \delta\right\} \cap \left\{v_l \in \mathcal{V}_l : |v_l| \le \frac{1}{\delta} \right\}.
	\end{align*}
	We can easily compute that
	\begin{align*}
		\int_{\mathcal{V}_l \backslash\mathcal{V}_l^\delta} d\sigma_l \le \int_{v_l \cdot n(x_l) \le \delta} d\sigma_l + \int_{|v_l| \ge \frac{1}{\delta}} d\sigma_l \le C\delta,
	\end{align*}
	where C is a constant independent of $l$.\\
	Now, we claim that
	\begin{align*}
		|t_l-t_{l+1}| \ge \frac{\delta^3}{C_\xi} \quad \text{if} \quad v_l \in \mathcal{V}_l^\delta \quad \text{and} \quad 0\le t \le T_0. 
	\end{align*}
	From the fact that
	\begin{align*}
		\lim_{y \rightarrow x_1, y\in \partial \Omega} \frac{|\{x_1-y\}\cdot n(x_1)|}{|x_1-y|} = 0 \quad \text{for} \quad x_1 \in \partial\Omega,
	\end{align*}
	we derive that
	\begin{align*}
		\left|\int_{t_{l+1}}^{t_l}V(s;t_l,x_l,v_l)ds\right|^2 &= |x_l-x_{l+1}|^2\\
		& \ge C_{\xi}|(x_l-x_{l+1})\cdot n(x_l)|\\
		& = C_{\xi}\left|\int_{t_{l+1}}^{t_l}V(s:t_l,x_l,v_l) \cdot n(x_l) ds \right|\\
		& = C_{\xi}\left|\int_{t_{l+1}}^{t_l}\left(v_l -\int_{t_l}^s \nabla_x \Phi(X(\tau; t_l,x_l,v_l))d\tau\right)\cdot n(x_l)ds\right|\\
		& \ge C_{\xi} \left( |v_l \cdot n(x_l)| |t_l-t_{l+1}| - \left|\int_{t_{l+1}} ^{t_l}\int_{t_l}^s \nabla_x \Phi (X(\tau;t_l,x_l,v_l))\cdot n(x_l) d\tau ds \right|\right).
	\end{align*}
	This implies that
	\begin{align*}
		|v_l \cdot n(x_l)| &\le \frac{C_\xi}{|t_l-t_{l+1}|}\left|\int_{t_{l+1}}^{t_l}V(s;t_l,x_l,v_l)ds\right|^2\\
		& \quad + \frac{C_\xi}{|t_l-t_{l+1}|} \left|\int_{t_{l+1}}^{t_l}\int_{t_l}^s \nabla_x \Phi (X(\tau;t_l,x_l,v_l))\cdot n(x_l) d\tau ds \right|.
	\end{align*}
	Here, we have
	\begin{align*}
		\left|\int_{t_{l+1}}^{t_l}V(s;t_l,x_l,v_l)ds\right|^2 &= \left|v_l(t_l-t_{l+1})-\int_{t_{l+1}}^{t_l}\int_{t_l}^s \nabla_x \Phi (X(\tau;t_l,x_l,v_l))d\tau ds \right|^2\\
		&\lesssim |v_l|^2|t_l-t_{l+1}|^2 + |t_l-t_{l+1}|^4\|\nabla_x \Phi\|_{L^{\infty}_x}^2
	\end{align*}
	and
	\begin{align*}
		\left|\int_{t_{l+1}} ^{t_l}\int_{t_l}^s \nabla_x \Phi (X(\tau;t_l,x_l,v_l))\cdot n(x_l) d\tau ds \right| \le |t_l-t_{l+1}|^2\|\nabla_x \Phi\|_{L_x^\infty}.
	\end{align*}
	By the above computation, for $v_l \in \mathcal{V}_l^\delta$, we obtain that
	\begin{align*}
		|v_l \cdot n(x_l)| & \le C_\xi |t_l-t_{l+1}|\left\{|v_l|^2 + |t_l- t_{l+1}|^2\|\nabla_x \Phi\|^2_{L^\infty_x} +\|\nabla_x \Phi\|_{L^\infty_x} \right\}\\
		& \le C_\xi |t_l-t_{l+1}| \left\{\frac{1}{\delta^2}+T_0^2\|\nabla_x \Phi\|^2_{L^\infty_x}+\|\nabla_x \Phi\|_{L^\infty_x} \right\}.
	\end{align*}
	Choosing $\delta>0$ so that $\delta \le  \frac{1}{T_0\left(1+\|\nabla_x\Phi\|_{L^\infty_x}\right)}$, we get
	\begin{align*}
		|t_l-t_{l+1}| \ge \frac{\delta^3}{C_\xi} \quad \text{if} \quad v_l \in \mathcal{V}_l^\delta \quad \text{and} \quad 0\le t \le T_0. 
	\end{align*}
	Thus, if $t_k(t,x,v,v_1, v_2, ..., v_{k-1})>0,$ then there are at most $\left[\frac{C_\xi T_0}{\delta^3}\right]+1$ number of $v_l \in \mathcal{V}_l^{\delta}$ for $1\le l \le k-1$, where $[x]$ is the largest integer less than or equal to $x$. Therefore, we have
	\begin{align}
		& \int_{\prod_{l=1}^{k-1}\mathcal{V}_l} \mathbf{1}_{\{t_k(t,x,v,v_1,v_2,...,v_{k-1})>0\}} \prod_{l=1}^{k-1}d\sigma_l \nonumber \\
		& \le \sum^{\left[\frac{C_\xi T_0}{\delta^3}\right]+1}_{j=1} \int_{\{\text{There are exactly} \ j \ of  \ v_{l_i} \in \mathcal{V}_{l_i}^\delta \ and \ k-1-j \ of \ v_{l_i} \notin \mathcal{V}_{l_i}^\delta\}} \prod_{l=1}^{k-1}d\sigma_l \nonumber\\
		& \le \sum^{\left[\frac{C_\xi T_0}{\delta^3}\right]+1}_{j=1} \binom{k-1}{j} \left|\sup_l \int_{\mathcal{V}_l^\delta} d\sigma_l\right|^{j} \left|\sup_l \int_{\mathcal{V}_l \backslash \mathcal{V}_l^\delta} d\sigma_l\right|^{k-1-j}\nonumber\\ \label{LSM1}
		& \le \left(\left[\frac{C_\xi T_0}{\delta^3}\right]+1\right)(k-1)^{\left[\frac{C_\xi T_0}{\delta^3}\right]+1} (C\delta)^{k-2-\left[\frac{C_\xi T_0}{\delta^3}\right]}.
	\end{align}
	For $\epsilon >0$, taking $k \gg \left[\frac{C_\xi T_0}{\delta^3}\right] +1$ and $C\delta <1$, it follows that
	\begin{align*}
		\int_{\prod_{l=1}^{k-1}\mathcal{V}_l} \mathbf{1}_{\{t_k(t,x,v,v_1,v_2,...,v_{k-1})>0\}} \prod_{l=1}^{k-1}d\sigma_l \le \epsilon.
	\end{align*}
	For \eqref{Lsmall2}, we take $k-2 = 15\left\{\left[\frac{C_\xi T_0}{\delta^3}\right]+1\right\}$. Then \eqref{LSM1} becomes
	\begin{align*}
		\left\{15\left(\frac{C_\xi T_0}{\delta^3} +1 \right) (C\delta)^{15}\right\}^{\left[\frac{C_\xi T_0}{\delta^3}\right]+1} \le \left\{\frac{30 C_\xi T_0}{\delta^3} (C\delta)^{15}\right\}^{\left[\frac{C_\xi T_0}{\delta^3}\right]+1}\le \left\{\tilde{C}_\xi T_0 \delta^{12}\right\}^{\left[\frac{C_\xi T_0}{\delta^3}\right]+1}.
	\end{align*}
	Choosing $\delta \le \min\left\{\left(\frac{1}{2T_0\tilde{C}_\xi}\right)^{\frac{1}{12}},\frac{1}{T_0\left(1+\|\nabla_x\Phi\|_{L^\infty_x}\right)}\right\}$, we obtain
	\begin{align*}
		\left[\frac{\tilde{C}_\xi T_0}{\delta^3}\right]+1 \sim C_\xi T_0^{\frac{5}{4}} \quad \text{and} \quad k \sim CT_0^{\frac{5}{4}}
	\end{align*}
	for sufficiently large $T_0$.
	Therefore, we conclude that
	\begin{align*}
		\int_{\prod_{l=1}^{k-1}\mathcal{V}_l} \mathbf{1}_{\{t_k(t,x,v,v_1,v_2,...,v_{k-1})>0\}} \prod_{l=1}^{k-1}d\sigma_l \le \left\{\frac{1}{2}\right\}^{C_2T_0^{\frac{5}{4}}}.
	\end{align*}
\end{proof}

\subsection{Exponential decay for Damped Transport equation}

In this subsecton, we consider the following equation:
\begin{align} \label{eeBE} 
	\partial_t f +v\cdot \nabla_x f -\nabla_x \Phi(x) \cdot\nabla_v f +e^{-\Phi(x)}\nu(v)f = 0. 
\end{align}
with the boundary condition
\begin{align} \label{eeBEBC}
	f(t,x,v)|_{\gamma_{-}} = c_\mu \mu^{\frac{1}{2}}(v)\int_{v'\cdot n(x)>0}f(t,x,v')\mu^{\frac{1}{2}}(v')\{n(x)\cdot v'\}dv',
\end{align}
where
\begin{align*}
	c_\mu\int_{v'\cdot n(x)>0}\mu(v')\{n(x) \cdot v'\}dv'=1.
\end{align*}
Set $h(t,x,v) = w(x,v)f(t,x,v)$. Then we derive the following equation:
\begin{align}\label{eBE}
	\partial_t h +v\cdot \nabla_x h -\nabla_x \Phi(x) \cdot\nabla_v h +e^{-\Phi(x)}\nu(v)h = 0. 
\end{align}
with diffuse reflection boundary condition
\begin{align}\label{eBEBC}
	h(t,x,v)|_{\gamma_{-}} = \frac{1}{\tilde{w}(x,v)} \int_{v'\cdot n(x)>0} h(t,x,v')\tilde{w}(x,v')d\sigma(x),
\end{align}
where
\begin{align} \label{tildeweight}
	\tilde{w}(x,v) = \frac{1}{w(x,v)\mu_E^{1/2}(x,v)}.
\end{align}
We now present some lemmas related to the equation \eqref{eBE}. These lemmas will be frequently used in proving our main steps.
\newline
\indent The following lemma gives the representation of a solution to \eqref{eBE} with the diffuse reflection boundary condition along the back-time cycle. Recall the definition \ref{Backwardexit}, especially \eqref{timecycle}, \eqref{Xtrajcycle}, and \eqref{Vtrajcycle}, as well as the definition of the iterated integral \eqref{iterint}.
\begin{lemma} \label{L41}
	Assume that $h,\frac{q}{\nu} \in L_{x,v}^\infty$ satisfy $\{\partial_t+v\cdot \nabla_x -\nabla_x \Phi(x) \cdot \nabla_v+ e^{-\Phi(x)}\nu\}h=q$, with the diffuse reflection boundary condition \eqref{eBEBC}. Then for any $0 \le s \le t$, for almost every $x,v$, if $t_1(t,x,v) \le s$,
	\begin{align*}
		h(t,x,v) & = \exp{-\int_s^t e^{-\Phi(X(\tau))}\nu(V(\tau))d\tau}h(s,X(s),V(s)) \\
		& \quad +\int_s^t \exp{-\int_\tau^t e^{-\Phi(X(\tau'))}\nu(V(\tau')) d\tau'} q(\tau,X(\tau),V(\tau))d\tau.
	\end{align*}
	If $t_1(t,x,v)>s$, then for $k \ge 2$,
	\begin{align*}
		h(t,x,v) &= \int_{t_1}^t \exp{-\int_\tau^t e^{-\Phi(X(\tau'))}\nu(V(\tau'))d\tau'}q(\tau,X(\tau),V(\tau))d\tau \\
		&\quad + \frac{\exp{-\int_{t_1}^t e^{-\Phi(X(\tau))}\nu(V(\tau))d\tau}}{\tilde{w}(x_1,V(t_1))}\sum_{l=1}^{k-1}\int_{\prod_{j=1}^{k-1}\mathcal{V}_j}\mathbf{1}_{\{t_{l+1}\le s < t_l\}}h(s,X_l(s),V_l(s))d\Sigma_l(s)\\
		&\quad +\frac{\exp{-\int_{t_1}^t e^{-\Phi(X(\tau))}\nu(V(\tau))d\tau}}{\tilde{w}(x_1,V(t_1))}\sum_{l=1}^{k-1} \int_s^{t_l}\int_{\prod_{j=1}^{k-1}\mathcal{V}_j}\mathbf{1}_{\{t_{l+1}\le s < t_l\}} q(\tau,X_l(\tau),V_l(\tau))d\Sigma_l(\tau)d\tau\\
		&\quad +\frac{\exp{-\int_{t_1}^t e^{-\Phi(X(\tau))}\nu(V(\tau))d\tau}}{\tilde{w}(x_1,V(t_1))}\sum_{l=1}^{k-1}\int_{t_{l+1}}^{t_l}\int_{\prod_{j=1}^{k-1}\mathcal{V}_j}\mathbf{1}_{\{t_{l+1}>s\}}q(\tau,X_l(\tau),V_l(\tau))d\Sigma_l(\tau)d\tau\\
		&\quad +\frac{\exp{-\int_{t_1}^t e^{-\Phi(X(\tau))}\nu(V(\tau))d\tau}}{\tilde{w}(x_1,V(t_1))}\int_{\prod_{j=1}^{k-1}\mathcal{V}_j}\mathbf{1}_{\{t_{k}>s\}} h(t_k,x_k, V_{k-1}(t_k))d\Sigma_{k-1}(t_k),
	\end{align*}
	where
	\begin{align*}
		d\Sigma_l(s) &= \left\{\prod_{j=l+1}^{k-1}d\sigma_j\right\}\left\{ \exp{-\int_s^{t_l} e^{-\Phi(X_l(\tau))}\nu(V_l(\tau))d\tau} \tilde{w}(x_l,v_l)d\sigma_l\right\}\\
		&\quad \cross \prod_{j=1}^{l-1}\left\{\exp{-\int_{t_{j+1}}^{t_j} e^{-\Phi(X_j(\tau))}\nu(V_j(\tau))d\tau}d\sigma_j\right\}.
	\end{align*}
	and the weight function $\tilde{w}$ is defined in \eqref{tildeweight}.
\end{lemma}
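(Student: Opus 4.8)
The plan is to derive the representation by iterating the mild (Duhamel) formula for the damped transport equation along the back-time cycle defined in Section~\ref{SDBC}. First I would establish the ``one-step'' formula: for $t_1(t,x,v)\le s$ the characteristic $[X(\tau),V(\tau)]$ through $(t,x,v)$ stays in $\Omega$ for $\tau\in[s,t]$, so multiplying \eqref{eBE}-type equation $\{\partial_t+v\cdot\nabla_x-\nabla_x\Phi\cdot\nabla_v+e^{-\Phi}\nu\}h=q$ by the integrating factor $\exp\{\int_\tau^t e^{-\Phi(X(\tau'))}\nu(V(\tau'))d\tau'\}$ and integrating the total derivative $\frac{d}{d\tau}$ along the flow from $s$ to $t$ gives exactly the first displayed identity. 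This is the base case; the only thing to check is that the chain rule applies along characteristics, which is standard since $\Phi\in C^3(\bar\Omega)$ guarantees the Hamiltonian flow \eqref{Hacon} is well-defined and $C^1$ in the interior.

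Next, when $t_1>s$, I would apply the one-step formula on the interval $[t_1,t]$, producing the $q$-integral over $[t_1,t]$ plus the boundary term $e^{-\int_{t_1}^t e^{-\Phi(X)}\nu(V)d\tau}\,h(t_1,x_1,v_b)$. At $x_1=x_\mathbf{b}(x,v)$ one has $v_b=V(t_1)\in\gamma_-$, so the diffuse reflection boundary condition \eqref{eBEBC} rewrites $h(t_1,x_1,V(t_1))$ as $\tilde w(x_1,V(t_1))^{-1}\int_{\mathcal V_1}h(t_1,x_1,v_1)\tilde w(x_1,v_1)d\sigma_1$. Now each $(t_1,x_1,v_1)$ with $v_1\in\mathcal V_1$ starts a fresh characteristic, and I iterate: on each such branch either $s$ is reached before the next bounce (contributing the $\mathbf 1_{\{t_{l+1}\le s<t_l\}}h(s,X_l(s),V_l(s))$ term and the corresponding $q$-integral over $[s,t_l]$), or a bounce at $t_{l+1}>s$ occurs (contributing the $q$-integral over $[t_{l+1},t_l]$ and feeding into the next application of \eqref{eBEBC}). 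After $k-1$ iterations the remaining un-terminated branches are collected in the last term $\mathbf 1_{\{t_k>s\}}h(t_k,x_k,V_{k-1}(t_k))$. Keeping careful track of the accumulated exponential damping factors and the product of boundary kernels $\tilde w\,d\sigma$ along the cycle is precisely what is packaged into the measure $d\Sigma_l(s)$; I would verify by induction on $l$ that the factor multiplying the $l$-th term is $e^{-\int_{t_1}^t e^{-\Phi(X)}\nu(V)}\tilde w(x_1,V(t_1))^{-1}\,d\Sigma_l$, with the $\prod_{j=l+1}^{k-1}d\sigma_j$ accounting for branches not yet ``used,'' the middle factor carrying the damping from $s$ (or $t_{l+1}$) up to $t_l$ together with the weight $\tilde w(x_l,v_l)$ from the boundary condition at $x_l$, and $\prod_{j=1}^{l-1}$ carrying the damping over the already-traversed legs.

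The main obstacle is bookkeeping rather than any deep estimate: one must be scrupulous about (i) which velocity variables $v_1,\dots,v_{k-1}$ are integrated and over which sets $\mathcal V_j$ (these depend on the earlier $v$'s, as noted after \eqref{Xtrajcycle}), (ii) the placement of the indicator functions $\mathbf 1_{\{t_{l+1}\le s<t_l\}}$ versus $\mathbf 1_{\{t_{l+1}>s\}}$ versus $\mathbf 1_{\{t_k>s\}}$ so that the branches partition correctly, and (iii) the exact intervals of the $\tau$-integrals for the source term $q$ on each leg. A secondary technical point is that the formula holds only for almost every $(x,v)$ and requires the back-time cycle to be well-defined, i.e.\ avoiding the grazing set and the measure-zero set where $t_\mathbf{b}$ is pathological; lower semicontinuity of $t_\mathbf{b}$ (stated after Definition~\ref{Backwardexit}) and the non-grazing reductions as in Lemma~\ref{Lsmall} suffice to justify this. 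I would also note that the hypothesis $q/\nu\in L^\infty_{x,v}$ ensures every $q$-integral converges absolutely, since each exponential damping factor is bounded and $\int_0^{t_l}e^{-c\nu_0(t_l-\tau)}\nu(V(\tau))\,d\tau$ is finite.
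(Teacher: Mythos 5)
Your plan is the standard derivation (one-step Duhamel along the Hamiltonian characteristics, then iteration through the diffuse condition \eqref{eBEBC} along the back-time cycle), and this is exactly the route the paper relies on: the lemma is stated without proof as the external-potential analogue of Guo's representation formula, so your outline matches the intended argument in structure.

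One bookkeeping point, however, is essential for the formula to come out in the stated clean form and is not addressed in your write-up. Each time you apply \eqref{eBEBC} at the $j$-th bounce ($j\ge 2$) you pick up a prefactor $\tilde{w}(x_j,V_{j-1}(t_j))^{-1}$ in addition to the factor $\tilde{w}(x_j,v_j)\,d\sigma_j$ inside the integral, so the induction you describe would a priori produce the product $\prod_{j=2}^{l}\tilde{w}(x_{j-1},v_{j-1})/\tilde{w}(x_j,V_{j-1}(t_j))$ rather than the single weight $\tilde{w}(x_l,v_l)$ appearing in $d\Sigma_l$ together with the single prefactor $\tilde{w}(x_1,V(t_1))^{-1}$. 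The reason these ratios disappear is that $\tilde{w}$ defined in \eqref{tildeweight} depends on $(x,v)$ only through the Hamiltonian $\frac{|v|^2}{2}+\Phi(x)$ (both $w$ in \eqref{weightfnt} and $\mu_E^{1/2}$ do), hence is constant along each leg of the flow \eqref{Hacon}; consequently $\tilde{w}(x_j,V_{j-1}(t_j))=\tilde{w}(x_{j-1},v_{j-1})$ and the product telescopes. Equivalently, and more cleanly, you can set $\tilde{h}=\tilde{w}h$, observe that $v\cdot\nabla_x-\nabla_x\Phi\cdot\nabla_v$ annihilates $\tilde{w}$ so $\tilde{h}$ solves the same damped transport equation with source $\tilde{w}q$ and the unweighted diffuse condition $\tilde{h}|_{\gamma_-}=\int_{\mathcal{V}}\tilde{h}\,d\sigma$, apply the classical Guo iteration to $\tilde{h}$, and convert back; all the weights in $d\Sigma_l(s)$ and the prefactor $\tilde{w}(x_1,V(t_1))^{-1}$ then appear automatically from $\tilde{h}(s,X_l(s),V_l(s))=\tilde{w}(x_l,v_l)h(s,X_l(s),V_l(s))$ and $\tilde{w}(x,v)=\tilde{w}(x_1,V(t_1))$. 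With this observation supplied, the rest of your bookkeeping (choice of indicators, intervals of the $\tau$-integrals, insertion of $\int_{\mathcal{V}_j}d\sigma_j=1$ for the unused variables, and the a.e.\ qualification via lower semicontinuity of $t_{\mathbf{b}}$) is correct.
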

\bigskip

\begin{lemma} \label{L42}
	Let $\mathcal{M}$ be an operator on $L^\infty(\gamma^+)\rightarrow L^\infty(\gamma^-)$ such that $\|\mathcal{M}\|_{\mathcal{L}(L^\infty,L^\infty)}=1$. Then for any $\epsilon >0$, there exists $h(t) \in L_{x,v}^\infty$ and $h|_\gamma \in L^\infty_t L^\infty(\gamma)$ solving
	\begin{align*}
		\{\partial_t + v\cdot \nabla_x -\nabla_x \Phi(x) \cdot \nabla_v+  e^{-\Phi(x)}\nu\}h = 0, \quad h|_{\gamma_-}=(1-\epsilon)\mathcal{M}h|_{\gamma_+}, \quad h(0,x,v) = h_0 \in L^\infty_{x,v}.
	\end{align*}
\end{lemma}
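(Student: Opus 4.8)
The plan is to construct $h$ as the $L^{\infty}$-limit of an iteration along backward characteristics, in which the reflection factor $1-\epsilon<1$ together with the non-negative damping $e^{-\Phi(x)}\nu(v)\ge 0$ make the scheme a geometric contraction. Throughout I use the Hamiltonian flow \eqref{Hacon} and the backward exit time $t_{\mathbf b}$ of Definition \ref{Backwardexit}; since the present boundary operator $\mathcal M$ need not be of diffuse type, I work with the one-step characteristic representation obtained by integrating the equation along \eqref{Hacon} back to the first boundary hit (or to $t=0$), rather than the fully unfolded series of Lemma \ref{L41}.

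First I would fix the iteration. Let $h^{0}$ solve the damped transport equation with datum $h_{0}$ and \emph{zero} incoming data, so that (writing $t_{1}(t,x,v)=t-t_{\mathbf b}(x,v)$ as in Lemma \ref{L41})
\begin{align*}
	h^{0}(t,x,v)=\mathbf{1}_{\{t_{1}(t,x,v)\le 0\}}\exp\Big\{-\int_{0}^{t}e^{-\Phi(X(\tau))}\nu(V(\tau))\,d\tau\Big\}h_{0}(X(0),V(0)),
\end{align*}
and, given $h^{\ell}$ together with its outgoing trace $h^{\ell}|_{\gamma_{+}}$ (which exists pointwise off the grazing set $\gamma_{0}$), put $g^{\ell}:=(1-\epsilon)\mathcal M h^{\ell}|_{\gamma_{+}}$ and let $h^{\ell+1}$ be the solution of the same equation with datum $h_{0}$ and incoming data $g^{\ell}$: on the set $\{t_{1}(t,x,v)\le 0\}$ it is given by the same expression as $h^{0}$, while on the complementary set, where the trajectory first meets $\partial\Omega$ at $(t_{1},x_{1},V(t_{1}))\in\gamma_{-}$, it equals $\exp\{-\int_{t_{1}}^{t}e^{-\Phi(X(\tau))}\nu(V(\tau))\,d\tau\}\,g^{\ell}(t_{1},x_{1},V(t_{1}))$. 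Since $h^{0}$ carries no incoming data, this recursion is well defined for every $\ell\ge 0$, the $C^{3}$ regularity of $\Phi$ making the flow and $t_{\mathbf b}$ regular enough for all traces involved to exist and be measurable.

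Next I would record the two estimates that drive the limit. An induction on $\ell$ gives $\|h^{\ell}(t)\|_{L^{\infty}_{x,v}}\le\|h_{0}\|_{L^{\infty}_{x,v}}$ for all $t$: the damping factor is $\le 1$, the traces inherit the bound, and $\|g^{\ell}(t)\|_{L^{\infty}(\gamma_{-})}\le(1-\epsilon)\|\mathcal M\|\,\|h^{\ell}(t)|_{\gamma_{+}}\|_{L^{\infty}(\gamma_{+})}\le(1-\epsilon)\|h_{0}\|_{L^{\infty}_{x,v}}$. Then $d^{\ell}:=h^{\ell+1}-h^{\ell}$ solves the \emph{homogeneous} damped transport equation with zero initial datum and incoming data $(1-\epsilon)\mathcal M d^{\ell-1}|_{\gamma_{+}}$ (with $d^{0}|_{\gamma_{-}}=(1-\epsilon)\mathcal M h^{0}|_{\gamma_{+}}$); since $d^{\ell}$ vanishes wherever the backward trajectory reaches $t=0$ inside $\Omega$, the characteristic formula and $\|\mathcal M\|=1$ force $\|d^{\ell}\|_{L^{\infty}_{t}L^{\infty}_{x,v}}\le(1-\epsilon)\|d^{\ell-1}\|_{L^{\infty}_{t}L^{\infty}_{x,v}}$, hence $\|d^{\ell}\|_{L^{\infty}_{t}L^{\infty}_{x,v}}\le(1-\epsilon)^{\ell+1}\|h_{0}\|_{L^{\infty}_{x,v}}$. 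As $\epsilon>0$ this is summable, so $\{h^{\ell}\}$ is Cauchy in $L^{\infty}_{t}L^{\infty}_{x,v}$ and $\{h^{\ell}|_{\gamma}\}$ is Cauchy in $L^{\infty}_{t}L^{\infty}(\gamma)$; I then set $h:=\lim_{\ell}h^{\ell}$ and $h|_{\gamma}:=\lim_{\ell}h^{\ell}|_{\gamma}$, and pass to the limit in the characteristic representation, which shows that $h$ solves the equation in the mild (hence distributional) sense with $h(0)=h_{0}$, and — by the boundedness of $\mathcal M$ — that $h|_{\gamma_{-}}=(1-\epsilon)\mathcal M h|_{\gamma_{+}}$. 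The same contraction also gives uniqueness.

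The routine parts are the explicit characteristic formulas and the induction for the uniform bound, which I would not belabor. The one genuinely careful point — and the closest thing to an obstacle — is the existence, measurability, and $L^{\infty}$ control of the boundary traces $h^{\ell}|_{\gamma_{\pm}}$, which rests on the $C^{3}$ regularity of $\Phi$, the lower semicontinuity of $t_{\mathbf b}$ noted after Definition \ref{Backwardexit}, and the fact that $\gamma_{0}$ is a null set; note that within a \emph{fixed} iterate $h^{\ell}$ only one boundary interaction ever enters the representation, so the usual grazing pathology of infinitely many bounces in finite time never intervenes. In particular neither Lemma \ref{cpl} nor Lemma \ref{Lsmall} is needed for this lemma — the entire mechanism is the strict contraction $1-\epsilon<1$ picked up at each reflection.
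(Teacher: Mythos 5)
Your proposal is correct and follows essentially the same route as the paper: an iteration in which the incoming data of the $(\ell+1)$-st iterate is $(1-\epsilon)\mathcal{M}$ applied to the outgoing trace of the $\ell$-th, with the strict factor $1-\epsilon$ making the successive differences a geometric contraction in $L^\infty_tL^\infty$, hence Cauchy, and the limit solving the problem. Your version merely spells out the one-step characteristic representation and the trace measurability in more detail than the paper does, but the mechanism is identical.
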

\begin{proof}
	Fix $\epsilon >0$. Set $h^{(0)}|_{\gamma_+} \equiv 0 $. We use the following iterative scheme to construct a solution :
	\begin{align*}
		\{\partial_t + v\cdot \nabla_x -\nabla_x \Phi(x) \cdot \nabla_v+  e^{-\Phi(x)}\nu\}h^{(k+1)}=0, \quad h^{(k+1)}|_{\gamma_-}=(1-\epsilon)\mathcal{M}h^{(k)}|_{\gamma_+}, \quad h^{(k+1)}(0,x,v) =h_0.
	\end{align*}
	Now we claim $h^{(k)}$ and $h^{(k)}|_\gamma$ are Cauchy sequences. Taking differences, we get
	\begin{align*}
		\{\partial_t + v\cdot \nabla_x -\nabla_x \Phi(x) \cdot \nabla_v+  e^{-\Phi(x)}\nu\}&\left(h^{(k+1)}-h^{(k)}\right)=0,\quad h^{(k+1)}|_{\gamma_-}-h^{(k)}|_{\gamma_-}=(1-\epsilon)\mathcal{M}\left(h^{(k)}|_{\gamma_+}-h^{(k-1)}|_{\gamma_+}\right),\\
		&\left(h^{(k+1)}-h^{(k)}\right)_{t=0}=0.
	\end{align*}
	Note that
	\begin{align*}
		\sup_s\left\|h^{(k+1)}|_{\gamma_+}(s) - h^{(k)}|_{\gamma_+}(s)\right\|_{L^\infty_{x,v}} \le (1-\epsilon)\sup_s \left\|h^{(k)}|_{\gamma_+}(s)-h^{(k-1)}|_{\gamma_+}(s)\right\|_{L^\infty_{x,v}}.
	\end{align*}
	Repeatedly using such inequality for $k=1,2,...$, we obtain
	\begin{align*}
		\sup_s\left\|h^{(k+1)}|_{\gamma_+}(s) - h^{(k)}|_{\gamma_+}(s)\right\|_{L^\infty_{x,v}} \le (1-\epsilon)^k\sup_s \left\|h^{(1)}|_{\gamma_+}(s)-h^{(0)}|_{\gamma_+}(s)\right\|_{L^\infty_{x,v}}.
	\end{align*}
	Thus $\left\{h^{(k)}|_{\gamma_+}\right\}$ is Cauchy in $L^\infty_{t}L^\infty(\gamma_-)$, and then both $\left\{h^{(k)}|_{\gamma_-}\right\}$ and $\left\{h^{(k)}\right\}$ are Cauchy, respectively. Hence we conclude our aim.
\end{proof}

\bigskip
We denote by $S_{G_\nu}(t)h_0$ the semigroup of a solution to the equation $\eqref{eBE}$ with the initial datum $h_0$ and the diffuse reflection boundary condition \eqref{eBEBC}. We now introduce two useful exponential time-decay to $S_{G_\nu}h_0$ in $L^\infty$.
\begin{lemma} \label{L43}
	Let $h_0 \in L_{x,v}^\infty$. There exists a unique solution $h(t) = S_{G_{\nu}}(t)h_0  \in L_{x,v}^\infty$ to  \eqref{eBE} with the initial datum $h_0$ and the diffuse reflection boundary condition \eqref{eBEBC}. Moreover, for all $0< \tilde{\nu}_0 < \nu_0$, there exists $C_\Phi>0$, depending on $\Phi$ and $\beta$, such that 
	\begin{align*}
		\sup_{t\ge 0}\left\{\exp\left\{e^{-\|\Phi\|_\infty}\tilde{\nu}_0 t\right\}\|S_{G_{\nu}}(t)h_0\|_{L_{x,v}^\infty}\right\} \le C_\Phi\|h_0\|_{L_{x,v}^\infty},
	\end{align*}
	where $\nu_0:=\inf_{v\in \mathbb{R}^3} \nu(v)$.
\end{lemma}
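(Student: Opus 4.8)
The plan is to reduce everything to the explicit back-time representation of Lemma~\ref{L41} and the phase-volume smallness of Lemma~\ref{Lsmall}. Existence and uniqueness come first: solve the problem with boundary operator $(1-\epsilon)\mathcal M$, $\mathcal M$ being the diffuse operator of \eqref{eBEBC}, via Lemma~\ref{L42}; the quantitative bound established below is uniform in $\epsilon\in(0,1)$ (the factors $1-\epsilon$ inserted at each reflection only help), so a routine weak compactness argument lets us pass $\epsilon\to0$, and uniqueness then follows from linearity by taking $h_0=0$ in that bound. The substance is the decay estimate. A preliminary remark makes all weight bookkeeping painless: with $H(x,v):=\tfrac{|v|^2}{2}+\Phi(x)$ the conserved Hamiltonian, \eqref{weightfnt} and the definition of $\mu_E$ give $w(x,v)=\{1+H(x,v)\}^{\beta/2}$ and $\mu_E^{1/2}(x,v)=e^{-H(x,v)/2}$, so both $w$ and $\tilde w=(w\mu_E^{1/2})^{-1}$ of \eqref{tildeweight} are constant along each characteristic segment and $\sup_{x,v}(w\mu_E^{1/2})(x,v)=\sup_{s\ge0}(1+s)^{\beta/2}e^{-s/2}=:C_\beta<\infty$. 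Since each $d\sigma_j$ is a probability measure on $\mathcal V_j$ and $\tilde w(x,v)\mu(v)=e^{\Phi(x)/2}e^{-|v|^2/4}\{1+H(x,v)\}^{-\beta/2}$, we also have $\int_{\mathcal V_j}\tilde w(x_j,v_j)\,d\sigma_j\le C_{\Phi,\beta}$ and $\sup_x\int_{|v|>N}\tilde w(x,v)\,d\sigma\le\eta(N)\to0$ as $N\to\infty$.

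\textbf{The elementary terms.} Fix $0<\tilde\nu_0<\nu_0$, put $\lambda':=e^{-\|\Phi\|_\infty}\tilde\nu_0$, and choose $\lambda_1\in(\lambda',\,e^{-\|\Phi\|_\infty}\nu_0)$. For $0\le t\le T_0$ (with $T_0$ large, fixed at the end) apply Lemma~\ref{L41} with $s=0$, $q\equiv0$, $k=k_0(\epsilon,T_0)$. The no-bounce term $\exp\{-\int_0^t e^{-\Phi(X)}\nu(V)\,d\tau\}h_0(X(0),V(0))$ is $\le e^{-\lambda_1 t}\|h_0\|_{L^\infty_{x,v}}$. In the $l$-th term of the $h_0$-sum, the damping exponentials in the prefactor and in $d\Sigma_l(0)$ have integrands $\ge e^{-\|\Phi\|_\infty}\nu_0\ge\lambda_1$ over the intervals $[0,t_l]$, $[t_{j+1},t_j]$ $(1\le j\le l-1)$ and $[t_1,t]$, which tile $[0,t]$; so the total damping is $\le e^{-\lambda_1 t}$. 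Bounding $\tilde w(x_1,V(t_1))^{-1}\le C_\beta$, $\int_{\mathcal V_l}\tilde w(x_l,v_l)\,d\sigma_l\le C_{\Phi,\beta}$ and the remaining $d\sigma_j$ by $1$, that term is $\le C_\Phi e^{-\lambda_1 t}\|h_0\|_{L^\infty_{x,v}}$; summing, the no-bounce term plus the whole $h_0$-sum is $\le (k-1)C_\Phi e^{-\lambda_1 t}\|h_0\|_{L^\infty_{x,v}}$.

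\textbf{The $k$-bounce remainder (the crux).} The remaining term $\tfrac{\exp\{-\int_{t_1}^t e^{-\Phi(X)}\nu(V)\}}{\tilde w(x_1,V(t_1))}\int_{\prod_{j=1}^{k-1}\mathcal V_j}\mathbf 1_{\{t_k>0\}}h(t_k,x_k,V_{k-1}(t_k))\,d\Sigma_{k-1}(t_k)$ has $h$ re-entering at the interior time $t_k\in(0,t)$, so it carries no time-decay by itself. Set $Z(t):=\sup_{0\le\tau\le t}e^{\lambda_1\tau}\|h(\tau)\|_{L^\infty_{x,v}}$ and bound $|h(t_k,x_k,V_{k-1}(t_k))|\le e^{-\lambda_1 t_k}Z(t)$; the damping exponentials in the prefactor and in $d\Sigma_{k-1}(t_k)$ tile $[t_k,t]$ with integrand $\ge\lambda_1$, so together with $e^{-\lambda_1 t_k}$ they produce exactly a factor $e^{-\lambda_1 t}$, leaving $C_\beta e^{-\lambda_1 t}Z(t)\,\mathcal{I}_k$ with $\mathcal{I}_k:=\sup_{t,x,v}\int_{\prod_{j=1}^{k-1}\mathcal V_j}\mathbf 1_{\{t_k>0\}}\tilde w(x_{k-1},v_{k-1})\prod_{j=1}^{k-1}d\sigma_j$. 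Split $\mathcal V_{k-1}$ at $|v_{k-1}|=N$: on $\{|v_{k-1}|>N\}$ the $v_{k-1}$-integral of $\tilde w\,d\sigma_{k-1}$ is $\le\eta(N)$ and the other $d\sigma_j$ contribute $1$; on $\{|v_{k-1}|\le N\}$ one has $\tilde w\le C_N$ and $\int_{\prod_{j=1}^{k-1}\mathcal V_j}\mathbf 1_{\{t_k>0\}}\prod_j d\sigma_j\le\epsilon$ by Lemma~\ref{Lsmall} once $k\ge k_0(\epsilon,T_0)$; hence $\mathcal{I}_k\le\eta(N)+C_N\epsilon$. Choosing $N$ so that $C_\beta\eta(N)\le\tfrac14$, then $\epsilon$ so that $C_\beta C_N\epsilon\le\tfrac14$, then $k=k_0(\epsilon,T_0)$, we get, for $0\le t\le T_0$,
\begin{align*}
	e^{\lambda_1 t}\|h(t)\|_{L^\infty_{x,v}}\le (k-1)C_\Phi\|h_0\|_{L^\infty_{x,v}}+\tfrac12 Z(t),
\end{align*}
and taking $\sup_{t\in[0,T_0]}$ yields $Z(T_0)\le 2(k-1)C_\Phi\|h_0\|_{L^\infty_{x,v}}=:C(T_0)\|h_0\|_{L^\infty_{x,v}}$, i.e. $\|h(t)\|_{L^\infty_{x,v}}\le C(T_0)e^{-\lambda_1 t}\|h_0\|_{L^\infty_{x,v}}$ on $[0,T_0]$.

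\textbf{Conclusion and main difficulty.} For fixed $\epsilon$ the threshold $k_0(\epsilon,T_0)$, hence $C(T_0)$, grows only polynomially in $T_0$ by Lemma~\ref{Lsmall}, while $\lambda_1>\lambda'$; so, enlarging $T_0$ once more, we may assume $C(T_0)e^{-\lambda_1 T_0}\le e^{-\lambda' T_0}$, whence $\|h(T_0)\|_{L^\infty_{x,v}}\le e^{-\lambda' T_0}\|h_0\|_{L^\infty_{x,v}}$. The semigroup identity $S_{G_\nu}(nT_0)=S_{G_\nu}(T_0)\,S_{G_\nu}((n-1)T_0)$ iterates this to $\|h(nT_0)\|_{L^\infty_{x,v}}\le e^{-n\lambda' T_0}\|h_0\|_{L^\infty_{x,v}}$, and combining with the $[0,T_0]$-estimate on each interval $[nT_0,(n+1)T_0]$ (using $\lambda_1>\lambda'$) gives $\|S_{G_\nu}(t)h_0\|_{L^\infty_{x,v}}\le C(T_0)e^{-\lambda' t}\|h_0\|_{L^\infty_{x,v}}$ for all $t\ge0$; set $C_\Phi:=C(T_0)$, which depends on $\Phi$ and $\beta$. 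The one real obstacle is the $k$-bounce remainder: it has no intrinsic decay, and closing the estimate requires absorbing the unbounded weight $\tilde w(x_{k-1},v_{k-1})$ via a velocity cutoff against the Gaussian in $d\sigma_{k-1}$, exploiting the smallness of the not-yet-returned phase volume (Lemma~\ref{Lsmall}), and finally trading the polynomial-in-$T_0$ loss contained in $k_0$ against the exponential gain — the reason the statement carries the strict inequality $\tilde\nu_0<\nu_0$.
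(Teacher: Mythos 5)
Your argument is correct and, in substance, the paper's: existence via a damped boundary operator and Lemma \ref{L42}, the back-time representation of Lemma \ref{L41}, smallness of the $k$-bounce phase volume from Lemma \ref{Lsmall} to absorb the remainder, a finite-time estimate with a polynomially growing constant, and iteration over intervals of length $T_0$ trading that loss against the gap $\tilde{\nu}_0<\nu_0$. You deviate in two places, both legitimate. For uniqueness the paper runs an $L^1$ Gr\"onwall argument (divergence theorem plus positivity of the boundary flux), whereas you reuse the $L^\infty$ decay bound with $h_0=0$; this works provided it is only invoked for solutions already known to be bounded on $[0,T_0]$, which is what the statement assumes. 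For the $k$-bounce remainder the paper applies the boundary condition once more at $x_k$, so that the weight $\tilde{w}(x_k,v_k)$ sits on a fresh velocity variable and factors out of the indicator $\{t_k>0\}$, and then uses the quantitative bound \eqref{Lsmall2} with $k\sim T_0^{5/4}$; you instead keep the weight on $v_{k-1}$ and decouple by cutting at $|v_{k-1}|=N$, using the Gaussian tail of $\tilde{w}\,d\sigma_{k-1}$ for $|v_{k-1}|>N$ and \eqref{Lsmall1} for $|v_{k-1}|\le N$. Both devices close the estimate; yours additionally needs the fact that $k_0(\epsilon,T_0)$ can be taken polynomial in $T_0$, which is true but only implicit in the statement of Lemma \ref{Lsmall}, so you should justify it by appealing to \eqref{Lsmall2} or to the proof of that lemma. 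The one genuine (though minor and fixable) gap is the invocation of Lemma \ref{L42}: the diffuse operator of \eqref{eBEBC} acting on $h$ need not have $L^\infty\to L^\infty$ norm at most one, so, as in the paper, you must first conjugate to $\tilde{h}=\tilde{w}h$, for which the boundary operator is an average against the probability measure $d\sigma$ and hence has norm exactly one, and then truncate the initial datum to $h_0\mathbf{1}_{\{|v|\le m\}}$ so that $\tilde{w}h_0\in L^{\infty}_{x,v}$; your uniform-in-$\epsilon$ estimate then removes both regularizations exactly as you describe.
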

\begin{proof}
	We first show the uniqueness of solution. Assume that there exists two solutions $h,\tilde{h}$ in $L^\infty_{x,v}$.\\
	Since $\|f\|_{L^1_{x,v}}\le \|wf\|_{L^\infty_{x,v}}\int_{\mathbb{R}^3} \frac{1}{w(x,v)} dv < \infty$ and $\|f\|_{L^1_{\gamma}}\le \|wf\|_{L^\infty_{x,v}}\int_{\mathbb{R}^3} \frac{|v|}{w(x,v)} dv < \infty$,\\ $f$, $\tilde{f}$ are in $L^1_{x,v}$ and $f|_{\gamma},\tilde{f}|_{\gamma}$ are in $L^1_{\gamma}$.
	So, by the divergence theorem,
	\begin{align*}
		\frac{d}{dt}\left\|\left(f-\tilde{f}\right)(t)\right\|_{L^1_{x,v}} &= \int_{\Omega \cross \mathbb{R}^3} \text{sgn}(f-\tilde{f}) [-v\cdot\nabla_x+\nabla_x\Phi(x)\cdot\nabla_v-e^{-\Phi(x)}\nu(v)]\left(f-\tilde{f}\right)(t,x,v)dxdv\\
		&=-\int_{\Omega \cross \mathbb{R}^3} v\cdot\nabla_x\left(|f-\tilde{f}|\right)dxdv +\int_{\Omega \cross \mathbb{R}^3} \nabla_x\Phi\cdot\nabla_v\left(|f-\tilde{f}|\right)dxdv\\
		& \quad- \left\|e^{-\Phi(x)}\nu(v)\left(f-\tilde{f}\right)\right\|_{L^1_{x,v}}\\
		&\le -\int_\gamma \left|\left(f-\tilde{f}\right)(t,x,v)\right| \{n(x)\cdot v\}dS(x)dv - e^{-\|\Phi\|_\infty}\nu_0\|f-\tilde{f}\|_{L^1_{x,v}}.
	\end{align*}
	Also, we have
	\begin{align*}
		\int_{\gamma_-}\left|\left(f-\tilde{f}\right)(t,x,v)\right||n(x)\cdot v|dS(x)dv \le \int_{\gamma_+}\left|\left(f-\tilde{f}\right)(t,x,v)\right||n(x)\cdot v|dS(x)dv,
	\end{align*}
	so that $\int_\gamma \left|\left(f-\tilde{f}\right)(t,x,v)\right| \{n(x)\cdot v\}dS(x)dv$ is positive.\\
	By the Gr\"onwall's inequality, we obtain
	\begin{align*}
		\left\|\left(f-\tilde{f}\right)(t)\right\|_{L^1_{x,v}} \le \|(f-\tilde{f})(0)\|_{L^1_{x,v}}\exp\left\{-\nu_0e^{-\|\Phi\|_\infty}t\right\}=0,
	\end{align*}
	and we conclude that a solution is unique.\\
	Let $h(t,x,v) =w(x,v)f(t,x,v)$.  
	Given any $m \ge 1$, we construct a solution to
	\begin{align} \label{qq1}
		\{\partial_t + v\cdot \nabla_x -\nabla_x \Phi(x) \cdot \nabla_v+  e^{-\Phi(x)}\nu\}h^{(m)}=0,
	\end{align}
	with the boundary and initial condition
	\begin{equation} \label{qq2}
	\begin{aligned}
		&h^{(m)}(t,x,v) = \left\{1-\frac{1}{m}\right\}\frac{1}{\tilde{w}(x,v)}\int_{n(x)\cdot v'>0} \left[h^{(m)}(t,x,v')\right]\tilde{w}(x,v')d\sigma(x),\\
		&h^{(m)}(0,x,v) = h_0 \mathbf{1}_{\{|v|\le m\}}.
	\end{aligned}
	\end{equation}
	Setting $\tilde{h}^{(m)}(t,x,v) = \tilde{w}(x,v) h^{(m)}(t,x,v)$, the equation \eqref{qq1} and the condition \eqref{qq2} become
	\begin{align*}
		&\{\partial_t + v\cdot \nabla_x -\nabla_x \Phi(x) \cdot \nabla_v+  e^{-\Phi(x)}\nu\}\tilde{h}^{(m)}=0,\\
		&\tilde{h}^{(m)}(t,x,v) = \left\{1-\frac{1}{m}\right\}\int_{n(x)\cdot v'>0} \tilde{h}^{(m)}(t,x,v') d\sigma(x),\\
		& \tilde{h}^{(m)}(0,x,v) = \tilde{h}_0 \mathbf{1}_{\{|v|\le m\}}
	\end{align*}
	Since $\int_{n(x) \cdot v'>0} d\sigma(x) =1$, the boundary operator maps $L^\infty_{x,v}$ to $L^\infty_{x,v}$ with a norm bounded by $1-\frac{1}{m}$, and
	\begin{align*}
		\|\tilde{h}^{(m)}(0)\|_{L^\infty_{x,v}} \le C_{m}\|h_0\|_{L^\infty_{x,v}} < \infty.
	\end{align*}
	By Lemma \ref{L42}, there exists a solution $\tilde{h}^{(m)}(t,x,v) \in L^\infty_{x,v}$ to the above equation, and $h^{(m)}$ is bounded because $h^{(m)}=\frac{1}{\tilde{w}(x,v)} \tilde{h}^{(m)}$.\\
	Now, we show the uniform $L^\infty_{x,v}$ bound for $h$. We consider the case $0\le t \le T_0$.\\
	If $t_1(t,x,v) \le 0$, we know  
	\begin{align*}
		\left(S_{G_\nu}(t)h_0\right)(x,v) = \exp\left\{-\int_0^te^{-\Phi(X(s))}\nu(V(s)) ds\right\}h_0(X(0),V(0)),
	\end{align*}
	and we deduce
	\begin{align} \label{qq3}
		\|S_{G_\nu}(t)h_0\|_{L^\infty_{x,v}} \le \exp{e^{-\|\Phi\|_\infty}\nu_0t}\|h_0\|_{L^\infty_{x,v}} \quad \text{for all} \quad 0\le t \le T_0.
	\end{align}
	We consider the case $t_1(t,x,v) > 0$. Recall the definition of the iterated integral in \eqref{iterint}. By Lemma \ref{L41}, we get
	\begin{align*}
		\left|h^{(m)}(t,x,v)\right| &\le \frac{\exp{-\int_{t_1}^t e^{-\Phi(X(\tau))}\nu(V(\tau))d\tau}}{\tilde{w}(x_1,V(t_1))}\sum_{l=1}^{k-1}\int_{\prod_{j=1}^{k-1}\mathcal{V}_j} \mathbf{1}_{\{t_{l+1}\le 0 < t_l\}} \left|h^{(m)}(0,X_l(0),V_l(0))\right|d\Sigma_l(0)\\
		&\quad +\frac{\exp{-\int_{t_1}^t e^{-\Phi(X(\tau))}\nu(V(\tau))d\tau}}{\tilde{w}(x_1,V(t_1))}\int_{\prod_{j=1}^{k-1}\mathcal{V}_j} \mathbf{1}_{\{t_k>0\}} \left|h^{(m)}(t_k,x_k,V_{k-1}(t_k))\right|d\Sigma_{k-1}(t_k)\\
		&=: I_1+I_2
	\end{align*}
	First of all, let us consider $I_2$. Using the boundary condition
	\begin{align*}
		h^{(m)}(t_k,x_k,V_{k-1}(t_k)) = \left\{1-\frac{1}{m}\right\}\frac{1}{\tilde{w}(x_k,V_{k-1}(t_k))}\int_{\mathcal{V}_k} h^{(m)}(t_k,x_k,v_k)\tilde{w}(x_k,v_k)d\sigma_k
	\end{align*}
and the fact $h^{(m)}(t_k,x_k,v_k) = \mathbf{1}_{\{t_{k+1} \le 0 <t_k\}}\exp\left\{-\int_0^{t_k}e^{-\Phi(X_k(s))}\nu(V_k(s))ds\right\}h^{(m)}(0,X_k(0),V_k(0))\\ +\mathbf{1}_{\{t_{k+1}>0\}}h^{(m)}(t_k,x_k,v_k)$, where $X_k(s)=x_k-\int_{s}^{t_k} V_k(\tau)d\tau$ and $V_k(s) = v_k-\int_s^{t_k}\nabla_x \Phi(X_k(\tau))d\tau$,
\begin{align*}
	I_2 &\le \frac{\exp{-\int_{t_1}^t e^{-\Phi(X(\tau))}\nu(V(\tau))d\tau}}{\tilde{w}(x_1,V(t_1))}\int_{\prod_{j=1}^{k}\mathcal{V}_j} \mathbf{1}_{\{t_{k+1} \le 0 <t_k\}}\left|h^{(m)}(0,X_k(0),V_k(0))\right|d\Sigma_{k}(0)\\
	&\quad +\frac{\exp{-\int_{t_1}^t e^{-\Phi(X(\tau))}\nu(V(\tau))d\tau}}{\tilde{w}(x_1,V(t_1))}\int_{\prod_{j=1}^{k}\mathcal{V}_j} \mathbf{1}_{\{t_{k+1}>0\}} \left|h^{(m)}(t_k,x_k,v_k)\right|d\Sigma_{k}(t_k)\\
	&=:J_1+J_2.
\end{align*}
Since $t_1(t_k,x_k,v_k)>0$ over $\{t_{k+1}>0\}$, we deduce
\begin{align} \label{qq5}
	\mathbf{1}_{\{t_{k+1}>0\}}\left|h^{(m)}(t_k,x_k,v_k)\right| \le \sup_{x,v}\left|h^{(m)}(t_k,x,v)\mathbf{1}_{\{t_1>0\}}\right|.
\end{align}
We know that the exponential in $d\Sigma_l(s)$ is bounded by $\exp\{-e^{-\|\Phi\|_\infty} \nu_0(t_1-s) \}$. By Lemma \ref{Lsmall}, we can choose $C_1,C_2>0$ such that for $k=C_1T_0^{\frac{5}{4}}$
\begin{align} \label{qq6}
	\int_{\prod_{j=1}^{k-1}\mathcal{V}_j} \mathbf{1}_{\{t_k(t,x,v,v_1,v_2,...,v_{k-1})>0\}}\prod_{j=1}^{k-1}d\sigma_j \le \left\{\frac{1}{2}\right\}^{C_2T_0^{\frac{5}{4}}}
\end{align}
Using \eqref{qq5} and \eqref{qq6}, we obtain
\begin{align*}
	J_2 &\le \frac{\exp\left\{-e^{-\|\Phi\|_\infty} \nu_0 (t-t_1)\right\}}{\tilde{w}(x_1,V(t_1))}\left\|h^{(m)}(t_k)\mathbf{1}_{\{t_1>0\}}\right\|_{L^\infty_{x,v}} \int_{\prod_{j=1}^k\mathcal{V}_j} \mathbf{1}_{\{t_k>0\}}\tilde{w}(x_k,v_k)\exp\{-e^{-\|\Phi\|_\infty} \nu_0(t_1-t_k) \}\prod_{j=1}^kd\sigma_j\\
	&\le C_\Phi \sup_{0\le s \le t \le T_0} \left\{\exp\left\{-e^{-\|\Phi\|_\infty} \nu_0(t-s) \right\}\left\|h^{(m)}(s) \mathbf{1}_{\{t_1>0\}}\right\|_{L^\infty_{x,v}}\right\}\left(\int_{\prod_{j=1}^{k-1}\mathcal{V}_j}\mathbf{1}_{\{t_{k}>0\}}\prod_{j=1}^{k-1}d\sigma_j\right)\\
	& \quad  \times \left(\int_{\mathcal{V}_k}\tilde{w}(x_k,v_k) d\sigma_k\right)\\
	&\le C_\Phi \left(\frac{1}{2}\right)^{C_2T_0^{\frac{5}{4}}}\sup_{0\le s \le t \le T_0} \left\{\exp\left\{-e^{-\|\Phi\|_\infty} \nu_0(t-s) \right\}\left\|h^{(m)}(s) \mathbf{1}_{\{t_1>0\}}\right\|_{L^\infty_{x,v}}\right\},
\end{align*}
where $\int_{\mathcal{V}_k}\tilde{w}(x_k,v_k) d\sigma_k$ is finite.\\
Let us consider $I_1$ and $J_1$. By inserting $\int_{\mathcal{V}_k}d\sigma_k=1$ into $I_1$, we get
\begin{align*}
	I_1+J_1 &= \frac{\exp{-\int_{t_1}^t e^{-\Phi(X(\tau))}\nu(V(\tau))d\tau}}{\tilde{w}(x_1,V(t_1))}\sum_{l=1}^{k}\int_{\prod_{j=1}^{k}\mathcal{V}_j} \mathbf{1}_{\{t_{l+1}\le 0 < t_l\}} \left|h^{(m)}(0,X_l(0),V_l(0))\right|d\Sigma_l(0)\\
	&\le \frac{\exp\left\{-e^{-\|\Phi\|_\infty} \nu_0 t\right\}}{\tilde{w}(x_1,V(t_1))}\left\|h^{(m)}(0)\right\|_{L^\infty_{x,v}}\sum_{l=1}^{k}\int_{\prod_{j=1}^{k}\mathcal{V}_j} \mathbf{1}_{\{t_{l+1}\le 0 < t_l\}} \left\{\prod_{j=l+1}^k d\sigma_j\right\}\left\{\tilde{w}(x_l,v_l) d\sigma_l\right\}\left\{\prod_{j=1}^{l-1} d\sigma_j\right\}.
\end{align*}
Now, we fix $l$ and consider the $l$-th term
\begin{align*}
	&\int_{\prod_{j=1}^{k}\mathcal{V}_j} \mathbf{1}_{\{t_{l+1}\le 0 < t_l\}} \left\{\prod_{j=l+1}^k d\sigma_j\right\}\left\{\tilde{w}(x_l,v_l) d\sigma_l\right\}\left\{\prod_{j=1}^{l-1} d\sigma_j\right\}\\
	&\le \int_{\prod_{j=1}^{l-1}\mathcal{V}_j} \left(\int_{\mathcal{V}_l}\tilde{w}(x_l,v_l) d\sigma_l\right)\left\{\prod_{j=1}^{l-1} d\sigma_j\right\}\\
	&\le C_\Phi,
\end{align*}
where $\int_{\mathcal{V}_l}\tilde{w}(x_l,v_l) d\sigma_l$ is finite.
Summing $1\le l \le k$, it follows that
\begin{align*}
	I_1+J_1 \le C_1T_0^{\frac{5}{4}}C_\Phi \frac{\exp\left\{-e^{-\|\Phi\|_\infty} \nu_0 t\right\}}{\tilde{w}(x_1,V(t_1))}\left\|h^{(m)}(0)\right\|_{L^\infty_{x,v}} \le C_\Phi T_0^{\frac{5}{4}} \exp\left\{-e^{-\|\Phi\|_\infty} \nu_0 t \right\} \left\|h^{(m)}(0)\right\|_{L^\infty_{x,v}}.
\end{align*}
Gathering $I_1$, $J_1$, and $J_2$, we deduce that for $0 \le t \le T_0$,
\begin{align*}
	\exp\left\{e^{-\|\Phi\|_\infty} \nu_0 t \right\}\left|h^{(m)}(t,x,v)\mathbf{1}_{\{t_1>0\}}\right| &\le C_\Phi \left(\frac{1}{2}\right)^{C_2T_0^{\frac{5}{4}}}\sup_{0\le s \le T_0} \left\{\exp\left\{-e^{-\|\Phi\|_\infty} \nu_0 s \right\}\left\|h^{(m)}(s) \mathbf{1}_{\{t_1>0\}}\right\|_{L^\infty_{x,v}}\right\}\\
	& \quad +C_\Phi T_0^{\frac{5}{4}}\left\|h^{(m)}(0)\right\|_{L^\infty_{x,v}}.
\end{align*}
Choosing sufficiently large $T_0 > 0$ such that $C_\Phi \left(\frac{1}{2}\right)^{C_2T_0^{\frac{5}{4}}} \le \frac{1}{2}$,
\begin{align} \label{qq4}
	\sup_{0\le t \le T_0}\left\{\exp\left\{e^{-\|\Phi\|_\infty} \nu_0 t \right\}\left\|h^{(m)}(t)\mathbf{1}_{\{t_1>0\}}\right\|_{L^\infty_{x,v}}\right\} \le C_\Phi T_0^{\frac{5}{4}}\left\|h^{(m)}(0)\right\|_{L^\infty_{x,v}} = C_\Phi T_0^{\frac{5}{4}}\left\|h_0\right\|_{L^\infty_{x,v}}.
\end{align}
From now on, we extend the exponential decay to all time $t>0$. Letting $t=T_0$ in \eqref{qq4} and choosing sufficiently large $T_0>0$, for all $\tilde{\nu}_0 < \nu_0$,
\begin{align} \label{qq7}
	\left\|h^{(m)}\left(T_0\right)\right\|_{L^\infty_{x,v}}\le C_\Phi T_0^{\frac{5}{4}}\exp\left\{-e^{-\|\Phi\|_\infty} \nu_0 T_0 \right\}\left\|h_0\right\|_{L^\infty_{x,v}} \le \exp\left\{-e^{-\|\Phi\|_\infty} \tilde{\nu}_0 T_0 \right\}\left\|h_0\right\|_{L^\infty_{x,v}},
\end{align}
and applying repeatedly the process \eqref{qq7}, we can derive for $l\ge 1$
\begin{align*}
	\left\|h^{(m)}(lT_0)\right\|_{L^\infty_{x,v}} \le\exp\left\{-e^{-\|\Phi\|_\infty} \tilde{\nu}_0 T_0 \right\}\left\|h^{(m)}\left((l-1)T_0\right)\right\|_{L^\infty_{x,v}} \le \exp\left\{-le^{-\|\Phi\|_\infty} \tilde{\nu}_0 T_0 \right\}\left\|h_0\right\|_{L^\infty_{x,v}}.
\end{align*}
Thus, for $lT_0 \le t \le (l+1)T_0$ with $l\ge 1$, we deduce that
\begin{align*}
	\left\|h^{(m)}\left(t\right)\right\|_{L^\infty_{x,v}} 
	&\le C_{T_0}\exp\left\{-le^{-\|\Phi\|_\infty} \tilde{\nu}_0 T_0 \right\}\left\|h_0\right\|_{L^\infty_{x,v}}\\ 
	&\le C_{T_0}\exp\left\{-e^{-\|\Phi\|_\infty} \tilde{\nu}_0 t\right\}\exp\left\{e^{-\|\Phi\|_\infty} \tilde{\nu}_0 T_0 \right\}\left\|h_0\right\|_{L^\infty_{x,v}}\\ 
	&\le C_{T_0} \exp\left\{-e^{-\|\Phi\|_\infty} \tilde{\nu}_0 t\right\}\left\|h_0\right\|_{L^\infty_{x,v}}.
\end{align*}
since $0 \le t-lT_0 \le T_0$. Hence $\left(h^{(m)}\right)$ is uniformly bounded, and the sequence has weak* limit in $L^\infty_{x,v}$. Letting $m \rightarrow \infty$, we conclude the existence of a solution and the exponential decay for the solution.
\end{proof}

\bigskip

\begin{corollary} \label{L43C}
	Assume that $h(t) = S_{G_{\nu}}(t)h_0  \in L_{x,v}^\infty$ is a solution to  \eqref{eBE} with the diffuse reflection boundary condition \eqref{eBEBC}. For all $0< \tilde{\nu}_0 < \nu_0$, there exists $C_\Phi>0$, depending on $\Phi$ and $\beta$, such that 
	\begin{align*}
		\left\|\int_0^t S_{G_\nu}(t-s) h(s) ds\right\|_{L^\infty_{x,v}} \le C_\Phi \sup_{0 \le s \le t} \left\{\exp\left\{-e^{-\|\Phi\|_\infty}\tilde{\nu}_0(t-s)\right\}\|h(s)\|_{L^\infty_{x,v}(\nu^{-1})}\right\}
	\end{align*}
	for all $t\ge 0$.
\end{corollary}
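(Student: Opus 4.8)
The plan is to recognize the left-hand side as a mild solution of the damped transport equation with $h$ playing the role of a source, and then run the backward-characteristic representation of Lemma~\ref{L41}. Write $u(t):=\int_0^t S_{G_\nu}(t-s)h(s)\,ds$. Each $S_{G_\nu}(t-s)h(s)$ solves \eqref{eBE}--\eqref{eBEBC} and is, by Lemma~\ref{L43}, bounded in $L^\infty_{x,v}$ uniformly for $s\in[0,t]$, so Duhamel's principle gives that $u\in L^\infty_{x,v}$ is a mild solution of
\[
\bigl\{\partial_t+v\cdot\nabla_x-\nabla_x\Phi(x)\cdot\nabla_v+e^{-\Phi(x)}\nu(v)\bigr\}u=h ,
\]
with the diffuse reflection boundary condition \eqref{eBEBC} and $u(0)\equiv0$; moreover $h/\nu\in L^\infty_{x,v}$ because $h(t)=S_{G_\nu}(t)h_0\in L^\infty_{x,v}$. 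I would then apply Lemma~\ref{L41} to $u$ with source $q=h$ and with $s=0$. Since $u(0)\equiv0$, every term of that representation carrying the factor $u(0,\cdot)$ drops, and what is left is (a) the integral of the source $h$ along the broken backward characteristic issuing from $(t,x,v)$ down to time $0$, with the usual exponential damping factors and, after each bounce, the factors $\tfrac{1}{\tilde{w}(x_1,V(t_1))}$, $\tilde{w}(x_l,v_l)$ and the iterated measures $d\sigma_j$; and (b) the $k$-bounce remainder term $\tfrac{\exp\{\cdots\}}{\tilde{w}(x_1,V(t_1))}\int\mathbf{1}_{\{t_k>0\}}u(t_k,x_k,V_{k-1}(t_k))\,d\Sigma_{k-1}(t_k)$, which again involves $u$. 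For a fixed $t$ I would choose $k=k_0(\epsilon_0,t)$ from Lemma~\ref{Lsmall} so that $\int_{\prod_l\mathcal{V}_l}\mathbf{1}_{\{t_k>0\}}\prod_l d\sigma_l\le\epsilon_0$, with $\epsilon_0$ a small absolute constant fixed below; as in the proof of Lemma~\ref{L43} I will also use $\tfrac{1}{\tilde{w}(x_1,V(t_1))}\le C_\Phi$, $\int_{\mathcal{V}_j}\tilde{w}(x_j,v_j)\,d\sigma_j\le C_\Phi$ and $\int_{\mathcal{V}_j}d\sigma_j=1$.

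The core of the argument is the estimate that turns the velocity growth of the source into the claimed decay. Along any backward (possibly broken) characteristic ending at $(t,x,v)$ one has $e^{-\Phi(\cdot)}\ge e^{-\|\Phi\|_\infty}$ and $\nu(\cdot)=\tilde{\nu}_0+\bigl(\nu(\cdot)-\tilde{\nu}_0\bigr)$ with $\nu(\cdot)-\tilde{\nu}_0\ge\nu_0-\tilde{\nu}_0=:\kappa>0$, where the strict inequality $\tilde{\nu}_0<\nu_0$ is precisely what is exploited. Peeling $\tilde{\nu}_0$ out of the exponent gives, for $0\le\tau\le t$,
\[
\exp\Bigl\{-\!\int_\tau^t e^{-\Phi}\nu\,d\tau'\Bigr\}\,e^{e^{-\|\Phi\|_\infty}\tilde{\nu}_0(t-\tau)}\le\exp\Bigl\{-e^{-\|\Phi\|_\infty}\!\int_\tau^t(\nu-\tilde{\nu}_0)\,d\tau'\Bigr\},
\]
and then, telescoping with $\tfrac{d}{d\tau}\exp\{-e^{-\|\Phi\|_\infty}\!\int_\tau^t(\nu-\tilde{\nu}_0)\}=e^{-\|\Phi\|_\infty}(\nu-\tilde{\nu}_0)\exp\{\cdots\}$ and using $\nu-\tilde{\nu}_0\ge\kappa$ to absorb the leftover $\tilde{\nu}_0$-piece,
\[
\int_0^t\exp\Bigl\{-\!\int_\tau^t e^{-\Phi}\nu\,d\tau'\Bigr\}\,\nu(V(\tau))\,e^{e^{-\|\Phi\|_\infty}\tilde{\nu}_0(t-\tau)}\,d\tau\le C_\Phi ,
\]
with $C_\Phi$ independent of $t$ and of the characteristic. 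Bounding $|h(\tau,\cdot)|\le\nu(\cdot)\,\|h(\tau)\|_{L^\infty_{x,v}(\nu^{-1})}$ and setting $\Psi(t):=\sup_{0\le s\le t}\bigl\{e^{e^{-\|\Phi\|_\infty}\tilde{\nu}_0 s}\,\|h(s)\|_{L^\infty_{x,v}(\nu^{-1})}\bigr\}$ (so that the right-hand side of the corollary is $C_\Phi e^{-e^{-\|\Phi\|_\infty}\tilde{\nu}_0 t}\Psi(t)$), this estimate — applied leg by leg and summed over the bounces, the intermediate $d\sigma_j$-integrals each contributing only a bounded constant and the sum over legs telescoping along the broken path — controls part (a) by $C_\Phi e^{-e^{-\|\Phi\|_\infty}\tilde{\nu}_0 t}\Psi(t)$.

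For the remainder term (b) I would argue exactly as for $J_2$ in the proof of Lemma~\ref{L43}: iterate the boundary condition once more to reduce $u(t_k,x_k,V_{k-1}(t_k))$ to an integral over $v_k\in\mathcal{V}_k$, use $\mathbf{1}_{\{t_{k+1}>0\}}|u(t_k,x_k,v_k)|\le\|u(t_k)\mathbf{1}_{\{t_1>0\}}\|_{L^\infty_{x,v}}$ on the part that has not yet reached time $0$ (the part that has reached it being a source integral over one more leg, absorbed into the part-(a) bound), and combine the exponential factors with $\int\mathbf{1}_{\{t_k>0\}}\prod_l d\sigma_l\le\epsilon_0$; this bounds that piece by $C_\Phi\epsilon_0\,e^{-e^{-\|\Phi\|_\infty}\tilde{\nu}_0 t}\,\mathcal{B}(t)$, where $\mathcal{B}(t):=\sup_{0\le s\le t}\{e^{e^{-\|\Phi\|_\infty}\tilde{\nu}_0 s}\|u(s)\mathbf{1}_{\{t_1>0\}}\|_{L^\infty_{x,v}}\}$, which is finite since $u\in L^\infty_{x,v}$ on bounded time intervals. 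Collecting the two pieces yields, for every $t\ge0$,
\[
e^{e^{-\|\Phi\|_\infty}\tilde{\nu}_0 t}\,\|u(t)\|_{L^\infty_{x,v}}\le C_\Phi\,\Psi(t)+C_\Phi\epsilon_0\,\mathcal{B}(t);
\]
taking the supremum over $t'\le t$ (applying this bound at each $t'$ with its own $k=k_0(\epsilon_0,t')$, and using that $\Psi$ and $\mathcal{B}$ are nondecreasing) gives $\mathcal{B}(t)\le C_\Phi\Psi(t)+C_\Phi\epsilon_0\,\mathcal{B}(t)$, so fixing $\epsilon_0$ with $C_\Phi\epsilon_0\le\tfrac12$ absorbs $\mathcal{B}(t)$ and leaves $\|u(t)\|_{L^\infty_{x,v}}\le C_\Phi e^{-e^{-\|\Phi\|_\infty}\tilde{\nu}_0 t}\Psi(t)$, which is the assertion. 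The single genuine obstacle is the absorption estimate of the second paragraph together with carrying it faithfully through the broken characteristics and the remainder-term bootstrap; this is exactly the step where the gap $\tilde{\nu}_0<\nu_0$ is consumed, and everything else is a transcription of arguments already in place for Lemma~\ref{L43}.
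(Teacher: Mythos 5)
Your proposal is correct and follows essentially the same route as the paper: represent $u(t)=\int_0^t S_{G_\nu}(t-s)h(s)\,ds$ via Lemma \ref{L41} with source $q=h$ and zero data, absorb the velocity growth through $|h|\le\nu\,\|h\|_{L^\infty_{x,v}(\nu^{-1})}$ and the gap $\nu_0-\tilde{\nu}_0>0$ in the exponential time integrals (the paper's $I_1$, $I_2$), and kill the $k$-bounce remainder with the smallness of Lemma \ref{Lsmall} followed by absorption. The only (cosmetic) difference is that you absorb the remainder via an $\epsilon_0$-small measure and a weighted bootstrap quantity $\mathcal{B}(t)$, whereas the paper takes $k\sim T_0^{5/4}$ on $[0,T_0]$, gets the factor $(1/2)^{C_2T_0^{5/4}}\le\tfrac12$, and then lets $T_0$ be arbitrary.
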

\begin{proof}
	Let  $0<\tilde{\nu}_0 < \nu_0$ and $0\le  t \le T_0$ . By Lemma \ref{L41}, we have
	\begin{align*}
		\int_0^t S_{G_\nu}(t-s)h(s)ds  &= \int_0^t\mathbf{1}_{\{t_1 \le s\}} \exp{-\int_s^t e^{-\Phi(X(\tau))}\nu(V(\tau))d\tau} h(s,X(s),V(s))ds\\
		&\quad  + \frac{\exp{-\int_{t_1}^t e^{-\Phi(X(\tau))}\nu(V(\tau))d\tau}}{\tilde{w}(x_1,V(t_1))}\sum_{l=1}^{k-1}\int_{t_{l+1}}^{t_l}\int_{\prod_{j=1}^{k-1}\mathcal{V}_j}  \left|h(s,X_l(s),V_l(s))\right|d\Sigma_l(s)ds\\
		&\quad +\frac{\exp{-\int_{t_1}^t e^{-\Phi(X(\tau))}\nu(V(\tau))d\tau}}{\tilde{w}(x_1,V(t_1))}\int_0^t\int_{\prod_{j=1}^{k-1}\mathcal{V}_j} \mathbf{1}_{\{t_k>s\}} \left|S_{G_\nu}(t_k-s)h(s)\right|d\Sigma_{k-1}(t_k)ds\\
		& =: I_1+I_2+I_3.
	\end{align*}
	Firstly, we compute
	\begin{align*}
		I_1 &\le \int_0^t\mathbf{1}_{\{t_1 \le s\}} \exp{-\int_s^t e^{-\Phi(X(\tau))}\nu(V(\tau))d\tau} \nu(V(s))\|h(s)\|_{L^\infty_{x,v}(\nu^{-1})}ds\\
		& \le C_\Phi \sup_{0 \le s \le t} \left\{\exp\left\{-e^{-\|\Phi\|_\infty}\tilde{\nu}_0(t-s)\right\}\|h(s)\|_{L^\infty_{x,v}(\nu^{-1})}\right\}.
	\end{align*}
	Next, let us estimate $I_2$. From \eqref{sese},
	\begin{align*}
		I_2 &\le C_\Phi \sup_{0 \le s \le t} \left\{\exp\left\{-e^{-\|\Phi\|_\infty}\tilde{\nu}_0(t-s)\right\}\|h(s)\|_{L^\infty_{x,v}(\nu^{-1})}\right\}\sum_{l=1}^{k-1} \int_{t_{l+1}}^{t_l} \int_{\prod_{j=1}^{l-1}\mathcal{V}_j} \exp{-e^{-\|\Phi\|_\infty}(\nu_0 -\tilde{\nu}_0)(t_l-s)}\\
		& \quad \times \left(\int_{\mathbb{R}^3} \nu(V_l(s))\tilde{w}(x_l,v_l)\mu(v_l)|v_l|dv_l\right) \prod_{j=1}^{l-1} \left\{ \exp{-e^{-\|\Phi\|_\infty}\nu_0(t_j-t_{j+1})}d\sigma_j\right\}ds\\
		& \le  C_\Phi \sup_{0 \le s \le t} \left\{\exp\left\{-e^{-\|\Phi\|_\infty}\tilde{\nu}_0(t-s)\right\}\|h(s)\|_{L^\infty_{x,v}(\nu^{-1})}\right\} \sum_{l=1}^{k-1} \int_{t_{l+1}}^{t_l} \exp{-e^{-\|\Phi\|_\infty}(\nu_0 -\tilde{\nu}_0)(t_1-s)}\\ 
		& \quad \times \int_{\prod_{j=1}^{l-1}\mathcal{V}_j}\left(\int_{\mathbb{R}^3} \left(1+|v_l|+\sqrt{2\|\Phi\|_\infty}\right)^\gamma\tilde{w}(x_l,v_l)\mu(v_l)|v_l|dv_l\right) \left\{\prod_{j=1}^{l-1}  d\sigma_j\right\}ds\\
		& \le C_\Phi \sup_{0 \le s \le t} \left\{\exp\left\{-e^{-\|\Phi\|_\infty}\tilde{\nu}_0(t-s)\right\}\|h(s)\|_{L^\infty_{x,v}(\nu^{-1})}\right\}\int_0^{t_1}\exp{-e^{-\|\Phi\|_\infty}(\nu_0 -\tilde{\nu}_0)(t_1-s)}ds\\
		& \le C_\Phi \sup_{0 \le s \le t} \left\{\exp\left\{-e^{-\|\Phi\|_\infty}\tilde{\nu}_0(t-s)\right\}\|h(s)\|_{L^\infty_{x,v}(\nu^{-1})}\right\},
	\end{align*}
	where $\int_0^{t_1}\exp{-e^{-\|\Phi\|_\infty}(\nu_0 -\tilde{\nu}_0)(t_1-s)}ds$ is finite.\\
	For $I_3$, from Lemma \ref{Lsmall}, we derive
	\begin{align*}
		I_3 \le  C_\Phi \left(\frac{1}{2}\right)^{C_2T_0^{\frac{5}{4}}}\sup_{t \ge 0}\left\|\int_0^t S_{G_\nu}(t-s) h(s) ds\right\|_{L^\infty_{x,v}} \le \frac{1}{2} \sup_{t \ge 0}\left\|\int_0^t S_{G_\nu}(t-s) h(s) ds\right\|_{L^\infty_{x,v}}.
	\end{align*}
	for large enough $T_0$.\\
	Combining $I_1$, $I_2$, and $I_3$, we conclude that
	\begin{align*}
		\left\|\int_0^t S_{G_\nu}(t-s) h(s) ds\right\|_{L^\infty_{x,v}} \le C_\Phi \sup_{0 \le s \le t} \left\{\exp\left\{-e^{-\|\Phi\|_\infty}\tilde{\nu}_0(t-s)\right\}\|h(s)\|_{L^\infty_{x,v}(\nu^{-1})}\right\}
	\end{align*}
	for all $t\ge 0$ because $T_0$ is arbitrarily large and $C_\Phi$ is independent of $T_0$.
\end{proof}

\bigskip
\subsection{Exponential decay for the Linearized Boltzmann equation}

From the equation \eqref{LBE}, we derive the following equation in terms of $h(t,x,v) = w(x,v)f(t,x,v)$ : 
\begin{align}\label{WLBE}
	\partial_t h +v\cdot \nabla_x h -\nabla_x \Phi(x) \cdot\nabla_v h +e^{-\Phi(x)}\nu(v)h -e^{-\Phi(x)}K_wh = 0. 
\end{align}
with diffuse reflection boundary condition
\begin{align}\label{wLBEBC}
	h(t,x,v)|_{\gamma_{-}} =  \frac{1}{\tilde{w}(x,v)} \int_{v'\cdot n(x)>0} h(t,x,v')\tilde{w}(x,v')d\sigma,
\end{align}
where $\tilde{w}(x,v)$ is defined in \eqref{tildeweight}.\\
\indent Our purpose in this subsection is to prove the linear (weighted) $L^\infty$ decay to the linearized Boltzmann equation. The following lemma gives a crucial estimate for the operator $K$ with weight $w$.
\begin{lemma} \label{Kest}\cite{ChanBELP}
	There exists $k(v,u) \ge 0$ such that for all $v$ in $\mathbb{R}^3$,
	\begin{align*}
		K(f)(v) = \int_{\mathbb{R}^3}k(v,u)f(u)du.
	\end{align*}
	Moreover, for $\alpha>0$, there exists $C_\alpha >0$, depending on $\alpha$ and $\beta$, such that
	\begin{align*}
		\int_{\mathbb{R}^3}\left|k(v,u)\right|e^{\frac{1}{16}|v-u|^2+\frac{1}{16}\frac{\left||v|^2-|u|^2\right|^2}{|v-u|^2}}\frac{w(x,v)}{w(x,u)}(1+|u|)^{-\alpha}du \le C_\alpha(1+|v|)^{-1-\alpha}.
	\end{align*}
\end{lemma}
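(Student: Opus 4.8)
\emph{Proof proposal.} The plan is to reduce everything to the classical pointwise bounds on Grad's kernel and then carry out the weight bookkeeping by hand. First I would invoke the standard representation $Kf(v)=\int_{\mathbb{R}^{3}}k(v,u)f(u)\,du$, obtained by writing $Kf=K_{2}f-K_{1}f$ (the loss--type part $K_{1}$ plus the two gain terms $K_{2}$) and applying the Carleman change of variables to $K_{2}$; this gives $|k(v,u)|\le k_{1}(v,u)+k_{2}(v,u)$ with
\begin{align*}
k_{1}(v,u)=C_{0}|v-u|^{\gamma}e^{-\frac{|v|^{2}}{4}-\frac{|u|^{2}}{4}},\qquad
k_{2}(v,u)\le\frac{C_{0}(1+|v-u|)^{\gamma}}{|v-u|}\,e^{-\frac18 Q(v,u)},\\
Q(v,u):=|v-u|^{2}+\frac{\bigl(|v|^{2}-|u|^{2}\bigr)^{2}}{|v-u|^{2}}.
\end{align*}
Since $K$ acts only in $v$, the potential enters the statement solely through the scalar factor $\frac{w(x,v)}{w(x,u)}=\bigl(\frac{1+|v|^{2}/2+\Phi(x)}{1+|u|^{2}/2+\Phi(x)}\bigr)^{\beta/2}$, and from $0\le\Phi(x)\le\|\Phi\|_{\infty}$ one records the two $x$--uniform bounds $\frac{w(x,v)}{w(x,u)}\le C_{\beta,\Phi}(1+|v|)^{\beta}$ always and $\frac{w(x,v)}{w(x,u)}\le 4^{\beta/2}$ whenever $|u|\ge|v|/2$; hence every constant below will be independent of $x$. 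I would also first establish the elementary inequality $Q(v,u)\le 2(|v|^{2}+|u|^{2})$: it follows by maximizing the convex map $c\mapsto c^{2}+\frac{(|v|-|u|)^{2}(|v|+|u|)^{2}}{c^{2}}$ over the admissible range $c=|v-u|\in\bigl[\,\bigl||v|-|u|\bigr|,\,|v|+|u|\,\bigr]$, the maximum being $2(|v|^{2}+|u|^{2})$, attained at each endpoint.

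I would then dispose of the $k_{1}$--contribution immediately. Using $Q\le 2(|v|^{2}+|u|^{2})$ one gets $k_{1}(v,u)e^{\frac1{16}Q(v,u)}\le C_{0}|v-u|^{\gamma}e^{-\frac{|v|^{2}}{8}-\frac{|u|^{2}}{8}}$; multiplying by $\frac{w(x,v)}{w(x,u)}\le C_{\beta,\Phi}(1+|v|)^{\beta}$ and by $(1+|u|)^{-\alpha}\le 1$ and integrating in $u$ (the Gaussian absorbs $|v-u|^{\gamma}$) bounds this part by $C_{\alpha,\beta,\Phi}(1+|v|)^{\beta+\gamma}e^{-|v|^{2}/8}\le C_{\alpha,\beta,\Phi}(1+|v|)^{-1-\alpha}$, since a Gaussian dominates every polynomial. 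Note that only half of the available Gaussian decay was spent.

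For the $k_{2}$--contribution I would use $e^{-\frac18 Q}e^{\frac1{16}Q}=e^{-\frac1{16}Q}$ and split the $u$--integral. On $\{|u|\le|v|/2\}$ one has $|v|/2\le|v-u|\le 3|v|/2$ and $|v|^{2}-|u|^{2}\ge 3|v|^{2}/4$, so $\frac{(|v|^{2}-|u|^{2})^{2}}{|v-u|^{2}}\ge|v|^{2}/4$ and $e^{-\frac1{16}Q}\le e^{-|v|^{2}/64}$, which kills the crude weight bound $C_{\beta,\Phi}(1+|v|)^{\beta}$; since $\int_{|u|\le|v|/2}\frac{(1+|v-u|)^{\gamma}}{|v-u|}\,du\le C(1+|v|)^{\gamma+2}$, this region contributes at most $C_{\alpha,\beta,\Phi}(1+|v|)^{\beta+\gamma+2}e^{-|v|^{2}/64}\le C_{\alpha,\beta,\Phi}(1+|v|)^{-1-\alpha}$. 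On $\{|u|\ge|v|/2\}$ the weight ratio is $\le 4^{\beta/2}$ and $(1+|u|)^{-\alpha}\le 2^{\alpha}(1+|v|)^{-\alpha}$, so it remains to prove the classical kernel bound $\int_{\mathbb{R}^{3}}\frac{(1+|v-u|)^{\gamma}}{|v-u|}e^{-\frac1{16}|v-u|^{2}-\frac1{16}\frac{(|v|^{2}-|u|^{2})^{2}}{|v-u|^{2}}}\,du\le\frac{C}{1+|v|}$; this I would do by absorbing $(1+|v-u|)^{\gamma}$ into half of $e^{-\frac1{16}|v-u|^{2}}$, setting $z=v-u$, and using spherical coordinates with polar axis along $v$: the angular integral $\int_{-1}^{1}e^{-c(2|v|\mu-|z|)^{2}}\,d\mu\le C/|v|$ furnishes the gain, while the radial integral $\int_{0}^{\infty}|z|e^{-c|z|^{2}}\,d|z|$ is finite. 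Adding the two regions together with the $k_{1}$--part then yields the claim.

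The step I expect to be the main obstacle is controlling the polynomial weight ratio $w(x,v)/w(x,u)$ against the exponential left over from Grad's estimate. What makes it go through is that wherever the ratio is large, i.e.\ $|v|\gg|u|$, the quotient $\frac{(|v|^{2}-|u|^{2})^{2}}{|v-u|^{2}}$ is already $\gtrsim|v|^{2}$, so the Gaussian-type factor wins there; and in the $k_{1}$--term one genuinely needs the sharp bound $Q\le 2(|v|^{2}+|u|^{2})$ rather than the easy $Q\le 4(|v|^{2}+|u|^{2})$, because only the former leaves enough Gaussian budget to pay the factor $e^{\frac1{16}Q}$ and still retain decay. The remaining ingredient — the classical kernel integral estimate with exponent $\tfrac1{16}$ — is entirely standard and could simply be cited (as in \cite{ChanBELP}) if a fully self-contained account is not wanted.
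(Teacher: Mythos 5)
Your proposal is essentially correct, but note that the paper itself offers no proof of this lemma at all: it is imported verbatim from \cite{ChanBELP} (the weighted Grad-type estimate there, in turn built on the classical Grad/Glassey/Guo kernel bounds), so there is no internal argument to compare against. Your argument is, in substance, the standard one behind that citation: split $k=k_{2}-k_{1}$ via Carleman, use the pointwise Grad bounds with the cross term $\frac{(|v|^{2}-|u|^{2})^{2}}{|v-u|^{2}}$, observe that the weight ratio $w(x,v)/w(x,u)$ enters only as a scalar factor uniformly controlled in $x$ because $0\le\Phi\le\|\Phi\|_{\infty}$, and exploit that wherever this ratio can be large ($|u|\le|v|/2$) the cross term already yields $e^{-c|v|^{2}}$, while on $|u|\ge|v|/2$ the ratio is bounded and $(1+|u|)^{-\alpha}\le 2^{\alpha}(1+|v|)^{-\alpha}$, after which the classical $\frac{C}{1+|v|}$ gain comes from the angular integral $\int_{-1}^{1}e^{-c(2|v|\mu-|z|)^{2}}d\mu\le C/|v|$. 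Your key auxiliary inequality $Q(v,u)\le 2(|v|^{2}+|u|^{2})$ and the two weight bounds ($\le C_{\beta,\Phi}(1+|v|)^{\beta}$ always, $\le 4^{\beta/2}$ for $|u|\ge|v|/2$) check out, and the exponent bookkeeping $e^{-\frac18 Q}e^{\frac1{16}Q}=e^{-\frac1{16}Q}$ is exactly why the statement carries the factor $\frac1{16}$.

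Two small points to tidy up. First, in the final classical estimate the angular gain produces $C/|v|$, which must be complemented by the trivial bound $\int\frac{(1+|z|)^{\gamma}}{|z|}e^{-\frac1{16}|z|^{2}}dz<\infty$ for, say, $|v|\le 1$ in order to conclude $\frac{C}{1+|v|}$; you should state this explicitly. Second, the lemma as stated asserts $k(v,u)\ge 0$, which your decomposition $k=k_{2}-k_{1}$ does not (and need not) deliver; since the displayed estimate only involves $|k(v,u)|$, your bound $|k|\le k_{1}+k_{2}$ is exactly what is used later in the paper, so this discrepancy is with the statement's phrasing rather than with your argument. Also be aware that the precise polynomial prefactor in the Grad bound for $k_{2}$ (e.g.\ $(1+|v-u|)^{\gamma}|v-u|^{-1}$ versus $|v-u|+|v-u|^{-1}$ or factors of $(1+|v|+|u|)^{\gamma-1}$) varies between references; this is immaterial here because the leftover Gaussian absorbs any such polynomial, but if you want a self-contained write-up you should cite or verify the specific form you use.
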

From this lemma, we can express the weighted operator $K_w$ in \eqref{woperK} as
\begin{align} \label{wKexpress}
	K_w h(v) = \int_{\mathbb{R}^3} k_w(v,u)h(u)du.
\end{align}

\bigskip

The next theorem provides that the global-in-time exponential decay of a solution to linearized Boltzmann equation which satisfies an assumption in a finite time. 
\begin{theorem} \label{Fest}
	Assume that there exist $T_0>0$ and $C_{T_0},\lambda>0$ such that for all solution $f(t,x,v)$ in $L^\infty_{x,v}(w)$ to
	\begin{align} \label{lb}
		\partial_tf+v \cdot \nabla_x f-\nabla_x\Phi(x) \cdot \nabla_v f +e^{-\Phi(x)}L(f)=0 
	\end{align}
	with initial datum $f_0$ in $L^\infty_{x,v}(w)$, the following holds
	\begin{align} \label{Fas1}
		\|wf(T_0)\|_{L_{x,v}^\infty} \le e^{-\lambda T_0}\|wf_0\|_{L_{x,v}^\infty} + C_{T_0}\int_0^{T_0}\|f(s)\|_{L_{x,v}^2} ds.
	\end{align}
	Then,  under the a priori assumption \eqref{AAL2},  for all $0<\tilde{\lambda}<\min\{\lambda,\lambda_G\}$, there exist $C>0$, independent of $f_0$, but depending on $T_0$, $\lambda$, $\lambda_G$, and $\tilde{\lambda}$, such that for all solution $f$ to \eqref{lb} in $L^\infty_{x,v}(w)$, 
	\begin{align*}
		\|wf(t)\|_{L_{x,v}^\infty} \le Ce^{-\tilde{\lambda}t}\|wf_0\|_{L_{x,v}^\infty} \quad \text{for all t}\ge 0.
	\end{align*}
\end{theorem}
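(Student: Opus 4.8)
The plan is to bootstrap the one-step bound \eqref{Fas1} along the successive intervals $[nT_0,(n+1)T_0]$, feeding in the global $L^2$ decay of Theorem~\ref{T31} to absorb the integral remainder. Since $\beta>5>3/2$ and $\Phi\ge 0$, one has $\int_{\mathbb{R}^3}w(x,v)^{-2}\,dv\le\int_{\mathbb{R}^3}(1+|v|^2/2)^{-\beta}\,dv<\infty$ uniformly in $x$, hence $\|f_0\|_{L^2_{x,v}}\le C_\Omega\|wf_0\|_{L^\infty_{x,v}}$; combining this with Theorem~\ref{T31} (which applies because of the a priori assumption \eqref{AAL2}) gives, for every $\lambda'<\lambda_G$,
\[
\|f(s)\|_{L^2_{x,v}}\le C_GC_\Omega\,e^{-\lambda' s}\|wf_0\|_{L^\infty_{x,v}},\qquad s\ge 0 .
\]
Fix the target rate $\tilde\lambda<\min\{\lambda,\lambda_G\}$ and choose an intermediate $\lambda'$ with $\tilde\lambda<\lambda'<\min\{\lambda,\lambda_G\}$.

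Apply \eqref{Fas1} to the solution restarted at time $nT_0$ (its initial datum is $f(nT_0,\cdot,\cdot)\in L^\infty_{x,v}(w)$, and the equation is autonomous). Writing $\alpha_n:=\|wf(nT_0)\|_{L^\infty_{x,v}}$ and using the $L^2$ bound above on $[nT_0,(n+1)T_0]$, we obtain the linear recursion
\[
\alpha_{n+1}\le e^{-\lambda T_0}\alpha_n+C_{T_0}\int_{nT_0}^{(n+1)T_0}\|f(s)\|_{L^2_{x,v}}\,ds\le a\,\alpha_n+C_*\,b^{\,n}\,\alpha_0,
\]
where $a:=e^{-\lambda T_0}$, $b:=e^{-\lambda' T_0}$ and $C_*:=C_{T_0}C_GC_\Omega/\lambda'$; note $0<a<b<1$ precisely because $\lambda'<\lambda$. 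A one-line induction on this recursion yields $\alpha_n\le\bigl(1+\tfrac{C_*}{b-a}\bigr)\,b^{\,n}\,\alpha_0$, i.e. exponential decay at the sampling times $nT_0$ with rate $\lambda'$.

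To promote this to all $t\ge 0$ I would invoke the local-in-time companion of \eqref{Fas1},
\[
\sup_{0\le\tau\le T_0}\|wf(\tau)\|_{L^\infty_{x,v}}\le C_{T_0}\Bigl(\|wf_0\|_{L^\infty_{x,v}}+\int_0^{T_0}\|f(s)\|_{L^2_{x,v}}\,ds\Bigr),
\]
which is the same estimate without the gain factor $e^{-\lambda T_0}$ and is obtained by the same mild-formulation/double-Duhamel analysis (splitting $K_w$ into a small-norm tail and a finite-rank piece with compactly supported kernel, the latter absorbed into $\|f\|_{L^2_{x,v}}$ by Cauchy--Schwarz, then Gr\"onwall on $[0,T_0]$). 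Applying it to the solution restarted at $nT_0$, together with the discrete decay and the $L^2$ bound, gives for $t\in[nT_0,(n+1)T_0]$
\[
\|wf(t)\|_{L^\infty_{x,v}}\le C\,b^{\,n}\,\alpha_0\le C\,e^{\lambda' T_0}\,e^{-\lambda' t}\|wf_0\|_{L^\infty_{x,v}}\le C'\,e^{-\tilde\lambda t}\|wf_0\|_{L^\infty_{x,v}},
\]
with $C'$ depending only on $T_0,\lambda,\lambda_G,\tilde\lambda$ (through $a$, $b-a$, $C_G$, $C_\Omega$, $C_{T_0}$) and the fixed data $\Phi,\beta$, which is the assertion.

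The main obstacle is the bookkeeping of the two distinct decay rates: $\lambda$ comes from the one-step $L^\infty$ estimate and $\lambda_G$ from the $L^2$ theory, and in solving $\alpha_{n+1}\le a\alpha_n+C_*b^n\alpha_0$ one must ensure that the slower rate survives with a finite constant — this forces the strict inequalities and the auxiliary $\lambda'$ so that $b-a>0$. The only other point requiring care is supplying the intermediate-time bound, since \eqref{Fas1} as stated controls just the endpoint $t=T_0$; but this is a routine non-decaying variant obtained by the same estimates and introduces no new difficulty.
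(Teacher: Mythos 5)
Your proposal is correct and follows essentially the same route as the paper: iterate the one-step estimate \eqref{Fas1} over the blocks $[nT_0,(n+1)T_0]$, insert the $L^2$ decay of Theorem \ref{T31} together with $\|f_0\|_{L^2_{x,v}}\lesssim\|wf_0\|_{L^\infty_{x,v}}$, obtain geometric decay of $\|wf(nT_0)\|_{L^\infty_{x,v}}$, and then pass to all times via a non-decaying local-in-time bound on each block. The only cosmetic difference is bookkeeping — you solve a recursion with an intermediate rate $\lambda'$ so that $b-a>0$, whereas the paper sums the series directly and absorbs the resulting factor $mT_0$ into the gap $e^{-(\lambda_{\min}-\tilde{\lambda})mT_0}$ — and your explicitly stated local companion estimate is exactly what the paper's step $\|wf(t)\|_{L^\infty_{x,v}}\le C_{T_0}\|wf(mT_0)\|_{L^\infty_{x,v}}$ implicitly uses (it is available from the same Duhamel analysis, cf. the bound preceding \eqref{cc33}).
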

\begin{proof}
	It suffices to prove only for $t \ge 1$.\\
	For any $m \ge 1$, we apply the assumption \eqref{Fas1} repeatedly to functions $f(lT_0+s)$:
	\begin{align*}
		\|wf\left(mT_0\right)\|_{L^\infty_{x,v}} 
		&\le e^{-\lambda T_0}\|wf(\{m-1\}T_0)\|_{L^\infty_{x,v}} +C_{T_0}\int_0^{T_0}\|f(\{m-1\}T_0+s)\|_{L^2_{x,v}}ds\\
		&= e^{-\lambda T_0}\|wf(\{m-1\}T_0)\|_{L^\infty_{x,v}} + C_{T_0}\int_{\{m-1\}T_0}^{mT_0} \|f(s)\|_{L^2_{x,v}}ds\\
		& \le e^{-2\lambda T_0} \|wf(\{m-2\}T_0)\|_{L^\infty_{x,v}} + e^{-\lambda T_0}C_{T_0}\int_{\{m-2\}T_0}^{\{m-1\}T_0} \|f(s)\|_{L^2_{x,v}}ds\\
		& \quad + C_{T_0}\int_{\{m-1\}T_0}^{mT_0} \|f(s)\|_{L^2_{x,v}}ds\\
		& \le e^{-m\lambda T_0}\|wf_0\|_{L^\infty_{x,v}} + C_{T_0}\sum_{k=0}^{m-1}e^{-k\lambda T_0}\int_{\{m-k-1\}T_0}^{\{m-k\}T_0} \|f(s)\|_{L^2_{x,v}}ds.
	\end{align*}
	By Theorem \ref{T31}, we have in $\{m-k-1\}T_0 \le s \le \{m-k\}T_0$,
	\begin{align*}
		\|f(s)\|_{L^2_{x,v}} \le C_G e^{-\lambda_G s}\|f_0\|_{L^2_{x,v}} \le C_G e^{-\lambda_G \{m-k-1\}T_0}\|f_0\|_{L^2_{x,v}}.
	\end{align*}
	Let $\lambda_{\min}:= \min\{\lambda,\lambda_G\}$ and we take $\tilde{\lambda}$ such that $0<\tilde{\lambda}<\lambda_{\min}$.\\
	Then we can bound
	\begin{align*}
		\|wf\left(mT_0\right)\|_{L^\infty_{x,v}} &\le e^{-m\lambda T_0}\|wf_0\|_{L^\infty_{x,v}}\\
		&\quad + C_{T_0}\sum_{k=0}^{m-1}e^{-k\lambda T_0}\int_{\{m-k-1\}T_0}^{\{m-k\}T_0} e^{-\lambda_G \{m-k-1\}T_0}\|f_0\|_{L^2_{x,v}}ds\\
		&= e^{-m\lambda T_0}\|wf_0\|_{L^\infty_{x,v}} + C_{T_0}\sum_{k=0}^{m-1}T_0 e^{-k\lambda T_0}e^{-\lambda_G \{m-k-1\}T_0}\|f_0\|_{L^2_{x,v}}\\
		& \le e^{-m\tilde{\lambda} T_0}\|wf_0\|_{L^\infty_{x,v}} + C_{T_0} mT_0 e^{-\lambda_{\min}\{m-1\}T_0}\|f_0\|_{L^2_{x,v}}.
	\end{align*}
	We note that $\|f_0\|_{L^2_{x,v}} \le C\|wf_0\|_{L^\infty_{x,v}}$.\\
	We split the exponent as  $e^{-\lambda_{\min}\{m-1\}T_0} =  e^{-\lambda_{\min}mT_0} e^{\lambda_{\min}T_0} = e^{-\tilde{\lambda}mT_0}e^{-(\lambda_{\min}-\tilde{\lambda})mT_0} e^{\lambda_{\min}T_0}$, and we can absorb $mT_0$ by $e^{-(\lambda_{\min}-\tilde{\lambda})mT_0}$, so that we obtain
	\begin{align*}
		\|wf\left(mT_0\right)\|_{L^\infty_{x,v}} \le C_{T_0,\tilde{\lambda},\lambda_{\min}}e^{-\tilde{\lambda}mT_0}\|wf_0\|_{L^\infty_{x,v}}.
	\end{align*}
	For any $t$, we can find $m$ such that $mT_0 \le t \le \{m+1\}T_0$, and we deduce
	\begin{align*}
		\|wf(t)\|_{L_{x,v}^\infty} &\le C_{T_0}\|wf\left(mT_0\right)\|_{L^\infty_{x,v}}\\
		& \le C_{T_0,\tilde{\lambda},\lambda_{\min}}e^{-\tilde{\lambda}mT_0}\|wf_0\|_{L^\infty_{x,v}}\\
		& \le C_{T_0,\tilde{\lambda},\lambda_{\min}}e^{-\tilde{\lambda}t}\|wf_0\|_{L^\infty_{x,v}}
	\end{align*}
	since $e^{-\tilde{\lambda}mT_0} \le e^{-\tilde{\lambda}t}e^{\tilde{\lambda}T_0}$.
\end{proof}

\bigskip
We denote by $S_G(t)h_0$ the semigroup of a solution of the equation \eqref{WLBE} with initial datum $h_0$ and the diffuse reflection boundary condition \eqref{wLBEBC}. The next theorem represents the exponential decay of the linearized Boltzmann equation \eqref{WLBE}. Recall the definition \ref{Backwardexit}, especially \eqref{timecycle}, \eqref{Xtrajcycle}, and \eqref{Vtrajcycle}, as well as the definition of the iterated integral \eqref{iterint}.

\begin{theorem} \label{LT}
	Let $h_0$ in $L^\infty_{x,v}$ and $\beta > 5$. Then there exists a unique global solution to \eqref{WLBE} with initial datum $h_0$ and the diffuse reflection boundary condition \eqref{wLBEBC}. Moreover, there exist $\lambda_\infty>0$ and $C_\infty>0$, depending on $\Phi$ and $\beta$, such that
	\begin{align} \label{expdec}
		\|S_G(t)h_0\|_{L^\infty_{x,v}} \le C_\infty e^{-\lambda_\infty t} \|h_0\|_{L^\infty_{x,v}} \quad \text{for all} \quad t\ge 0.	
	\end{align}
\end{theorem}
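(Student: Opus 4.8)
The plan is to combine the linear $L^2$ decay (Theorem \ref{T31}) with the $L^\infty$ machinery of Theorem \ref{Fest} via the classical $L^2$–$L^\infty$ bootstrap, so the real work is to verify hypothesis \eqref{Fas1} of Theorem \ref{Fest}: for a solution $f$ of \eqref{lb} one must show $\|wf(T_0)\|_{L^\infty_{x,v}} \le e^{-\lambda T_0}\|wf_0\|_{L^\infty_{x,v}} + C_{T_0}\int_0^{T_0}\|f(s)\|_{L^2_{x,v}}ds$ for suitable $T_0$, $\lambda$. First I would write the mild formulation of \eqref{WLBE} using the damped-transport semigroup $S_{G_\nu}$: $h(t) = S_{G_\nu}(t)h_0 + \int_0^t S_{G_\nu}(t-s)(e^{-\Phi}K_w h(s))ds$. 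By Lemma \ref{L43}, the first term already decays like $\exp\{-e^{-\|\Phi\|_\infty}\tilde\nu_0 t\}$, so the entire difficulty is the second (collision) term. For that term I would apply Lemma \ref{L43C} to reduce to estimating $\sup_s \exp\{-e^{-\|\Phi\|_\infty}\tilde\nu_0(t-s)\}\|e^{-\Phi}K_w h(s)\|_{L^\infty_{x,v}(\nu^{-1})}$, and then use the kernel bound of Lemma \ref{Kest} to write $K_w h = \int k_w(v,u)h(u)\,du$ with $\int |k_w(v,u)|(1+|u|)^{-\beta}\,du \lesssim (1+|v|)^{-1-\beta}\cdot(1+|v|)^\beta$-type control.

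The central step, as in \cite{GDCB2010}, is the double Duhamel iteration: substitute the mild formula for $h(s)$ back inside $K_w h(s)$, producing a double time integral with a double kernel $\int\int k_w(v,u)k_w(u,u')h(s',\ldots,u')$. I would split the $u$- and $u'$-integrals into the regions $\{|u|\ge N\}\cup\{|u'|\ge N\}$ (small by the decay of the kernels in Lemma \ref{Kest} and the weight, giving a factor $o_N(1)$ that absorbs a small fraction of $\|wh\|_{L^\infty}$), the small-time slab $\{s-s'\le \delta\}$ (small by the length of the time interval), and the main region $\{|u|\le N,\ |u'|\le N,\ s-s'\ge\delta\}$. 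On the main region the inner kernels are bounded, so the contribution is controlled by an integral of the form \eqref{fot}, namely $\int_0^t\int_0^s \exp\{-e^{-\|\Phi\|_\infty}\nu_0(t-s')\}\int_{|u|\le 2R}\int_{|u'|\le 3R}|h(s',X(s';s,X(s;t,x,v),u),u')|\,du'\,du\,ds'\,ds$. Here I invoke the transversality/non-degeneracy apparatus: by Lemma \ref{cpl} I partition $[0,T_0]\times\Omega\times[-4R,4R]^3$ so that outside finitely many short time intervals $\det\big(\tfrac{dX}{du}(s';s,X,u)\big)>\delta_*$, change variables $u\mapsto X(s';s,X(s;t,x,v),u)$, and bound the resulting integral by $\delta_*^{-1}\int_{\Omega}|h(s',y,u')|\,dy \le C\|h(s')\|_{L^2_{x,v}}$ (after an additional $u'$-integration against the bounded weight). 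The short excluded time intervals contribute another term of size $\epsilon\cdot\|wh\|_{L^\infty}$, which can be made small by choosing $\epsilon$ small. Collecting everything yields, for $T_0$ large enough and then $N,\delta,\epsilon$ chosen appropriately, exactly the inequality \eqref{Fas1} with $e^{-\lambda T_0} = C_\Phi T_0^{5/4}\exp\{-e^{-\|\Phi\|_\infty}\tilde\nu_0 T_0\} + (\text{small})<1$.

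With \eqref{Fas1} established, Theorem \ref{Fest} applies directly under the a priori assumption \eqref{AAL2} (which is harmless here since the equation is linear — one can take the smallness parameter $\eta$ as small as needed, or note that Theorem \ref{T31} only requires it to run), giving $\|wf(t)\|_{L^\infty_{x,v}} \le C e^{-\tilde\lambda t}\|wf_0\|_{L^\infty_{x,v}}$ for any $0<\tilde\lambda<\min\{\lambda,\lambda_G\}$; setting $\lambda_\infty := \tilde\lambda$ and $C_\infty := C$ finishes the decay estimate \eqref{expdec}. For existence and uniqueness I would run the same iteration scheme as in Lemma \ref{L43} (or Lemma \ref{L42}): construct approximate solutions $h^{(m)}$ with truncated data $h_0\mathbf 1_{\{|v|\le m\}}$ and boundary factor $1-\tfrac1m$, obtain the uniform bound from the estimate just proved applied at each stage, extract a weak-$*$ limit, and check it solves \eqref{WLBE}; uniqueness follows from the $L^1$-contraction argument in Lemma \ref{L43} augmented by the bounded kernel operator $K_w$ (which contributes a term absorbed by Grönwall). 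The main obstacle is unquestionably the double Duhamel estimate and, within it, the change of variables governed by Lemma \ref{cpl} — that is where the external potential $\Phi$ genuinely complicates the flat-space argument of \cite{GDCB2010}, since the Jacobian of $u\mapsto X(s';s,X,u)$ is no longer the trivial one and can degenerate.
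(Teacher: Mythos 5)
Your proposal follows essentially the same route as the paper: write the mild form with $S_{G_\nu}$, perform the double Duhamel iteration, use Lemma \ref{Kest} together with the large-velocity/short-time splittings and the smooth compactly supported kernel approximation, and on the main region apply Lemma \ref{cpl} to change variables $v'\mapsto X(s';s,X(s;t,x,v),v')$ away from the degenerate time slices so that the contribution is controlled by $\int_0^{T_0}\|f(s')\|_{L^2_{x,v}}ds'$, which yields \eqref{Fas1} and lets Theorem \ref{Fest} (with Theorem \ref{T31}) produce the exponential decay, while existence and uniqueness come from the same iteration/contraction scheme the paper uses. The only cosmetic differences (truncated data and the $L^1$/Gr\"onwall uniqueness variant, and your brief remark on closing the a priori assumption \eqref{AAL2}) do not change the substance of the argument.
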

\begin{proof}
	We show the existence and uniqueness of solution to \eqref{WLBE} and the exponential decay.\\
	\newline
	$\mathbf{(Existence)}$\\
	\indent We consider an iterative system to \eqref{WLBE} as followings:
	\begin{align*}
		\left(\partial_t +v\cdot \nabla_x-\nabla_x \Phi(x) \cdot \nabla_v+e^{-\Phi(x)}\nu(v)\right)h^{(m+1)} = e^{-\Phi(x)}K_w(h^{(m)}).
	\end{align*}
	By Duhamel's principle, we get
	\begin{align*}
		h^{(m+1)}(t,x,v) = \left(S_{G_\nu}(t)h_0\right)(x,v) + \int_0^t S_{G_\nu}(t-s)\left(e^{-\Phi}K_wh^{(m)}(s)\right)(x,v)ds,
	\end{align*}
	By Lemma \ref{L43} and Lemma \ref{Kest}, we deduce that for $0\le t \le T$,
	\begin{align*}
		\left|\left(h^{(m+1)}-h^{(m)}\right)(t,x,v)\right|&\le \int_0^{T} \left|S_{G_\nu}(t-s)\left(e^{-\Phi}K_w(h^{(m)}-h^{(m-1)})(s)\right)(x,v)\right|ds\\
		&\le T\sup_{0 \le s \le T}\left\|S_{G_\nu}(t-s)\left(e^{-\Phi}K_w(h^{(m)}-h^{(m-1)})(s)\right)\right\|_{L^\infty_{x,v}}\\
		&\le TC_\Phi \sup_{0 \le s \le T}\left\|\left(h^{(m)}-h^{(m-1)}\right)(s)\right\|_{L^\infty_{x,v}}.
	\end{align*}
	This yields
	\begin{align*}
		 \sup_{0 \le s \le T}\left\|\left(h^{(m+1)}-h^{(m)}\right)(s)\right\|_{L^\infty_{x,v}} \le TC_\Phi \sup_{0 \le s \le T}\left\|\left(h^{(m)}-h^{(m-1)}\right)(s)\right\|_{L^\infty_{x,v}}.
	\end{align*}
	Choosing small $T>0$ such that $TC_\Phi<1$, the above inequality is a contraction. Thus we complete the existence for small times. Lately, thanks to the exponential decay, we can conclude the global existence of solution.\\
	\newline
	$\mathbf{(Uniqueness)}$\\
	\indent It is similar to the proof of existence part. We can deduce the uniqueness of solution for small times. Thanks to the exponential decay, we can conclude the global uniqueness of solution.\\
	\newline
	$\mathbf{(Exponential\ decay )}$\\
 	\indent Thanks to Theorem \ref{Fest}, it suffices to prove \eqref{Fas1} for a finite time in order to derive the exponential decay \eqref{expdec}.
	Let $T_0>0$ be fixed and $0\le t \le T_0$.
	By Duhamel's principle, we get
	\begin{align*}
		h(t,x,v) &= \left(S_{G_\nu}(t)h_0\right)(x,v) + \int_0^t S_{G_\nu}(t-s)\left(e^{-\Phi}K_w h(s)\right)(x,v)ds.
	\end{align*}
	We use the Duhamel's principle one more time to obtain
	\begin{equation} \label{dd1}
	\begin{aligned}
		h(t) &= S_{G_\nu}(t)h_0+ \int_0^t S_{G_\nu}(t-s) e^{-\Phi}K_w S_{G_\nu}(s)h_0ds\\ 
		&\quad + \int_0^t \int_0^s S_{G_\nu}(t-s)e^{-\Phi}K_wS_{G_\nu}(s-s')e^{-\Phi}K_wh(s')ds'ds\\
		&=:I_1 + I_2 +I_3.
	\end{aligned}
	\end{equation}
	First of all, by Lemma \ref{L43}, we can easily get
	\begin{align} \label{dd2}
		|I_1| \le \left\|S_{G_\nu}(t)(h_0)\right\|_{L^\infty_{x,v}} \le C \exp\left\{-e^{-\|\Phi\|_\infty} \frac{\nu_0}{2} t \right\}\|h_0\|_{L^\infty_{x,v}}.
	\end{align}
	Next, by Lemma \ref{L43} and Lemma \ref{Kest}, we deduce
	\begin{equation} \label{dd3}
	\begin{aligned} 
		|I_2| 
		&\le C_K \int^t_0\exp\left\{-e^{-\|\Phi\|_\infty} \frac{\nu_0}{2} (t-s) \right\} \|S_{G_\nu}(s)h_0\|_{L^\infty_{x,v}}ds\\
		&\le C_K t \exp\left\{-e^{-\|\Phi\|_\infty} \frac{\nu_0}{2} t\right\}\|h_0\|_{L^\infty_{x,v}}.
	\end{aligned}
	\end{equation}
	Now, we consider the term $I_3$. We can divide $I_3$ into three integrations as follows:
	\begin{align*}
		&\int_0^t \int_0^s S_{G_\nu}(t-s)e^{-\Phi}K_wS_{G_\nu}(s-s')e^{-\Phi}K_wh(s')ds'ds\\
		&=\int_{t-\epsilon}^t\int_0^s S_{G_\nu}(t-s)e^{-\Phi}K_wS_{G_\nu}(s-s')e^{-\Phi}K_wh(s')ds'ds\\
		& \quad + \int_0^{t-\epsilon}\int_{s-\epsilon}^s  S_{G_\nu}(t-s)e^{-\Phi}K_wS_{G_\nu}(s-s')e^{-\Phi}K_wh(s')ds'ds\\
		& \quad + \int_0^{t-\epsilon}\int_0^{s-\epsilon} S_{G_\nu}(t-s)e^{-\Phi}K_wS_{G_\nu}(s-s')e^{-\Phi}K_wh(s')ds'ds\\
		&=: J_1+J_2+J_3.
	\end{align*}
	\indent From now on, we will derive bounds for $J_1$, $J_2$, and $J_3$ in sequence.
	First, we can compute the bound for $J_1$:
	\begin{align*}
		|J_1| 
		& \le C_K \int_{t-\epsilon}^t\int_0^s \exp\left\{-e^{-\|\Phi\|_\infty} \frac{\nu_0}{2} (t-s) \right\}\|S_{G_\nu}(s-s')e^{-\Phi}K_wh(s')\|_{L^\infty_{x,v}}ds'ds\\
		& \le C_K \int_{t-\epsilon}^t\int_0^s \exp\left\{-e^{-\|\Phi\|_\infty} \frac{\nu_0}{2} (t-s') \right\}\|h(s')\|_{L^\infty_{x,v}}ds'ds\\
		& \le C_K \sup_{0 \le s' \le T_0 }\left\{\exp\left\{-e^{-\|\Phi\|_\infty} \frac{\nu_0}{4} (t-s') \right\}\|h(s')\|_{L^\infty_{x,v}}\right\} \int_{t-\epsilon}^{t}\int_0^s \exp\left\{-e^{-\|\Phi\|_\infty} \frac{\nu_0}{4} (t-s') \right\}ds'ds,
	\end{align*}
	where we have used Lemma \ref{L43} and Lemma \ref{Kest}.
	Here, we get
	\begin{align*}
		\int_{t-\epsilon}^{t}\int_0^s \exp\left\{-e^{-\|\Phi\|_\infty} \frac{\nu_0}{4} (t-s') \right\}ds'ds \le C_\Phi \int_{t-\epsilon}^{t} \exp\left\{-e^{-\|\Phi\|_\infty} \frac{\nu_0}{4} (t-s)\right\}ds \le C_\Phi\epsilon.
	\end{align*}
	Thus we deduce 
	\begin{align} \label{dd4}
		|J_1| \le \epsilon C_\Phi \sup_{0 \le s' \le T_0 }\left\{\exp\left\{-e^{-\|\Phi\|_\infty} \frac{\nu_0}{4} (t-s') \right\}\|h(s')\|_{L^\infty_{x,v}}\right\}.
	\end{align}
	Next, we can compute the bound for $J_2$:
	\begin{align*}
		|J_2|
		& \le C_K \int_{0}^t\int_{s-\epsilon}^s \exp\left\{-e^{-\|\Phi\|_\infty} \frac{\nu_0}{2} (t-s) \right\}\|S_{G_\nu}(s-s')e^{-\Phi}K_wh(s')\|_{L^\infty_{x,v}}ds'ds\\
		& \le C_K \int_{0}^t\int_{s-\epsilon}^s \exp\left\{-e^{-\|\Phi\|_\infty} \frac{\nu_0}{2} (t-s') \right\}\|h(s')\|_{L^\infty_{x,v}}ds'ds\\
		& \le C_K  \sup_{0 \le s' \le T_0 }\left\{\exp\left\{-e^{-\|\Phi\|_\infty} \frac{\nu_0}{4} (t-s') \right\}\|h(s')\|_{L^\infty_{x,v}}\right\} \int_{0}^{t}\int_{s-\epsilon}^s \exp\left\{-e^{-\|\Phi\|_\infty} \frac{\nu_0}{4} (t-s') \right\}ds'ds,
	\end{align*}
	where we have used Lemma \ref{L43} and Lemma \ref{Kest}.
	Here, we get
	\begin{align*}
		\int_0^{t}\int_{s-\epsilon}^s \exp\left\{-e^{-\|\Phi\|_\infty} \frac{\nu_0}{4} (t-s') \right\}ds'ds \le \epsilon \int_0^{t} \exp\left\{-e^{-\|\Phi\|_\infty} \frac{\nu_0}{4} (t-s) \right\}ds \le C\epsilon.
	\end{align*}
	Thus we deduce 
	\begin{align} \label{dd5}
		|J_2| \le \epsilon C_\Phi  \sup_{0 \le s' \le T_0 }\left\{\exp\left\{-e^{-\|\Phi\|_\infty} \frac{\nu_0}{4} (t-s') \right\}\|h(s')\|_{L^\infty_{x,v}}\right\}.
	\end{align}
	Now, controlling the term $J_3$ remains. Fix $(t,x,v)$ so that $(x,v) \notin \gamma_0$. Recall the definition of the iterated integral in \eqref{iterint}. From Lemma \ref{L41}, we have
	\begin{align*}
		&S_{G_\nu}(t-s)e^{-\Phi}K_wS_{G_\nu}(s-s')e^{-\Phi}K_wh(s')\\
		& = \exp\left\{-\int_s^te^{-\Phi(X(\tau))}\nu(V(\tau))d\tau\right\} \mathbf{1}_{\{t_{1}\le s \}}\left\{e^{-\Phi}K_wS_{G_\nu}(s-s')e^{-\Phi}K_wh(s')\right\}(s,X(s),V(s))\\
		&\quad + \frac{\exp{-\int_{t_1}^t e^{-\Phi(X(\tau))}\nu(V(\tau))d\tau}}{\tilde{w}(x_1,V(t_1))} \sum_{l=1}^{k-1}\int_{\prod_{j=1}^{k-1}\mathcal{V}_j} \mathbf{1}_{\{t_{l+1}\le s < t_l\}} \left\{e^{-\Phi}K_wS_{G_\nu}(s-s')e^{-\Phi}K_wh(s')\right\}(s,X_l(s),V_l(s))\\
		&\qquad \times d\Sigma_l(s)\\
		&\quad +\frac{\exp{-\int_{t_1}^t e^{-\Phi(X(\tau))}\nu(V(\tau))d\tau}}{\tilde{w}(x_1,V(t_1))}\int_{\prod_{j=1}^{k-1}\mathcal{V}_j} \mathbf{1}_{\{t_k>s\}} \left\{S_{G_\nu}(t-s)e^{-\Phi}K_wS_{G_\nu}(s-s')e^{-\Phi}K_wh(s')\right\}(t_k,x_k,V_{k-1}(t_k))\\
		&\qquad \times d\Sigma_{k-1}(t_k)\\
		&=: M_1+M_2+M_3,
	\end{align*}
	where
	\begin{align*}
		d\Sigma_l(s) &= \left\{\prod_{j=l+1}^{k-1}d\sigma_j\right\}\left\{\exp{-\int_s^{t_l} e^{-\Phi(X_l(\tau))}\nu(V_l(\tau))d\tau} \tilde{w}(x_l,v_l)d\sigma_l\right\}\\
		&\quad \cross \prod_{j=1}^{l-1}\left\{\exp{-\int_{t_{j+1}}^{t_j} e^{-\Phi(X_j(\tau))}\nu(V_j(\tau))d\tau}d\sigma_j\right\},
	\end{align*}
	and the exponential factor in $d \Sigma_l(s)$ is bounded by $\exp\left\{-e^{-\|\Phi\|_\infty}\nu_0(t_1-s)\right\}$.\\
	\indent From now on, we will sequentially adress $M_3$, $M_1$, and $M_2$.
	Firstly, we consider the term $M_3$. By Lemma \ref{L41} and Lemma \ref{Kest}, we compute
	\begin{equation} \label{ppp0}
	\begin{aligned}
		&\left\|\left\{S_{G_\nu}(t-s)e^{-\Phi}K_wS_{G_\nu}(s-s')e^{-\Phi}K_wh(s')\right\}(t_k)\right\|_{L^\infty_{x,v}}\\ 
		&\le C_\Phi \exp\left\{-e^{-\|\Phi\|_\infty} \frac{\nu_0}{2} (t_k-s) \right\}\left\|\left\{S_{G_\nu}(s-s')e^{-\Phi}K_wh(s')\right\}\right\|_{L^\infty_{x,v}}\\
		&\le C_\Phi \exp\left\{-e^{-\|\Phi\|_\infty} \frac{\nu_0}{2} (t_k-s') \right\}\left\|h(s')\right\|_{L^\infty_{x,v}}.
	\end{aligned}
	\end{equation}
	Since $t-s \ge \epsilon >0$, from Lemma \ref{Lsmall}, we can choose large $k=k_0(\epsilon,T_0)$ such that for $k \ge k_0$, for all $(t,x,v)$, $0 \le t \le T_0$, $x \in \bar{\Omega}$, $v \in \mathbb{R}^3$,
	\begin{align*}
		\int_{\prod_{j=1}^{k-2}\mathcal{V}_j} \mathbf{1}_{\{t_{k-1}(t,x,v,v_1,v_2,...,v_{k-2})>s\}} \prod_{j=1}^{k-2}d\sigma_j \le \epsilon.
	\end{align*}
	Thus from \eqref{ppp0}, we get
	\begin{align*}
		|M_3| &\le C_\Phi \exp\left\{-e^{-\|\Phi\|_\infty}\nu_0(t-t_1)\right\}\int_{\prod_{j=1}^{k-1}\mathcal{V}_j}  \mathbf{1}_{\{t_k>s\}} \exp\left\{-e^{-\|\Phi\|_\infty} \frac{\nu_0}{2} (t_1-s') \right\}\left\|h(s')\right\|_{L^\infty_{x,v}}\\
		&\quad \times \tilde{w}(x_{k-1},v_{k-1}) \prod_{j=1}^{k-1}d\sigma_j\\
		& \le C_\Phi \exp\left\{-e^{-\|\Phi\|_\infty} \frac{\nu_0}{2} (t-s') \right\}\left\|h(s')\right\|_{L^\infty_{x,v}} \int_{\mathcal{V}_{k-1}}\tilde{w}(x_{k-1},v_{k-1})\left(\int_{\prod_{j=1}^{k-2}\mathcal{V}_j} \mathbf{1}_{\{t_{k-1}>s\}} \prod_{j=1}^{k-2}d\sigma_j\right)d\sigma_{k-1}\\
		& \le \epsilon C_\Phi \exp\left\{-e^{-\|\Phi\|_\infty} \frac{\nu_0}{2} (t-s')\right\}\left\|h(s')\right\|_{L^\infty_{x,v}}.
	\end{align*}
	This yields
	\begin{equation} \label{dd6}
	\begin{aligned}
		\int_0^{t-\epsilon}\int_0^{s-\epsilon} M_3ds'ds		
		\le\epsilon C_\Phi \sup_{0 \le s' \le T_0 }\left\{\exp\left\{-e^{-\|\Phi\|_\infty} \frac{\nu_0}{4} (t-s') \right\}\|h(s')\|_{L^\infty_{x,v}}\right\},
	\end{aligned}
	\end{equation}
	where $\int_{0}^t\int_0^s \exp\left\{-e^{-\|\Phi\|_\infty} \frac{\nu_0}{4} (t-s')\right\}ds'ds$ is finite.\\
	Now, we consider the terms $M_1$ and $M_2$. We derive from Lemma \ref{L41} the formula for $e^{-\Phi}K_wS_{G_\nu}(s-s')e^{-\Phi}K_wh(s')$. Recall the definition \eqref{Backwardexit}. We denote $X'(s') = X_l(s) -\int_{s'}^s V'(\tau)d\tau$.
	\begin{align*}
		&\left\{e^{-\Phi}K_wS_{G_\nu}(s-s')e^{-\Phi}K_wh(s')\right\}(s,X_{l}(s),V_l(s))\\
		& = \int_{\mathbb{R}^3} e^{-\Phi(X_l(s))}k_w(V_l(s),v')\left\{S_{G_\nu}(s-s')e^{-\Phi}K_wh(s')\right\}(s,X_l(s),v')dv'\\
		& = \int_{\mathbb{R}^3}e^{-\Phi(X_l(s))} k_w(V_l(s),v')\exp{-\int_{s'}^s e^{-\Phi(X(\tau))}\nu(V(\tau))d\tau} \mathbf{1}_{\{t_1' \le s'\}}\left\{e^{-\Phi}K_wh(s')\right\}(s',X'(s'),V'(s'))dv'\\
		& \quad + \int_{\mathbb{R}^3} e^{-\Phi(X_l(s))}k_w(V_l(s),v')\frac{\exp{-\int_{t_1'}^s e^{-\Phi(X'(\tau))}\nu(V'(\tau))d\tau}}{\tilde{w}(x_1',V'(t_1'))} \sum_{l'=1}^{k-1}\int_{\prod_{j=1}^{k-1}\mathcal{V}_j'} \mathbf{1}_{\{t_{l'+1}'\le s' < t_{l'}'\}} \\
		&\qquad \times \left\{\int_{\mathbb{R}^3}e^{-\Phi(X_{l'}'(s'))}k_w(V_{l'}'(s'),v'')h(s',X_{l'}'(s'),v'')dv''\right\}d\Sigma_{l'}'(s')dv'\\
		& \quad +  \int_{\mathbb{R}^3}e^{-\Phi(X_l(s))} k_w(V_l(s),v')\frac{\exp{-\int_{t_1'}^s e^{-\Phi(X'(\tau))}\nu(V'(\tau))d\tau}}{\tilde{w}(x_1',V'(t_1'))}  \int_{\prod_{j=1}^{k-1}\mathcal{V}_j'}\mathbf{1}_{\{t_k' > s'\}} \\
		&\qquad \times \left\{S_{G_\nu}(s-s')e^{-\Phi}K_wh(s')\right\}(t_k',x_k',V_{k-1}'(t_k'))d\Sigma_{k-1}'(t_k')dv'\\
		& =: L_1+L_2+L_3.
	\end{align*}
	\indent We first consider the term $L_3$. Since $s-s' \ge \epsilon >0$, by Lemma \ref{Lsmall}, we can choose large $k=k_0(\epsilon,T_0)$ such that for $k \ge k_0$, for all $(s,,X_{l}(s),V_l(s))$, $0 \le s \le T_0$, $X_{l}(s) \in \bar{\Omega}$, $V_l(s) \in \mathbb{R}^3$,
	\begin{align*}
		\int_{\prod_{j=1}^{k-2}\mathcal{V}_j'} \mathbf{1}_{\{t_{k-1}'(s,X_{l}(s),v',v'_1,v'_2,...,v'_{k-2})>s'\}} \prod_{j=1}^{k-2}d\sigma'_j \le \epsilon.
	\end{align*}
	From Lemma \ref{L43} and Lemma \ref{Kest}, we have
	\begin{align} \label{ppp10}
		\left\|\left\{S_{G_\nu}(s-s')e^{-\Phi}K_wh(s')\right\}(t_k')\right\|_{L^\infty_{x,v}} \le C_\Phi\exp\left\{-e^{-\|\Phi\|_\infty} \frac{\nu_0}{2} (t_k'-s') \right\} \|h(s')\|_{L^\infty_{x,v}}.
	\end{align}
	Using the estimate \eqref{ppp10}, we obtain
	\begin{align*}
		L_3 
		&\le C_\Phi \exp\left\{-e^{-\|\Phi\|_\infty} \frac{\nu_0}{2} (s-s') \right\}\|h(s')\|_{L^\infty_{x,v}}\int_{\mathbb{R}^3}k_w(V_l(s),v')\left(\int_{\prod_{j=1}^{k-2}\mathcal{V}_j'} \mathbf{1}_{\{t_{k-1}'>s'\}} \prod_{j=1}^{k-2}d\sigma_j'\right) \\
		&\quad \times  \left(\int_{\mathcal{V}_{k-1}'}\tilde{w}(x_{k-1}',v_{k-1}')d\sigma_{k-1}'\right) dv'\\
		&\le \epsilon C_\Phi \exp\left\{-e^{-\|\Phi\|_\infty} \frac{\nu_0}{2} (s-s') \right\}\|h(s')\|_{L^\infty_{x,v}}.
	\end{align*}
	Hence the first contribution of $L_3$ (contribution of $M_1$ and $L_3$) is 
	\begin{align} \label{dd7}
		&\int_0^{t-\epsilon}\int_0^{s-\epsilon} \exp{-\int_{s}^t e^{-\Phi(X(\tau))}\nu(V(\tau))d\tau}\mathbf{1}_{\{t_{1}\le s \}} L_3ds'ds\nonumber\\
		&\le \epsilon C_\Phi \sup_{0 \le s' \le T_0 }\left\{\exp\left\{-e^{-\|\Phi\|_\infty} \frac{\nu_0}{4} (t-s') \right\}\|h(s')\|_{L^\infty_{x,v}}\right\}.
	\end{align}
	The second contribution of $L_3$ (contribution of $M_2$ and $L_3$) also is
	\begin{align}\label{dd8}
		&\int_0^{t-\epsilon}\int_0^{s-\epsilon}\frac{\exp{-\int_{t_1}^t e^{-\Phi(X(\tau))}\nu(V(\tau))d\tau}}{\tilde{w}(x_1,V(t_1))} \sum_{l=1}^{k-1}\int_{\prod_{j=1}^{k-1}\mathcal{V}_j} \mathbf{1}_{\{t_{l+1}\le s < t_l\}} L_3(V_l(s))d\Sigma_l(s)ds'ds\nonumber\\
		& \le \epsilon C_\Phi  \sum_{l=1}^{k-1}\int_{t_{l+1}}^{t_l}\int_0^{s} \frac{\exp\left\{-e^{-\|\Phi\|_\infty}\nu_0(t-t_1)\right\}}{\tilde{w}(x_1,V(t_1))} \int_{\prod_{j=1}^{l}\mathcal{V}_j}\exp\left\{-e^{-\|\Phi\|_\infty} \frac{\nu_0}{2} (t_1-s') \right\}\|h(s')\|_{L^\infty_{x,v}}\tilde{w}(x_l,v_l)\prod_{j=1}^{l}d\sigma_jds'ds\nonumber\\
		& \le \epsilon C_\Phi \sum_{l=1}^{k-1}\int_{t_{l+1}}^{t_l}\int_0^{s} \exp\left\{-e^{-\|\Phi\|_\infty} \frac{\nu_0}{2} (t-s') \right\}\|h(s')\|_{L^\infty_{x,v}}ds'ds\nonumber\\
		& \le \epsilon C_\Phi \sup_{0 \le s' \le T_0 }\left\{\exp\left\{-e^{-\|\Phi\|_\infty} \frac{\nu_0}{4} (t-s') \right\}\|h(s')\|_{L^\infty_{x,v}}\right\}.
	\end{align}
	The remaining terms is
	\begin{equation} \label{RMC}
	\begin{aligned}
		&\int_0^{t-\epsilon}\int_0^{s-\epsilon} \exp{-\int_{s}^t e^{-\Phi(X(\tau))}\nu(V(\tau))d\tau} \mathbf{1}_{\{t_{1}\le s \}}\int_{\mathbb{R}^3} e^{-\Phi(X(s))}k_w(V(s),v') \exp{-\int_{s'}^s e^{-\Phi(X'(\tau))}\nu(V'(\tau))d\tau}\\ 
		&\quad \times \mathbf{1}_{\{t_1' \le s'\}} \left\{e^{-\Phi}K_wh(s')\right\}(s',X'(s'),V'(s'))dv'ds'ds\\
		&+\int_0^{t-\epsilon}\int_0^{s-\epsilon}  \exp{-\int_{s}^t e^{-\Phi(X(\tau))}\nu(V(\tau))d\tau}  \mathbf{1}_{\{t_{1}\le s \}}\int_{\mathbb{R}^3} e^{-\Phi(X(s))}k_w(V(s),v')\frac{\exp{-\int_{t_1'}^s e^{-\Phi(X'(\tau))}\nu(V'(\tau))d\tau}}{\tilde{w}(x_1',V'(t_1'))}\\  
		&\quad \times \sum_{l'=1}^{k-1}\int_{\prod_{j=1}^{k-1}\mathcal{V}_j'} \mathbf{1}_{\{t_{l'+1}'\le s' < t_{l'}'\}} \left\{\int_{\mathbb{R}^3}e^{-\Phi(X_{l'}'(s'))}k_w(V_{l'}'(s'),v'')h(s',X_{l'}'(s'),v'')dv''\right\}d\Sigma_{l'}'(s')dv'ds'ds\\
		&+\int_0^{t-\epsilon}\int_0^{s-\epsilon} \frac{\exp{-\int_{t_1}^t e^{-\Phi(X(\tau))}\nu(V(\tau))d\tau}}{\tilde{w}(x_1,V(t_1))} \sum_{l=1}^{k-1}\int_{\prod_{j=1}^{k-1}\mathcal{V}_j} \mathbf{1}_{\{t_{l+1}\le s < t_l\}} \int_{\mathbb{R}^3} e^{-\Phi(X_l(s))} k_w(V_l(s),v')\\
		&\quad \times \exp{-\int_{s'}^s e^{-\Phi(X'(\tau))}\nu(V'(\tau))d\tau} \mathbf{1}_{\{t_1' \le s'\}}\left\{e^{-\Phi}K_wh(s')\right\}(s',X'(s'),V'(s'))dv'd\Sigma_l(s)ds'ds\\
		&+\int_0^{t-\epsilon}\int_0^{s-\epsilon} \frac{\exp{-\int_{t_1}^t e^{-\Phi(X(\tau))}\nu(V(\tau))d\tau}}{\tilde{w}(x_1,V(t_1))} \sum_{l=1}^{k-1}\int_{\prod_{j=1}^{k-1}\mathcal{V}_j} \mathbf{1}_{\{t_{l+1}\le s < t_l\}} \int_{\mathbb{R}^3} e^{-\Phi(X_l(s))}k_w(V_l(s),v')\\
		&\quad \times\frac{\exp{-\int_{t_1'}^s e^{-\Phi(X'(\tau))}\nu(V'(\tau))d\tau}}{\tilde{w}(x_1',V'(t_1'))} \sum_{l'=1}^{k-1}\int_{\prod_{j=1}^{k-1}\mathcal{V}_j'} \mathbf{1}_{\{t_{l'+1}'\le s' < t_{l'}'\}}\biggl\{\int_{\mathbb{R}^3}e^{-\Phi(X_{l'}'(s'))}k_w(V_{l'}'(s'),v'')\\
		&\quad \times h(s',X'_{l'}(s'),v'')dv''\biggr\}d\Sigma_{l'}'(s')dv'd\Sigma_l(s)ds'ds\\
		&=: R_1+R_2+R_3+R_4.
	\end{aligned}
	\end{equation}
	\newline
	First of all, let us estimate the term $R_1$ in \eqref{RMC} :
	\begin{align}
		&\int_{t_1}^{t-\epsilon}\int_{t_1'}^{s-\epsilon} \exp{-\int_{s}^t e^{-\Phi(X(\tau))}\nu(V(\tau))d\tau} \int_{\mathbb{R}^3} e^{-\Phi(X(s))}k_w(V(s),v') \exp{-\int_{s'}^s e^{-\Phi(X(\tau))}\nu(V(\tau))d\tau}\nonumber \\
		&\times \int_{\mathbb{R}^3} e^{-\Phi(X'(s'))}k_w(V'(s')	,v'')h(s',X'(s'),v'')dv''dv'ds'ds \nonumber \\ \label{FTMC}
		&\le \int_0^{t-\epsilon}\int_0^{s-\epsilon}\exp\left\{-e^{-\|\Phi\|_\infty}\nu_0(t-s')\right\}\int_{\mathbb{R}^3} \int_{\mathbb{R}^3}|k_w(V(s),v')| |k_w(V'(s'),v'')||h(s',X'(s'),v'')|\nonumber \\
		&\quad \times dv''dv'ds'ds.
	\end{align}
	We will divide this term into 3 cases.\\
	\newline
	$\mathbf{Case\ 1:}$ $|v| \ge R$ with $R \gg 2\sqrt{2\|\Phi\|_\infty}$.\\
	By \eqref{sese}, we get
	\begin{align*}
		|V(s)|\ge |v|-\sqrt{2\|\Phi\|_\infty}\ge \frac{R}{2}.
	\end{align*}
	From Lemma \ref{Kest}, we have
	\begin{align*}
		\int_{\mathbb{R}^3} \int_{\mathbb{R}^3} |k_w(V(s),v')| |k_w(V'(s'),v'')|dv''dv'\le \frac{C_\Phi}{1+R}.
	\end{align*}
	Then $R_1$ in this case is bounded by
	\begin{align}\label{class1}
		&\frac{C}{1+R}\int_0^t \int_0^s \exp\left\{-e^{-\|\Phi\|_\infty}\nu_0(t-s')\right\} \|h(s')\|_{L^\infty_{x,v}}ds'ds\nonumber\\
		& \le \frac{C}{1+R} \sup_{0\le s' \le T_0} \left\{\exp\left\{-e^{-\|\Phi\|_\infty} \frac{\nu_0}{2} (t-s') \right\}\|h(s')\|_{L^\infty_{x,v}}\right\} \int_0^t \int_0^s \exp\left\{-e^{-\|\Phi\|_\infty} \frac{\nu_0}{2} (t-s') \right\} ds'ds\nonumber\\
		& \le \frac{C_\Phi}{1+R} \sup_{0\le s' \le T_0} \left\{\exp\left\{-e^{-\|\Phi\|_\infty} \frac{\nu_0}{2} (t-s') \right\}\|h(s')\|_{L^\infty_{x,v}}\right\},
	\end{align}
	where we have used the fact $\int_0^t \int_0^s \exp\left\{-e^{-\|\Phi\|_\infty} \frac{\nu_0}{2} (t-s') \right\} ds'ds$ is finite.\\
	\newline
	$\mathbf{Case\ 2:}$ $|v| \le R$, $|v'| \ge 2R$, or $|v'| \le 2R$, $|v''| \ge 3R$.\\
	Note that either $|v-v'| \ge R$ or $|v'-v''| \ge R$. From \eqref{sese}, either one of the followings holds:
	\begin{align*}
		&|V(s)-v'| \ge |v-v'|-|V(s)-v| \ge R-\frac{R}{2}=\frac{R}{2},\\
		&|V'(s')-v''| \ge |v'-v''|-|V'(s')-v'| \ge R-\frac{R}{2}=\frac{R}{2}.
	\end{align*}
	Then we have either one of the followings:
	\begin{equation} \label{dd12}
	\begin{aligned}
		&|k_w(V(s),v')| \le e^{-\frac{R^2}{64}}|k_w(V(s),v')|e^{\frac{1}{16}|V(s)-v'|^2},\\
		&|k_w(V'(s'),v'')| \le e^{-\frac{R^2}{64}}|k_w(V'(s'),v'')|e^{\frac{1}{16}|V'(s')-v''|^2}.
	\end{aligned}
	\end{equation}
	This yields from Lemma \ref{Kest}
	\begin{equation} \label{dd13}
	\begin{aligned}
		&\int_{|v'| \ge 2R} |k_w(V(s),v')|e^{\frac{1}{16}|V(s)-v'|^2}dv' < C,\\
		&\int_{|v''| \ge 3R} |k_w(V'(s'),v'')|e^{\frac{1}{16}|V'(s')-v''|^2}dv'' < C,
	\end{aligned}
	\end{equation}
	for some constant $C$.
	Thus we use \eqref{dd12} and \eqref{dd13} to bound $R_1$ in this case by
	\begin{align} 
		&\int_0^t\int_0^s \exp\left\{-e^{-\|\Phi\|_\infty}\nu_0(t-s')\right\}\int_{|v'|\ge 2R} \int_{\mathbb{R}^3}|k_w(V(s),v')| |k_w(V'(s'),v'')|\|h(s')\|_{L^\infty_{x,v}}dv''dv'ds'ds\nonumber\\
		&+ \int_0^t\int_0^s \exp\left\{-e^{-\|\Phi\|_\infty}\nu_0(t-s')\right\}\int_{|v'|\le 2R} \int_{|v''| \ge 3R}|k_w(V(s),v')| |k_w(V'(s'),v'')|\|h(s')\|_{L^\infty_{x,v}}dv''dv'ds'ds\nonumber\\
		&\le C_\Phi \int_0^t\int_0^s \exp\left\{-e^{-\|\Phi\|_\infty}\nu_0(t-s')\right\}\int_{|v'|\ge 2R} |k_w(V(s),v')| \|h(s')\|_{L^\infty_{x,v}}dv'ds'ds\nonumber\\
		&\quad + C_\Phi e^{-\frac{R^2}{64}}\int_0^t\int_0^s \exp\left\{-e^{-\|\Phi\|_\infty}\nu_0(t-s')\right\}\int_{|v'|\le 2R} |k_w(V(s),v')| \|h(s')\|_{L^\infty_{x,v}}dv'ds'ds\nonumber\\	
		&\le C_\Phi e^{-\frac{R^2}{64}}\int_0^t\int_0^s \exp\left\{-e^{-\|\Phi\|_\infty}\nu_0(t-s')\right\}\|h(s')\|_{L^\infty_{x,v}}ds'ds\nonumber\\
		&\le C_\Phi e^{-\frac{R^2}{64}} \sup_{0\le s' \le T_0} \left\{\exp\left\{-e^{-\|\Phi\|_\infty} \frac{\nu_0}{2} (t-s') \right\}\|h(s')\|_{L^\infty_{x,v}}\right\}\int_0^t\int_0^s \exp\left\{-e^{-\|\Phi\|_\infty} \frac{\nu_0}{2} (t-s') \right\}ds'ds\nonumber\\\label{class2}
		&\le C_\Phi e^{-\frac{R^2}{64}}\sup_{0\le s' \le T_0} \left\{\exp\left\{-e^{-\|\Phi\|_\infty} \frac{\nu_0}{2} (t-s') \right\}\|h(s')\|_{L^\infty_{x,v}}\right\},
	\end{align}
	where we have used the fact $\int_0^t\int_0^s \exp\left\{-e^{-\|\Phi\|_\infty} \frac{\nu_0}{2} (t-s') \right\}ds'ds$ is finite.\\
	\newline
	$\mathbf{Case\ 3:}$ $|v|\le R$, $|v'| \le 2R$, $|v''| \le 3R$.\\
	Since $k_w(v,v')$ has possible integrable singularity of $\frac{1}{|v-v'|}$, we can choose smooth function $k_R(v,v')$ with compact support such that
	\begin{align} \label{kk1}
	 	\sup_{|v|\le 3R}\int_{|v'| \le 3R} \left|k_R(v,v')-k_w(v,v')\right|dv' \le \frac{1}{R}.
	\end{align}
	We split 
	\begin{equation} \label{kk2}
	\begin{aligned}
		k_w(V(s),v')k_w(V'(s'),v'') &= \left\{k_w(V(s),v')-k_R(V(s),v')\right\}k_w(V'(s'),v'')\\
		&\quad + \{k_w(V'(s'),v'')-k_R(V'(s'),v'')\}k_R(V(s),v')\\
		&\quad +k_R(V(s),v')k_R(V'(s'),v'').
	\end{aligned}
	\end{equation}
	From \eqref{kk1} and \eqref{kk2}, $R_1$ in this case is bounded by 
	\begin{align}
		&\frac{C_\Phi}{R} \int_0^t \int_0^{s-\epsilon} \exp\left\{-e^{-\|\Phi\|_\infty}\nu_0(t-s')\right\} \|h(s')\|_{L^\infty_{x,v}}ds'ds \nonumber\\ 
		& + \int_0^t \int_0^{s-\epsilon} \exp\left\{-e^{-\|\Phi\|_\infty}\nu_0(t-s')\right\} \int_{|v'|\le 2R, |v''|\le 3R}|k_R(V(s),v')||k_R(V'(s'),v'')||h(s',X'(s'),v'')|dv''dv'ds'ds \nonumber\\ \label{class3}
		&\le \frac{C_\Phi}{R}\sup_{0 \le s' \le T_0}\left\{\exp{-e^{-\|\Phi\|_\infty} \frac{\nu_0}{2} (t-s')}\|h(s')\|_{L^\infty_{x,v}}\right\} \\ \nonumber 
		&\quad + C_{R,\Phi}\int_0^t \int_0^{s-\epsilon} \exp\left\{-e^{-\|\Phi\|_\infty}\nu_0(t-s')\right\} \int_{|v'|\le 2R, |v''|\le 3R}|h(s',X'(s'),v'')|dv''dv'ds'ds\\ \nonumber
		&=: R_{11}+R_{12}, 
	\end{align}
	where we have used the fact $|k_R(V(s),v')||k_R(V'(s'),v'')| \le C_R$.\\
	In the term $R_1$, we recall that $X'(s') = X(s';s,X(s;t,x,v),v')$.
	Since the potential is time dependent, we have
	\begin{align*}
		X(s';s,X(s;t,x,v),v') = X(s'-s+T_0;T_0,X(T_0;t-s+T_0,x,v),v')
	\end{align*}
	for all $0\le s' \le s \le t$.\\
	By Lemma \ref{cpl}, the term $R_{12}$ becomes
	\begin{align} 
		&C_{R,\Phi}\sum_{i_1}^{M_1}\sum_{I_2}^{(M_2)^3}\sum_{I_3}^{(M_3)^3} \int_0^t \mathbf{1}_{\{X(T_0;t-s+T_0,x,v)\in \mathcal{P}_{I_2}^{\Omega}\}}(s)\int_0^s\mathbf{1}_{\mathcal{P}_{i_1}^{T_0}}(s'-s+T_0) \exp\left\{-e^{-\|\Phi\|_\infty}\nu_0(t-s')\right\} \nonumber \\ \label{ltlt1}
		& \times \int_{|v'|\le 2R, |v''|\le 3R}\mathbf{1}_{\mathcal{P}_{I_3}^v}(v')|h(s',X(s'-s+T_0;T_0,X(T_0;t-s+T_0,x,v),v'),v'')|dv''dv'ds'ds.
	\end{align}
	Let $\tilde{\epsilon}>0$.
	From Lemma \ref{cpl}, we have the following partitions:
	\begin{align*}
		&\Big\{(s'-s+T_0,X(T_0;t-s+T_0,x,v),v')\in \mathcal{P}_{i_1}^{T_0}\cross \mathcal{P}_{I_2}^{\Omega}\cross \mathcal{P}_{I_3}^{v} \\
		&\quad : \det\left(\frac{dX}{dv'}(s'-s+T_0;T_0,X(T_0;t-s+T_0,x,v),v')\right)=0\Big\}\\
		& \subset \bigcup_{j=1}^3 \bigg\{(s'-s+T_0,X(T_0;t-s+T_0,x,v),v')\in \mathcal{P}_{i_1}^{T_0}\cross \mathcal{P}_{I_2}^{\Omega}\cross \mathcal{P}_{I_3}^{v}\\
		&\quad :s'-s+T_0\in \left(t_{j,i_1,I_2,I_3}-\frac{\tilde{\epsilon}}{4M_1},t_{j,i_1,I_2,I_3}+\frac{\tilde{\epsilon}}{4M_1}\right)\bigg\}.
	\end{align*}
	Thus for each $i_1$,$I_2$, and $I_3$, we split $\mathbf{1}_{\mathcal{P}_{i_1}^{T_0}}(s'-s+T_0)$ as
	\begin{align} \label{fst1}
		&\mathbf{1}_{\mathcal{P}_{i_1}^{T_0}}(s'-s+T_0)\mathbf{1}_{\cup_{j=1}^3(t_{j,i_1,I_2,I_3}-\frac{\tilde{\epsilon}}{4M_1},t_{j,i_1,I_2,I_3}+\frac{\tilde{\epsilon}}{4M_1})}(s'-s+T_0)\\ \label{sst1}
		& +\mathbf{1}_{\mathcal{P}_{i_1}^{T_0}}(s'-s+T_0)\left\{1-\mathbf{1}_{\cup_{j=1}^3(t_{j,i_1,I_2,I_3}-\frac{\tilde{\epsilon}}{4M_1},t_{j,i_1,I_2,I_3}+\frac{\tilde{\epsilon}}{4M_1})}(s'-s+T_0)\right\}.
	\end{align}
	$\mathbf{Case\ 3 \ (\romannumeral 1):}$ The integration \eqref{ltlt1} corresponding to \eqref{fst1} is bounded by
	\begin{align}
		&C_{R,\Phi}\sum_{i_1}^{M_1}\sum_{I_2}^{(M_2)^3}\sum_{I_3}^{(M_3)^3} \sum_{j=1}^3 \int_0^t \mathbf{1}_{\{X(T_0;t-s+T_0,x,v)\in \mathcal{P}_{I_2}^{\Omega}\}}(s)\int_0^s\mathbf{1}_{\mathcal{P}_{i_1}^{T_0}}(s'-s+T_0) \nonumber \\
		& \times \mathbf{1}_{(t_{j,i_1,I_2,I_3}-\frac{\tilde{\epsilon}}{4M_1},t_{j,i_1,I_2,I_3}+\frac{\tilde{\epsilon}}{4M_1})}(s'-s+T_0) \exp\left\{-e^{-\|\Phi\|_\infty} \nu_0 (t-s') \right\} \nonumber \\ \label{mlt1}
		&\times \int_{|v'|\le 2R}\mathbf{1}_{\mathcal{P}_{I_3}^v}(v')\int_{|v''|\le 3R}|h(s',X(s'-s+T_0;T_0,X(T_0;t-s+T_0,x,v),v'),v'')|dv''dv'ds'ds.
	\end{align}
	We split
	\begin{align*}
		\exp\left\{-e^{-\|\Phi\|_\infty} \nu_0 (t-s') \right\} &= \exp\left\{-e^{-\|\Phi\|_\infty} \frac{\nu_0}{2} (t-s) \right\}\exp\left\{-e^{-\|\Phi\|_\infty} \frac{\nu_0}{2} (s-s') \right\}\\
	& \quad \times  \exp\left\{-e^{-\|\Phi\|_\infty} \frac{\nu_0}{2} (t-s') \right\}.
	\end{align*}
	and we can bound the integration \eqref{mlt1} by
	\begin{align*}
		&C_{R,\Phi}\sum_{i_1}^{M_1}\sum_{I_2}^{(M_2)^3}\sum_{I_3}^{(M_3)^3} \sum_{j=1}^3 \int_0^t \mathbf{1}_{\{X(T_0;t-s+T_0,x,v)\in \mathcal{P}_{I_2}^{\Omega}\}}(s)\exp\left\{-e^{-\|\Phi\|_\infty} \frac{\nu_0}{2} (t-s) \right\}\\
		& \times \underbrace{\int_0^s\mathbf{1}_{\mathcal{P}_{i_1}^{T_0}}(s'-s+T_0)\mathbf{1}_{(t_{j,i_1,I_2,I_3}-\frac{\tilde{\epsilon}}{4M_1},t_{j,i_1,I_2,I_3}+\frac{\tilde{\epsilon}}{4M_1})}(s'-s+T_0) \exp\left\{-e^{-\|\Phi\|_\infty} \frac{\nu_0}{2} (s-s') \right\}}_{(*1)}\\
		&\times \int_{|v'|\le 2R}\mathbf{1}_{\mathcal{P}_{I_3}^v}(v')\int_{|v''|\le 3R}\exp\left\{-e^{-\|\Phi\|_\infty} \frac{\nu_0}{2} (t-s') \right\}\|h(s')\|_{L^\infty_{x,v}}dv''dv'ds'ds
	\end{align*}
	Here, $(*1)$ is bounded by
	\begin{align}
		&\int_0^s \mathbf{1}_{\mathcal{P}_{i_1}^{T_0}}(s'-s+T_0)\mathbf{1}_{(t_{j,i_1,I_2,I_3}-\frac{\tilde{\epsilon}}{4M_1},t_{j,i_1,I_2,I_3}+\frac{\tilde{\epsilon}}{4M_1})}(s'-s+T_0) \exp\left\{-e^{-\|\Phi\|_\infty} \frac{\nu_0}{2} (s-s') \right\}ds'\nonumber\\
		& \le \int_{s-T_0+t_{j,i_1,I_2,I_3}-\frac{\tilde{\epsilon}}{4M_1}}^{s-T_0+t_{j,i_1,I_2,I_3}+\frac{\tilde{\epsilon}}{4M_1}} \exp\left\{-e^{-\|\Phi\|_\infty} \frac{\nu_0}{2} (s-s') \right\}ds'\nonumber\\ \label{ppp1}
		& \le \frac{\tilde{\epsilon}}{2M_1}.
	\end{align}
	From the partition of the time interval $[0,T_0]$ and velocity domain $[-4R,4R]^3$ in Lemma \ref{cpl}, we have
	\begin{equation} \label{ppp2}
	\begin{aligned}
		&\sum_{I_2}^{(M_2)^3}\mathbf{1}_{\{X(t-s+T_0,x,v)\in \mathcal{P}_{I_2}^{\Omega}\}}(s) \le \mathbf{1}_{\{0\le s \le T_0\}}(s),\\
		&\sum_{I_3}^{(M_3)^3} \mathbf{1}_{\mathcal{P}_{I_3}^v}(v')\mathbf{1}_{\{|v'|\le 2R\}}(v') = \mathbf{1}_{\{|v'|\le 2R\}}(v').
	\end{aligned}
	\end{equation}
	Using \eqref{ppp1} and \eqref{ppp2}, \eqref{mlt1} is bounded by
	\begin{align}
		& C_{R,\Phi} \sup_{0\le s' \le T_0} \left\{\exp\left\{-e^{-\|\Phi\|_\infty} \frac{\nu_0}{2} (t-s') \right\}\|h(s')\|_{L^\infty_{x,v}}\right\}\ \sum_{i_1}^{M_1} \sum_{I_2}^{(M_2)^3} \int_0^t \mathbf{1}_{\{X(T_0;t-s+T_0,x,v) \in \mathcal{P}_{I_2}^\Omega\}}(s) \exp\left\{-e^{-\|\Phi\|_\infty} \frac{\nu_0}{2} (t-s) \right\}\nonumber\\
		& \times \int_0^s \mathbf{1}_{\mathcal{P}_{i_1}^{T_0}}(s'-s+T_0)\mathbf{1}_{(t_{j,i_1,I_2,I_3}-\frac{\tilde{\epsilon}}{4M_1},t_{j,i_1,I_2,I_3}+\frac{\tilde{\epsilon}}{4M_1})}(s'-s+T_0) \exp\left\{-e^{-\|\Phi\|_\infty} \frac{\nu_0}{2} (s-s') \right\}ds'ds\nonumber\\
		&\le C_{R,\Phi} \frac{\tilde{\epsilon}}{2}\sup_{0\le s' \le T_0} \left\{\exp\left\{-e^{-\|\Phi\|_\infty} \frac{\nu_0}{2} (t-s') \right\}\|h(s')\|_{L^\infty_{x,v}}\right\}\nonumber\\
		&\quad \times \sum_{I_2}^{(M_2)^3} \int_0^t \mathbf{1}_{\{X(T_0;t-s+T_0,x,v) \in \mathcal{P}_{I_2}^\Omega\}}(s)\exp\left\{-e^{-\|\Phi\|_\infty} \frac{\nu_0}{2} (t-s) \right\} ds\nonumber\\ \label{class4}
		&\le \tilde{\epsilon} \ C_{R,\Phi} \sup_{0\le s' \le T_0} \left\{\exp\left\{-e^{-\|\Phi\|_\infty} \frac{\nu_0}{2} (t-s') \right\}\|h(s')\|_{L^\infty_{x,v}}\right\}.
	\end{align}
	\newline
	$\mathbf{Case\ 3 \ (\romannumeral 2):}$ The integration \eqref{ltlt1} corresponding to \eqref{sst1} is bounded by
	\begin{align}
		&C_{R,\Phi}\sum_{i_1}^{M_1}\sum_{I_2}^{(M_2)^3}\sum_{I_3}^{(M_3)^3} \int_0^t \mathbf{1}_{\{X(T_0;t-s+T_0,x,v)\in \mathcal{P}_{I_2}^{\Omega}\}}(s)\int_0^s\mathbf{1}_{\mathcal{P}_{i_1}^{T_0}}(s'-s+T_0) \nonumber \\
		& \times \left\{1-\mathbf{1}_{\cup_{j=1}^3(t_{j,i_1,I_2,I_3}-\frac{\tilde{\epsilon}}{4M_1},t_{j,i_1,I_2,I_3}+\frac{\tilde{\epsilon}}{4M_1})}(s'-s+T_0)\right\}\exp\left\{-e^{-\|\Phi\|_\infty} \nu_0 (t-s') \right\} \nonumber \\ \label{mmlt1}
		&\times \underbrace{\int_{|v'|\le 2R}\mathbf{1}_{\mathcal{P}_{I_3}^v}(v')\int_{|v''|\le 3R}|h(s',X(s'-s+T_0;T_0,X(T_0;t-s+T_0,x,v),v'),v'')|dv''dv'}_{(\#1)}ds'ds.
	\end{align}
	By Lemma \ref{cpl}, we have made a change of variables $v' \rightarrow y:=X(s'-s+T_0;T_0,X(T_0;t-s+T_0,x,v),v')$ so that 
	\begin{align*}
		\det\left(\frac{dX}{dv'}(s'-s+T_0;T_0,X(T_0;t-s+T_0,x,v),v')\right)> \delta_*
	\end{align*}
	and the term $(\# 1)$ is bounded by
	\begin{align*}
		&\int_{|v'|\le 2R}\int_{|v''|\le 3R}|h(s',X(s'-s+T_0;T_0,X(T_0;t-s+T_0,x,v),v'),v'')|dv''dv'\\
		&\le \frac{1}{\delta_*}\int_{\Omega} \int_{|v''|\le 3R}|h(s',y,v'')|dv''dy\\
		&\le \frac{1}{\delta_*}\left(\int_{\Omega}\int_{|v''|\le 3R}w(y,v'')^2dv''dy\right)^{\frac{1}{2}}\|f(s')\|_{L^2_{x,v}}\\
		&\le \frac{C_{R,\Phi}}{\delta_*} \|f(s')\|_{L^2_{x,v}},
	\end{align*}
	where we have used the Cauchy-Schwarz inequality.
	Hence \eqref{mmlt1} is bounded by
	\begin{align}
		&\frac{C_{R,\Phi,M_1,M_2,M_3}}{\delta_*} \int_0^t\int_0^s\exp\left\{-e^{-\|\Phi\|_\infty} \nu_0 (t-s') \right\} \|f(s')\|_{L^2_{x,v}}ds'ds\nonumber\\ \label{class5}
		& \le \frac{C_{R,\Phi,M_1,M_2,M_3}}{\delta_*} \int_0^{T_0}\|f(s')\|_{L^2_{x,v}}ds'.
	\end{align}
	Combining the bounds \eqref{class1}, \eqref{class2}, \eqref{class3}, \eqref{class4}, and \eqref{class5}, we can bound $R_1$ in \eqref{RMC} by
	\begin{equation} \label{dd19}
	\begin{aligned}
		&\left(\frac{C_\Phi}{R}+C_\Phi e^{-\frac{R^2}{64}}+ \tilde{\epsilon} \ C_{R,\Phi} \right) \exp\left\{-e^{-\|\Phi\|_\infty} \frac{\nu_0}{2} t \right\} \sup_{0 \le s' \le T_0}\left\{\exp\left\{e^{-\|\Phi\|_\infty} \frac{\nu_0}{2} s' \right\}\|h(s')\|_{L^\infty_{x,v}}\right\}\\
		& +\frac{C_{R,\Phi,M_1,M_2,M_3}}{\delta_*} \int_0^{T_0}\|f(s')\|_{L^2_{x,v}}ds'.
	\end{aligned}
	\end{equation}
	\newline
	Next, let us estimate the term $R_2$ in \eqref{RMC} :
	\begin{align} 
		&\int_{t_1}^{t-\epsilon} \exp{-\int_{s}^t e^{-\Phi(X(\tau))}\nu(V(\tau))d\tau} \int_{\mathbb{R}^3} e^{-\Phi(X(s))}k_w(V(s),v')\frac{\exp{-\int_{t_1'}^s e^{-\Phi(X'(\tau))}\nu(V'(\tau))d\tau}}{\tilde{w}(x_1',V'(t_1'))}\nonumber\\
		&\quad \times \sum_{l'=1}^{k-1}\int_{t_{l'+1}'}^{t_{l'}'}\int_{\prod_{j=1}^{k-1}\mathcal{V}_j'}\left\{\int_{\mathbb{R}^3}e^{-\Phi(X'_{l'}(s'))}k_w(V_{l'}'(s'),v'')h(s',X_{l'}'(s'),v'')dv''\right\}d\Sigma_{l'}'(s')ds'dv'ds \nonumber \\
		& \le \int_{t_1}^{t-\epsilon} \exp\left\{-e^{-\|\Phi\|_\infty}\nu_0(t-t_1)\right\}\int_{\mathbb{R}^3} |k_w(V(s),v')|\frac{1}{\tilde{w}(x_1',V'(t_1'))}\sum_{l'=1}^{k-1}\int_{t_{l'+1}'}^{t_{l'}'}\exp\left\{-e^{-\|\Phi\|_\infty}\nu_0(t_1-s)\right\} \nonumber \\ \label{STMC}
		&\quad \times  \int_{\prod_{j=1}^{k-1}\mathcal{V}_j'}\Biggl\{\int_{\mathbb{R}^3}|k(V_{l'}'(s'),v'')| |h(s',X_{l'}'(s'),v'')|dv''\Biggr\}\left\{\prod_{j=l'+1}^{k-1}d\sigma_j'\right\}\left\{\tilde{w}(x'_{l'},v'_{l'})d\sigma_{l'}'\right\}\left\{\prod_{j=1}^{l'-1}d\sigma_j'\right\}ds'dv'ds.
	\end{align}
	Fix $l'$. Note that $\tilde{w}(x'_{l'},v_{l'}')\mu(v_{l'}')|v_{l'}'|\le C_\Phi$ for some constant $C_\Phi >0$.
	We will divide this term into 3 cases.\\
	\newline
	$\mathbf{Case\ 1:}$ $|v| \ge R$ or $|v_{l'}'| \ge R$ with $R \gg 2\sqrt{2\|\Phi\|_\infty}$.\\
	By \eqref{sese}, we get
	\begin{align*}
		|V(s)|\ge |v|-\sqrt{2\|\Phi\|_\infty}\ge \frac{R}{2} \quad \text{or} \quad |V'_{l'}(s')|\ge |v_{l'}'|-\sqrt{2\|\Phi\|_\infty}\ge \frac{R}{2}
	\end{align*}
	From Lemma \ref{Kest}, we have
	\begin{align*}
		\int_{|v_{l'}'|\ge R} \left(\int_{\mathbb{R}^3} |k_w(V_{l'}'(s'),v'')||h(s',X_{l'}'(s'),v'')|dv''\right)\tilde{w}(x'_{l'}v_{l'}')d\sigma_{l'}' \le \frac{C_\Phi}{1+R} \|h(s')\|_{L^\infty_{x,v}}.
	\end{align*}
	Then $R_2$ in the case $|v_{l'}'| \ge R$ is bounded by
	\begin{align}
		&\frac{C_\Phi}{1+R} \int_0^{t-\epsilon} \int_{t_{l'+1}'}^{t_{l'}'} \exp\left\{-e^{-\|\Phi\|_\infty}\nu_0(t-s')\right\}\|h(s')\|_{L^\infty_{x,v}} \int_{\mathbb{R}^3}|k_w(V(s),v')|dv'ds'ds\nonumber\\ \label{class11}
		& \le \frac{C_\Phi}{1+R} \sup_{0\le s' \le T_0} \left\{\exp\left\{-e^{-\|\Phi\|_\infty} \frac{\nu_0}{2} (t-s') \right\}\|h(s')\|_{L^\infty_{x,v}}\right\} ,
	\end{align}
	where we have used the fact $\int_0^t \int_0^s \exp\left\{-e^{-\|\Phi\|_\infty} \frac{\nu_0}{2} (t-s') \right\} ds'ds$ is finite.\\
	By Lemma \ref{Kest}, $R_2$ in the case $|v| \ge R$ is bounded by 
	\begin{align}
		& C_\Phi \int_0^{t-\epsilon}\int_{t_{l'+1}'}^{t_{l'}'}\exp\left\{-e^{-\|\Phi\|_\infty}\nu_0(t-s')\right\}\int_{\mathbb{R}^3}|k_w(V(s),v')|\int_{\prod_{j=1}^{k-1}\mathcal{V}_j'}\Biggl\{\int_{\mathbb{R}^3}|k_w(V_{l'}'(s'),v'')| \|h(s')\|_{L^\infty_{x,v}}dv''\Biggr\} \nonumber\\
		&\quad  \times  \left\{\prod_{j=l'+1}^{k-1}d\sigma_j'\right\}\left\{\tilde{w}(x_{l'}',v_{l'}')d\sigma_{l'}'\right\}\left\{\prod_{j=1}^{l'-1}d\sigma_j'\right\}dv'ds'ds\nonumber\\
		& \le C_\Phi \int_0^{t}\int_{0}^{s} \exp\left\{-e^{-\|\Phi\|_\infty}\nu_0(t-s')\right\} \|h(s')\|_{L^\infty_{x,v}} \left\{\int_{\mathbb{R}^3}|k_w(V(s),v')|dv'\right\}ds'ds\nonumber\\  \label{class12}
		& \le \frac{C_\Phi}{1+R} \sup_{0\le s' \le T_0} \left\{\exp\left\{-e^{-\|\Phi\|_\infty} \frac{\nu_0}{2} (t-s') \right\}\|h(s')\|_{L^\infty_{x,v}}\right\}.
	\end{align}
	\newline
	$\mathbf{Case\ 2:}$ $|v| \le R$, $|v'| \ge 2R$, or $|v_{l'}'| \le R$, $|v''| \ge 2R$.\\
	Note that either $|v-v'| \ge R$ or $|v_{l'}'-v''| \ge R$. From \eqref{sese}, either one of the followings holds:
	\begin{align*}
		&|V(s)-v'| \ge |v-v'|-|V(s)-v| \ge R-\frac{R}{2}=\frac{R}{2},\\
		&|V_{l'}'(s')-v''| \ge |v'-v''|-|V_{l'}'(s')-v'| \ge R-\frac{R}{2}=\frac{R}{2}.
	\end{align*}
	Then we have either one of the followings:
	\begin{equation} \label{dd21}
	\begin{aligned}
		&|k_w(V(s),v')| \le e^{-\frac{R^2}{64}}|k_w(V(s),v')|e^{\frac{1}{16}|V(s)-v'|^2},\\
		&|k_w(V_{l'}'(s'),v'')| \le e^{-\frac{R^2}{64}}|k_w(V_{l'}'(s'),v'')|e^{\frac{1}{16}|V_{l'}'(s')-v''|^2}.
	\end{aligned}
	\end{equation}
	This yields from Lemma \ref{Kest},
	\begin{equation} \label{dd22}
	\begin{aligned}
		&\int_{|v'| \ge 2R} |k_w(V(s),v')|e^{\frac{1}{16}|V(s)-v'|^2}dv' < C,\\
		&\int_{|v''| \ge 2R} |k_w(V_{l'}'(s'),v'')|e^{\frac{1}{16}|V_{l'}'(s')-v''|^2}dv'' < C,
	\end{aligned}
	\end{equation}
	for some constant $C$.
	Thus we use \eqref{dd21} and \eqref{dd22} to bound $R_2$ in the case $|v| \le R, |v'|\ge 2R$ by
	\begin{align} 
		&C_\Phi \int_0^{t-\epsilon}\int_{t_{l'+1}'}^{t_{l'}'}\exp\left\{-e^{-\|\Phi\|_\infty}\nu_0(t-s')\right\}\int_{|v'| \ge 2R}|k_w(V(s),v')|\int_{\prod_{j=1}^{k-1}\mathcal{V}_j'}\Biggl\{\int_{\mathbb{R}^3}|k_w(V_{l'}'(s'),v'')|\|h(s')\|_{L^\infty_{x,v}}dv''\Biggr\} \nonumber\\
		&\quad  \times  \left\{\prod_{j=l'+1}^{k-1}d\sigma_j'\right\}\left\{\tilde{w}(x_{l'}',v_{l'}')d\sigma_{l'}'\right\}\left\{\prod_{j=1}^{l'-1}d\sigma_j'\right\}dv'ds'ds\nonumber\\
		&\le C_\Phi \int_0^t\int_0^s \exp\left\{-e^{-\|\Phi\|_\infty}\nu_0(t-s')\right\}\|h(s')\|_{L^\infty_{x,v}} \int_{|v'| \ge 2R}|k_w(V(s),v')| dv'ds'ds \nonumber\\  \label{class13}
		&\le  C_\Phi e^{-\frac{R^2}{64}}\sup_{0\le s' \le T_0} \left\{\exp\left\{-e^{-\|\Phi\|_\infty} \frac{\nu_0}{2} (t-s') \right\}\|h(s')\|_{L^\infty_{x,v}}\right\},
	\end{align}
	where we have used the fact $\int_0^t\int_0^s \exp\left\{-e^{-\|\Phi\|_\infty} \frac{\nu_0}{2} (t-s') \right\}ds'ds$ is finite.\\
	Similarly, we use \eqref{dd21} and \eqref{dd22} to bound $R_2$ in the case $|v'| \le 2R, |v''| \ge 3R$ by 
	\begin{align}
		&C_\Phi \int_0^{t-\epsilon}\int_{t_{l'+1}'}^{t_{l'}'}\exp\left\{-e^{-\|\Phi\|_\infty}\nu_0(t-s')\right\}\int_{\mathbb{R}^3}|k_w(V(s),v')| \int_{\prod_{j=1}^{k-1}\mathcal{V}_j' ,\ |v_{l'}'|\le R}\Biggl\{\int_{|v''| \ge 2R}|k_w(V_{l'}'(s'),v'')|\nonumber\\
		&\quad  \times  \|h(s')\|_{L^\infty_{x,v}}dv''\Biggr\}\left\{\prod_{j=l'+1}^{k-1}d\sigma_j'\right\}\left\{\tilde{w}(x_{l'}',v_{l'}')d\sigma_{l'}'\right\}\left\{\prod_{j=1}^{l'-1}d\sigma_j'\right\}dv'ds'ds\nonumber\\
		&\le C_\Phi e^{-\frac{R^2}{64}} \int_0^t\int_0^s \exp\left\{-e^{-\|\Phi\|_\infty}\nu_0(t-s')\right\}\|h(s')\|_{L^\infty_{x,v}}ds'ds\nonumber\\ \label{class14}
		&\le C_\Phi e^{-\frac{R^2}{64}} \sup_{0\le s' \le T_0} \left\{\exp\left\{-e^{-\|\Phi\|_\infty} \frac{\nu_0}{2} (t-s') \right\}\|h(s')\|_{L^\infty_{x,v}}\right\}.
	\end{align}
	\newline
	$\mathbf{Case\ 3:}$ $|v|\le R$, $|v'| \le 2R$, $|v_{l'}'| \le R$, $|v''| \le 2R$.\\
	Since $k_w(v,v')$ has possible integrable singularity of $\frac{1}{|v-v'|}$, we can choose smooth function $k_R(v,v')$ with compact support such that
	\begin{align} \label{kk3}
		\sup_{|v|\le 2R}\int_{|v'| \le 2R} \left|k_R(v,v')-k_w(v,v')\right|dv' \le \frac{1}{R}.
	\end{align}
	We split 
	\begin{equation} \label{kk4}
	\begin{aligned}
		k_w(V(s),v')k_w(V_{l'}'(s'),v'') &= \left\{k_w(V(s),v')-k_R(V(s),v')\right\}k_w(V_{l'}'(s'),v'')\\
		& \quad + \{k_w(V_{l'}'(s'),v'')-k_R(V_{l'}'(s'),v'')\}k_R(V(s),v')\\
		&\quad +k_R(V(s),v')k_R(V_{l'}'(s'),v'').
	\end{aligned}
	\end{equation}
	From \eqref{kk3} and \eqref{kk4}, $R_2$ in this case is bounded by 
	\begin{align}
		&\frac{C_\Phi}{R} \int_0^t \int_0^{s} \exp\left\{-e^{-\|\Phi\|_\infty}\nu_0(t-s')\right\} \|h(s')\|_{L^\infty_{x,v}}ds'ds \nonumber \\
		& +C_\Phi \int_{t_1}^t \int_{t_{l'+1}'}^{t_{l'}'} \exp\left\{-e^{-\|\Phi\|_\infty}\nu_0(t-s')\right\} \int_{|v'|\le 2R} |k_R(V(s),v')| \int_{\prod_{j=1}^{k-1}\mathcal{V}_j' ,\ |v_{l'}'|\le R} \Biggl\{\int_{|v''| \le 2R}|k_R(V_{l'}'(s'),v'')|\nonumber \\ 
		&\quad  \times  |h(s',X'_{l'}(s'),v'')|dv''\Biggr\}\left\{\prod_{j=l'+1}^{k-1}d\sigma_j'\right\}\left\{\tilde{w}(x_{l'}',v_{l'}')d\sigma_{l'}'\right\}\left\{\prod_{j=1}^{l'-1}d\sigma_j'\right\}dv'ds'ds \nonumber \\ \label{class15}
		&\le \frac{C_\Phi}{R} \exp\left\{-e^{-\|\Phi\|_\infty} \frac{\nu_0}{2} t \right\}\sup_{0 \le s' \le T_0}\left\{\exp\left\{e^{-\|\Phi\|_\infty} \frac{\nu_0}{2} s' \right\}\|h(s')\|_{L^\infty_{x,v}}\right\} \\  
		&\quad + C_{R,\Phi}\int_{t_1}^t \int_{t_{l'+1}'}^{t_{l'}'} \exp\left\{-e^{-\|\Phi\|_\infty}\nu_0(t-s')\right\} \int_{|v'|\le 2R} \int_{\prod_{j=1}^{k-1}\mathcal{V}_j' ,\ |v_{l'}'|\le R} \Biggl\{\int_{|v''| \le 2R}\nonumber |h(s',X'_{l'}(s'),v'')|dv''\Biggr\}\nonumber\\ 
		&\quad \quad \times  \left\{\prod_{j=l'+1}^{k-1}d\sigma_j'\right\}\left\{\tilde{w}(x_{l'}',v_{l'}')d\sigma_{l'}'\right\}\left\{\prod_{j=1}^{l'-1}d\sigma_j'\right\}dv'ds'ds\nonumber\\
		&=: R_{21}+R_{22}, \nonumber
	\end{align}
	where we have used the fact $|k_R(V(s),v')||k_R(V'_{l'}(s'),v'')| \le C_R$.\\
	In the term $R_2$, we recall that $X'_{l'}(s') = X(s';t_{l'}',x_{l'}',v_{l'}')$.
	Since the potential is time dependent, we have
	\begin{align*}
		X(s';t_{l'}',x_{l'}',v_{l'}') = X(s'-t_{l'}'+T_0;T_0,x_{l'}',v_{l'}')
	\end{align*}
	for all $0\le s' \le t_{l'}' \le T_0$.\\
	By Lemma \ref{cpl}, the term $R_{22}$ becomes
	\begin{align} 
		&C_{R,\Phi}\sum_{i_1}^{M_1}\sum_{I_2}^{(M_2)^3}\sum_{I_3}^{(M_3)^3} \int_0^t \mathbf{1}_{\{x_{l'}'\in \mathcal{P}_{I_2}^{\Omega}\}}(s)\int_{t_{l'+1}'}^{t_{l'}'}\mathbf{1}_{\mathcal{P}_{i_1}^{T_0}}(s'-t_{l'}'+T_0) \exp\left\{-e^{-\|\Phi\|_\infty}\nu_0(t-s')\right\} \nonumber \\ 
		& \times  \int_{|v'|\le 2R} \int_{\prod_{j=1}^{k-1}\mathcal{V}_j' ,\ |v_{l'}'|\le R}\mathbf{1}_{\mathcal{P}_{I_3}^v}(v'_{l'})\int_{|v''| \le 2R}|h(s',X(s'-t_{l'}'+T_0;T_0,x_{l'}',v_{l'}'),v'')|dv''\nonumber\\\label{ltlt2}
		&\times  \left\{\prod_{j=l'+1}^{k-1}d\sigma_j'\right\}\left\{\tilde{w}(x_{l'}',v_{l'}')d\sigma_{l'}'\right\}\left\{\prod_{j=1}^{l'-1}d\sigma_j'\right\}dv'ds'ds.
	\end{align}
	From Lemma \ref{cpl}, we have the following partitions:
	\begin{align*}
		&\left\{(s'-t_{l'}'+T_0,x_{l'}',v_{l'}')\in \mathcal{P}_{i_1}^{T_0}\cross \mathcal{P}_{I_2}^{\Omega}\cross \mathcal{P}_{I_3}^{v} : \det\left(\frac{dX}{dv_{l'}'}(s'-t_{l'}'+T_0;T_0,x_{l'}',v_{l'}')\right)=0\right\}\\
		& \subset \bigcup_{j=1}^3 \left\{(s'-t_{l'}'+T_0,x_{l'}',v_{l'}')\in \mathcal{P}_{i_1}^{T_0}\cross \mathcal{P}_{I_2}^{\Omega}\cross \mathcal{P}_{I_3}^{v} :s'-t_{l'}'+T_0\in \left(t_{j,i_1,I_2,I_3}-\frac{\tilde{\epsilon}}{4M_1},t_{j,i_1,I_2,I_3}+\frac{\tilde{\epsilon}}{4M_1}\right)\right\}.
	\end{align*}
	Thus for each $i_1$,$I_2$, and $I_3$, we split $\mathbf{1}_{\mathcal{P}_{i_1}^{T_0}}(s'-t_{l'}'+T_0)$ as
	\begin{align} \label{fst2}
		&\mathbf{1}_{\mathcal{P}_{i_1}^{T_0}}(s'-t_{l'}'+T_0)\mathbf{1}_{\cup_{j=1}^3(t_{j,i_1,I_2,I_3}-\frac{\tilde{\epsilon}}{4M_1},t_{j,i_1,I_2,I_3}+\frac{\tilde{\epsilon}}{4M_1})}(s'-t_{l'}'+T_0)\\ \label{sst2}
		& +\mathbf{1}_{\mathcal{P}_{i_1}^{T_0}}(s'-t_{l'}'+T_0)\left\{1-\mathbf{1}_{\cup_{j=1}^3(t_{j,i_1,I_2,I_3}-\frac{\tilde{\epsilon}}{4M_1},t_{j,i_1,I_2,I_3}+\frac{\tilde{\epsilon}}{4M_1})}(s'-t_{l'}'+T_0)\right\}.
	\end{align}
	$\mathbf{Case\ 3 \ (\romannumeral 1):}$ The integration \eqref{ltlt2} corresponding to \eqref{fst2} is bounded by
	\begin{align}
		&C_{R,\Phi}\sum_{i_1}^{M_1}\sum_{I_2}^{(M_2)^3}\sum_{I_3}^{(M_3)^3} \sum_{j=1}^3 \int_0^t \mathbf{1}_{\{x_{l'}'\in \mathcal{P}_{I_2}^{\Omega}\}}(s)\int_{t_{l'+1}'}^{t_{l'}'}\mathbf{1}_{\mathcal{P}_{i_1}^{T_0}}(s'-t_{l'}'+T_0) \nonumber \\
		& \times \mathbf{1}_{(t_{j,i_1,I_2,I_3}-\frac{\tilde{\epsilon}}{4M_1},t_{j,i_1,I_2,I_3}+\frac{\tilde{\epsilon}}{4M_1})}(s'-t_{l'}'+T_0) \exp\left\{-e^{-\|\Phi\|_\infty} \nu_0 (t-s') \right\} \nonumber \\ 
		&\times \int_{|v'|\le 2R} \int_{\prod_{j=1}^{k-1}\mathcal{V}_j' ,\ |v_{l'}'|\le R}\mathbf{1}_{\mathcal{P}_{I_3}^v}(v_{l'}')\int_{|v''|\le 2R}|h(s',X(s'-t_{l'}'+T_0;T_0,x_{l'}',v_{l'}'),v'')|dv'' \nonumber\\\label{mlt2}
		&\times  \left\{\prod_{j=l'+1}^{k-1}d\sigma_j'\right\}\left\{\tilde{w}(x_{l'}',v_{l'}')d\sigma_{l'}'\right\}\left\{\prod_{j=1}^{l'-1}d\sigma_j'\right\}dv'ds'ds.
	\end{align}
	We split
	\begin{align*}
		\exp\left\{-e^{-\|\Phi\|_\infty} \nu_0 (t-s') \right\} &= \exp\left\{-e^{-\|\Phi\|_\infty} \frac{\nu_0}{2} (t-s) \right\}\exp\left\{-e^{-\|\Phi\|_\infty} \frac{\nu_0}{2} (s-s') \right\}\\
	& \quad \times  \exp\left\{-e^{-\|\Phi\|_\infty} \frac{\nu_0}{2} (t-s') \right\}.
	\end{align*}
	and we can bound the integration \eqref{mlt2} by
	\begin{align*}
		&C_{R,\Phi}\sum_{i_1}^{M_1}\sum_{I_2}^{(M_2)^3}\sum_{I_3}^{(M_3)^3} \sum_{j=1}^3 \int_0^t \mathbf{1}_{\{x_{l'}'\in \mathcal{P}_{I_2}^{\Omega}\}}(s)\exp\left\{-e^{-\|\Phi\|_\infty} \frac{\nu_0}{2} (t-s) \right\}\\
		& \times \underbrace{\int_{t_{l'+1}'}^{t_{l'}'}\mathbf{1}_{\mathcal{P}_{i_1}^{T_0}}(s'-t_{l'}'+T_0)\mathbf{1}_{(t_{j,i_1,I_2,I_3}-\frac{\tilde{\epsilon}}{4M_1},t_{j,i_1,I_2,I_3}+\frac{\tilde{\epsilon}}{4M_1})}(s'-t_{l'}'+T_0) \exp\left\{-e^{-\|\Phi\|_\infty} \frac{\nu_0}{2} (t_{l'}'-s') \right\}}_{(*2)}\\
		&\times \int_{|v'|\le 2R} \int_{\prod_{j=1}^{k-1}\mathcal{V}_j' ,\ |v_{l'}'|\le R}\mathbf{1}_{\mathcal{P}_{I_3}^v}(v_{l'}')\int_{|v''|\le 2R}\exp\left\{-e^{-\|\Phi\|_\infty} \frac{\nu_0}{2} (t-s') \right\}\|h(s')\|_{L^\infty_{x,v}}dv''\\
		&\times  \left\{\prod_{j=l'+1}^{k-1}d\sigma_j'\right\}\left\{\tilde{w}(x_{l'}',v_{l'}')d\sigma_{l'}'\right\}\left\{\prod_{j=1}^{l'-1}d\sigma_j'\right\}dv'ds'ds.
	\end{align*}
	Here, $(*2)$ is bounded by
	\begin{align}
		&\int_{t_{l'+1}'}^{t_{l'}'} \mathbf{1}_{\mathcal{P}_{i_1}^{T_0}}(s'-t_{l'}'+T_0)\mathbf{1}_{(t_{j,i_1,I_2,I_3}-\frac{\tilde{\epsilon}}{4M_1},t_{j,i_1,I_2,I_3}+\frac{\tilde{\epsilon}}{4M_1})}(s'-t_{l'}'+T_0) \exp\left\{-e^{-\|\Phi\|_\infty} \frac{\nu_0}{2} (t_{l'}'-s') \right\}ds'\nonumber\\
		& \le \int_{t_{l'}'-T_0+t_{j,i_1,I_2,I_3}-\frac{\tilde{\epsilon}}{4M_1}}^{t_{l'}'-T_0+t_{j,i_1,I_2,I_3}+\frac{\tilde{\epsilon}}{4M_1}} \exp\left\{-e^{-\|\Phi\|_\infty} \frac{\nu_0}{2} (t_{l'}'-s') \right\}ds'\nonumber\\\label{ppp3}
		& \le \frac{\tilde{\epsilon}}{2M_1}.
	\end{align}
	From the partition of the time interval $[0,T_0]$ and velocity domain $[-4R,4R]^3$ in Lemma \ref{cpl}, we have
	\begin{equation} \label{ppp4}
	\begin{aligned}
		&\sum_{I_2}^{(M_2)^3}\mathbf{1}_{\{x_{l'}'\in \mathcal{P}_{I_2}^{\Omega}\}}(s) \le \mathbf{1}_{\{0\le s \le T_0\}}(s),\\
		&\sum_{I_3}^{(M_3)^3} \mathbf{1}_{\mathcal{P}_{I_3}^v}(v_{l'}')\mathbf{1}_{\{|v_{l'}'|\le R\}}(v_{l'}') = \mathbf{1}_{\{|v_{l'}'|\le R\}}(v_{l'}').
	\end{aligned}
	\end{equation}
	Using \eqref{ppp3} and \eqref{ppp4}, \eqref{mlt2} is bounded by
	\begin{align}
		& C_{R,\Phi} \sup_{0\le s' \le T_0} \left\{\exp\left\{-e^{-\|\Phi\|_\infty} \frac{\nu_0}{2} (t-s') \right\}\|h(s')\|_{L^\infty_{x,v}}\right\} \sum_{i_1}^{M_1} \sum_{I_2}^{(M_2)^3} \int_0^t \mathbf{1}_{\{x_{l'}' \in \mathcal{P}_{I_2}^\Omega\}}(s) \exp\left\{-e^{-\|\Phi\|_\infty} \frac{\nu_0}{2} (t-s) \right\}\nonumber\\
		&\quad \times \int_{t_{l'+1}'}^{t_{l'}'} \mathbf{1}_{\mathcal{P}_{i_1}^{T_0}}(s'-t_{l'}'+T_0)\mathbf{1}_{(t_{j,i_1,I_2,I_3}-\frac{\tilde{\epsilon}}{4M_1},t_{j,i_1,I_2,I_3}+\frac{\tilde{\epsilon}}{4M_1})}(s'-t_{l'}'+T_0) \exp\left\{-e^{-\|\Phi\|_\infty} \frac{\nu_0}{2} (t_{l'}'-s') \right\}ds'ds\nonumber\\ \label{class16}
		&\le \tilde{\epsilon} \ C_{R,\Phi} \sup_{0\le s' \le T_0} \left\{\exp\left\{-e^{-\|\Phi\|_\infty} \frac{\nu_0}{2} (t-s') \right\}\|h(s')\|_{L^\infty_{x,v}}\right\}.
	\end{align}
	\newline
	$\mathbf{Case\ 3 \ (\romannumeral 2):}$ The integration \eqref{ltlt2} corresponding to \eqref{sst2} is bounded by
	\begin{align}
		&C_{R,\Phi}\sum_{i_1}^{M_1}\sum_{I_2}^{(M_2)^3}\sum_{I_3}^{(M_3)^3} \int_0^t \mathbf{1}_{\{x_{l'}'\in \mathcal{P}_{I_2}^{\Omega}\}}(s)\int_{t_{l'+1}'}^{t_{l'}'}\mathbf{1}_{\mathcal{P}_{i_1}^{T_0}}(s'-t_{l'}'+T_0) \nonumber \\
		& \times \left\{1-\mathbf{1}_{\cup_{j=1}^3(t_{j,i_1,I_2,I_3}-\frac{\tilde{\epsilon}}{4M_1},t_{j,i_1,I_2,I_3}+\frac{\tilde{\epsilon}}{4M_1})}(s'-t_{l'}'+T_0)\right\}\exp\left\{-e^{-\|\Phi\|_\infty} \nu_0 (t-s') \right\} \int_{|v'|\le 2R} \nonumber \\ 
		&\times \underbrace{\int_{\prod_{j=1}^{l'}\mathcal{V}_j' ,\ |v_{l'}'|\le R}\mathbf{1}_{\mathcal{P}_{I_3}^v}(v_{l'}')\int_{|v''|\le 2R}|h(s',X(s'-t_{l'}'+T_0;T_0,x_{l'}',v_{l'}'),v'')|dv''\left\{\tilde{w}(x_{l'}',v_{l'}')d\sigma_{l'}'\right\}\left\{\prod_{j=1}^{l'-1}d\sigma_j'\right\}}_{(\#2)}\nonumber\\ \label{mmlt2}
		&\times dv'ds'ds.
	\end{align}
	By Lemma \ref{cpl}, we have made a change of variables $v' \rightarrow y:=X(s'-t_{l'}'+T_0;T_0,x_{l'}',v_{l'}')$ satisfying 
	\begin{align*}
		\det\left(\frac{dX}{dv_{l'}'}(s'-t_{l'}'+T_0;T_0,x_{l'}',v_{l'}')\right)> \delta_*
	\end{align*}
	and the term $(\#2)$ is bounded by 
	\begin{align*}
		&\int_{\prod_{j=1}^{l'-1}\mathcal{V}_j' ,\ |v_{l'}'|\le R}\mathbf{1}_{\mathcal{P}_{I_3}^v}(v_{l'}')\int_{|v''|\le 2R}|h(s',X(s'-t_{l'}'+T_0;T_0,x_{l'}',v_{l'}'),v'')|dv''\left\{\tilde{w}(x_{l'}',v_{l'}')d\sigma_{l'}'\right\}\left\{\prod_{j=1}^{l'-1}d\sigma_j'\right\}\\
		&\le \frac{C_{R,\Phi}}{\delta_*}\int_{\Omega} \int_{|v''|\le 2R}|h(s',y,v'')|dv''dy\\
		&\le \frac{C_{R,\Phi}}{\delta_*}\left(\int_{\Omega}\int_{|v''|\le 2R}w(y,v'')^2dv''dy\right)^{\frac{1}{2}}\|f(s')\|_{L^2_{x,v}}\\
		&\le \frac{C_{R,\Phi}}{\delta_*} \|f(s')\|_{L^2_{x,v}},
	\end{align*}
	where we have used the Cauchy-Schwarz inequality.
	Hence \eqref{mmlt2} is bounded by
	\begin{align}
		&\frac{C_{R,\Phi,M_1,M_2,M_3}}{\delta_*} \int_0^t\int_0^s\exp\left\{-e^{-\|\Phi\|_\infty} \nu_0 (t-s') \right\} \|f(s')\|_{L^2_{x,v}}ds'ds\nonumber\\ \label{class17}
		& \le \frac{C_{R,\Phi,M_1,M_2,M_3}}{\delta_*} \int_0^{T_0}\|f(s')\|_{L^2_{x,v}}ds'.
	\end{align}
	Combining the bounds \eqref{class11}, \eqref{class12}, \eqref{class13}, \eqref{class14}, \eqref{class15}, \eqref{class16}, \eqref{class17} and summing over $1\le l' \le k(\epsilon)-1$, we can bound $R_2$ in \eqref{RMC} by
	\begin{equation} \label{dd29}
	\begin{aligned}
		&\left(\frac{C_{\epsilon,\Phi}}{R}+C_{\epsilon,\Phi} e^{-\frac{R^2}{64}}+ \tilde{\epsilon} \ C_{\epsilon,R,\Phi} \right) \sup_{0\le s' \le T_0} \left\{\exp\left\{-e^{-\|\Phi\|_\infty} \frac{\nu_0}{2} (t-s') \right\}\|h(s')\|_{L^\infty_{x,v}}\right\}\\
		& +\frac{C_{\epsilon,R,\Phi,M_1,M_2,M_3}}{\delta_*} \int_0^{T_0}\|f(s')\|_{L^2_{x,v}}ds'.
	\end{aligned}
	\end{equation}
	\newline
	Third, let us estimate the term $R_3$ in \eqref{RMC} : 
	\begin{align} 
		& \sum_{l=1}^{k-1} \int_{t_{l+1}}^{t_l}\int_{t_1'}^{s-\epsilon} \frac{\exp{-\int_{t_1}^t e^{-\Phi(X(\tau))}\nu(V(\tau))d\tau}}{\tilde{w}(x_1,V(t_1))}\int_{\prod_{j=1}^{k-1}\mathcal{V}_j}  \int_{\mathbb{R}^3}e^{-\Phi(X_l(s))} k_w(V_l(s),v') \exp{-\int_{s'}^s e^{-\Phi(X'(\tau'))}\nu(V'(\tau'))d\tau'}\nonumber\\
		&\quad \times \int_{\mathbb{R}^3}e^{-\Phi(X'(s')))}k_w(V'(s'),v'')h(s',X'(s'),v'') dv''dv'd\Sigma_l(s)ds'ds \nonumber \\
		&\le \sum_{l=1}^{k-1} \int_{t_{l+1}}^{t_l}\int_{t_1'}^{s-\epsilon}\frac{\exp\left\{-e^{-\|\Phi\|_\infty}\nu_0(t-s')\right\}}{\tilde{w}(x_1,V(t_1))}\int_{\prod_{j=1}^{k-1}\mathcal{V}_j}  \int_{\mathbb{R}^3}|k_w(V_l(s),v')| \int_{\mathbb{R}^3}|k_w(V'(s'),v'')|\nonumber \\ \label{TTMC} 
		&\quad \times|h(s',X'(s'),v'')| dv''dv'\left\{\prod_{j=l+1}^{k-1}d\sigma_j\right\}\left\{\tilde{w}(x_{l},v_{l})d\sigma_{l}\right\}\left\{\prod_{j=1}^{l-1}d\sigma_j\right\}ds'ds.
	\end{align}
	Fix $l$. We will divide this term into 3 cases.\\
	\newline
	$\mathbf{Case\ 1:}$ $|v_l| \ge R$ with $R \gg 2\sqrt{2\|\Phi\|_\infty}$.\\
	By \eqref{sese}, we get
	\begin{align*}
		|V_l(s)|\ge |v_l|-\sqrt{2\|\Phi\|_\infty}\ge \frac{R}{2}.
	\end{align*}
	From Lemma \ref{Kest}, we have
	\begin{align*}
		\int_{\mathbb{R}^3} \int_{\mathbb{R}^3} |k_w(V_l(s),v')| |k_w(V'(s'),v'')|dv''dv' \le \frac{C_\Phi}{1+R}.
	\end{align*}
	Then $R_3$ in this case is bounded by
	\begin{align}
		&\frac{C_\Phi}{1+R}\int_{t_{l+1}}^{t_l} \int_{t_1'}^{s-\epsilon} \exp\left\{-e^{-\|\Phi\|_\infty}\nu_0(t-s')\right\} \|h(s')\|_{L^\infty_{x,v}}ds'ds\nonumber\\ \label{class21}
		& \le \frac{C_\Phi}{1+R}\sup_{0\le s' \le T_0} \left\{\exp\left\{-e^{-\|\Phi\|_\infty} \frac{\nu_0}{2} (t-s') \right\}\|h(s')\|_{L^\infty_{x,v}}\right\},
	\end{align}
	where we have used the fact $\int_0^s \exp\left\{-e^{-\|\Phi\|_\infty} \frac{\nu_0}{2} (t-s') \right\} ds'ds$ is finite.\\
	\newline
	$\mathbf{Case\ 2:}$ $|v_l| \le R$, $|v'| \ge 2R$, or $|v'| \le 2R$, $|v''| \ge 3R$.\\
	Note that either $|v_l-v'| \ge R$ or $|v'-v''| \ge R$. From \eqref{sese}, either one of the followings holds:
	\begin{align*}
		&|V_l(s)-v'| \ge |v_l-v'|-|V_l(s)-v_l| \ge R-\frac{R}{2}=\frac{R}{2},\\
		&|V'(s')-v''| \ge |v'-v''|-|V'(s')-v'| \ge R-\frac{R}{2}=\frac{R}{2}.
	\end{align*}
	Then we have either one of the followings:
	\begin{equation} \label{dd32}
	\begin{aligned}
		&|k_w(V_l(s),v')| \le e^{-\frac{R^2}{64}}|k_w(V_l(s),v')|e^{\frac{1}{16}|V_l(s)-v'|^2},\\
		&|k_w(V'(s'),v'')| \le e^{-\frac{R^2}{64}}|k_w(V'(s'),v'')|e^{\frac{1}{16}|V'(s')-v''|^2}.
	\end{aligned}
	\end{equation}
	This yields from Lemma \ref{Kest},
	\begin{equation} \label{dd33}
	\begin{aligned}
		&\int_{|v'| \ge 2R} |k_w(V_l(s),v')|e^{\frac{1}{16}|V_l(s)-v'|^2}dv' < C,\\
		&\int_{|v''| \ge 3R} |k_w(V'(s'),v'')|e^{\frac{1}{16}|V'(s')-v''|^2}dv'' < C
	\end{aligned}
	\end{equation}
	for some constant $C$.
	Thus we use \eqref{dd32} and \eqref{dd33} to bound $R_3$ in this case by
	\begin{align}
		&C_\Phi \int_0^t\int_0^s \exp\left\{-e^{-\|\Phi\|_\infty}\nu_0(t-s')\right\} \int_{\prod_{j=1}^{k-1}\mathcal{V}_j,\ |v_l|\le R} \int_{|v'|\ge 2R} \int_{\mathbb{R}^3}|k_w(V_l(s),v')| |k_w(V'(s'),v'')|\|h(s')\|_{L^\infty_{x,v}}\nonumber\\
		& \quad \times dv''dv'\left\{\prod_{j=l+1}^{k-1}d\sigma_j\right\}\left\{\tilde{w}(x_{l},v_{l})d\sigma_{l}\right\}\left\{\prod_{j=1}^{l-1}d\sigma_j\right\}ds'ds\nonumber\\
		&+ C_\Phi \int_0^t\int_0^s \exp\left\{-e^{-\|\Phi\|_\infty}\nu_0(t-s')\right\}\int_{\prod_{j=1}^{k-1}\mathcal{V}_j}  \int_{|v'|\le 2R} \int_{|v''| \ge 3R}|k_w(V_l(s),v')| |k_w(V'(s'),v'')|\|h(s')\|_{L^\infty_{x,v}}\nonumber\\
		&\quad \times dv''dv'\left\{\prod_{j=l+1}^{k-1}d\sigma_j\right\}\left\{\tilde{w}(x_l,v_{l})d\sigma_{l}\right\}\left\{\prod_{j=1}^{l-1}d\sigma_j\right\}ds'ds\nonumber\\
		&\le C_\Phi e^{-\frac{R^2}{64}}\int_0^t\int_0^s \exp\left\{-e^{-\|\Phi\|_\infty}\nu_0(t-s')\right\}\|h(s')\|_{L^\infty_{x,v}}ds'ds\nonumber\\ \label{class22}
		&\le C_\Phi e^{-\frac{R^2}{64}}\sup_{0\le s' \le T_0} \left\{\exp\left\{-e^{-\|\Phi\|_\infty} \frac{\nu_0}{2} (t-s') \right\}\|h(s')\|_{L^\infty_{x,v}}\right\}.
	\end{align}
	\newline
	$\mathbf{Case\ 3:}$ $|v_l|\le R$, $|v'| \le 2R$, $|v''| \le 3R$.\\
	Since $k_w(v,v')$ has possible integrable singularity of $\frac{1}{|v-v'|}$, we can choose smooth function $k_R(v,v')$ with compact support such that
	\begin{align} \label{kk31}
		\sup_{|v|\le 3R}\int_{|v'| \le 3R} \left|k_R(v,v')-k_w(v,v')\right|dv' \le \frac{1}{R}.
	\end{align}
	We split 
	\begin{equation} \label{kk32}
	\begin{aligned}
		k_w(V_l(s),v')k_w(V'(s'),v'') &= \left\{k_w(V_l(s),v')-k_R(V_l(s),v')\right\}k_w(V'(s'),v'') \\
		&\quad + \{k_w(V'(s'),v'')-k_R(V'(s'),v'')\}k_R(V_l(s),v')\\
		&\quad +k_R(V_l(s),v')k_R(V'(s'),v'').
	\end{aligned}
	\end{equation}
	From \eqref{kk31} and \eqref{kk32}, $R_3$ in this case is bounded by 
	\begin{align}
		&\frac{C_\Phi}{R} \int_0^t \int_0^{s-\epsilon} \exp\left\{-e^{-\|\Phi\|_\infty}\nu_0(t-s')\right\}\|h(s')\|_{L^\infty_{x,v}}ds'ds \nonumber \\ 
		& + \int_0^t \int_0^{s-\epsilon} \frac{\exp\left\{-e^{-\|\Phi\|_\infty}\nu_0(t-s')\right\}}{\tilde{w}(x_1,V(t_1))}\int_{\prod_{j=1}^{k-1}\mathcal{V}_j, |v_l| \le R} \int_{|v'|\le 2R, |v''|\le 3R}|k_R(V_l(s),v')||k_R(V'(s'),v'')| \nonumber \\
		& \quad \times |h(s',X'(s'),v'')|dv''dv'\left\{\prod_{j=l+1}^{k-1}d\sigma_j\right\}\left\{\tilde{w}(x_l,v_{l})d\sigma_{l}\right\}\left\{\prod_{j=1}^{l-1}d\sigma_j\right\}ds'ds \nonumber \\ \label{class23}
		&\le \frac{C_\Phi}{R} \sup_{0\le s' \le T_0} \left\{\exp\left\{-e^{-\|\Phi\|_\infty} \frac{\nu_0}{2} (t-s') \right\}\|h(s')\|_{L^\infty_{x,v}}\right\} \\ 
		&\quad + C_{R,\Phi}\int_0^t \int_0^{s-\epsilon} \exp\left\{-e^{-\|\Phi\|_\infty}\nu_0(t-s')\right\} \int_{\prod_{j=1}^{k-1}\mathcal{V}_j, |v_l| \le R}\int_{|v'|\le 2R, |v''|\le 3R}|h(s',X'(s'),v'')|dv''dv' \nonumber\\ 
		&\qquad \times \left\{\prod_{j=l+1}^{k-1}d\sigma_j\right\}\left\{\tilde{w}(x_l,v_{l})d\sigma_{l}\right\}\left\{\prod_{j=1}^{l-1}d\sigma_j\right\}ds'ds \nonumber\\
		&=: R_{31}+R_{32}, \nonumber 
	\end{align}
	where we have used the fact $|k_R(V_l(s),v')||k_R(V'(s'),v'')| \le C_R$.\\
	In the term $R_3$, we recall that $X'(s') = X(s';s,X(s;t_l,x_l,v_l),v')$.
	Since the potential is time dependent, we have
	\begin{align*}
		X(s';s,X(s;t_l,x_l,v_l),v') = X(s'-s+T_0;T_0,X(T_0;T_0+t_l-s,x_l,v_l),v')
	\end{align*}
	for all $0\le s' \le s \le t_l \le T_0$.\\
	By Lemma \ref{cpl}, the term $R_{32}$ becomes
	\begin{align} 
		&C_{R,\Phi}\sum_{i_1}^{M_1}\sum_{I_2}^{(M_2)^3}\sum_{I_3}^{(M_3)^3} \int_0^t \mathbf{1}_{\{X(T_0;T_0+t_l-s,x_l,v_l)\in \mathcal{P}_{I_2}^{\Omega}\}}(s)\int_0^s\mathbf{1}_{\mathcal{P}_{i_1}^{T_0}}(s'-s+T_0) \exp\left\{-e^{-\|\Phi\|_\infty}\nu_0(t-s')\right\} \nonumber \\ 
		& \times \int_{\prod_{j=1}^{k-1}\mathcal{V}_j,|v_l| \le R}\int_{|v'|\le 2R}\mathbf{1}_{\mathcal{P}_{I_3}^v}(v')\int_{|v''|\le3R}|h(s',X(s'-s+T_0;T_0,X(T_0;T_0+t_l-s,x_l,v_l),v'),v'')|dv''dv' \nonumber\\ \label{ltlt3}
		& \times \left\{\prod_{j=l+1}^{k-1}d\sigma_j\right\}\left\{\tilde{w}(x_l,v_{l})d\sigma_{l}\right\}\left\{\prod_{j=1}^{l-1}d\sigma_j\right\}ds'ds.
	\end{align}
	From Lemma \ref{cpl}, we have the following partitions:
	\begin{align*}
		&\biggl\{(s'-s+T_0,X(T_0;T_0+t_l-s,x_l,v_l),v')\in \mathcal{P}_{i_1}^{T_0}\cross \mathcal{P}_{I_2}^{\Omega}\cross \mathcal{P}_{I_3}^{v} \\
		& \quad: \det\left(\frac{dX}{dv'}(s'-s+T_0;T_0,X(T_0;T_0+t_l-s,x_l,v_l),v')\right)=0\biggr\}\\
		& \subset \bigcup_{j=1}^3 \biggl\{(s'-s+T_0,X(T_0;T_0+t_l-s,x_l,v_l),v')\in \mathcal{P}_{i_1}^{T_0}\cross \mathcal{P}_{I_2}^{\Omega}\cross \mathcal{P}_{I_3}^{v} \\
		&\qquad :s'-s+T_0\in \left(t_{j,i_1,I_2,I_3}-\frac{\tilde{\epsilon}}{4M_1},t_{j,i_1,I_2,I_3}+\frac{\tilde{\epsilon}}{4M_1}\right)\biggr\}.
	\end{align*}
	Thus for each $i_1$,$I_2$, and $I_3$, we split $\mathbf{1}_{\mathcal{P}_{i_1}^{T_0}}(s'-s+T_0)$ as
	\begin{align} \label{fst3}
		&\mathbf{1}_{\mathcal{P}_{i_1}^{T_0}}(s'-s+T_0)\mathbf{1}_{\cup_{j=1}^3(t_{j,i_1,I_2,I_3}-\frac{\tilde{\epsilon}}{4M_1},t_{j,i_1,I_2,I_3}+\frac{\tilde{\epsilon}}{4M_1})}(s'-s+T_0)\\ \label{sst3}
		& +\mathbf{1}_{\mathcal{P}_{i_1}^{T_0}}(s'-s+T_0)\left\{1-\mathbf{1}_{\cup_{j=1}^3(t_{j,i_1,I_2,I_3}-\frac{\tilde{\epsilon}}{4M_1},t_{j,i_1,I_2,I_3}+\frac{\tilde{\epsilon}}{4M_1})}(s'-s+T_0)\right\}.
	\end{align}
	$\mathbf{Case\ 3 \ (\romannumeral 1):}$ The integration \eqref{ltlt3} corresponding to \eqref{fst3} is bounded by
	\begin{align}
		&C_{R,\Phi}\sum_{i_1}^{M_1}\sum_{I_2}^{(M_2)^3}\sum_{I_3}^{(M_3)^3} \sum_{j=1}^3\int_0^t \mathbf{1}_{\{X(T_0;T_0+t_l-s,x_l,v_l)\in \mathcal{P}_{I_2}^{\Omega}\}}(s)\int_0^s\mathbf{1}_{\mathcal{P}_{i_1}^{T_0}}(s'-s+T_0)  \nonumber \\ 
		& \times \mathbf{1}_{(t_{j,i_1,I_2,I_3}-\frac{\tilde{\epsilon}}{4M_1},t_{j,i_1,I_2,I_3}+\frac{\tilde{\epsilon}}{4M_1})}(s'-s+T_0)\exp\left\{-e^{-\|\Phi\|_\infty}\nu_0(t-s')\right\}\int_{\prod_{j=1}^{k-1}\mathcal{V}_j,|v_l| \le R}\int_{|v'|\le 2R}\mathbf{1}_{\mathcal{P}_{I_3}^v}(v')\nonumber\\ 
		& \times \int_{|v''|\le3R}|h(s',X(s'-s+T_0;T_0,X(T_0;T_0+t_l-s,x_l,v_l),v'),v'')|dv''dv' \left\{\prod_{j=l+1}^{k-1}d\sigma_j\right\}\left\{\tilde{w}(x_l,v_{l})d\sigma_{l}\right\}\nonumber \\ \label{mlt3}
		& \times \left\{\prod_{j=1}^{l-1}d\sigma_j\right\}ds'ds.
	\end{align}
	We split
	\begin{align*}
		\exp\left\{-e^{-\|\Phi\|_\infty} \nu_0 (t-s') \right\} &= \exp\left\{-e^{-\|\Phi\|_\infty} \frac{\nu_0}{2} (t-s) \right\}\exp\left\{-e^{-\|\Phi\|_\infty} \frac{\nu_0}{2} (s-s') \right\}\\
	& \quad \times  \exp\left\{-e^{-\|\Phi\|_\infty} \frac{\nu_0}{2} (t-s') \right\}.
	\end{align*}
	and we can bound the integration \eqref{mlt3} by
	\begin{align*}
		&C_{R,\Phi}\sum_{i_1}^{M_1}\sum_{I_2}^{(M_2)^3}\sum_{I_3}^{(M_3)^3} \sum_{j=1}^3 \int_0^t \mathbf{1}_{\{X(T_0;T_0+t_l-s,x_l,v_l)\in \mathcal{P}_{I_2}^{\Omega}\}}(s)\exp\left\{-e^{-\|\Phi\|_\infty} \frac{\nu_0}{2} (t-s) \right\}\\
		& \times \underbrace{\int_0^s\mathbf{1}_{\mathcal{P}_{i_1}^{T_0}}(s'-s+T_0)\mathbf{1}_{(t_{j,i_1,I_2,I_3}-\frac{\tilde{\epsilon}}{4M_1},t_{j,i_1,I_2,I_3}+\frac{\tilde{\epsilon}}{4M_1})}(s'-s+T_0) \exp\left\{-e^{-\|\Phi\|_\infty} \frac{\nu_0}{2} (s-s') \right\}}_{(*3)}\\
		&\times \int_{\prod_{j=1}^{k-1}\mathcal{V}_j,|v_l| \le R}\int_{|v'|\le 2R}\mathbf{1}_{\mathcal{P}_{I_3}^v}(v')\int_{|v''|\le 3R}\exp\left\{-e^{-\|\Phi\|_\infty} \frac{\nu_0}{2} (t-s') \right\}\|h(s')\|_{L^\infty_{x,v}}dv''dv'\left\{\prod_{j=l+1}^{k-1}d\sigma_j\right\}\\
		&\times\left\{\tilde{w}(x_l,v_{l})d\sigma_{l}\right\}\left\{\prod_{j=1}^{l-1}d\sigma_j\right\}ds'ds.
	\end{align*}
	Here, $(*3)$ is bounded by
	\begin{align}
		&\int_0^s \mathbf{1}_{\mathcal{P}_{i_1}^{T_0}}(s'-s+T_0)\mathbf{1}_{(t_{j,i_1,I_2,I_3}-\frac{\tilde{\epsilon}}{4M_1},t_{j,i_1,I_2,I_3}+\frac{\tilde{\epsilon}}{4M_1})}(s'-s+T_0) \exp\left\{-e^{-\|\Phi\|_\infty} \frac{\nu_0}{2} (s-s') \right\}ds'\nonumber\\ \label{ppp5}
		& \le \frac{\tilde{\epsilon}}{2M_1}.
	\end{align}
	From the partition of the time interval $[0,T_0]$ and velocity domain $[-4R,4R]^3$ in Lemma \ref{cpl}, we have
	\begin{equation} \label{ppp6}
	\begin{aligned}
		&\sum_{I_2}^{(M_2)^3}\mathbf{1}_{\{X(T_0;T_0+t_l-s,x_l,v_l)\in \mathcal{P}_{I_2}^{\Omega}\}}(s) \le \mathbf{1}_{\{0\le s \le T_0\}}(s),\\
		&\sum_{I_3}^{(M_3)^3} \mathbf{1}_{\mathcal{P}_{I_3}^v}(v')\mathbf{1}_{\{|v'|\le 2R\}}(v') = \mathbf{1}_{\{|v'|\le 2R\}}(v').
	\end{aligned}
	\end{equation}
	Using \eqref{ppp5} and \eqref{ppp6}, \eqref{mlt3} is bounded by
	\begin{align}
		& C_{R,\Phi} \sup_{0\le s' \le t} \left\{\exp\left\{-e^{-\|\Phi\|_\infty} \frac{\nu_0}{2} (t-s') \right\}\|h(s')\|_{L^\infty_{x,v}}\right\}\ \sum_{i_1}^{M_1} \sum_{I_2}^{(M_2)^3} \int_0^t \mathbf{1}_{\{X(T_0;T_0+t_l-s,x_l,v_l)\in \mathcal{P}_{I_2}^{\Omega}\}}(s) \nonumber\\
		&\quad \times \exp\left\{-e^{-\|\Phi\|_\infty} \frac{\nu_0}{2} (t-s) \right\}\int_0^s \mathbf{1}_{\mathcal{P}_{i_1}^{T_0}}(s'-s+T_0)\mathbf{1}_{(t_{j,i_1,I_2,I_3}-\frac{\tilde{\epsilon}}{4M_1},t_{j,i_1,I_2,I_3}+\frac{\tilde{\epsilon}}{4M_1})}(s'-s+T_0) \nonumber\\
		&\quad \times \exp\left\{-e^{-\|\Phi\|_\infty} \frac{\nu_0}{2} (s-s') \right\}ds'ds\nonumber\\ \label{class24}
		&\le \tilde{\epsilon} \ C_{R,\Phi}\sup_{0\le s' \le T_0} \left\{\exp\left\{-e^{-\|\Phi\|_\infty} \frac{\nu_0}{2} (t-s') \right\}\|h(s')\|_{L^\infty_{x,v}}\right\}.
	\end{align}
	\newline
	$\mathbf{Case\ 3 \ (\romannumeral 2):}$ The integration \eqref{ltlt3} corresponding to \eqref{sst3} is bounded by
	\begin{align}
		&C_{R,\Phi}\sum_{i_1}^{M_1}\sum_{I_2}^{(M_2)^3}\sum_{I_3}^{(M_3)^3}\int_0^t \mathbf{1}_{\{X(T_0;T_0+t_l-s,x_l,v_l)\in \mathcal{P}_{I_2}^{\Omega}\}}(s)\int_0^s\mathbf{1}_{\mathcal{P}_{i_1}^{T_0}}(s'-s+T_0) \nonumber \\ 
		& \times \left\{1-\mathbf{1}_{\cup_{j=1}^3(t_{j,i_1,I_2,I_3}-\frac{\tilde{\epsilon}}{4M_1},t_{j,i_1,I_2,I_3}+\frac{\tilde{\epsilon}}{4M_1})}(s'-s+T_0) \right\}\exp\left\{-e^{-\|\Phi\|_\infty}\nu_0(t-s')\right\}\int_{\prod_{j=1}^{k-1}\mathcal{V}_j,|v_l| \le R}\nonumber\\ 
		& \times \underbrace{\int_{|v'|\le 2R}\mathbf{1}_{\mathcal{P}_{I_3}^v}(v')\int_{|v''|\le 3R}|h(s',X(s'-s+T_0;T_0,X(T_0;T_0+t_l-s,x_l,v_l),v'),v'')|dv''dv'}_{(\#3)}\nonumber\\ \label{mmlt3}
		&\times \left\{\prod_{j=l+1}^{k-1}d\sigma_j\right\}\left\{\tilde{w}(x_l,v_{l})d\sigma_{l}\right\}\left\{\prod_{j=1}^{l-1}d\sigma_j\right\}ds'ds.
	\end{align}
	By Lemma \ref{cpl}, we have made a change of variables $v' \rightarrow y:=X(s'-s+T_0;T_0,X(T_0;T_0+t_l-s,x_l,v_l),v')$ so that 
	\begin{align*}
		\det\left(\frac{dX}{dv'}(s'-s+T_0;T_0,X(T_0;T_0+t_l-s,x_l,v_l),v')\right)> \delta_*
	\end{align*}
	and the term $(\#3)$ is bounded by
	\begin{align*}
		\int_{|v'|\le 2R}\int_{|v''|\le 3R}|h(s',X(s'-s+T_0;T_0,X(T_0;T_0+t_l-s,x_l,v_l),v'),v'')|dv''dv'
		\le \frac{C_{R,\Phi}}{\delta_*} \|f(s')\|_{L^2_{x,v}},
	\end{align*}
	where we have used the Cauchy-Schwarz inequality.
	Hence \eqref{mmlt3} is bounded by
	\begin{align}
		&\frac{C_{R,\Phi,M_1,M_2,M_3}}{\delta_*} \int_0^t\int_0^s\exp\left\{-e^{-\|\Phi\|_\infty} \nu_0 (t-s') \right\} \|f(s')\|_{L^2_{x,v}}ds'ds\nonumber\\ \label{class25}
		& \le \frac{C_{R,\Phi,M_1,M_2,M_3}}{\delta_*} \int_0^{T_0}\|f(s')\|_{L^2_{x,v}}ds'.
	\end{align}
	Combining the bounds \eqref{class21}, \eqref{class22}, \eqref{class23}, \eqref{class24}, \eqref{class25} and summing over $1\le l \le k(\epsilon)-1$, we can bound $R_3$ in \eqref{RMC} by
	\begin{equation} \label{dd39}
	\begin{aligned}
		&\left(\frac{C_{\epsilon,\Phi}}{R}+C_{\epsilon,\Phi} e^{-\frac{R^2}{64}}+ \tilde{\epsilon} \ C_{\epsilon,R,\Phi} \right) \sup_{0\le s' \le T_0} \left\{\exp\left\{-e^{-\|\Phi\|_\infty} \frac{\nu_0}{2} (t-s') \right\}\|h(s')\|_{L^\infty_{x,v}}\right\}\\
		& +\frac{C_{\epsilon,R,\Phi,M_1,M_2,M_3}}{\delta_*} \int_0^{T_0}\|f(s')\|_{L^2_{x,v}}ds'.
	\end{aligned}
	\end{equation}
	\newline
	Finally, let us estimate  the term $R_4$ in \eqref{RMC} :
	\begin{align} 
		&\sum_{l=1}^{k-1}\int_{t_{l+1}}^{t_l}\sum_{l'=1}^{k-1}\int_{t_{l'+1}'}^{t_{l'}'}\frac{\exp{-\int_{t_1}^t e^{-\Phi(X(\tau))}\nu(V(\tau))d\tau}}{\tilde{w}(x_1,V(t_1))}\int_{\prod_{j=1}^{k-1}\mathcal{V}_j} \int_{\mathbb{R}^3} e^{-\Phi(X_l(s))}k_w(V_l(s),v')   \nonumber \\
		&\quad \times \frac{\exp{-\int_{t_1'}^s e^{-\Phi(X'(\tau))}\nu(V'(\tau))d\tau}}{\tilde{w}(x_1',V'(t'_1))}\int_{\prod_{j=1}^{k-1}\mathcal{V}_j'}\left\{\int_{\mathbb{R}^3}e^{-\Phi(X'_{l'}(s'))}k_w(V_{l'}'(s'),v'')h(s',X_{l'}'(s'),v'')dv''\right\} \nonumber \\
		& \quad \times d\Sigma_{l'}'(s')dv'd\Sigma_l(s)ds'ds \nonumber \\
		&\le \sum_{l=1}^{k-1}\int_{t_{l+1}}^{t_l}\sum_{l'=1}^{k-1}\int_{t_{l'+1}'}^{t_{l'}'} \frac{\exp\left\{-e^{-\|\Phi\|_\infty}\nu_0(t-s')\right\}}{\tilde{w}(x_1,V(t_1))} \int_{\prod_{j=1}^{k-1}\mathcal{V}_j} \int_{\mathbb{R}^3} |k_w(V_l(s),v')|\frac{1}{\tilde{w}(x_1',V'(t'_1))}  \nonumber \\
		& \quad \times  \int_{\prod_{j=1}^{k-1}\mathcal{V}_j'}\left\{\int_{\mathbb{R}^3}|k_w(V_{l'}'(s'),v'')||h(s',X_{l'}'(s'),v'')|dv''\right\} \left\{\prod_{j=l'+1}^{k-1}d\sigma_j'\right\}\left\{\tilde{w}(x_{l'}',v_{l'}')d\sigma_{l'}'\right\}\left\{\prod_{j=1}^{l'-1}d\sigma_j'\right\} \nonumber\\ \label{FFTMC}
		& \quad \times dv'\left\{\prod_{j=l+1}^{k-1}d\sigma_j\right\}\left\{\tilde{w}(x_l,v_{l})d\sigma_{l}\right\}\left\{\prod_{j=1}^{l-1}d\sigma_j\right\}ds'ds.
	\end{align}
	Fix $l,l'$. Note that $\tilde{w}(x'_{l'},v_{l'}')\mu(v_{l'}')|v_{l'}'|\le C_\Phi$ for some constant $C_\Phi >0$.
	We will divide this term into 3 cases.\\
	\newline
	$\mathbf{Case\ 1:}$ $|v_l| \ge R$ or $|v_{l'}'| \ge R$ with $R \gg 2\sqrt{2\|\Phi\|_\infty}$.\\
	By \eqref{sese}, we get
	\begin{align*}
		|V_l(s)|\ge |v_l|-\sqrt{2\|\Phi\|_\infty}\ge \frac{R}{2} \quad \text{or} \quad |V'_{l'}(s')|\ge |v_{l'}'|-\sqrt{2\|\Phi\|_\infty}\ge \frac{R}{2}.
	\end{align*}
	From Lemma \ref{Kest}, we have
	\begin{align*}
		\int_{|v_{l'}'|\ge R} \left(\int_{\mathbb{R}^3} |k_w(V_{l'}'(s'),v'')||h(s',X_{l'}'(s'),v'')|dv''\right)\tilde{w}(x_{l'}',v_{l'}')d\sigma_{l'}'\le \frac{C_\Phi}{1+R} \|h(s')\|_{L^\infty_{x,v}}.
	\end{align*}
	Then $R_4$ in the case $|v_{l'}'| \ge R$ is bounded by
	\begin{align}
		&\frac{C_\Phi}{1+R} \int_{t_{l+1}}^{t_l} \int_{t_{l'+1}'}^{t_{l'}'} \exp\left\{-e^{-\|\Phi\|_\infty}\nu_0(t-s')\right\}\|h(s')\|_{L^\infty_{x,v}}\int_{\prod_{j=1}^{k-1}\mathcal{V}_j} \int_{\mathbb{R}^3}|k_w(V_l(s),v')|dv'\nonumber\\
		& \quad \times \left\{\prod_{j=l+1}^{k-1}d\sigma_j\right\}\left\{\tilde{w}(x_l,v_{l})d\sigma_{l}\right\}\left\{\prod_{j=1}^{l-1}d\sigma_j\right\}ds'ds\nonumber\\
		& \le \frac{C_\Phi}{1+R} \int_0^t \int_0^s \exp\left\{-e^{-\|\Phi\|_\infty}\nu_0(t-s')\right\}\|h(s')\|_{L^\infty_{x,v}} ds'ds\nonumber\\ \label{class31}
		& \le \frac{C_\Phi}{1+R} \sup_{0\le s' \le T_0} \left\{\exp\left\{-e^{-\|\Phi\|_\infty} \frac{\nu_0}{2} (t-s') \right\}\|h(s')\|_{L^\infty_{x,v}}\right\},
	\end{align}
	where we have used the fact $\int_0^t \int_0^s \exp\left\{-e^{-\|\Phi\|_\infty} \frac{\nu_0}{2} (t-s') \right\} ds'ds$ is finite.\\
	By Lemma \ref{Kest}, $R_4$ in the case $|v_l| \ge R$ is bounded by 
	\begin{align}
		& C_\Phi \int_{t_{l+1}}^{t_l}\int_{t_{l'+1}'}^{t_{l'}'}\exp\left\{-e^{-\|\Phi\|_\infty}\nu_0(t-s')\right\}\int_{\prod_{j=1}^{k-1}\mathcal{V}_j,\ |v_l|\ge R}\int_{\mathbb{R}^3}|k_w(V_l(s),v')|\nonumber\\
		&\quad \times \int_{\prod_{j=1}^{k-1}\mathcal{V}_j'}\Biggl\{\int_{\mathbb{R}^3}|k_w(V_{l'}'(s'),v'')| \|h(s')\|_{L^\infty_{x,v}}dv''\Biggr\} \left\{\prod_{j=l'+1}^{k-1}d\sigma_j'\right\}\left\{\tilde{w}(x_{l'}',v_{l'}')d\sigma_{l'}'\right\}\left\{\prod_{j=1}^{l'-1}d\sigma_j'\right\}\nonumber\\
		&\quad \times dv'\left\{\prod_{j=l+1}^{k-1}d\sigma_j\right\}\left\{\tilde{w}(x_l,v_{l})d\sigma_{l}\right\}\left\{\prod_{j=1}^{l-1}d\sigma_j\right\}ds'ds\nonumber\\
		& \le \frac{C_\Phi}{1+R} \int_0^{t}\int_{0}^{s} \exp\left\{-e^{-\|\Phi\|_\infty}\nu_0(t-s')\right\} \|h(s')\|_{L^\infty_{x,v}} ds'ds\nonumber\\ \label{class32}
		& \le \frac{C_\Phi}{1+R} \sup_{0\le s' \le T_0} \left\{\exp\left\{-e^{-\|\Phi\|_\infty} \frac{\nu_0}{2} (t-s') \right\}\|h(s')\|_{L^\infty_{x,v}}\right\},
	\end{align}
	where we have used the fact $\int_0^t \int_0^s \exp\left\{-e^{-\|\Phi\|_\infty} \frac{\nu_0}{2} (t-s') \right\} ds'ds$ is finite.\\
	\newline
	$\mathbf{Case\ 2:}$ $|v_l| \le R$, $|v'| \ge 2R$, or $|v_{l'}'| \le R$, $|v''| \ge 2R$.\\
	Note that either $|v_l-v'| \ge R$ or $|v_{l'}'-v''| \ge R$.
	From \eqref{sese}, either one of the followings holds:
	\begin{align*}
		&|V_l(s)-v'| \ge |v_l-v'|-|V_l(s)-v_l| \ge R-\frac{R}{2}=\frac{R}{2},\\
		&|V_{l'}'(s')-v''| \ge |v_{l'}'-v''|-|V_{l'}'(s')-v_{l'}'| \ge R-\frac{R}{2}=\frac{R}{2}.
	\end{align*}
	Then we have either one of the followings:
	\begin{equation} \label{dd41}
	\begin{aligned}
		&|k(V_l(s),v')| \le e^{-\frac{R^2}{64}}|k(V_l(s),v')|e^{\frac{1}{16}|V_l(s)-v'|^2},\\
		&|k(V_{l'}'(s'),v'')| \le e^{-\frac{R^2}{64}}|k(V_{l'}'(s'),v'')|e^{\frac{1}{16}|V_{l'}'(s')-v''|^2}.
	\end{aligned}
	\end{equation}
	This yields from Lemma \ref{Kest},
	\begin{equation} \label{dd42}
	\begin{aligned}
		&\int_{|v'| \ge 2R} |k_w(V_l(s),v')|e^{\frac{1}{16}|V_l(s)-v'|^2}dv' < C,\\
		&\int_{|v''| \ge 2R} |k_w(V_{l'}'(s'),v'')|e^{\frac{1}{16}|V_{l'}'(s')-v''|^2}dv'' < C.
	\end{aligned}
	\end{equation}
	for some constant $C$. Thus we use \eqref{dd41} and \eqref{dd42} to bound $R_4$ in the case $|v_l| \le R, |v'| \ge 2R$ by
	\begin{align}
		&C_\Phi \int_{t_{l+1}}^{t_1}\int_{t_{l'+1}'}^{t_{l'}'}\exp\left\{-e^{-\|\Phi\|_\infty}\nu_0(t-s')\right\}\int_{\prod_{j=1}^{k-1}\mathcal{V}_j,\ |v_l|\le R}\int_{|v'| \ge 2R}|k_w(V_l(s),v')|\int_{\prod_{j=1}^{k-1}\mathcal{V}_j'}\nonumber\\
		& \quad  \times \Biggl\{\int_{\mathbb{R}^3}|k_w(V_{l'}'(s'),v'')|\|h(s')\|_{L^\infty_{x,v}}dv''\Biggr\} \left\{\prod_{j=l'+1}^{k-1}d\sigma_j'\right\}\left\{\tilde{w}(x_{l'}'v_{l'}')d\sigma_{l'}'\right\}\left\{\prod_{j=1}^{l'-1}d\sigma_j'\right\}dv'\nonumber\\
		& \quad \times \left\{\prod_{j=l+1}^{k-1}d\sigma_j\right\}\left\{\tilde{w}(x_l,v_{l})d\sigma_{l}\right\}\left\{\prod_{j=1}^{l-1}d\sigma_j\right\}ds'ds\nonumber\\
		&\le C_\Phi e^{-\frac{R^2}{64}} \int_0^t\int_0^s \exp\left\{-e^{-\|\Phi\|_\infty}\nu_0(t-s')\right\}\|h(s')\|_{L^\infty_{x,v}}ds'ds\nonumber\\ \label{class33}
		&\le C_\Phi e^{-\frac{R^2}{64}} \sup_{0\le s' \le T_0} \left\{\exp\left\{-e^{-\|\Phi\|_\infty} \frac{\nu_0}{2} (t-s') \right\}\|h(s')\|_{L^\infty_{x,v}}\right\},
	\end{align}
	where we have used the fact $\int_0^t\int_0^s \exp\left\{-e^{-\|\Phi\|_\infty} \frac{\nu_0}{2} (t-s) \right\}ds'ds$ is finite.\\
	Similarly, we use \eqref{dd41} and \eqref{dd42} to bound $R_4$ in the case $|v_{l'}'| \le R, |v''|\ge 2R$ by
	\begin{align}
		&C_\Phi \int_{t_{l+1}}^{t_1}\int_{t_{l'+1}'}^{t_{l'}'}\exp\left\{-e^{-\|\Phi\|_\infty}\nu_0(t-s')\right\}\int_{\prod_{j=1}^{k-1}\mathcal{V}_j}\int_{\mathbb{R}^3}|k_w(V_l(s),v')|\int_{\prod_{j=1}^{k-1}\mathcal{V}_j',\ |v_{l'}'|\le R}\nonumber\\
		& \quad  \times \Biggl\{\int_{|v''|\ge 2R}|k_w(V_{l'}'(s'),v'')|\|h(s')\|_{L^\infty_{x,v}}dv''\Biggr\} \left\{\prod_{j=l'+1}^{k-1}d\sigma_j'\right\}\left\{\tilde{w}(x_{l'}',v_{l'}')d\sigma_{l'}'\right\}\left\{\prod_{j=1}^{l'-1}d\sigma_j'\right\}dv'\nonumber\\
		& \quad \times \left\{\prod_{j=l+1}^{k-1}d\sigma_j\right\}\left\{\tilde{w}(x_l,v_{l})d\sigma_{l}\right\}\left\{\prod_{j=1}^{l-1}d\sigma_j\right\}ds'ds\nonumber\\
		&\le C_\Phi e^{-\frac{R^2}{64}} \int_0^t\int_0^s \exp\left\{-e^{-\|\Phi\|_\infty}\nu_0(t-s')\right\}\|h(s')\|_{L^\infty_{x,v}}ds'ds\nonumber\\ \label{class34}
		&\le C_\Phi e^{-\frac{R^2}{64}}\sup_{0\le s' \le T_0} \left\{\exp\left\{-e^{-\|\Phi\|_\infty} \frac{\nu_0}{2} (t-s') \right\}\|h(s')\|_{L^\infty_{x,v}}\right\},
	\end{align}
	where we have used the fact $\int_0^t\int_0^s \exp\left\{-e^{-\|\Phi\|_\infty} \frac{\nu_0}{2} (t-s') \right\}ds'ds$ is finite.\\
	\newline
	$\mathbf{Case\ 3:}$ $|v_l|\le R$, $|v'| \le 2R$, $|v_{l'}'| \le R$, $|v''| \le 2R$.\\
	Since $k_w(v,v')$ has possible integrable singularity of $\frac{1}{|v-v'|}$, we can choose smooth function $k_R(v,v')$ with compact support such that
	\begin{align} \label{kk7}
		\sup_{|v|\le 2R}\int_{|v'| \le 2R} \left|k_R(v,v')-k_w(v,v')\right|dv' \le \frac{1}{R}.
	\end{align}
	We split 
	\begin{equation} \label{kk8}
	\begin{aligned}
		k_w(V_l(s),v')k_w(V_{l'}'(s'),v'') &= \left\{k_w(V_l(s),v')-k_R(V_l(s),v')\right\}k(V_{l'}'(s'),v'')\\ 
		&\quad + \{k_w(V_{l'}'(s'),v'')-k_R(V_{l'}'(s'),v'')\}k_R(V_l(s),v')\\
		&\quad +k_R(V_l(s),v')k_R(V_{l'}'(s'),v'').
	\end{aligned}
	\end{equation}
	From \eqref{kk7} and \eqref{kk8}, $R_4$ in this case is bounded by 
	\begin{align}
		&\frac{C_\Phi}{R} \int_0^t \int_0^{s}\exp\left\{-e^{-\|\Phi\|_\infty}\nu_0(t-s')\right\} \|h(s')\|_{L^\infty_{x,v}}ds'ds \nonumber \\
		& +C_\Phi \int_{t_{l+1}}^{t_l} \int_{t_{l'+1}'}^{t_{l'}'} \exp\left\{-e^{-\|\Phi\|_\infty}\nu_0(t-s')\right\}  \int_{\prod_{j=1}^{k-1}\mathcal{V}_j,\ |v_l|\le R}\int_{|v'|\le 2R} |k_R(V_l(s),v')|  \nonumber \\ 
		&\quad  \times \int_{\prod_{j=1}^{k-1}\mathcal{V}_j' ,\ |v_{l'}'|\le R} \Biggl\{\int_{|v''| \le 2R}|k_R(V_{l'}'(s'),v'')|  |h(s',X'_{l'}(s'),v'')|dv''\Biggr\}\left\{\prod_{j=l'+1}^{k-1}d\sigma_j'\right\}\left\{\tilde{w}(x_{l'}',v_{l'}')d\sigma_{l'}'\right\}\nonumber\\
		&\quad \times \left\{\prod_{j=1}^{l'-1}d\sigma_j'\right\}dv'\left\{\prod_{j=l+1}^{k-1}d\sigma_j\right\}\left\{\tilde{w}(x_l,v_{l})d\sigma_{l}\right\}\left\{\prod_{j=1}^{l-1}d\sigma_j\right\}ds'ds \nonumber  \\ \label{class35}
		&\le \frac{C_\Phi}{R} \sup_{0\le s' \le T_0} \left\{\exp\left\{-e^{-\|\Phi\|_\infty} \frac{\nu_0}{2} (t-s') \right\}\|h(s')\|_{L^\infty_{x,v}}\right\}  \\ 
		&\quad + C_{R,\Phi}\int_{t_{l+1}}^{t_l} \int_{t_{l'+1}'}^{t_{l'}'} \exp\left\{-e^{-\|\Phi\|_\infty}\nu_0(t-s')\right\} \int_{\prod_{j=1}^{k-1}\mathcal{V}_j,\ |v_l|\le R}\int_{|v'|\le 2R} \int_{\prod_{j=1}^{k-1}\mathcal{V}_j' ,\ |v_{l'}'|\le R} \nonumber \\ 
		&\qquad \times \Biggl\{\int_{|v''| \le 2R}|h(s',X'_{l'}(s'),v'')|dv''\Biggr\} \left\{\prod_{j=l'+1}^{k-1}d\sigma_j'\right\}\left\{\tilde{w}(x_{l'}',v_{l'}')d\sigma_{l'}'\right\}\left\{\prod_{j=1}^{l'-1}d\sigma_j'\right\}dv'\nonumber\\ 
		&\qquad \times \left\{\prod_{j=l+1}^{k-1}d\sigma_j\right\}\left\{\tilde{w}(x_l,v_{l})d\sigma_{l}\right\}\left\{\prod_{j=1}^{l-1}d\sigma_j\right\}ds'ds \nonumber\\
		&=: R_{41}+R_{42}, \nonumber
	\end{align}
	where we have used the fact $|k_R(V_l(s),v')||k_R(V_{l'}'(s'),v'')|\le C_R$.\\
	In the term $R_4$, we recall that $X_{l'}'(s') = X(s';t_{l'}',x_{l'}',v_{l'}')$.
	Since the potential is time dependent, we have
	\begin{align*}
		X(s';t_{l'}',x_{l'}',v_{l'}') = X(s'-t_{l'}'+T_0;T_0,x_{l'}',v_{l'}')
	\end{align*}
	for all $0\le s' \le t_{l'}' \le T_0$.\\
	By Lemma \ref{cpl}, the term $R_{42}$ becomes
	\begin{align}
		&C_{R,\Phi}\sum_{i_1}^{M_1}\sum_{I_2}^{(M_2)^3}\sum_{I_3}^{(M_3)^3}\int_{t_{l+1}}^{t_l} \mathbf{1}_{\{x_{l'}'\in \mathcal{P}_{I_2}^{\Omega}\}}(s)\int_{t_{l'+1}'}^{t_{l'}'}\mathbf{1}_{\mathcal{P}_{i_1}^{T_0}}(s'-t_{l'}'+T_0) \exp\left\{-e^{-\|\Phi\|_\infty}\nu_0(t-s')\right\} \int_{\prod_{j=1}^{k-1}\mathcal{V}_j,\ |v_l|\le R} \nonumber \\ 
		&\quad \times \int_{|v'|\le 2R} \int_{\prod_{j=1}^{k-1}\mathcal{V}_j' ,\ |v_{l'}'|\le R}\mathbf{1}_{\mathcal{P}_{I_3}^v}(v_{l'}')\Biggl\{\int_{|v''| \le 2R}|h(s',X'_{l'}(s'),v'')|dv''\Biggr\} \left\{\prod_{j=l'+1}^{k-1}d\sigma_j'\right\}\left\{\tilde{w}(x_{l'}',v_{l'}')d\sigma_{l'}'\right\}\nonumber\\ \label{ltlt4}
		&\quad \times \left\{\prod_{j=1}^{l'-1}d\sigma_j'\right\}dv'\left\{\prod_{j=l+1}^{k-1}d\sigma_j\right\}\left\{\tilde{w}(x_l,v_{l})d\sigma_{l}\right\}\left\{\prod_{j=1}^{l-1}d\sigma_j\right\}ds'ds.
	\end{align}
	From Lemma \ref{cpl}, we have the following partitions:
	\begin{align*}
		&\biggl\{(s'-t_{l'}'+T_0,x_{l'}',v_{l'}')\in \mathcal{P}_{i_1}^{T_0}\cross \mathcal{P}_{I_2}^{\Omega}\cross \mathcal{P}_{I_3}^{v} : \det\left(\frac{dX}{dv'}(s'-t_{l'}'+T_0;T_0,x_{l'}',v_{l'}')\right)=0\biggr\}\\
		& \subset \bigcup_{j=1}^3 \biggl\{(s'-t_{l'}'+T_0,x_{l'}',v_{l'}')\in \mathcal{P}_{i_1}^{T_0}\cross \mathcal{P}_{I_2}^{\Omega}\cross \mathcal{P}_{I_3}^{v}:s'-t_{l'}'+T_0\in \left(t_{j,i_1,I_2,I_3}-\frac{\tilde{\epsilon}}{4M_1},t_{j,i_1,I_2,I_3}+\frac{\tilde{\epsilon}}{4M_1}\right)\biggr\}.
	\end{align*}
	Thus for each $i_1$,$I_2$, and $I_3$, we split $\mathbf{1}_{\mathcal{P}_{i_1}^{T_0}}(s'-t_{l'}'+T_0)$ as
	\begin{align} \label{fst4}
		&\mathbf{1}_{\mathcal{P}_{i_1}^{T_0}}(s'-t_{l'}'+T_0)\mathbf{1}_{\cup_{j=1}^3(t_{j,i_1,I_2,I_3}-\frac{\tilde{\epsilon}}{4M_1},t_{j,i_1,I_2,I_3}+\frac{\tilde{\epsilon}}{4M_1})}(s'-t_{l'}'+T_0)\\ \label{sst4}
		& +\mathbf{1}_{\mathcal{P}_{i_1}^{T_0}}(s'-t_{l'}'+T_0)\left\{1-\mathbf{1}_{\cup_{j=1}^3(t_{j,i_1,I_2,I_3}-\frac{\tilde{\epsilon}}{4M_1},t_{j,i_1,I_2,I_3}+\frac{\tilde{\epsilon}}{4M_1})}(s'-t_{l'}'+T_0)\right\}.
	\end{align}
	$\mathbf{Case\ 3 \ (\romannumeral 1):}$ The integration \eqref{ltlt4} corresponding to \eqref{fst4} is bounded by
	\begin{align}
		&C_{R,\Phi}\sum_{i_1}^{M_1}\sum_{I_2}^{(M_2)^3}\sum_{I_3}^{(M_3)^3}\sum_{j=1}^3\int_{t_{l+1}}^{t_l} \mathbf{1}_{\{x_{l'}'\in \mathcal{P}_{I_2}^{\Omega}\}}(s)\int_{t_{l'+1}'}^{t_{l'}'}\mathbf{1}_{\mathcal{P}_{i_1}^{T_0}}(s'-t_{l'}'+T_0)\mathbf{1}_{(t_{j,i_1,I_2,I_3}-\frac{\tilde{\epsilon}}{4M_1},t_{j,i_1,I_2,I_3}+\frac{\tilde{\epsilon}}{4M_1})}(s'-t_{l'}'+T_0)  \nonumber \\ 
		&\quad \times \exp\left\{-e^{-\|\Phi\|_\infty}\nu_0(t-s')\right\} \int_{\prod_{j=1}^{k-1}\mathcal{V}_j,\ |v_l|\le R}\int_{|v'|\le 2R} \int_{\prod_{j=1}^{k-1}\mathcal{V}_j' ,\ |v_{l'}'|\le R}\mathbf{1}_{\mathcal{P}_{I_3}^v}(v_{l'}') \nonumber\\ 
		&\quad \times \Biggl\{\int_{|v''| \le 2R}|h(s',X(s'-t_{l'}'+T_0;T_0,x_{l'}',v_{l'}'),v'')|dv''\Biggr\}\left\{\prod_{j=l'+1}^{k-1}d\sigma_j'\right\}\left\{\tilde{w}(x_{l'}',v_{l'}')d\sigma_{l'}'\right\}\left\{\prod_{j=1}^{l'-1}d\sigma_j'\right\}\nonumber\\ \label{mlt4}
		&\quad \times dv'\left\{\prod_{j=l+1}^{k-1}d\sigma_j\right\} \left\{\tilde{w}(x_l,v_{l})d\sigma_{l}\right\}\left\{\prod_{j=1}^{l-1}d\sigma_j\right\}ds'ds.
	\end{align}
	We split
	\begin{align*}
		\exp\left\{-e^{-\|\Phi\|_\infty} \nu_0 (t-s') \right\} &= \exp\left\{-e^{-\|\Phi\|_\infty} \frac{\nu_0}{2} (t-s) \right\}\exp\left\{-e^{-\|\Phi\|_\infty} \frac{\nu_0}{2} (s-s') \right\}\\
	& \quad \times  \exp\left\{-e^{-\|\Phi\|_\infty} \frac{\nu_0}{2} (t-s') \right\}.
	\end{align*}
	and we can bound the integration \eqref{mlt4} by
	\begin{align*}
		&C_{R,\Phi}\sum_{i_1}^{M_1}\sum_{I_2}^{(M_2)^3}\sum_{I_3}^{(M_3)^3}\sum_{j=1}^3\int_{t_{l+1}}^{t_l} \mathbf{1}_{\{x_{l'}'\in \mathcal{P}_{I_2}^{\Omega}\}}(s)\exp\left\{-e^{-\|\Phi\|_\infty} \frac{\nu_0}{2} (t-s) \right\}\\  
		& \times  \underbrace{\int_{t_{l'+1}'}^{t_{l'}'}\mathbf{1}_{\mathcal{P}_{i_1}^{T_0}}(s'-t_{l'}'+T_0)\mathbf{1}_{(t_{j,i_1,I_2,I_3}-\frac{\tilde{\epsilon}}{4M_1},t_{j,i_1,I_2,I_3}+\frac{\tilde{\epsilon}}{4M_1})}(s'-t_{l'}'+T_0)\exp\left\{-e^{-\|\Phi\|_\infty} \frac{\nu_0}{2} (s-s')\right\} }_{(*4)} \\ 
		& \times  \int_{\prod_{j=1}^{k-1}\mathcal{V}_j,\ |v_l|\le R}\int_{|v'|\le 2R}\int_{\prod_{j=1}^{k-1}\mathcal{V}_j' ,\ |v_{l'}'|\le R}\mathbf{1}_{\mathcal{P}_{I_3}^v}(v_{l'}')\Biggl\{\int_{|v''| \le 2R}\exp\left\{-e^{-\|\Phi\|_\infty} \frac{\nu_0}{2} (t-s') \right\}\|h(s')\|_{L^\infty_{x,v}}dv''\Biggr\}\\ 
		&\times \left\{\prod_{j=l'+1}^{k-1}d\sigma_j'\right\}\left\{\tilde{w}(x_{l'}',v_{l'}')d\sigma_{l'}'\right\}\left\{\prod_{j=1}^{l'-1}d\sigma_j'\right\}dv'\left\{\prod_{j=l+1}^{k-1}d\sigma_j\right\}\left\{\tilde{w}(x_l,v_{l})d\sigma_{l}\right\}\left\{\prod_{j=1}^{l-1}d\sigma_j\right\}ds'ds.
	\end{align*}
	Here, $(*4)$ is bounded by
	\begin{align}
		&\int_0^s \mathbf{1}_{\mathcal{P}_{i_1}^{T_0}}(s'-t_{l'}'+T_0)\mathbf{1}_{(t_{j,i_1,I_2,I_3}-\frac{\tilde{\epsilon}}{4M_1},t_{j,i_1,I_2,I_3}+\frac{\tilde{\epsilon}}{4M_1})}(s'-t_{l'}'+T_0) \exp\left\{-e^{-\|\Phi\|_\infty} \frac{\nu_0}{2} (s-s') \right\}ds'\nonumber\\ \label{ppp7}
		& \le \frac{\tilde{\epsilon}}{2M_1}.
	\end{align}
	From the partition of the time interval $[0,T_0]$ and velocity domain $[-4R,4R]^3$ in Lemma \ref{cpl}, we have
	\begin{equation} \label{ppp8}
	\begin{aligned}
		&\sum_{I^2}^{(M_2)^3}\mathbf{1}_{\{x_{l'}'\in \mathcal{P}_{I_2}^{\Omega}\}}(s) \le \mathbf{1}_{\{0\le s \le T_0\}}(s),\\
		&\sum_{I^3}^{(M_3)^3} \mathbf{1}_{\mathcal{P}_{I_3}^v}(v_{l'}')\mathbf{1}_{\{|v_{l'}'|\le R\}}(v_{l'}') = \mathbf{1}_{\{|v_{l'}'|\le R\}}(v_{l'}').
	\end{aligned}
	\end{equation}
	Using \eqref{ppp7} and \eqref{ppp8}, \eqref{mlt4} is bounded by
	\begin{align}
		& C_{R,\Phi} \sup_{0\le s' \le T_0} \left\{\exp\left\{-e^{-\|\Phi\|_\infty} \frac{\nu_0}{2} (t-s') \right\}\|h(s')\|_{L^\infty_{x,v}}\right\}\ \sum_{i_1}^{M_1} \sum_{I_2}^{(M_2)^3} \int_0^t \mathbf{1}_{\{x_{l'}'\in \mathcal{P}_{I_2}^{\Omega}\}}(s)\exp\left\{-e^{-\|\Phi\|_\infty} \frac{\nu_0}{2} (t-s) \right\} \nonumber\\
		&\quad  \times \int_0^s \mathbf{1}_{\mathcal{P}_{i_1}^{T_0}}(s'-t_{l'}'+T_0)\mathbf{1}_{(t_{j,i_1,I_2,I_3}-\frac{\tilde{\epsilon}}{4M_1},t_{j,i_1,I_2,I_3}+\frac{\tilde{\epsilon}}{4M_1})}(s'-t_{l'}'+T_0)  \exp\left\{-e^{-\|\Phi\|_\infty} \frac{\nu_0}{2} (s-s') \right\}ds'ds\nonumber\\
		&\le \tilde{\epsilon}\ C_{R,\Phi} \sup_{0\le s' \le T_0} \left\{\exp\left\{-e^{-\|\Phi\|_\infty} \frac{\nu_0}{2} (t-s') \right\}\|h(s')\|_{L^\infty_{x,v}}\right\} \sum_{I_2}^{(M_2)^3} \int_0^t \mathbf{1}_{\{x_{l'}'\in \mathcal{P}_{I_2}^{\Omega}\}}(s)\exp\left\{-e^{-\|\Phi\|_\infty} \frac{\nu_0}{2} (t-s) \right\} ds\nonumber\\ \label{class36}
		&\le \tilde{\epsilon} \ C_{R,\Phi} \sup_{0\le s' \le T_0} \left\{\exp\left\{-e^{-\|\Phi\|_\infty} \frac{\nu_0}{2} (t-s') \right\}\|h(s')\|_{L^\infty_{x,v}}\right\}.
	\end{align}
	\newline
	$\mathbf{Case\ 3 \ (\romannumeral 2):}$ The integration \eqref{ltlt4} corresponding to \eqref{sst4} is bounded by
	\begin{align}
		&C_{R,\Phi}\sum_{i_1}^{M_1}\sum_{I_2}^{(M_2)^3}\sum_{I_3}^{(M_3)^3}\int_{t_{l+1}}^{t_l} \mathbf{1}_{\{x_{l'}'\in \mathcal{P}_{I_2}^{\Omega}\}}(s)\int_{t_{l'+1}'}^{t_{l'}'}\mathbf{1}_{\mathcal{P}_{i_1}^{T_0}}(s'-t_{l'}'+T_0)\left\{1-\mathbf{1}_{\cup_{j=1}^3(t_{j,i_1,I_2,I_3}-\frac{\tilde{\epsilon}}{4M_1},t_{j,i_1,I_2,I_3}+\frac{\tilde{\epsilon}}{4M_1})}(s'-t_{l'}'+T_0)\right\}  \nonumber \\ 
		& \times \exp\left\{-e^{-\|\Phi\|_\infty}\nu_0(t-s')\right\} \int_{\prod_{j=1}^{k-1}\mathcal{V}_j,\ |v_l|\le R}\int_{|v'|\le 2R}  \nonumber\\ 
		&\times \underbrace{\int_{\prod_{j=1}^{l'}\mathcal{V}_j' ,\ |v_{l'}'|\le R}\mathbf{1}_{\mathcal{P}_{I_3}^v}(v_{l'}')\Biggl\{\int_{|v''| \le 2R}|h(s',X(s'-t_{l'}'+T_0;T_0,x_{l'}',v_{l'}'),v'')|dv''\Biggr\}\left\{\tilde{w}(x_{l'}',v_{l'}')d\sigma_{l'}'\right\}\left\{\prod_{j=1}^{l'-1}d\sigma_j'\right\}dv'}_{(\#4)}\nonumber\\ \label{mmlt4}
		& \times \left\{\prod_{j=l+1}^{k-1}d\sigma_j\right\}\left\{\tilde{w}(x_l,v_{l})d\sigma_{l}\right\}\left\{\prod_{j=1}^{l-1}d\sigma_j\right\}ds'ds.
	\end{align}
	By Lemma \ref{cpl}, we have made a change of variables $v' \rightarrow y:=X(s'-t_{l'}'+T_0;T_0,x_{l'}',v_{l'}')$ so that 
	\begin{align*}
		\det\left(\frac{dX}{dv_{l'}'}(s'-t_{l'}'+T_0;T_0,x_{l'}',v_{l'}')\right)> \delta_*
	\end{align*}
	and the term $(\#4)$ is bounded by
	\begin{align*}
		&\int_{\prod_{j=1}^{l'}\mathcal{V}_j' ,\ |v_{l'}'|\le R}\int_{|v''|\le 2R}|h(s',X(s'-t_{l'}'+T_0;T_0,x_{l'}',v_{l'}'),v'')|dv''\left\{\tilde{w}(x_{l'}',v_{l'}')d\sigma_{l'}'\right\}\left\{\prod_{j=1}^{l'-1}d\sigma_j'\right\} \le \frac{C_{R,\Phi}}{\delta_*} \|f(s')\|_{L^2_{x,v}},
	\end{align*}
	where we have used the Cauchy-Schwarz inequality. 
	Hence \eqref{mmlt4} is bounded by
	\begin{align}
		&\frac{C_{R,\Phi,M_1,M_2,M_3}}{\delta_*} \int_0^t\int_0^s\exp\left\{-e^{-\|\Phi\|_\infty} \nu_0 (t-s') \right\} \|f(s')\|_{L^2_{x,v}}ds'ds\nonumber\\ \label{class37}
		& \le \frac{C_{R,\Phi,M_1,M_2,M_3}}{\delta_*} \int_0^{T_0}\|f(s')\|_{L^2_{x,v}}ds'.
	\end{align}
	Combining the bounds \eqref{class31}, \eqref{class32}, \eqref{class33}, \eqref{class34}, \eqref{class35}, \eqref{class36}, \eqref{class37} and summing over $1\le l, l' \le k(\epsilon)-1$, we can bound $R_4$ in \eqref{RMC} by
	\begin{equation} \label{dd49}
	\begin{aligned}
		&\left(\frac{C_{\epsilon,\Phi}}{R}+C_{\epsilon,\Phi} e^{-\frac{R^2}{64}}+ \tilde{\epsilon} \ C_{\epsilon,R,\Phi} \right) \sup_{0\le s' \le T_0} \left\{\exp\left\{-e^{-\|\Phi\|_\infty} \frac{\nu_0}{2} (t-s') \right\}\|h(s')\|_{L^\infty_{x,v}}\right\}\\
		& +\frac{C_{\epsilon,R,\Phi,M_1,M_2,M_3}}{\delta_*} \int_0^{T_0}\|f(s')\|_{L^2_{x,v}}ds'.
	\end{aligned}
	\end{equation}
	\newline
	Gathering \eqref{dd2}, \eqref{dd3}, \eqref{dd4}, \eqref{dd5}, \eqref{dd6}, \eqref{dd7}, \eqref{dd8}, \eqref{dd19}, \eqref{dd29}, \eqref{dd39}, and \eqref{dd49}, we deduce for $0\le t \le T_0$
	\begin{align*}
		\|h(t)\|_{L^\infty_{x,v}} 
		& \le  C_\Phi(1+t)\exp\left\{-e^{-\|\Phi\|_\infty} \frac{\nu_0}{4} t\right\}\|h_0\|_{L^\infty_{x,v}}\\
		& \quad + \left( \epsilon C_\Phi^{(1)}+\frac{C^{(2)}_{\epsilon,\Phi}}{R}+C^{(3)}_{\epsilon,\Phi} e^{-\frac{R^2}{64}} + \tilde{\epsilon}\ C^{(4)}_{\epsilon, R,\Phi}\right) \sup_{0 \le s' \le T_0}\left\{\exp\left\{-e^{-\|\Phi\|_\infty} \frac{\nu_0}{4} (t-s') \right\}\|h(s')\|_{L^\infty_{x,v}}\right\}\\
		& \quad +\frac{C_{\epsilon, R,\Phi,M_1,M_2,M_3}}{\delta_*} \int_0^{T_0}\|f(s')\|_{L^2_{x,v}}ds'.
	\end{align*}
	This implies that
	\begin{align*}
		&\sup_{0 \le t \le T_0 } \left\{\exp\left\{e^{-\|\Phi\|_\infty} \frac{\nu_0}{4} t \right\} \|h(t)\|_{L^\infty_{x,v}}\right\}\\
		&\le  C_\Phi(1+T_0)\|h_0\|_{L^\infty_{x,v}} \\
		& \quad + \left(\epsilon C_\Phi^{(1)}+\frac{C^{(2)}_{\epsilon,\Phi}}{R}+C^{(3)}_{\epsilon,\Phi} e^{-\frac{R^2}{64}} + \tilde{\epsilon} C^{(4)}_{\epsilon, R,\Phi}\right) \sup_{0 \le s' \le T_0}\left\{\exp\left\{e^{-\|\Phi\|_\infty} \frac{\nu_0}{4} s' \right\}\|h(s')\|_{L^\infty_{x,v}}\right\}\\
		& \quad +\frac{C_{\epsilon, R,\Phi,M_1,M_2,M_3}}{\delta_*} \int_0^{T_0}\|f(s')\|_{L^2_{x,v}}ds'.
	\end{align*}
	First, we choose $\epsilon>0$ small enough such that $\epsilon C_\Phi^{(1)} < \frac{1}{3}$, then choose $R$ sufficiently large so that $\frac{C_{\epsilon,\Phi}^{(2)}}{R}+C_{\epsilon,\Phi}^{(3)} e^{-\frac{R^2}{64}}  < \frac{1}{3}$, and last choose $\tilde{\epsilon}$ large enough such that $\tilde{\epsilon} C^{(4)}_{\epsilon, R,\Phi} < \frac{1}{3}$.
	Hence, we obtain
	\begin{align*}
		\sup_{0 \le t \le T_0 } \left\{\exp\left\{e^{-\|\Phi\|_\infty} \frac{\nu_0}{4} t \right\} \|h(t)\|_{L^\infty_{x,v}}\right\} &\le  C_\Phi(1+T_0)\|h_0\|_{L^\infty_{x,v}} +C_{T_0,\Phi} \int_0^{T_0}\|f(s')\|_{L^2_{x,v}}ds'.
	\end{align*}
	This yields that
	\begin{align*}
		\|h(T_0)\|_{L^\infty_{x,v}} \le C_\Phi(1+T_0)\exp\left\{-e^{-\|\Phi\|_\infty} \frac{\nu_0}{4} T_0 \right\}\|h_0\|_{L^\infty_{x,v}} +C_{T_0, \Phi} \int_0^{T_0}\|f(s')\|_{L^2_{x,v}}ds'.
	\end{align*}
	Choosing large $\tilde{T}_0 >0$ such that
	\begin{align} \label{cc33}
		C_\Phi(1+\tilde{T}_0)\exp\left\{-e^{-\|\Phi\|_\infty} \frac{\nu_0}{4} \tilde{T}_0 \right\} \le e^{-\lambda \tilde{T}_0} \quad \text{for} \quad \lambda < e^{-\|\Phi\|_\infty} \frac{\nu_0}{4},
	\end{align}
	we obtain
	\begin{align*}
		\|h(\tilde{T}_0)\|_{L^\infty_{x,v}} \le e^{-\lambda \tilde{T}_0}\|h_0\|_{L^\infty_{x,v}} +C_{\tilde{T}_0, \Phi} \int_0^{\tilde{T}_0}\|f(s')\|_{L^2_{x,v}}ds'.
	\end{align*}
	By Theorem \ref{Fest}, we conclude the exponential decay.
\end{proof}

\subsection{A priori estimate in a small data problem} \label{Apeisdp}
In this subsection, we need to take constants $\tilde{\lambda}$, $\tilde{T}_0$, and $\eta$ in the a priori assumption \eqref{AAL2}. Firstly, we take $\tilde{\lambda}>0$ satisfying conditions \eqref{cc4} and 
\begin{align} \label{cc6}
	\tilde{\lambda}< e^{-\|\Phi\|_\infty}\frac{\nu_0}{4}.
\end{align}
 Next, we choose $\tilde{T}_0>0$ which satisfies the conditions \eqref{cc5} and \eqref{cc33}. In the proof of Theorem \ref{Fest}, it holds that
\begin{align}\label{cc2}
	\|wf(t)\|_{L^\infty_{x,v}} \le C_Ae^{-\tilde{\lambda}t}\|wf_0\|_{L^\infty_{x,v}} \quad \text{for all } 0\le t\le \tilde{T}_0,
\end{align}
for some constant $C_A>0$. Here we take $\eta := C_A\|wf_0\|_{L^\infty_{x,v}}$. In Theorem \ref{mainresult1}, $\|wf_0\|_{L^\infty_{x,v}}$ is chosen sufficiently small. Thus we take $\eta>0$ satisfying the condition \eqref{cc3} and a condition of smallness in Theorem \ref{mainresult1}. Hence we have closed the a priori assumption \eqref{AAL2}.

\bigskip

\section{Nonlinear Asymptotic Stability near $\mu_{E}$} \label{Nonlineardecay}
In this section we prove Theorem \ref{mainresult1}, asymptotic stability of small perturbation problem with the diffuse reflection boundary condition \eqref{PDRBC}. Theorem \ref{mainresult1} provides a key foundation to prove the large amplitude problem, i.e. Theorem \ref{mainresult2}. Before proving Theorem \ref{mainresult1}, we first introduce the next lemma, called {\it the Gamma estimate}. Using the following lemma, we will handle the nonlinear term $\Gamma(g_1,g_2)$ in Theorem \ref{mainresult1}. Because the lemma is essentially identical to \cite[Lemma\ 5]{GDCB2010}, we only mention the statement of the lemma.

\begin{lemma} \cite{GDCB2010} \label{Gamest}
	Let $g_1, g_2$ be in $L^\infty_{x,v}(w)$. Then there exists $C>0$ such that
	\begin{align*}
		\left|w(x,v)\Gamma\left(g_1,g_2\right)(x,v)\right| \le C \nu(v)\|wg_1\|_{L^\infty_{x,v}}\|wg_2\|_{L^\infty_{x,v}}.
	\end{align*}
\end{lemma}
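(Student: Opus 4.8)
The plan is to reduce the bound on $w(x,v)\Gamma(g_1,g_2)$ to pointwise estimates on the weighted kernel of the collision gain and loss operators, exactly as in \cite[Lemma 5]{GDCB2010}; the only genuinely new feature here is the shape of the weight $w(x,v) = \{1+\tfrac{|v|^2}{2}+\Phi(x)\}^{\beta/2}$, which mixes $x$ and $v$, so I must check that the argument is insensitive to the $\Phi(x)$ term. First I would recall that $\Gamma(g_1,g_2) = \Gamma_+(g_1,g_2) - \Gamma_-(g_1,g_2)$ and treat the two pieces separately.

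For the loss term, $\Gamma_-(g_1,g_2)(v) = \tfrac{1}{\sqrt\mu(v)} Q_-(\sqrt\mu g_1,\sqrt\mu g_2)(v) = g_2(v)\int_{\R^3}\int_{\mbs^2} B(v-u,\omega)\mu(u)^{1/2}\sqrt{\mu(v)}\,\tfrac{g_1(u)\sqrt{\mu(u)}}{\sqrt{\mu(v)}}\,d\omega du$; more simply $\Gamma_-(g_1,g_2)(v)=g_2(v)\,\int\int B(v-u,\omega)\sqrt{\mu(u)}g_1(u)\,d\omega du$. Multiplying by $w(x,v)$ and writing $g_1(u) = \tfrac{1}{w(x,u)}(wg_1)(x,u)$, I get $|w\Gamma_-(g_1,g_2)| \le |w g_2(v)|\,\|wg_1\|_{L^\infty_{x,v}} \int\int B(v-u,\omega)\sqrt{\mu(u)}\,\tfrac{w(x,v)}{w(x,u)}\tfrac{1}{w(x,v)}\,d\omega du$. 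Since $w(x,v)^{-1}\le 1$ and $\tfrac{w(x,v)}{w(x,u)}\sqrt{\mu(u)}$ is integrable in $u$ with a factor $\lesssim (1+|v|)^\beta$ absorbed against the Gaussian $\sqrt\mu(u)$ — here one uses $1+\tfrac{|v|^2}{2}+\Phi(x) \le (1+\tfrac{|v|^2}{2})(1+\|\Phi\|_\infty)$, so the $x$-dependence only costs a harmless constant $C_\Phi^{\beta/2}$ — one obtains the bound $C\nu(v)\|wg_1\|_{L^\infty_{x,v}}\|wg_2\|_{L^\infty_{x,v}}$, recalling $\nu(v)\sim(1+|v|)^\gamma \gtrsim \int\int B(v-u,\omega)\mu(u)\,d\omega du$.

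For the gain term $\Gamma_+(g_1,g_2)(v) = \tfrac{1}{\sqrt\mu(v)}\int\int B(v-u,\omega)\sqrt{\mu(u')}\sqrt{\mu(v')}\,g_1(u')g_2(v')\,d\omega du$, I use the energy conservation $|u|^2+|v|^2=|u'|^2+|v'|^2$, which gives $\sqrt{\mu(u')\mu(v')}=\sqrt{\mu(u)\mu(v)}$, so the prefactor $\tfrac{1}{\sqrt\mu(v)}\sqrt{\mu(u')\mu(v')} = \sqrt{\mu(u)}$. Writing $g_i = \tfrac{1}{w}(wg_i)$ and pulling out the $L^\infty$ norms, the key point is that $w(x,v)$ against $\tfrac{1}{w(x,u')w(x,v')}$ is controlled: since $|u'|^2/2 + |v'|^2/2 = |u|^2/2+|v|^2/2 \ge |v|^2/2$ and both $|u'|,|v'|\ge 0$, and since the $\Phi(x)$ contributions in numerator and denominator again only produce the constant $C_\Phi$, one has $\tfrac{w(x,v)}{w(x,u')w(x,v')}\le C_\Phi \{1+\tfrac{|v|^2}{2}\}^{\beta/2}\{1+\tfrac{|u'|^2}{2}\}^{-\beta/2}$ whenever $|v'|$ is bounded below, and a standard splitting (cf.\ \cite{GDCB2010}) handles the remaining region; then $\int\int B(v-u,\omega)\sqrt{\mu(u)}\,\tfrac{w(x,v)}{w(x,u')w(x,v')}\,d\omega du \lesssim \nu(v)$ after absorbing the polynomial weight into the Gaussian $\sqrt\mu(u)$ (using $\beta>5$ for integrability, though any finite $\beta$ works here).

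The main obstacle — and it is a mild one — is verifying that the weight ratio in the gain term really is dominated by a product of $v$-only weights times a constant depending on $\Phi$, uniformly in $x\in\bar\Omega$ and in the collision geometry $(u,\omega)$. This is exactly where $0\le\Phi(x)\le\|\Phi\|_\infty$ on $\bar\Omega$ enters: it lets me bound $1+\tfrac{|v|^2}{2}+\Phi(x)$ above and below by $v$-only expressions up to multiplicative constants, so that all the $x$-dependence factors out and the problem collapses to the classical estimate of \cite[Lemma 5]{GDCB2010}. Since the paper explicitly states the lemma is ``essentially identical'' to that reference and only its statement is needed, I would present the above reduction and then cite \cite{GDCB2010} for the remaining classical kernel computation rather than reproducing it in full.
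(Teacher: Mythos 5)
Your proposal is correct and takes essentially the same route as the paper, which gives no proof of its own and simply cites \cite{GDCB2010} because the argument carries over verbatim: your reduction—using $0\le\Phi(x)\le\|\Phi\|_\infty$ to factor the $x$-dependence of the weight out as a constant and then invoking the classical estimate for the velocity-only weight—is exactly the intended adaptation. One minor simplification: in the gain term no case splitting is needed, since energy conservation gives $1+\tfrac{|v|^2}{2}\le\bigl(1+\tfrac{|u'|^2}{2}\bigr)\bigl(1+\tfrac{|v'|^2}{2}\bigr)$ and hence $w(x,v)\le C_\Phi\, w(x,u')\,w(x,v')$ uniformly in $x$, after which $\int\int B(v-u,\omega)\sqrt{\mu(u)}\,d\omega du\lesssim\nu(v)$ finishes the bound.
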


\bigskip
Theorem \ref{mainresult1} states the global existence, uniqueness, and exponential decay of a solution to the full perturbed Boltzmann equation \eqref{PBE} with small amplitude data and the diffuse reflection boundary condition \eqref{PDRBC}. This theorem may be essentially used to demonstrate our main goal.

\begin{proof} [\textbf{Proof of Theorem \ref{mainresult1}}]
	Set $h^{(0)} \equiv 0$. We consider the following iterative system:
	\begin{align} \label{NIS}
		\left\{\partial_{t}+ v\cdot \nabla_x  -\nabla_x \Phi(x) \cdot \nabla_v+e^{-\Phi(x)}\nu -e^{-\Phi(x)}K_w \right\} h^{(m+1)} = e^{-\frac{\Phi(x)}{2}} w\Gamma \left(\frac{h^{(m)}}{w},\frac{h^{(m)}}{w}\right)
	\end{align}
	with $h^{(m+1)}|_{t=0} = h_0$ and the diffuse reflection boundary condition
	\begin{align*}
		h^{(m+1)}|_{\gamma_-} = \frac{1}{\tilde{w}(x,v)}\int_{n(x) \cdot v' >0} h^{(m+1)}(t,x,v')\tilde{w}(x,v') d\sigma,
	\end{align*}
	 where $\tilde{w}(x,v)$ is defined in \eqref{tildeweight}.\\
	By the Duhamel principle, we have
	\begin{align*}
		h^{(m+1)}(t,x,v) = S_G(t)h_0 + \int_0^t S_G(t-s)\left[e^{-\frac{\Phi}{2}}w\Gamma \left(\frac{h^{(m)}}{w},\frac{h^{(m)}}{w}\right)\right] (s) ds.
	\end{align*}
	We use the Duhamel principle once again to get
	\begin{align*}
		h^{(m+1)}(t,x,v) &= S_G(t)h_0\\
		& \quad + \int_0^t S_{G_\nu}(t-s)\left[e^{-\frac{\Phi}{2}}w\Gamma \left(\frac{h^{(m)}}{w},\frac{h^{(m)}}{w}\right)\right] (s) ds\\
		&\quad  + \int_0^t\int_s^t S_{G_\nu}(t-s')e^{-\Phi}K_w S_G(s'-s) \left[e^{-\frac{\Phi}{2}}w\Gamma\left(\frac{h^{(m)}}{w},\frac{h^{(m)}}{w}\right)\right](s) ds'ds\\
		&=: I_1+I_2+I_3.
	\end{align*}
	From Theorem \ref{LT}, for some $0 < \lambda \le \lambda_\infty$,
	\begin{align*}
		I_1 \le Ce^{-\lambda t}\|h_0\|_{L^\infty_{x,v}}
	\end{align*}
	for some constant $C>0$.\\
	First, we consider the term $I_2$. From Corollary \ref{L43C} and Lemma \ref{Gamest}, we deduce that
	\begin{equation} \label{ddd51}
	\begin{aligned}		
		&\left|\int_0^t S_{G_\nu}(t-s)\left[e^{-\frac{\Phi}{2}}w\Gamma\left(\frac{h^{(m)}}{w},\frac{h^{(m)}}{w}\right)\right](s)ds\right|\\
		&\le C_\Phi \exp\left\{-e^{-\|\Phi\|_\infty} \frac{\nu_0}{2} t \right\} \sup_{0 \le s \le t}\left\{ \exp\left\{e^{-\|\Phi\|_\infty} \frac{\nu_0}{4} s \right\}\|h(s)\|_{L^\infty_{x,v}}\right\}^2.
	\end{aligned}
	\end{equation}		
	On the other hand, for the term $I_3$, for any given initial datum $\tilde{h}_0$, we consider the semigroup $S_{\tilde{G}}(t)\tilde{h}_0$ which solves
	\begin{align*}
		\left\{\partial_{t}+ v\cdot \nabla_x  -\nabla_x \Phi(x) \cdot \nabla_v+e^{-\Phi(x)}\nu -e^{-\Phi(x)}K_{w/(1+|v|)} \right\} S_{\tilde{G}}(t)\tilde{h}_0 = 0
	\end{align*}
	with $S_{\tilde{G}}(0)\tilde{h}_0 = \tilde{h}_0$ and the diffuse reflection boundary condition
	\begin{align*}
		\left(S_{\tilde{G}}(t) \tilde{h}_0\right)(t,x,v)\Big|_{\gamma_-} = \frac{1}{\tilde{w}_1(x,v)} \int_{n(x) \cdot v' >0} \left(S_{\tilde{G}}(t) \tilde{h}_0\right)(t,x,v) \tilde{w}_1(x,v') d\sigma,
	\end{align*}
	where $\tilde{w}_1(x,v) = \frac{1+|v|}{w(x,v)\mu_E^{1/2}(x,v)}$.\\
	Then $(1+|v|)S_{\tilde{G}}(t)$ solves the linear Boltzmann equation \eqref{WLBE}. By the uniqueness in Theorem \ref{LT} with the initial datum $\tilde{h}_0$, we have
	\begin{align*}
		S_G(t)h_0 \equiv (1+|v|)S_{\tilde{G}}(t)\left(\frac{h_0}{1+|v|}\right).
	\end{align*}
	Thus we can rewrite $I_3$ as following:
	\begin{align*}
		&\int_0^t \int_s^t S_{G_\nu}(t-s')\int_{\mathbb{R}^3}e^{-\Phi(X(s'))}k_w(V(s'),v')\left\{(1+|v'|)S_{\tilde{G}}(s'-s)\left[e^{-\frac{\Phi}{2}}\frac{w}{1+|v'|}\Gamma\left(\frac{h^{(m)}}{w},\frac{h^{(m)}}{w}\right)\right]\right\}(s)\\
		&\quad \times dv'ds'ds
	\end{align*}
	From the proof of Lemma \ref{Gamest}, we obtain
	\begin{align*}
		\int_{\mathbb{R}^3}k_w(V(s'),v')(1+|v'|)dv' \le C\int_{\mathbb{R}^3}k_w(V(s'),v')\left\{|V(s')-v'|+|V(s')|\right\}dv' < +\infty.
	\end{align*} 
	Using Lemma \ref{L43}, Theorem \ref{LT}, and Lemma \ref{Gamest}, we can bound $I_3$ by
	\begin{equation} \label{ddd52}
	\begin{aligned}			
		&C_\Phi \int_0^t \int_s^t \exp\left\{-e^{-\|\Phi\|_\infty} \frac{\nu_0}{2} (t-s') \right\} \left\| \left\{S_{\tilde{G}}(s'-s)\left[e^{-\frac{\Phi}{2}}\frac{w}{1+|v'|}\Gamma\left(\frac{h^{(m)}}{w},\frac{h^{(m)}}{w}\right)\right]\right\}(s) \right\|_{L^\infty_{x,v}}ds'ds\\ 
		& \le C_\Phi \int_0^t \int_s^t \exp\left\{-e^{-\|\Phi\|_\infty} \frac{\nu_0}{2} (t-s') \right\} e^{-\lambda(s'-s)}\|h^{(m)}(s)\|_{L^\infty_{x,v}}^2ds'ds\\
		& \le C_\Phi e^{-\frac{\lambda}{2}t}\sup_{0\le s \le t} \left\{e^{\frac{\lambda}{2}s}\|h^{(m)}(s)\|_{L^\infty_{x,v}} \right\}^2 \int_0^t\int_s^t e^{-\frac{\lambda}{2}(t-s)}ds'ds\\
		& \le C_\Phi e^{-\frac{\lambda}{2}t}\sup_{0\le s \le t} \left\{e^{\frac{\lambda}{2}s}\|h^{(m)}(s)\|_{L^\infty_{x,v}} \right\}^2,
	\end{aligned}
	\end{equation}	
	where $\frac{\nu(v')}{1+|v'|} \le C$ for some constant $C$.\\
	We therefore deduce for $0\le \lambda \le \lambda_\infty$
	\begin{align*}
		e^{\lambda t}\|h^{(m+1)}(t)\|_{L^\infty_{x,v}} \le C\|h_0\|_{L^\infty_{x,v}}+Ce^{\frac{\lambda}{2}t}\sup_{0\le s \le \infty}\left\{e^{\frac{\lambda}{2}s}\|h^{(m)}(s)\|_{L^\infty_{x,v}}\right\}^2 \quad \text{for all} \ t \ge 0.
	\end{align*}
	We use an induction to get
	\begin{align*}
		\sup_m\sup_{0\le t < \infty}\left\{e^{\frac{\lambda}{2}t}\|h^{(m)}(t)\|_{L^\infty_{x,v}}\right\} \le C\|h_0\|_{L^\infty_{x,v}},
	\end{align*}
	where $\|h_0\|_{L^\infty_{x,v}}$ is sufficiently small.\\
	From \eqref{NIS}, we can derive the following system:
	\begin{align*}
		&\left\{\partial_{t}+ v\cdot \nabla_x  -\nabla_x \Phi(x) \cdot \nabla_v+e^{-\Phi(x)}\nu -e^{-\Phi(x)}K_w \right\} \left(h^{(m+1)}-h^{(m)}\right)\\
		& \quad = e^{-\frac{\Phi(x)}{2}} w\left[\Gamma \left(\frac{h^{(m)}}{w},\frac{h^{(m)}}{w}\right)-\Gamma \left(\frac{h^{(m-1)}}{w},\frac{h^{(m-1)}}{w}\right)\right]
	\end{align*}
	with zero initial value.\\
	We split 
	\begin{align*}
		\Gamma \left(\frac{h^{(m)}}{w},\frac{h^{(m)}}{w}\right)-\Gamma \left(\frac{h^{(m-1)}}{w},\frac{h^{(m-1)}}{w}\right)
		& = \Gamma \left(\frac{h^{(m)}-h^{(m-1)}}{w},\frac{h^{(m)}}{w}\right) -\Gamma \left(\frac{h^{(m-1)}}{w},\frac{h^{(m-1)}-h^{(m)}}{w}\right)
	\end{align*}
	and by the similar way in \eqref{ddd51} and \eqref{ddd52}, we obtain
	\begin{align*}
		e^{\frac{\lambda}{2}t}\|h^{(m+1)}(t)-h^{(m)}(t)\|_{L^\infty_{x,v}} &\le C\sup_{0\le s < \infty}\left\{e^{\frac{\lambda}{2}s}\|h^{(m)}(s)-h^{(m-1)}(s)\|_{L^\infty_{x,v}}\right\}\\
		& \quad \times \left[\sup_{0\le s < \infty}\left\{e^{\frac{\lambda}{2}s}\|h^{(m)}(s)\|_{L^\infty_{x,v}}\right\}+\sup_{0\le s < \infty}\left\{e^{\frac{\lambda}{2}s}\|h^{(m-1)}(s)\|_{L^\infty_{x,v}}\right\}\right]\\
		& \le  C\|h_0\|_{L^\infty_{x,v}}\sup_{0\le s < \infty}\left\{e^{\frac{\lambda}{2}s}\|h^{(m)}(s)-h^{(m-1)}(s)\|_{L^\infty_{x,v}}\right\},
	\end{align*}
	where $\|h_0\|_{L^\infty_{x,v}}$ is sufficiently small.\\
	Thus $h^{(m)}$ is a Cauchy sequence and the limit $h$ is a desired unique solution of \eqref{WPBE}.\\
	\indent Finally, we show the positivity of $F = \mu_E + \mu_E^{\frac{1}{2}}f$. Let $F^{(0)} \equiv \mu_{E}$. We consider the following iterative system:
	\begin{align} \label{NIS2}
		\left\{\partial_t  + v\cdot \nabla_x -\nabla_x \Phi(x) \cdot \nabla_v \right\}F^{(m+1)} + \nu(F^{(m)})F^{(m+1)} = Q_+(F^{(m)},F^{(m)})
	\end{align}
	with the initial value $F^{(m+1)}(0,x,v)=\mu_E(x,v)+\mu_E^{\frac{1}{2}}(x,v)f_0(x,v)$ and the diffuse reflection boundary condition
	\begin{align*}
		F^{(m+1)} = c_\mu \mu(v) \int_{n(x) \cdot v' >0} F^{(m+1)}(t,x,v') \{n(x) \cdot v'\} dv',
	\end{align*}
	where $\nu(F^{(m)}) = \int_{\mathbb{R}^3 \cross \mathbb{S}^2} B(u-v,\omega) F^{(m)}(u)d\omega du$.\\
	Set $f^{(m)} = \frac{F^{(m)}-\mu_E}{\sqrt{\mu_E}}$. From \eqref{NIS2}, we derive that
	\begin{align*}
		\left\{\partial_t + v\cdot \nabla_x -\nabla_x \Phi(x) \cdot \nabla_v +e^{-\Phi(x)}\nu(v)\right\}f^{(m+1)} &= e^{-\Phi(x)}Kf^{(m)} + e^{-\frac{\Phi(x)}{2}}\Gamma_+(f^{(m)},f^{(m)})\\
		&\quad  - e^{-\frac{\Phi(x)}{2}}\Gamma_-(f^{(m)},f^{(m+1)}).
	\end{align*}
	We can show that $h^{(m)} = wf^{(m)}$ is a Cauchy sequence in $L^\infty_{x,v}$, locally in time $[0, T_0]$, where $T_0$ depends on $\|h_0\|_{L^\infty_{x,v}}$ by the similar way in the proof of a solution existence.\\
	Assume that $F^{(m)} \ge 0$. Then $Q_+(F^{(m)},F^{(m)})\ge 0 $. By the deviation of Lemma \ref{L41}, if $t_1(t,x,v) \le 0$,
	\begin{align*}
		F^{(m+1)}(t,x,v) &= \exp{-\int_{0}^t e^{-\Phi(X(\tau))}\nu(F^{(m)})(\tau,X(\tau),V(\tau))d\tau}F_0(X(0),V(0))\\
		& \quad +\int_0^t \exp{-\int_{\tau}^t e^{-\Phi(X(\tau'))}\nu(F^{(m)})(\tau',X(\tau'),V(\tau'))d\tau'}Q_+(F^{(m)},F^{(m)})(\tau,X(\tau),V(\tau) d\tau\\
		& \ge 0.
	\end{align*}
	On the other hand, if $t_1(t,x,v) \ge 0$, then for $k \ge 2$,
	\begin{align*}
		F^{(m+1)}(t,x,v) &= \int_{t_1}^t \exp{-\int_{\tau}^t e^{-\Phi(X(\tau'))}\nu(F^{(m)})(\tau', X(\tau'),V(\tau')) d\tau'}Q_+(F^{(m)},F^{(m)})(\tau,X(\tau),V(\tau)) d\tau\\
		&\quad + \exp{-\int_{t_1}^t e^{-\Phi(X(\tau))}\nu(F^{(m)})(\tau, X(\tau),V(\tau)) d\tau} \mu_E(x_1, V(t_1))\\
		&\qquad \times \sum_{l=1}^{k-1}\int_{\prod_{j=1}^{k-1}\mathcal{V}_j}\mathbf{1}_{\{t_{l+1}\le 0 < t_l\}}
			F_0(X_l(0),V_l(0))d\Sigma_l^{(m)}(0)\\
		&\quad +\exp{-\int_{t_1}^t e^{-\Phi(X(\tau))}\nu(F^{(m)})(\tau, X(\tau),V(\tau)) d\tau} \mu_E(x_1, V(t_1))\\
		&\qquad \times \sum_{l=1}^{k-1} \int_0^{t_l}\int_{\prod_{j=1}^{k-1}\mathcal{V}_j}\mathbf{1}_{\{t_{l+1}\le 0 < t_l\}} Q_+(F^{(m)},F^{(m)})(\tau,X_l(\tau),V_l(\tau))d\Sigma_l^{(m)}(\tau)d\tau\\
		&\quad +\exp{-\int_{t_1}^t e^{-\Phi(X(\tau))}\nu(F^{(m)})(\tau, X(\tau),V(\tau)) d\tau} \mu_E(x_1, V(t_1))\\
		&\qquad \times \sum_{l=1}^{k-1}\int_{t_{l+1}}^{t_l}\int_{\prod_{j=1}^{k-1}\mathcal{V}_j}\mathbf{1}_{\{t_{l+1}>0\}}Q_+(F^{(m)},F^{(m)})(\tau,X_l(\tau),V_l(\tau))d\Sigma_l^{(m)}(\tau)d\tau\\
		&\quad +\exp{-\int_{t_1}^t e^{-\Phi(X(\tau))}\nu(F^{(m)})(\tau, X(\tau),V(\tau)) d\tau} \mu_E(x_1, V(t_1))\\
		&\qquad \times \int_{\prod_{j=1}^{k-1}\mathcal{V}_j}\mathbf{1}_{\{t_{k}>0\}} F^{(m+1)}(t_k,x_k, V_{k-1}(t_k))d\Sigma_{k-1}^{(m)}(t_k),
	\end{align*}
	where
	\begin{align*}
		d\Sigma_l^{(m)}(s) &= \left\{\prod_{j=l+1}^{k-1}d\sigma_j\right\}\left\{\exp{-\int_s^{t_l} e^{-\Phi(X_l(\tau))}\nu(F^{(m)})(\tau,X_l(\tau),V_l(\tau))d\tau} \frac{1}{\mu_E(x_l,v_l)} d\sigma_l\right\}\\
		&\quad \times \prod_{j=1}^{l-1}\left\{\exp{-\int_{t_{j+1}}^{t_j} e^{-\Phi(X_j(\tau))}\nu(F^{(m)})(\tau,X_j(\tau),V_j(\tau))d\tau}d\sigma_j\right\}.
	\end{align*}
	For any $\epsilon>0$, by Lemma \ref{Lsmall}, there exists $k$ large such that
	\begin{align*}
		\int_{\prod_{l=1}^{k-2}\mathcal{V}_l} \mathbf{1}_{\{t_{k-1}(t,x,v,v_1,v_2,...,v_{k-2})>0\}} \prod_{l=1}^{k-2}d\sigma_l \le \epsilon.
	\end{align*}
	It follows that
	\begin{align*}
		F^{(m+1)}(t,x,v) &\ge -C_\Phi\int_{\prod_{j=1}^{k-1}\mathcal{V}_j}\mathbf{1}_{\{t_{k}>0\}} \left|F^{(m+1)}(t_k,x_k, V_{k-1}(t_k))\right|dv_{k-1}\prod_{j=1}^{k-2}d\sigma_j	\\
		&\ge -C_\Phi\int_{\prod_{j=1}^{k-2}\mathcal{V}_j}\mathbf{1}_{\{t_{k-1}>0\}} \int_{\mathcal{V}_{k-1}}\{\mu_E+\mu_E^{\frac{1}{2}}|f^{(m+1)}|\}(t_k,x_k, V_{k-1}(t_k)) dv_{k-1}\prod_{j=1}^{k-2}d\sigma_j \\
		&\ge -C_\Phi\left(\sup_{m, 0\le s \le T_0}\|wf^{(m)}(s)\|_{L^{\infty}_{x,v}}\right) \epsilon.
	\end{align*}
	Since $\epsilon >0$ is arbitrary, $F^{(m+1)} \ge 0$ over $[0,T_0]$, and thus $F \ge 0$ over $[0, T_0]$. By uniqueness, we conclude that $F\ge 0$.
\end{proof}

\bigskip

\section{Large Amplitude Solution}
\label{LASEC}
\subsection{Preliminaries}
We define the relative entropy by
\begin{align} \label{relent}
	\mathcal{E}(F):= \int_{\Omega \cross \mathbb{R}^3} \left(\frac{F}{\mu_E}\log\frac{F}{\mu_E}-\frac{F}{\mu_E}+1\right) \mu_E dxdv.
\end{align}

The following lemma provides the global-in-time a priori estimate of the relative entropy. 
\begin{lemma} \label{entd}
	Assume $F$ satisfies the Boltzmann equation \eqref{BE} and the diffuse reflection boundary condition \eqref{DRBC}. Then
	\begin{align*}
		\mathcal{E}(F(t)) \le \mathcal{E}(F_0) \quad \text{for all } t\ge 0.
	\end{align*}
\end{lemma}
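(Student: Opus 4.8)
The plan is to establish the entropy dissipation identity for the relative entropy functional $\mathcal E(F)$ and then show that the boundary contribution has a favorable sign under the diffuse reflection boundary condition. First I would differentiate $\mathcal E(F(t))$ in time along a solution. Since $\frac{d}{dF}\left(\frac{F}{\mu_E}\log\frac{F}{\mu_E}-\frac{F}{\mu_E}+1\right)\mu_E = \log\frac{F}{\mu_E}$, and $\mu_E=e^{-\Phi(x)}\mu(v)$ is a stationary Maxwellian (so $v\cdot\nabla_x\mu_E-\nabla_x\Phi\cdot\nabla_v\mu_E=0$ and $\log\mu_E = -\Phi(x)-\frac{|v|^2}{2}$ is a collision invariant), using the equation \eqref{BE} I get formally
\begin{align*}
	\frac{d}{dt}\mathcal E(F(t)) = \int_{\Omega\cross\mathbb R^3}\log\frac{F}{\mu_E}\,\partial_t F\,dxdv
	= \int_{\Omega\cross\mathbb R^3}\log\frac{F}{\mu_E}\big[-v\cdot\nabla_x F+\nabla_x\Phi\cdot\nabla_v F + Q(F,F)\big]dxdv.
\end{align*}
The transport part integrates (using that the Hamiltonian flow is divergence-free in $(x,v)$ and that $\log\mu_E$ is transported trivially) to a pure boundary term $-\int_{\gamma}\big(\frac{F}{\mu_E}\log\frac{F}{\mu_E}-\frac{F}{\mu_E}+1\big)\mu_E\,\{n(x)\cdot v\}\,dS(x)dv$, while the collision part gives the classical entropy dissipation $\int_\Omega\int_{\mathbb R^3}\log\frac{F}{\mu_E}\,Q(F,F)\,dv\,dx = -D(F)\le 0$, where $D(F)=\frac14\int B\,(F'F_*'-FF_*)\log\frac{F'F_*'}{FF_*}\,d\omega du dv\ge 0$ is nonnegative by the elementary inequality $(a-b)\log(a/b)\ge 0$; here I use that $\log\mu_E$ is a collision invariant so that $Q(F,F)\log\frac{F}{\mu_E}$ and $Q(F,F)\log F$ have the same integral.

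Next I would treat the boundary term. Write $\Psi(z)=z\log z - z + 1\ge 0$, which is convex with $\Psi(1)=0$; the boundary contribution is $-\int_\gamma \Psi\big(\tfrac{F}{\mu_E}\big)\mu_E\{n\cdot v\}dSdv = \int_{\gamma_-}\Psi\big(\tfrac{F}{\mu_E}\big)\mu_E|n\cdot v|dSdv - \int_{\gamma_+}\Psi\big(\tfrac{F}{\mu_E}\big)\mu_E\{n\cdot v\}dSdv$. On $\gamma_-$ the diffuse reflection condition \eqref{DRBC} gives $\frac{F}{\mu_E}\big|_{\gamma_-} = \frac{1}{e^{-\Phi(x)}}\,c_\mu\int_{n\cdot v'>0}F(v')\{n\cdot v'\}dv'$, which is a constant in $v$ for each fixed boundary point $x$; call it $\rho(x)$. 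Using the normalization $c_\mu\int_{n\cdot v'>0}\mu(v')\{n\cdot v'\}dv'=1$, one checks $\rho(x) = c_\mu\int_{n\cdot v'>0}\tfrac{F}{\mu_E}(v')\mu(v')\{n\cdot v'\}dv'$, i.e. $\rho(x)$ is the average of $\frac{F}{\mu_E}$ over $\gamma_+(x)$ against the probability measure $d\sigma = c_\mu\mu(v')\{n\cdot v'\}dv'$. Then, since $\int_{\gamma_-(x)}\mu_E|n\cdot v|dv = e^{-\Phi(x)}\int_{n\cdot v<0}\mu(v)|n\cdot v|dv = e^{-\Phi(x)}/c_\mu$, the incoming integral at $x$ equals $\frac{e^{-\Phi(x)}}{c_\mu}\Psi(\rho(x))$, and Jensen's inequality applied to the convex function $\Psi$ gives $\Psi(\rho(x)) = \Psi\big(\int \tfrac{F}{\mu_E}d\sigma\big)\le \int\Psi\big(\tfrac{F}{\mu_E}\big)d\sigma$. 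Multiplying through by $e^{-\Phi(x)}/c_\mu$ and unwinding the measure shows $\int_{\gamma_-(x)}\Psi\big(\tfrac{F}{\mu_E}\big)\mu_E|n\cdot v|dv \le \int_{\gamma_+(x)}\Psi\big(\tfrac{F}{\mu_E}\big)\mu_E\{n\cdot v\}dv$, so the net boundary term is $\le 0$. Integrating over $\partial\Omega$ gives $\frac{d}{dt}\mathcal E(F(t))\le 0$, and integrating in time yields $\mathcal E(F(t))\le\mathcal E(F_0)$.

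The main obstacle is rigor/regularity rather than the formal computation: the identities above are justified a priori only for smooth, rapidly decaying, strictly positive solutions, whereas Theorem \ref{mainresult2} produces mild solutions that are merely in weighted $L^\infty$. I would handle this by first proving the inequality for the approximating iterates $F^{(m)}$ (or for the local solution on $[0,T_0]$, which is continuous and positive by the positivity argument at the end of Section \ref{Nonlineardecay}), where all the integrations by parts and Green's identities are legitimate, and where the collision integral manipulations are finite because of the $L^\infty_{x,v}(w)$ bound with $\beta>5$ controlling all needed moments. Then I would pass to the limit using lower semicontinuity of $\mathcal E$ (it is a supremum of affine functionals of $F$, hence weakly-$*$ lower semicontinuous) to retain the inequality $\mathcal E(F(t))\le\mathcal E(F_0)$ for the limiting mild solution. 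A secondary technical point is the grazing set $\gamma_0$ and lower semicontinuity of $t_{\mathbf b}$, but this has measure zero in $dS(x)dv$ and does not affect the boundary integrals, exactly as in the standard $L^2$ energy estimate carried out in the proof of Theorem \ref{T31}.
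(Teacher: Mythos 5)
Your proof is correct and follows essentially the same route as the paper: differentiate the relative entropy (equivalently, use the pointwise identity for $\mu_E\Psi(F/\mu_E)$ with $\Psi(s)=s\log s-s+1$), invoke the classical sign $\int Q(F,F)\log F\le 0$ for the collision term, and apply Jensen's inequality with the probability measure $d\sigma=c_\mu\mu(v)\{n(x)\cdot v\}dv$ to show that the diffuse-reflection boundary flux has a favorable sign. The paper's proof is carried out at this formal level, so your closing remarks about approximating solutions and lower semicontinuity of $\mathcal{E}$ are extra care beyond what the paper records, not a different method.
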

\begin{proof}
	We define a function $\Psi(s) = s\log s-s+1$ for $s>0$. Then $\Psi$ is nonnegative and convex on $(0,\infty)$ with $\Psi'(s)=\log s$. From \eqref{BE}, we obtain
	\begin{align*}
		\partial_t\left[\mu_E \Psi\left(\frac{F}{\mu_E}\right)\right]+\nabla_x \cdot \left[v \mu_E \Psi\left(\frac{F}{\mu_E}\right)\right]-\nabla_v \cdot \left[\nabla_x\Phi(x) \mu_E\Psi\left(\frac{F}{\mu_E}\right)\right] = Q(F,F) \log \frac{F}{\mu_E}.
	\end{align*}
	Taking an integration for $x \in \Omega$ and $v \in \mathbb{R}^3$,
	\begin{align*}
		\frac{d}{dt}\int_{\Omega \cross \mathbb{R}^3} \mu_E \Psi\left(\frac{F}{\mu_E}\right)dxdv + \int_{\partial\Omega \cross \mathbb{R}^3} \mu_E \Psi\left(\frac{F}{\mu_E}\right) \{n(x) \cdot v\}dS(x)dv = \int_{\Omega \cross \mathbb{R}^3} Q(F,F) \log F dxdv.
	\end{align*}
	For $x\in \partial \Omega$, we consider
	\begin{align*}
		I_x := \int_{\mathbb{R}^3} \mu_E \Psi \left(\frac{F}{\mu_E}\right) \{n(x) \cdot v\}dv.
	\end{align*}
	We can split
	\begin{align*}
		I_x = \int_{n(x) \cdot v>0}\mu_E \Psi \left(\frac{F}{\mu_E}\right) \{n(x) \cdot v\}dv +\int_{n(x)\cdot v <0} \mu_E \Psi \left(\frac{F}{\mu_E}\right) \{n(x) \cdot v\}dv =: I_1+I_2.
	\end{align*}
	Let $z(t,x) = e^{\Phi(x)}\int_{n(x) \cdot v' >0} F(t,x,v') \{n(x) \cdot v'\}dv'.$
	Taking a change of variables on $v$ and applying the boundary condition \eqref{DRBC}, we deduce that
	\begin{align*}
		I_2 &= -\int_{n(x)\cdot v>0}\left[c_\mu z(t,x) \log\left(c_\mu z(t,x)\right)-c_\mu z(t,x)+1\right]\mu_E \{n(x) \cdot v\} dv\\
		& = -\frac{1}{c_\mu}e^{\Phi(x)}\left[c_\mu z(t,x) \log\left(c_\mu z(t,x)\right)-c_\mu z(t,x)+1\right],
	\end{align*}
	where we have used the fact $\int_{n(x) \cdot v >0}\mu(v) \{n(x) \cdot v\} dv=1$.\\
	We use the Jensen's inequality to get
	\begin{align*}
		I_1 &= \frac{1}{c_\mu}e^{\Phi(x)} \int_{n(x) \cdot v>0} \Psi \left(\frac{F}{\mu_E}\right) c_\mu \mu(v) \{n(x) \cdot v\}dv\\
		& \ge \frac{1}{c_\mu}e^{\Phi(x)}\left[c_\mu z(t,x) \log\left(c_\mu z(t,x)\right)-c_\mu z(t,x)+1\right].
	\end{align*}
	Thus $I_x \ge 0$, which implies that
	\begin{align*}
		\int_{\partial\Omega \cross \mathbb{R}^3} \mu_E \Psi\left(\frac{F}{\mu_E}\right) \{n(x) \cdot v\}dS(x)dv \ge 0 \quad \text{for all } t\ge 0.
	\end{align*}
	From the fact $\int_{\Omega \cross \mathbb{R}^3} Q(F,F) \log F dxdv \le 0$, we conclude that
	\begin{align*}
		\mathcal{E}(F(t)) \le \mathcal{E}(F_0) \quad \text{for all } t\ge 0.
	\end{align*}

\end{proof}

\bigskip
We now introduce the following lemma which is a similar result in \cite{GuoQAM}. The following lemma means the relative entropy can control the $L^1$ norm and $L^2$ norm of $F-\mu_E$ over the different domains.
\begin{lemma} \cite{GuoQAM} \label{L1L2cont}
	Assume $F$ satisfies the Boltzmann equation \eqref{BE} and the diffuse reflection boundary condition \eqref{DRBC}. We have
	\begin{align*}
		\int_{\Omega \cross \mathbb{R}^3} \frac{1}{4\mu_E}|F-\mu_E|^2 \mathbf{1}_{|F-\mu_E|\le \mu_E} dxdv + \int_{\Omega \cross \mathbb{R}^3} \frac{1}{4}|F-\mu_E| \mathbf{1}_{|F-\mu_E| > \mu_E} dxdv \le \mathcal{E}(F_0)
	\end{align*}
	for all $t \ge 0$.
	Moreover, if we write $F = \mu_E + \mu_E^{\frac{1}{2}} f$, then
	\begin{align*}
		\int_{\Omega \cross \mathbb{R}^3} \frac{1}{4}|f|^2 \mathbf{1}_{|f|\le \sqrt{\mu_E}} dxdv + \int_{\Omega \cross \mathbb{R}^3} \frac{\sqrt{\mu_E}}{4}|f| \mathbf{1}_{|f| > \sqrt{\mu_E}} dxdv \le \mathcal{E}(F_0)
	\end{align*}
	for all $t \ge 0$.
\end{lemma}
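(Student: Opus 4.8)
The plan is to deduce the claimed bounds from the entropy dissipation inequality of Lemma~\ref{entd} together with an elementary pointwise inequality relating $\Psi(s)=s\log s-s+1$ to $|s-1|^2$ near $s=1$ and to $|s-1|$ away from $s=1$. First I would recall that Lemma~\ref{entd} gives $\mathcal{E}(F(t))\le\mathcal{E}(F_0)$ for all $t\ge 0$, so it suffices to bound the left-hand side of each displayed inequality by $\mathcal{E}(F(t))=\int_{\Omega\times\mathbb{R}^3}\Psi\!\left(\frac{F}{\mu_E}\right)\mu_E\,dxdv$ pointwise in $(x,v)$.

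The key step is the following calculus lemma: for $s\ge 0$,
\begin{align*}
	\Psi(s) \ge \frac{1}{4}|s-1|^2 \mathbf{1}_{|s-1|\le 1} + \frac{1}{4}|s-1|\mathbf{1}_{|s-1|>1}.
\end{align*}
To see this, split into the regime $|s-1|\le 1$, i.e. $s\in[0,2]$, and the regime $s>2$. On $[0,2]$ one checks that $\Psi(s)-\frac14(s-1)^2\ge 0$ by noting both sides and their first derivatives vanish at $s=1$ and comparing second derivatives $\Psi''(s)=1/s\ge 1/2$ on $[0,2]$ while $\frac14(s-1)^2$ has second derivative $1/2$; a short convexity argument closes it (one may also just verify the minimum of $\Psi(s)-\frac14(s-1)^2$ on $[0,2]$). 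For $s>2$ one uses that $\Psi$ is increasing and convex past $s=1$, so $\Psi(s)\ge \Psi(2)+\Psi'(2)(s-2) = (2\log 2-1)+(\log 2)(s-2)$, and since $\log 2 > 1/4$ and $s-1 < 2(s-2)+1$ is false in general—more carefully, for $s\ge 2$ we have $\Psi(s)\ge (\log 2)(s-1)$ because $g(s):=\Psi(s)-(\log 2)(s-1)$ satisfies $g(2)=2\log2 -1 -\log 2 = \log 2 - 1 <0$... so instead I would use the cleaner bound $\Psi(s)\ge s\log s - s + 1 \ge \frac14 s \ge \frac14|s-1|$ for $s$ large, after verifying $s\log s - s + 1 - \frac14 s \ge 0$ for $s\ge 2$ (true since at $s=2$ it equals $2\log 2 - 2\ge 0.386>0$ and the derivative $\log s - 1/4 >0$). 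Thus the stated inequality holds, possibly with the threshold adjusted; the constant $\tfrac14$ in the statement is exactly what this elementary split yields.

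Granting the calculus lemma, I would finish as follows. Multiply the pointwise inequality by $\mu_E(x,v)\ge 0$ with $s=F/\mu_E$, noting $s-1 = (F-\mu_E)/\mu_E$, so that $\mu_E|s-1|^2 = |F-\mu_E|^2/\mu_E$ and $\mu_E|s-1| = |F-\mu_E|$, and $\{|s-1|\le 1\} = \{|F-\mu_E|\le \mu_E\}$. Integrating over $\Omega\times\mathbb{R}^3$ and applying Lemma~\ref{entd} gives the first displayed inequality. For the second, substitute $F-\mu_E = \mu_E^{1/2}f$, so $|F-\mu_E|^2/\mu_E = |f|^2$ and $|F-\mu_E| = \mu_E^{1/2}|f|$, and $\{|F-\mu_E|\le\mu_E\} = \{|f|\le \mu_E^{1/2}\}$; this turns the first inequality into the second verbatim. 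The only genuine obstacle is pinning down the calculus lemma with the precise constant $\tfrac14$ and the precise threshold $|s-1|\lessgtr 1$ (equivalently $s\lessgtr 2$) in both regimes; everything else is a direct substitution and an application of the already-proved entropy monotonicity. I expect the $s>2$ estimate to require the slightly delicate comparison $s\log s - s + 1 \ge \tfrac14 s$, which is where I would be most careful, but it reduces to monotonicity of an explicit one-variable function and is routine once set up.
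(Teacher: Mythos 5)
Your overall route is the right one, and in fact it is the standard argument behind this lemma (the paper itself gives no proof, citing \cite{GuoQAM}): combine the entropy monotonicity of Lemma \ref{entd} with the pointwise inequality $\Psi(s)\ge \tfrac14|s-1|^2\mathbf{1}_{|s-1|\le 1}+\tfrac14|s-1|\mathbf{1}_{|s-1|>1}$ for $s=F/\mu_E\ge 0$, and then observe that the substitution $F-\mu_E=\mu_E^{1/2}f$ turns the first displayed bound into the second verbatim. Your treatment of the regime $s\in[0,2]$ is correct: $g(s)=\Psi(s)-\tfrac14(s-1)^2$ satisfies $g(1)=g'(1)=0$ and $g''(s)=\tfrac1s-\tfrac12\ge 0$ on $(0,2]$, plus $g(0)=\tfrac34>0$.

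The one genuine flaw is the regime $s>2$. The intermediate bound you settle on, $\Psi(s)=s\log s-s+1\ge \tfrac14 s$ for $s\ge 2$, is false near $s=2$: at $s=2$ one has $\Psi(2)=2\log 2-1\approx 0.386<0.5=\tfrac14\cdot 2$, and it stays false up to roughly $s\approx 2.3$. Your numerical check is also off: the quantity $s\log s-s+1-\tfrac14 s$ at $s=2$ equals $2\log 2-\tfrac32\approx-0.11$, not $2\log 2-2$ (which is itself negative), and the value $0.386$ you quote is $2\log 2-1$, i.e. $\Psi(2)$ with the $-\tfrac14 s$ term dropped. The fix is to prove the inequality you actually need, $\Psi(s)\ge\tfrac14(s-1)$ on $[2,\infty)$, directly: set $u(s)=\Psi(s)-\tfrac14(s-1)=s\log s-\tfrac54 s+\tfrac54$; then $u(2)=2\log 2-\tfrac54\approx 0.136>0$ and $u'(s)=\log s-\tfrac14>0$ for $s\ge 2$, so $u\ge 0$ there. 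With this repair (and deleting the abandoned tangent-line digression), your argument is complete and matches the cited standard proof.
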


\bigskip
The below lemma implies we can bound the weighted Gamma gain term $w\Gamma_+$ by the product of the weighted $L^\infty$ norm and the $L^2$ norm with a good kernel. Because the proof of the lemma is nearly identical to \cite[Lemma\ 2.2]{DW2019}, we omit this proof.
\begin{lemma} \cite{DW2019} \label{Gam+est}
	There is a generic contant $C_{\beta} >0$ such that
	\begin{align*}
		|w(x,v)\Gamma_+(f,f)(x,v)| \le \frac{C_{\beta}\|wf\|_{L^{\infty}_{x,v}}}{1+|v|} \left(\int_{\mathbb{R}^3} (1+|\eta|)^4 |f(\eta)|^2 d\eta\right)^{\frac{1}{2}}
	\end{align*}
	for all $x \in \Omega$, $v \in \mathbb{R}^3$. In particular,
	\begin{align*}
		|w(x,v)\Gamma_+(f,f)(x,v)| \le \frac{C_{\beta}\|wf\|_{L^{\infty}_{x,v}}^2}{1+|v|}
	\end{align*}
	for all $x \in \Omega$, $v \in \mathbb{R}^3$, where $C_{\beta}>0$ is a constant.
\end{lemma}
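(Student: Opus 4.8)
The plan is to reduce the weighted gain term to a scalar bilinear integral in which one copy of $f$ is measured in $L^\infty_{x,v}(w)$ and the other in a polynomially weighted $L^2_v$, and then to produce the decay factor $(1+|v|)^{-1}$ out of a Carleman-type change of variables in the collision integral; this follows the scheme of \cite[Lemma 2.2]{DW2019}.

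First I would use the collisional conservation laws to remove one Maxwellian factor. Since $|u'|^2+|v'|^2=|u|^2+|v|^2$ and $u'+v'=u+v$, one has $\mu^{1/2}(u')\mu^{1/2}(v')=\mu^{1/2}(u)\mu^{1/2}(v)$, hence
\begin{align*}
\Gamma_+(f,f)(v)=\frac{1}{\sqrt{\mu(v)}}Q_+\big(\sqrt{\mu}f,\sqrt{\mu}f\big)(v)=\int_{\mathbb{R}^3}\int_{\mathbb{S}^2}B(v-u,\omega)\,\sqrt{\mu(u)}\,f(u')\,f(v')\,d\omega\,du.
\end{align*}
Multiplying by $w(x,v)$ and writing $f(u')=(wf)(u')/w(x,u')$, $f(v')=(wf)(v')/w(x,v')$, everything rests on the collisional weight ratio $\tfrac{w(x,v)}{w(x,u')w(x,v')}$. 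Here I would use energy conservation in the form $|v|^2=|u'|^2+|v'|^2-|u|^2\le|u'|^2+|v'|^2$ together with the elementary sub-multiplicativity $1+a+b+\Phi\le(1+a+\Phi)(1+b)$ for $a,b,\Phi\ge 0$, which gives $w(x,v)\le w(x,u')w(x,v')$ and, more precisely, $\tfrac{w(x,v)}{w(x,u')}\le(1+|v'|^2/2)^{\beta/2}$. After bounding $|(wf)(u')|\le\|wf\|_{L^\infty_{x,v}}$ one is then left with controlling a scalar integral of the form $\int\!\!\int B(v-u,\omega)\sqrt{\mu(u)}\,(1+|v'|)^{\beta}|f(v')|\,d\omega\,du$, the extra velocity weight in $v'$ being meant to be traded against the Maxwellian $\sqrt{\mu(u)}$ and against the Jacobian of the change of variables below.

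Next I would pass to the Carleman representation of the $(u,\omega)$-integral. For fixed $v$, eliminating the angular variable rewrites $\int\!\!\int B(v-u,\omega)\sqrt{\mu(u)}\,(\cdot)(v')\,d\omega\,du$ as $\int_{\mathbb{R}^3}\mathcal{K}(v,v')\,(\cdot)(v')\,dv'$, where $\mathcal{K}(v,v')$ is $|v-v'|^{-1}$ times the integral of $|v-u|^{\gamma-1}b(\cos\theta)\sqrt{\mu(u)}$ (carrying the velocity weight) over the hyperplane $\{u:(u-v')\cdot(v-v')=0\}$; on that hyperplane $|v-u|^2=|v-v'|^2+|u-v'|^2$ and $|u|^2$ splits as a square plus a shift-invariant two-dimensional Gaussian, so the hyperplane integral is finite and the Maxwellian forces $|v-v'|$ to be comparable to $|v|$ away from the degenerate directions. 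Applying Cauchy--Schwarz with weight $(1+|v'|)^{\pm2}$ one gets
\begin{align*}
\Big|\int_{\mathbb{R}^3}\mathcal{K}(v,v')f(v')\,dv'\Big|\le\Big(\int_{\mathbb{R}^3}\frac{\mathcal{K}(v,v')^2}{(1+|v'|)^4}\,dv'\Big)^{1/2}\Big(\int_{\mathbb{R}^3}(1+|v'|)^4|f(v')|^2\,dv'\Big)^{1/2},
\end{align*}
so the estimate is reduced to the kernel bound $\int\mathcal{K}(v,v')^2(1+|v'|)^{-4}\,dv'\le C_\beta(1+|v|)^{-2}$. That bound --- where the Gaussian on the collision hyperplane together with the Grad cutoff kernel must absorb the polynomial weight and still leave the gain $(1+|v|)^{-1}$ --- is the only substantial point and I expect it to be the main obstacle, since it is precisely where the smoothing of $Q_+$ is exploited; it is carried out exactly as in \cite[Lemma 2.2]{DW2019}, presumably after partitioning $\mathbb{S}^2$ according to the angle between $v$ and the collision direction. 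Finally, the ``in particular'' bound follows from the first estimate and $\big(\int_{\mathbb{R}^3}(1+|\eta|)^4|f(\eta)|^2\,d\eta\big)^{1/2}\le\|wf\|_{L^\infty_{x,v}}\big(\int_{\mathbb{R}^3}(1+|\eta|)^4(1+|\eta|^2/2)^{-\beta}\,d\eta\big)^{1/2}\le C_\beta\|wf\|_{L^\infty_{x,v}}$, the last integral being finite and uniform in $x$ because $\beta>5>7/2$ and $\Phi\ge 0$.
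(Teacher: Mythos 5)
Your setup (the identity $\mu^{1/2}(u')\mu^{1/2}(v')=\mu^{1/2}(u)\mu^{1/2}(v)$, the representation $\Gamma_+(f,f)(v)=\int\!\!\int B\sqrt{\mu(u)}f(u')f(v')\,d\omega\,du$, the sub-multiplicativity $w(x,v)\le w(x,u')w(x,v')$, and the final ``in particular'' step using $\beta>5>7/2$) is fine, but the core of your plan has a genuine gap, and it is exactly at the step you defer to \cite{DW2019}. After bounding $|wf(u')|\le\|wf\|_{L^\infty_{x,v}}$ and dominating $w(x,v)/w(x,u')$ by $(1+|v'|^2/2)^{\beta/2}$, you are committed to proving the decoupled inequality $\int\!\!\int B(v-u,\omega)\sqrt{\mu(u)}\,(1+|v'|)^{\beta}|f(v')|\,d\omega\,du\lesssim (1+|v|)^{-1}\big(\int(1+|\eta|)^4|f(\eta)|^2d\eta\big)^{1/2}$, equivalently your kernel bound $\int\mathcal K(v,v')^2(1+|v'|)^{-4}dv'\lesssim(1+|v|)^{-2}$ with $\mathcal K$ carrying the factor $(1+|v'|)^{\beta}$. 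This is false for the range $\beta>5$ of the paper (indeed for any $\beta>5/2-\gamma$). The Carleman/Grad Gaussian $e^{-c|v-v'|^2-c(|v|^2-|v'|^2)^2/|v-v'|^2}$ gives no decay in the region $v'=v+\delta$, $|\delta|\le1$, $\delta$ nearly orthogonal to $v$ (there $|v'|\approx|v|$ and $|v|^2-|v'|^2=O(1)$), and on that region $\mathcal K\sim(1+|v|)^{\beta+\gamma}|v-v'|^{-1}$, so the squared integral over the reachable slab of thickness $\sim(1+|v|)^{-1}$ is of size $(1+|v|)^{2\beta+2\gamma-5}$ up to a logarithm, not $(1+|v|)^{-2}$. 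One can also see the intermediate inequality itself fail directly by testing with $f=\mathbf 1$ on that slab: the left side is $\sim(1+|v|)^{\beta+\gamma-2}$ while the right side is $\sim(1+|v|)^{1/2}$. So no refinement of the hyperplane-integral estimate can close your reduction; the loss occurs when you dump the entire weight ratio onto the $v'$ variable.

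The cited argument avoids this by exploiting the coupling between the two post-collisional factors rather than taking the supremum blindly in the $u'$ slot: since $\max(|u'|,|v'|)^2\ge\frac12(|u'|^2+|v'|^2)\ge\frac12|v|^2$, one splits the $(u,\omega)$ integration according to whether $|u'|\ge|v'|$ or $|u'|<|v'|$, puts $\|wf\|_{L^\infty_{x,v}}$ on the factor with the larger post-collisional speed so that the ratio $w(x,v)/w(x,\cdot)$ there is bounded by $2^{\beta/2}$ (no polynomial growth survives), and keeps the other factor, at $v'$ in one case and at $u'$ in the other, for the Cauchy--Schwarz step. The resulting kernels (in $v'$ and in $u'$ respectively) are then genuine Grad-type gain kernels with $|v-\cdot|^{-1}$ singularity and Gaussian decay but no $(1+|\cdot|)^{\beta}$ factor, and for those the bound $\int k(v,\eta)^2(1+|\eta|)^{-4}d\eta\lesssim(1+|v|)^{2\gamma-4}\le(1+|v|)^{-2}$ does hold, which is precisely why the weight $(1+|\eta|)^4$ appears in the statement. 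In short: your two weight inequalities are correct, but the strategy of using the lossy one and hoping the smoothing of $Q_+$ absorbs $(1+|v'|)^{\beta}$ cannot work, because the gain kernel has no decay tangentially to the sphere $\{|v'|=|v|\}$; the case split (or an equivalent device) is the missing idea.
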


\bigskip

We define $R(f)$ by
\begin{align} \label{Rfdef}
	R(f)(t,x,v) = \int_{\mathbb{R}^3 \cross \mathbb{S}^2} B(v-u,\omega) \left[\mu_E(x,u)+\mu_E^{\frac{1}{2}}(x,u)f(x,u)\right]d\omega du.
\end{align}
The full perturbed Boltzmann equation \eqref{WPBE} with weight $w$ becomes
\begin{align} \label{WPBBE}
	\partial_t h+v\cdot \nabla_x h -\nabla_x \Phi(x) \cdot \nabla_v h + R(f)h = e^{-\Phi(x)}K_wh+ e^{-\frac{\Phi(x)}{2}}w\Gamma_+\left(\frac{h}{w},\frac{h}{w}\right).
\end{align}

\indent Now, let us consider the following equation:
\begin{align} \label{WLBBE}
	\partial_t h+v\cdot \nabla_x h -\nabla_x \Phi(x) \cdot \nabla_v h + R(\varphi)h = 0,
\end{align}
where $\varphi = \varphi(t,x,v)$ is a given function satisfying
\begin{align} \label{phc}
	\mu_E(x,v) + \mu_E^{\frac{1}{2}}(x,v) \varphi(t,x,v) \ge 0, \quad \|\varphi(t)\|_{L^\infty_{x,v}}< \infty.
\end{align} 
We denote by $S_{G_\varphi}(t)h_0$ the semigroup of a solution to the equation \eqref{WLBBE} with initial datum $h_0$ and the diffuse boundary condition
\begin{align}\label{wwLBEBC}
	h(t,x,v)|_{\gamma_{-}} =  \frac{1}{\tilde{w}(x,v)} \int_{v'\cdot n(x)>0} h(t,x,v')\tilde{w}(x,v')d\sigma,
\end{align}
where $\tilde{w}(x,v)$ is defined in \eqref{tildeweight}.
The following lemma provides an useful $L^\infty$ estimate of $S_{G_\varphi}(t)h_0$ in a finite time.

\begin{lemma} \label{Phiest}
	Assume that $\rho > 1$ is sufficiently large and $\varphi$ satisfies the condition \eqref{phc}. Let $h_0 \in L^\infty_{x.v}$. Then there exists a unique solution $h(t)=S_{G_\varphi}(t)h_0$ to the equation \eqref{WLBBE} with initial datum $h_0$ and the boundary condition \eqref{wwLBEBC}. Moreover, there is a constant $C_3$, depending on $\beta$ and $\Phi$, such that
	\begin{align} \label{Phire}
		\|S_{G_\varphi}(t)h_0\|_{L^\infty_{x,v}} \le C_3 \rho^{\frac{5}{4}}\|h_0\|_{L^\infty_{x,v}} \quad \text{for all } 0\le t \le \rho.
	\end{align}
\end{lemma}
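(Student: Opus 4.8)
The plan is to mimic the proof of Lemma \ref{L43}, replacing the damping coefficient $e^{-\Phi(x)}\nu(v)$ by $R(\varphi)(t,x,v)$ and simply discarding the exponential-decay bookkeeping, since here we only want a finite-time bound. The one structural input needed is that $R(\varphi)\ge 0$: indeed $R(\varphi)(t,x,v)=\int_{\mathbb{R}^3\times\mathbb{S}^2} B(v-u,\omega)\,[\mu_E(x,u)+\mu_E^{\frac12}(x,u)\varphi(t,x,u)]\,d\omega du$ and the bracket is $\mu_E+\mu_E^{\frac12}\varphi\ge 0$ by \eqref{phc}, so every exponential factor $\exp\{-\int_s^t R(\varphi)(\tau,X(\tau),V(\tau))\,d\tau\}$ that occurs in a representation formula is bounded by $1$. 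Note that $R(\varphi)$ is $\varphi$-dependent and grows like $\nu(v)$, but since it is only ever used through such exponentials bounded upward by $1$, this quantitative dependence never enters the estimate.

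First I would establish existence and uniqueness. For uniqueness, if $h,\tilde h$ both solve \eqref{WLBBE}--\eqref{wwLBEBC}, then $f=h/w,\ \tilde f=\tilde h/w$ lie in $L^1_{x,v}$ (since $\beta>5$, as in Lemma \ref{L43}), and the divergence theorem gives $\tfrac{d}{dt}\|(f-\tilde f)(t)\|_{L^1_{x,v}} \le -\int_\gamma |f-\tilde f|\{n(x)\cdot v\}\,dS(x)dv - \|R(\varphi)(f-\tilde f)\|_{L^1_{x,v}} \le 0$, where the boundary term is nonnegative after the change of variables $\gamma_-\mapsto\gamma_+$ exactly as in Lemma \ref{L43}, and $R(\varphi)\ge 0$; hence $f\equiv\tilde f$. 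For existence I would run the same truncated iteration as in Lemma \ref{L43}: solve with initial data $h_0\mathbf{1}_{\{|v|\le m\}}$ and boundary operator damped by a factor $1-\tfrac1m$, which after conjugating by $\tilde w$ becomes a probability average of norm $\le 1-\tfrac1m<1$, so Lemma \ref{L42} applies; then the uniform bound below lets one send $m\to\infty$ and take a weak-$*$ limit.

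The heart of the matter is \eqref{Phire}. Fix $0\le t\le\rho$, set $T_0=\rho$ in Lemma \ref{Lsmall}, and fix $(x,v)\notin\gamma_0$. If $t_1(t,x,v)\le 0$, the explicit characteristic formula together with $\exp\{-\int_0^t R(\varphi)\}\le 1$ gives $|h(t,x,v)|\le\|h_0\|_{L^\infty_{x,v}}$. If $t_1>0$, I would use the back-time-cycle representation -- the exact analogue of Lemma \ref{L41} with $q\equiv 0$ and $e^{-\Phi}\nu$ replaced by $R(\varphi)$ (the same representation already invoked for $\nu(F^{(m)})$ in the positivity argument of Theorem \ref{mainresult1}). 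Bounding all damping exponentials inside $d\Sigma_l$ by $1$, each iterated boundary integral collapses to a probability measure except for the single factor $\tilde w(x_l,v_l)\,d\sigma_l$, and $\int_{\mathcal{V}_l}\tilde w(x_l,v_l)\,d\sigma_l\le C_\Phi<\infty$ is finite as in Lemma \ref{L43}. Thus, after merging the $l$-th term with the inserted $\int_{\mathcal{V}_k}d\sigma_k=1$ term as in the $I_1+J_1$ step of Lemma \ref{L43}, the terms hitting $\{t_{l+1}\le 0<t_l\}$ contribute at most $C_\Phi(k-1)\|h_0\|_{L^\infty_{x,v}}$, while the leftover term carrying $\mathbf{1}_{\{t_k>0\}}h(t_k,\cdot)$ is bounded, after using the boundary condition at $x_k$ once more, by $C_\Phi\big(\int_{\prod_{j=1}^{k-1}\mathcal{V}_j}\mathbf{1}_{\{t_k>0\}}\prod_{j=1}^{k-1}d\sigma_j\big)\sup_{0\le s\le\rho}\|h(s)\mathbf{1}_{\{t_1>0\}}\|_{L^\infty_{x,v}}$. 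Choosing $k=C_1\rho^{5/4}$, the estimate \eqref{Lsmall2} of Lemma \ref{Lsmall} bounds that integral by $(1/2)^{C_2\rho^{5/4}}$, which for $\rho$ large enough is $\le\tfrac{1}{2C_\Phi}$; absorbing it yields $\sup_{0\le t\le\rho}\|h(t)\mathbf{1}_{\{t_1>0\}}\|_{L^\infty_{x,v}}\le 2C_\Phi C_1\rho^{5/4}\|h_0\|_{L^\infty_{x,v}}$, and combining with the $t_1\le 0$ case (where $\rho>1$) gives \eqref{Phire} for a suitable $C_3$ depending only on $\beta$ and $\Phi$.

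The main obstacle I anticipate is purely bookkeeping rather than conceptual: one must verify that the back-time-cycle representation of Lemma \ref{L41}, the weights $d\Sigma_l$, and the truncated construction of the semigroup all go through \emph{verbatim} when the damping $e^{-\Phi}\nu$ is replaced by the merely nonnegative, $\varphi$-dependent, velocity-growing coefficient $R(\varphi)$ -- in particular that no step secretly relied on the positive \emph{lower} bound $e^{-\|\Phi\|_\infty}\nu_0$ of the damping. Since for this finite-time lemma every $R(\varphi)$-exponential is discarded upward and decay is never needed, all such uses are harmless, and the $\rho^{5/4}$ growth is simply the number of bounces $k=C_1\rho^{5/4}$ allowed in time $[0,\rho]$ by Lemma \ref{Lsmall}.
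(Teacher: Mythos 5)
Your proposal is correct and follows essentially the same route as the paper's proof: the truncated iteration of Lemma \ref{L42} for existence, the back-time-cycle representation with all $R(\varphi)$-exponentials bounded above by $1$ thanks to \eqref{phc}, the $\tilde{w}\,d\sigma_l$ factor controlled by $C_\Phi$, and Lemma \ref{Lsmall} with $k=C_1\rho^{5/4}$ to absorb the $\mathbf{1}_{\{t_k>0\}}$ remainder for $\rho$ large, yielding the $C_3\rho^{5/4}$ bound. Your added $L^1$ contraction argument for uniqueness (which the paper's proof of this lemma leaves implicit, relying on the pattern of Lemma \ref{L43}) is a harmless and valid supplement.
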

\begin{proof} 
	Given any $m \ge 1$, we construct a solution to
	\begin{align} \label{qq11}
		\{\partial_t + v\cdot \nabla_x -\nabla_x \Phi(x) \cdot \nabla_v+  R(\varphi)\}h^{(m)}=0,
	\end{align}
	with the boundary and initial condition
	\begin{equation} \label{qq12}	
	\begin{aligned} 
		&h^{(m)}(t,x,v) = \left\{1-\frac{1}{m}\right\}\frac{1}{\tilde{w}(x,v)}\int_{n(x)\cdot v'>0} \left[h^{(m)}(t,x,v')\right]\tilde{w}(x,v')d\sigma(x),\\
		&h^{(m)}(0,x,v) = h_0 \mathbf{1}_{\{|v|\le m\}}.
	\end{aligned}
	\end{equation}
	Setting $\tilde{h}^{(m)}(t,x,v) = \tilde{w}(x,v) h^{(m)}(t,x,v)$, the equation \eqref{qq11} and the condition \eqref{qq12} become
	\begin{align*}
		&\{\partial_t + v\cdot \nabla_x -\nabla_x \Phi(x) \cdot \nabla_v+ R(\varphi)\}\tilde{h}^{(m)}=0,\\
		&\tilde{h}^{(m)}(t,x,v) = \left\{1-\frac{1}{m}\right\}\int_{n(x)\cdot v'>0} \tilde{h}^{(m)}(t,x,v') d\sigma(x),\\
		& \tilde{h}^{(m)}(0,x,v) = \tilde{h}_0 \mathbf{1}_{\{|v|\le m\}}.
	\end{align*}
	Since $\int d\sigma =1$, the boundary operator maps $L^\infty_{x,v}$ to $L^\infty_{x,v}$ with a norm bounded by $1-\frac{1}{m}$, and
	\begin{align*}
		\|\tilde{h}^{(m)}(0)\|_{L^\infty_{x,v}} \le C_{m}\|h_0\|_{L^\infty_{x,v}} < \infty.
	\end{align*}
	By the deviation of Lemma \ref{L42}, there exists a solution $\tilde{h}^{(m)}(t,x,v) \in L^\infty_{x,v}$ to the above equation, and $h^{(m)}$ is bounded because $h^{(m)}=\frac{1}{\tilde{w}(x,v)} \tilde{h}^{(m)}$.\\
	From now on, we show the uniform $L^\infty_{x,v}$ bound for $h$. Let $0\le t \le \rho$.
	If $t_1(t,x,v) \le  0$, we know
	\begin{align*}
		\left(S_{G_\varphi}(t)h_0\right)(x,v) = \exp\left\{-\int_0^t R(\varphi)(s,X(s),V(s)) ds\right\}h_0(X(0),V(0)),
	\end{align*}
	and \eqref{Phire} is valid.\\
	We consider the case $t_1(t,x,v) > 0$. Recall the definition of the iterated integral in \eqref{iterint}. By the deviation of Lemma \ref{L41}, we deduce 
	\begin{align*}
		\left|h^{(m)}(t,x,v)\right| &\le \frac{\exp{-\int_{t_1}^t R(\varphi)(\tau,X(\tau),V(\tau))d\tau}}{\tilde{w}(x_1,V(t_1))}\sum_{l=1}^{k-1}\int_{\prod_{j=1}^{k-1}\mathcal{V}_j} \mathbf{1}_{\{t_{l+1}\le 0 < t_l\}} \left|h^{(m)}(0,X_l(0),V_l(0))\right|d\Sigma_l^\varphi(0)\\
		&\quad +\frac{\exp{-\int_{t_1}^t R(\varphi)(\tau,X(\tau),V(\tau))d\tau}}{\tilde{w}(x_1,V(t_1))}\int_{\prod_{j=1}^{k-1}\mathcal{V}_j} \mathbf{1}_{\{t_k>0\}} \left|h^{(m)}(t_k,x_k,V_{k-1}(t_k))\right|d\Sigma_{k-1}^\varphi(t_k)\\
		&=: I_1+I_2,
	\end{align*}
	where
	\begin{align*}
		d\Sigma_l^\varphi(s) &= \left\{\prod_{j=l+1}^{k-1}d\sigma_j\right\}\left\{ \exp{-\int_s^{t_l} R(\varphi)(\tau, X_l(\tau),V_l(\tau))d\tau} \tilde{w}(x_l,v_l)d\sigma_l\right\}\\
		&\quad \cross \prod_{j=1}^{l-1}\left\{\exp{-\int_{t_{j+1}}^{t_j} R(\varphi)(\tau, X_j(\tau),V_j(\tau))d\tau}d\sigma_j\right\}.
	\end{align*}
	First of all, we consider $I_2$. Using the boundary condition
	\begin{align*}
		h^{(m)}(t_k,x_k,V_{k-1}(t_k)) = \left\{1-\frac{1}{m}\right\}\frac{1}{\tilde{w}(x_k,V_{k-1}(t_k))}\int_{\mathcal{V}_k} h^{(m)}(t_k,x_k,v_k)\tilde{w}(x_k,v_k)d\sigma_k
	\end{align*}
and the fact $h^{(m)}(t_k,x_k,v_k) = \mathbf{1}_{\{t_{k+1} \le 0 <t_k\}}\exp\left\{-\int_0^{t_k}R(\varphi)(s,X(s),V(s))ds\right\}h^{(m)}(0,X_k(0),V_k(0))$\\ $+\mathbf{1}_{\{t_{k+1}>0\}}h^{(m)}(t_k,x_k,v_k)$,
\begin{align*}
	I_2 &\le \frac{\exp{-\int_{t_1}^t R(\varphi)(\tau,X(\tau),V(\tau))d\tau}}{\tilde{w}(x_1,V(t_1))}\int_{\prod_{j=1}^{k}\mathcal{V}_j} \mathbf{1}_{\{t_{k+1} \le 0 <t_k\}}\left|h^{(m)}(0,X_k(0),V_k(0))\right|d\Sigma_{k}^\varphi(0)\\
	&\quad +\frac{\exp{-\int_{t_1}^t R(\varphi)(\tau,X(\tau),V(\tau))d\tau}}{\tilde{w}(x_1,V(t_1))}\int_{\prod_{j=1}^{k}\mathcal{V}_j} \mathbf{1}_{\{t_{k+1}>0\}} \left|h^{(m)}(t_k,x_k,v_k)\right|d\Sigma_{k}^\varphi(t_k)\\
	&=:J_1+J_2.
\end{align*}
Since $t_1(t_k,x_k,v_k)>0$ over $\{t_{k+1}>0\}$, we deduce that
\begin{align} \label{qq13}
	\mathbf{1}_{\{t_{k+1}>0\}}\left|h^{(m)}(t_k,x_k,v_k)\right| \le \sup_{x,v}\left|h^{(m)}(t_k,x,v)\mathbf{1}_{\{t_1>0\}}\right|.
\end{align}
We know that the exponential in $d\Sigma_l^\varphi(s)$ is bounded by $1$. By Lemma \ref{Lsmall}, we can choose $C_1,C_2>0$ such that for $k=C_1\rho^{\frac{5}{4}}$
\begin{align} \label{qq14}
	\int_{\prod_{j=1}^{k-1}\mathcal{V}_j} \mathbf{1}_{\{t_k(t,x,v,v_1,v_2,...,v_{k-1})>0\}}\prod_{j=1}^{k-1}d\sigma_j \le \left(\frac{1}{2}\right)^{C_2 \rho^{\frac{5}{4}}}
\end{align}
Using \eqref{qq13} and \eqref{qq14}, we obtain
\begin{align*}
	J_2 &\le \frac{1}{\tilde{w}(x_1,V(t_1))}\left\|h^{(m)}(t_k)\mathbf{1}_{\{t_1>0\}}\right\|_{L^\infty_{x,v}} \int_{\prod_{j=1}^k\mathcal{V}_j} \mathbf{1}_{\{t_k>0\}}\tilde{w}(x_k,v_k)\prod_{j=1}^kd\sigma_j\\
	&\le C_\Phi \sup_{0\le s \le t \le \rho} \left\{\left\|h^{(m)}(s) \mathbf{1}_{\{t_1>0\}}\right\|_{L^\infty_{x,v}}\right\}\left(\int_{\prod_{j=1}^{k-1}\mathcal{V}_j}\mathbf{1}_{\{t_{k}>0\}}\prod_{j=1}^{k-1}d\sigma_j\right)\left(\int_{\mathcal{V}_k}\tilde{w}(x_k,v_k) d\sigma_k\right)\\
	&\le C_\Phi \left(\frac{1}{2}\right)^{C_2 \rho^{\frac{5}{4}}}\sup_{0\le s \le t \le \rho} \left\{\left\|h^{(m)}(s) \mathbf{1}_{\{t_1>0\}}\right\|_{L^\infty_{x,v}}\right\}.
\end{align*}
On the other hand, we consider $I_1$ and $J_1$. By inserting $\int_{\mathcal{V}_k}d\sigma_k=1$ into $I_1$, we get
\begin{align*}
	I_1+J_1 &= \frac{\exp{-\int_{t_1}^t R(\varphi)(\tau,X(\tau),V(\tau))d\tau}}{\tilde{w}(x_1,V(t_1))}\sum_{l=1}^{k}\int_{\prod_{j=1}^{k}\mathcal{V}_j} \mathbf{1}_{\{t_{l+1}\le 0 < t_l\}} \left|h^{(m)}(0,X_l(0),V_l(0))\right|d\Sigma_l^\varphi(0)\\
	&\le \frac{1}{\tilde{w}(x_1,V(t_1))}\left\|h^{(m)}(0)\right\|_{L^\infty_{x,v}}\sum_{l=1}^{k}\int_{\prod_{j=1}^{k}\mathcal{V}_j} \mathbf{1}_{\{t_{l+1}\le 0 < t_l\}} \left\{\prod_{j=l+1}^k d\sigma_j\right\}\left\{\tilde{w}(x_l,v_l) d\sigma_l\right\}\left\{\prod_{j=1}^{l-1} d\sigma_j\right\}.
\end{align*}
Now, we fix $l$ and consider the $l$-th term
\begin{align*}
	\int_{\prod_{j=1}^{k}\mathcal{V}_j} \mathbf{1}_{\{t_{l+1}\le 0 < t_l\}} \left\{\prod_{j=l+1}^k d\sigma_j\right\}\left\{\tilde{w}(x_l,v_l) d\sigma_l\right\}\left\{\prod_{j=1}^{l-1} d\sigma_j\right\} &\le \int_{\prod_{j=1}^{l-1}\mathcal{V}_j} \left(\int_{\mathcal{V}_l}\tilde{w}(x_l,v_l) d\sigma_l\right)\left\{\prod_{j=1}^{l-1} d\sigma_j\right\}\\
	&\le C_\Phi.
\end{align*}
Summing $1\le l \le k$, it follows that
\begin{align*}
	I_1+J_1 \le C_1 \rho^{\frac{5}{4}}C_\Phi \frac{1}{\tilde{w}(x_1,V(t_1))}\left\|h^{(m)}(0)\right\|_{L^\infty_{x,v}} \le C_\Phi \rho^{\frac{5}{4}}  \left\|h^{(m)}(0)\right\|_{L^\infty_{x,v}}.
\end{align*}
Thus, we deduce that for $0 \le t \le \rho$,
\begin{align*}
	\left|h^{(m)}(t,x,v)\mathbf{1}_{\{t_1>0\}}\right| &\le C_\Phi \left(\frac{1}{2}\right)^{C_2\rho^{\frac{5}{4}}}\sup_{0\le s \le t \le \rho} \left\{\left\|h^{(m)}(s) \mathbf{1}_{\{t_1>0\}}\right\|_{L^\infty_{x,v}}\right\}+C_\Phi \rho^{\frac{5}{4}}\left\|h^{(m)}(0)\right\|_{L^\infty_{x,v}}.
\end{align*}
Choosing sufficiently large $\rho > 0$ such that $C_\Phi \left(\frac{1}{2}\right)^{C_2\rho^{\frac{5}{4}}} \le \frac{1}{2}$,
\begin{align*} 
	\sup_{0\le t \le \rho}\left\{\left\|h^{(m)}(t)\mathbf{1}_{\{t_1>0\}}\right\|_{L^\infty_{x,v}}\right\} \le C_\Phi \rho^{\frac{5}{4}}\left\|h^{(m)}(0)\right\|_{L^\infty_{x,v}} = C_\Phi \rho^{\frac{5}{4}}\left\|h_0\right\|_{L^\infty_{x,v}}.
\end{align*}
Therefore $\left(h^{(m)}\right)$ is uniformly bounded, and the sequence has weak* limit in $L^\infty_{x,v}$. Letting $m \rightarrow \infty$, we conclude the existence of a solution and the uniform bound for the solution.
\end{proof}

\bigskip
In \eqref{WLBBE} and \eqref{phc}, we can take $\varphi$ as $f$, and we consider the semigroup $S_{G_f}(t)$. The below lemma gives a $L^\infty$ estimate to $S_{G_f}(t)h_0$ when we have some assumption related to $R(f)$.
\begin{lemma} \label{LL62} 
	Assume that $\rho > 1$ is sufficiently large and $h_0 \in L^\infty_{x.v}$. Let $h(t,x,v)=S_{G_f}(t)h_0$ be the solution to the equation \eqref{WLBBE} with $\varphi = f$, initial datum $h_0$, and the boundary condition \eqref{wwLBEBC}. Suppose that
	\begin{align*}
		R(f)(t,x,v) \ge \frac{1}{2}e^{-\Phi(x)}\nu(v) \quad \text{for all }(t,x,v)\in [0,\infty)\cross \Omega \cross \mathbb{R}^3.
	\end{align*}    
	It holds that
	\begin{align*}
		\|S_{G_f}(t)h_0\|_{L^\infty_{x,v}} \le C_3 \rho^{\frac{5}{4}}\exp\left\{-e^{-\|\Phi\|_{\infty}}\frac{\nu_0}{2}t\right\}\|h_0\|_{L^\infty_{x,v}} \quad \text{for all } 0\le t \le \rho,
	\end{align*}
	where $C_3$ is a constant in Lemma \ref{Phiest}.
	Furthermore, there is $C_{\rho, \Phi, \beta} \ge 1$ such that
	\begin{align*}
		\|S_{G_f}(t)h_0\|_{L^\infty_{x,v}} \le C_{\rho, \Phi, \beta}\exp\left\{-e^{-\|\Phi\|_{\infty}}\frac{\nu_0}{4}t\right\}\|h_0\|_{L^\infty_{x,v}} \quad \text{for all } t \ge 0.
	\end{align*}
\end{lemma}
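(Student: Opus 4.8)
The plan is to mirror the proof of Lemma \ref{L43}: first establish a finite-time $L^\infty$ bound carrying the sharp exponential rate, then iterate to all times. Throughout I write $\lambda_1 := \tfrac12 e^{-\|\Phi\|_\infty}\nu_0$ and use that the hypothesis gives $R(f)(t,x,v) \ge \tfrac12 e^{-\Phi(x)}\nu(v) \ge \lambda_1$ for every $(t,x,v)$.

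First I would dispose of existence and uniqueness: since $F = \mu_E + \mu_E^{1/2}f \ge 0$ solves \eqref{BE} and $\|f(t)\|_{L^\infty_{x,v}} < \infty$, the choice $\varphi = f$ meets condition \eqref{phc}, so Lemma \ref{Phiest} already supplies a unique $h(t) = S_{G_f}(t)h_0 \in L^\infty_{x,v}$ on $[0,\rho]$ together with the bound \eqref{Phire}. To promote \eqref{Phire} to the decaying estimate I would set $g(t,x,v) := e^{\lambda_1 t}h(t,x,v)$, which solves
\begin{align*}
	\partial_t g + v\cdot\nabla_x g - \nabla_x\Phi(x)\cdot\nabla_v g + \big(R(f) - \lambda_1\big)g = 0
\end{align*}
with boundary condition \eqref{wwLBEBC} and $g(0) = h_0$. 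The new zeroth-order coefficient $R(f) - \lambda_1$ is still nonnegative; this is the only feature of $R(\varphi)$ that the proof of Lemma \ref{Phiest} actually exploits (nonnegativity makes every exponential factor in the back-time-cycle representation $\le 1$, and makes the free-transport-with-damping operator a contraction on $L^\infty_{x,v}$). Hence I would rerun that proof verbatim with $R(\varphi)$ replaced by $R(f) - \lambda_1$, obtaining $\|g(t)\|_{L^\infty_{x,v}} \le C_3\rho^{5/4}\|h_0\|_{L^\infty_{x,v}}$ on $[0,\rho]$, i.e.
\begin{align*}
	\|S_{G_f}(t)h_0\|_{L^\infty_{x,v}} \le C_3\rho^{5/4}\exp\left\{-e^{-\|\Phi\|_\infty}\tfrac{\nu_0}{2}t\right\}\|h_0\|_{L^\infty_{x,v}}, \qquad 0 \le t \le \rho ,
\end{align*}
which is the first assertion; global existence then follows by restarting the equation at $t = \rho, 2\rho, \dots$.

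For the global-in-time estimate I would iterate. For each integer $l \ge 0$ the shifted function $s \mapsto h(l\rho + s)$ solves \eqref{WLBBE} with $\varphi(s,\cdot,\cdot) = f(l\rho + s,\cdot,\cdot)$ and initial datum $h(l\rho)$; since the hypothesis $R(f) \ge \tfrac12 e^{-\Phi(x)}\nu(v)$ is assumed for all times, the previous step applies to it and gives $\|h(l\rho + s)\|_{L^\infty_{x,v}} \le C_3\rho^{5/4}e^{-\lambda_1 s}\|h(l\rho)\|_{L^\infty_{x,v}}$ for $0 \le s \le \rho$. Enlarging $\rho$ if necessary so that $C_3\rho^{5/4}e^{-\lambda_1\rho} \le e^{-\lambda_1\rho/2}$, the choice $s = \rho$ and induction on $l$ yield $\|h(l\rho)\|_{L^\infty_{x,v}} \le e^{-\lambda_1 l\rho/2}\|h_0\|_{L^\infty_{x,v}}$; then for $l\rho \le t \le (l+1)\rho$,
\begin{align*}
	\|h(t)\|_{L^\infty_{x,v}} \le C_3\rho^{5/4}e^{-\lambda_1(t-l\rho)}\|h(l\rho)\|_{L^\infty_{x,v}} \le C_3\rho^{5/4}e^{\lambda_1\rho/2}\,e^{-\lambda_1 t/2}\|h_0\|_{L^\infty_{x,v}} ,
\end{align*}
which is the claim with $C_{\rho,\Phi,\beta} := C_3\rho^{5/4}e^{\lambda_1\rho/2} \ge 1$, since $\tfrac{\lambda_1}{2} = e^{-\|\Phi\|_\infty}\tfrac{\nu_0}{4}$.

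I expect no serious obstacle beyond Lemma \ref{Phiest} itself: the whole content is that the lower bound on $R(f)$ lets one peel off the damping $\lambda_1$ while leaving a still-nonnegative coefficient, so the delicate part — the iterated back-time-cycle estimate controlling the phase-space volume of $\{t_k>0\}$ via Lemma \ref{Lsmall} — is reused unchanged. The one point that deserves careful checking is precisely the claim that the proof of Lemma \ref{Phiest} nowhere uses the bilinear structure of $R(\varphi)$, only its sign (and its measurability/integrability in $v$, which is identical for $R(f)-\lambda_1$); granting this, the change of variables $g = e^{\lambda_1 t}h$ and the restart/iteration argument are entirely routine.
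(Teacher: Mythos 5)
Your proposal is correct and follows essentially the route the paper intends (it only remarks that the proof is ``similar to Lemma \ref{L43} and Lemma \ref{Phiest}''): the lower bound $R(f)\ge \tfrac12 e^{-\Phi}\nu\ge \tfrac12 e^{-\|\Phi\|_\infty}\nu_0$ is fed into the back-time-cycle estimate of Lemma \ref{Phiest}, whose argument indeed uses only the nonnegativity (and boundedness in the relevant integrals) of the damping coefficient, and the global bound with rate $e^{-\|\Phi\|_\infty}\nu_0/4$ then follows by restarting on windows of length $\rho$ exactly as at the end of Lemma \ref{L43}. Your device of substituting $g=e^{\lambda_1 t}h$ rather than carrying the exponential lower bound through the $d\Sigma^f_l$ factors is a purely cosmetic reorganization of the same estimate, and your iteration (with $\rho$ enlarged so that $C_3\rho^{5/4}e^{-\lambda_1\rho}\le e^{-\lambda_1\rho/2}$, permissible since the lemma only assumes $\rho$ sufficiently large) closes the argument correctly.
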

\begin{proof}
	The proof of this Lemma is similar to the proof of Lemma \ref{L43} and Lemma \ref{Phiest}.
\end{proof}

\bigskip
Theorem \ref{LocExist} implies the local-in-time existence of the full perturbed Boltzmann equation with given initial data and the diffuse reflection boundary condition.
\begin{theorem} \label{LocExist}
	Assume that $\rho >1$ is sufficiently large. Suppose that $F_0 = \mu_E(x,v) + \mu_E^{\frac{1}{2}}(x,v)f_0(x,v) \ge 0$ and $\|wf_0\|_{L^\infty_{x,v}} < \infty$. Then there is a time $\hat{t}_0 := (\hat{C}_{\rho,\Phi}[1+\|wf_0\|_{L^\infty_{x,v}}])^{-1} >0$ such that there exists a unique solution $F(t,x,v) = \mu_E(x,v) + \mu_E^{\frac{1}{2}}(x,v)f(t,x,v) \ge 0$  on time interval $[0,\hat{t}_0]$ to the Boltzmann equation \eqref{BE} with initial datum $F_0$ and the diffuse reflection boundary condition \eqref{DRBC} satisfying
	\begin{align*}
		\sup_{0\le t \le \hat{t}_0}\|wf(t)\|_{L^\infty_{x,v}} \le 2C_3 \rho^{\frac{5}{4}} \|wf_0\|_{L^\infty_{x,v}},
	\end{align*}
	where $\hat{C}_{\rho,\Phi} >0$ is a constant depending on $\rho>0$ and $C_3$ is a constant in Lemma \ref{Phiest}. 
\end{theorem}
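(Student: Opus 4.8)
The plan is to build the solution by the non\-negativity\-preserving iteration already used in the positivity part of the proof of Theorem~\ref{mainresult1}: set $F^{(0)}\equiv\mu_E$ and, given $F^{(m)}=\mu_E+\mu_E^{\frac12}f^{(m)}$ with $h^{(m)}:=wf^{(m)}$, define $F^{(m+1)}=\mu_E+\mu_E^{\frac12}f^{(m+1)}$ as the solution of
\begin{align*}
	\{\partial_t+v\cdot\nabla_x-\nabla_x\Phi\cdot\nabla_v\}F^{(m+1)}+R(f^{(m)})F^{(m+1)}=Q_+(F^{(m)},F^{(m)}),
\end{align*}
with datum $F_0$ and the diffuse reflection boundary condition \eqref{DRBC}; equivalently $h^{(m+1)}$ solves the \eqref{WLBBE}\-type equation with $\varphi=f^{(m)}$ and source $e^{-\Phi}K_wh^{(m)}+e^{-\frac{\Phi}{2}}w\Gamma_+(\tfrac{h^{(m)}}{w},\tfrac{h^{(m)}}{w})$, so that by Duhamel $h^{(m+1)}(t)=S_{G_{f^{(m)}}}(t)h_0+\int_0^tS_{G_{f^{(m)}}}(t-s)\big[e^{-\Phi}K_wh^{(m)}+e^{-\frac{\Phi}{2}}w\Gamma_+(\tfrac{h^{(m)}}{w},\tfrac{h^{(m)}}{w})\big](s)\,ds$. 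Note that $R(f)$ is exactly $\int B(v-u,\omega)F\,d\omega du$, so this iteration keeps the loss\-plus\-frequency term together, and the source on the right involves only the gain term $\Gamma_+$.

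First I would prove by induction on $m$ that $F^{(m)}\ge0$ and $\sup_{[0,\hat t_0]}\|h^{(m)}(t)\|_{L^\infty_{x,v}}\le M^\ast:=2C_3\rho^{\frac54}\|wf_0\|_{L^\infty_{x,v}}$, provided $\hat t_0$ has the asserted form. Since $F^{(m)}\ge0$ and $\|f^{(m)}(t)\|_{L^\infty_{x,v}}\le\|h^{(m)}(t)\|_{L^\infty_{x,v}}<\infty$ verify \eqref{phc}, Lemma~\ref{Phiest} applies to $S_{G_{f^{(m)}}}$; combined with $\|e^{-\Phi}K_wg\|_{L^\infty_{x,v}}\le C_K\|g\|_{L^\infty_{x,v}}$ from Lemma~\ref{Kest} and $\|e^{-\frac{\Phi}{2}}w\Gamma_+(\tfrac gw,\tfrac gw)\|_{L^\infty_{x,v}}\le C_\beta\|g\|_{L^\infty_{x,v}}^2$ from Lemma~\ref{Gam+est}, this gives $\|h^{(m+1)}(t)\|_{L^\infty_{x,v}}\le C_3\rho^{\frac54}\|h_0\|_{L^\infty_{x,v}}+C_3\rho^{\frac54}\hat t_0\big(C_KM^\ast+C_\beta(M^\ast)^2\big)$, which is $\le M^\ast$ as soon as $\hat t_0\le(\hat C_{\rho,\Phi}[1+\|wf_0\|_{L^\infty_{x,v}}])^{-1}$ with $\hat C_{\rho,\Phi}$ absorbing $C_3,C_K,C_\beta$ and the fixed power of $\rho$ (one also arranges $\hat t_0\le\rho$, harmless since $\rho$ is fixed and large). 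For $F^{(m+1)}\ge0$: from $F^{(m)}\ge0$ one has $R(f^{(m)})\ge0$ and $Q_+(F^{(m)},F^{(m)})\ge0$, so the representation of $F^{(m+1)}$ along back\-time cycles (the variant of Lemma~\ref{L41} with $e^{-\Phi}\nu$ replaced by $R(f^{(m)})$ and the non\-negative diffuse kernel) expresses $F^{(m+1)}(t,x,v)$ as a sum of manifestly non\-negative terms plus a $k$\-bounce remainder; bounding that remainder by Lemma~\ref{Lsmall} exactly as in the positivity argument of Theorem~\ref{mainresult1} yields $F^{(m+1)}\ge-C_\Phi\epsilon M^\ast$ for every $\epsilon>0$, hence $F^{(m+1)}\ge0$ on $[0,\hat t_0]$.

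Next I would show $\{h^{(m)}\}$ is Cauchy. Subtracting consecutive equations, $d^{(m)}:=f^{(m+1)}-f^{(m)}$ satisfies the same transport\-damping equation with damping $R(f^{(m)})$, zero datum, and source equal to the $e^{-\Phi}K_w$\-difference, the $\Gamma_+$\-difference, and the loss term $-[R(f^{(m)})-R(f^{(m-1)})]f^{(m)}=-e^{-\frac{\Phi}{2}}\Gamma_-(f^{(m)}-f^{(m-1)},f^{(m)})$. \emph{The delicate point}---and the step I expect to be the main obstacle---is that this last term carries a factor $\nu(v)\sim(1+|v|)^\gamma$ that is incompatible with the purely polynomial weight $w$: $(1+|v|)^\gamma|h^{(m)}(v)|$ is not controlled by $\|h^{(m)}\|_{L^\infty_{x,v}}$. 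I would resolve this by measuring the differences in the weaker weight $w_\sharp:=\{1+\tfrac{|v|^2}{2}+\Phi\}^{\frac{\beta-1}{2}}$, admissible because $\beta>5$; then $(1+|v|)^\gamma\,w_\sharp(v)/w(v)\lesssim(1+|v|)^{\gamma-1}\le1$, so $\|w_\sharp[R(f^{(m)})-R(f^{(m-1)})]f^{(m)}\|_{L^\infty_{x,v}}\le CM^\ast\|w_\sharp d^{(m-1)}\|_{L^\infty_{x,v}}$ with no $\nu(v)$ growth left, and likewise the $K$\- and $\Gamma_+$\-differences are $\le CM^\ast\|w_\sharp d^{(m-1)}\|_{L^\infty_{x,v}}$ by the $w_\sharp$\-analogues of Lemmas~\ref{Kest} and \ref{Gam+est} (their proofs carry over since $\beta>5$). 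Applying the $w_\sharp$\-analogue of Lemma~\ref{Phiest} (its proof only uses decay of the weight) through the Duhamel formula gives $\sup_{[0,\hat t_0]}\|w_\sharp d^{(m)}(t)\|_{L^\infty_{x,v}}\le C\rho^{\frac54}\hat t_0(1+M^\ast)\sup_{[0,\hat t_0]}\|w_\sharp d^{(m-1)}(t)\|_{L^\infty_{x,v}}$, a contraction once $\hat t_0$ is further shrunk---still of the asserted form since $1+M^\ast\lesssim_\rho 1+\|wf_0\|_{L^\infty_{x,v}}$. Passing to the limit, $h^{(m)}\to h$ in $C([0,\hat t_0];L^\infty_{x,v}(w_\sharp))$, while the uniform $L^\infty_{x,v}(w)$ bound of Step~2 is retained in the limit; hence $F=\mu_E+\mu_E^{\frac12}\tfrac hw\ge0$ (limit of $F^{(m)}\ge0$) solves \eqref{BE}--\eqref{DRBC} with $\sup_{[0,\hat t_0]}\|wf(t)\|_{L^\infty_{x,v}}\le 2C_3\rho^{\frac54}\|wf_0\|_{L^\infty_{x,v}}$, and uniqueness follows from the same $w_\sharp$\-weighted difference estimate applied to two solutions, shrinking $\hat t_0$ once more if necessary.
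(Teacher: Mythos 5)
Your proposal is correct, and it is essentially the route the paper has in mind: the paper actually states Theorem \ref{LocExist} without proof, but the ingredients you use are exactly the ones it deploys elsewhere — the iteration \eqref{NIS2} with damping $\nu(F^{(m)})=R(f^{(m)})$ and gain-only source (identical to the scheme in the positivity part of the proof of Theorem \ref{mainresult1}), the propagator bound of Lemma \ref{Phiest} under \eqref{phc}, the smallness of many-bounce phase space from Lemma \ref{Lsmall} for positivity, and Lemmas \ref{Kest} and \ref{Gam+est} for the source; your time restriction $\hat t_0=(\hat C_{\rho,\Phi}[1+\|wf_0\|_{L^\infty_{x,v}}])^{-1}$ comes out with the asserted form. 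The one place you go beyond what the paper records is the contraction step, and your diagnosis there is right: on a short time interval $R(f^{(m)})$ has no lower bound of order $e^{-\Phi}\nu$ (Lemma \ref{Rfest} is unavailable), so the $\nu(v)$ growth in $\Gamma_-(f^{(m)}-f^{(m-1)},f^{(m)})$ cannot be absorbed by the damping, and measuring the differences in the weaker weight $w_\sharp$ with exponent $\beta-1$ (so that $(1+|v|)^{\gamma}w_\sharp/w\lesssim 1$, while the uniform bound is kept in $L^\infty_{x,v}(w)$ and recovered in the limit by lower semicontinuity) is a standard and legitimate fix; the $w_\sharp$-versions of Lemmas \ref{Kest}, \ref{Gam+est}, and \ref{Phiest} indeed only need the weight exponent to exceed the thresholds already met by $\beta-1>4$. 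Minor points you should state explicitly if you write this up: $S_{G_{f^{(m)}}}(t-s)$ is really the two-parameter evolution family (the damping is time-dependent), consistent with the paper's own usage, and the positivity induction must be ordered as you indicate — uniform bound at stage $m+1$ first, then the back-time-cycle representation with the $k$-bounce remainder controlled by that bound.
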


\bigskip

\subsection{A priori estimate}
Fix $\rho>0$  and $\beta>5$ so that
\begin{align} \label{rhoest}
	\left(C_3 \rho^{\frac{5}{4}}\right)^\frac{1}{\rho} \le \exp{e^{-\|\Phi\|_\infty} \frac{\nu_0}{2}},
\end{align}
where $C_3 >0$ is a constant in Lemma \ref{Phiest}. The inequality \eqref{rhoest} will be used to drive an estimate in $L^\infty_{x,v}$ to a semigroup $S_{G_f}$ in Theorem \ref{L61}. \\
We make the a priori assumption as following : 
\begin{align} \label{Ape}
	\sup_{0\le t \le T} \|h(t)\|_{L^\infty_{x,v}} \le \bar{M},
\end{align}
where $\bar{M} \ge 1$ and $T>0$. The a priori assumption \eqref{Ape} is crucial to achieve our main aim. The constants $\bar{M}$ and $T$ are determined in Section \ref{NASLAS}. See \eqref{aa30} and \eqref{aa31}. We note that $\bar{M}$ depends only on an initial amplitude $M_0 \ge 1$, which can be large, but does not depend on $T$ and the solution.

\bigskip
 Through the next lemma, we can estimate the term $R(f)$ to deduce the exponential decay in $L^\infty_{x,v}$. The estimate \eqref{Rfestimate} in this lemma is called the $R(f)$ {\it estimate}. Recall the definition \ref{Backwardexit}, especially \eqref{timecycle}, \eqref{Xtrajcycle}, and \eqref{Vtrajcycle}, as well as the definition of the iterated integral \eqref{iterint}.
\begin{lemma} \label{Rfest}
	Assume the a priori assumption \eqref{Ape}. Let $f$ be a solution to \eqref{PBE} with initial datum $f_0$ and the boundary condition \eqref{PDRBC}. Let $M_0$ be an initial amplitude. There exists a generic constant $C_4 \ge 1$ such that for given $T_1 > \tilde{t}$ with
	\begin{align*}
		\tilde{t} = \frac{2}{\nu_0 e^{-\|\Phi\|_\infty}} \log (C_4M_0)>0,
	\end{align*}
	there is a small constant $\epsilon_0 = \epsilon_0(\bar{M}, T_1) >0$ so that if $\mathcal{E}(F_0) \le \epsilon_0$, then
	\begin{align} \label{Rfestimate}
		R(f)(t,x,v) \ge \frac{1}{2}e^{-\Phi(x)}\nu(v) \quad \text{for all }(t,x,v)\in [\tilde{t},T_1)\cross \Omega \cross \mathbb{R}^3.
	\end{align}
\end{lemma}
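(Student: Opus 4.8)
The goal is a uniform lower bound $R(f)(t,x,v) \ge \tfrac12 e^{-\Phi(x)}\nu(v)$ for $t \ge \tilde t$, given that the relative entropy $\mathcal{E}(F_0)$ is small and the a priori bound $\sup_{0\le t\le T}\|h(t)\|_{L^\infty_{x,v}}\le \bar M$ holds. The first step is to reduce matters to a pointwise lower bound on the local density: since
\begin{align*}
	R(f)(t,x,v) = \int_{\mathbb{R}^3\times\mathbb{S}^2} B(v-u,\omega)\bigl[\mu_E(x,u)+\mu_E^{1/2}(x,u)f(t,x,u)\bigr]\,d\omega\,du
	= \int_{\mathbb{R}^3\times\mathbb{S}^2} B(v-u,\omega) F(t,x,u)\,d\omega\,du,
\end{align*}
and $F\ge 0$, we have for the hard-potential kernel $B(v-u,\omega)=|v-u|^\gamma b(\cos\theta)$ the standard lower bound
\begin{align*}
	R(f)(t,x,v) \ge c_b \int_{\mathbb{R}^3} |v-u|^\gamma F(t,x,u)\,du \gtrsim \int_{|u|\le \delta} (1+|v|)^\gamma \,e^{-\|\Phi\|_\infty}\cdot \tfrac14 F(t,x,u)\,du
\end{align*}
for a small fixed $\delta>0$ (using $|v-u|^\gamma \gtrsim (1+|v|)^\gamma$ when $|u|$ is bounded, say $|u|\le 1$, and splitting into $|v|\le 2$ and $|v|>2$). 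Thus, recalling $\nu(v)\sim(1+|v|)^\gamma$, it suffices to establish a uniform positive lower bound of the form $\int_{|u|\le 1} F(t,x,u)\,du \ge c_*$ for all $t\ge\tilde t$, $x\in\Omega$, with $c_*$ a generic constant — more precisely $\int_{\mathbb{R}^3} e^{-|u|^2/8} f(t,x,u)\,du \ge -C$ for a small $C$, as indicated in the strategy section around \eqref{zz11}, which together with the Gaussian lower bound of $\mu_E$ yields the needed bound on $\int F$.

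First I would run the mild-formulation estimate \eqref{mildsolh11} for $h=wf$ along backward characteristics. Writing $h(t) = S_{G_\nu}(t)h_0 + \int_0^t S_{G_\nu}(t-s)(e^{-\Phi}K_w h(s))\,ds + \int_0^t S_{G_\nu}(t-s)(e^{-\Phi/2}w\Gamma(h/w,h/w)(s))\,ds$, the first term is handled by Lemma \ref{L43}: $\|S_{G_\nu}(t)h_0\|_{L^\infty_{x,v}} \le C_\Phi e^{-e^{-\|\Phi\|_\infty}\tilde\nu_0 t}\|h_0\|_{L^\infty_{x,v}} \le C_\Phi e^{-e^{-\|\Phi\|_\infty}\tilde\nu_0 t}M_0$, which is $\le$ (small) once $t\ge\tilde t = \tfrac{2}{\nu_0 e^{-\|\Phi\|_\infty}}\log(C_4 M_0)$ with $C_4$ large enough — this is exactly where $\tilde t$ and the constant $C_4$ enter. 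For the $K_w$ and $\Gamma$ contributions I would mimic the $I_2,I_3,J_1,J_2,J_3,M_i,R_i$ decomposition of the proof of Theorem \ref{LT} (double Duhamel, the $|v|\ge R$ / $|v'|\ge 2R$ / small-velocity cases, the non-degeneracy Lemma \ref{cpl}, the smooth truncation $k_R$, and the Cauchy–Schwarz step that produces $\|f(s)\|_{L^2_{x,v}}$), together with the Gamma-gain estimate Lemma \ref{Gam+est} to control $w\Gamma_+$ by $\|wf\|_{L^\infty}\cdot\|f\|_{L^2}$-type quantities; the loss part of $\Gamma$ combines into $R(f)$ and is absorbed into the damping (that is why in the large-amplitude setting one uses $R(f)$ rather than $e^{-\Phi}\nu$). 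The upshot should be a bound of the shape $\|h(t)\|_{L^\infty_{x,v}}\le \mathcal{S}(\tilde\epsilon,\lambda,R)\,\bar M^3 + C_{\tilde\epsilon,\lambda,R}\bigl[\mathcal{E}(F_0)^{1/2}+\mathcal{E}(F_0)\bigr]$ for $t$ in a suitable range, where $\mathcal{S}$ can be made arbitrarily small by choosing $\tilde\epsilon$ small, $R$ large; the critical new input over Theorem \ref{LT} is that the $L^2$-norm terms $\int_0^{T_1}\|f(s)\|_{L^2_{x,v}}\,ds$ are now bounded using Lemma \ref{L1L2cont}, i.e. by $\mathcal{E}(F_0)^{1/2}$ on the bounded time interval $[0,T_1]$ after splitting $|f|\le\sqrt{\mu_E}$ and $|f|>\sqrt{\mu_E}$, rather than by the exponential $L^2$ decay of Theorem \ref{T31} (which is unavailable without smallness of the initial data).

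Having controlled $\|h(t)\|_{L^\infty_{x,v}}$ this way, I would then estimate $\int_{\mathbb{R}^3} e^{-|u|^2/8}|f(t,x,u)|\,du$ directly. Split $|f| = |f|\mathbf{1}_{|f|\le \sqrt{\mu_E}} + |f|\mathbf{1}_{|f|>\sqrt{\mu_E}}$; on the first set $|f|\le\sqrt{\mu_E}=e^{-\Phi/2}e^{-|u|^2/4}$ so $\int e^{-|u|^2/8}|f|\mathbf{1}_{|f|\le\sqrt{\mu_E}}\,du$ is already a generic small constant; on the second set one uses the mild bound just derived together with Lemma \ref{L1L2cont} (the $L^1$ part of the relative-entropy control, $\int \tfrac{\sqrt{\mu_E}}{4}|f|\mathbf{1}_{|f|>\sqrt{\mu_E}}\,dxdv\le\mathcal{E}(F_0)$) after passing through the Duhamel representation — this is where the interplay between the pointwise-in-$x$ quantity and the integrated-in-$x$ entropy bound has to be handled, roughly by the same characteristic/iterated-integral bookkeeping as above, absorbing the $\bar M^3$ term via smallness of the coefficient $\mathcal{S}$ and the $\mathcal{E}(F_0)$ terms via smallness of $\epsilon_0=\epsilon_0(\bar M,T_1)$. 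Concluding, $\int_{|u|\le 1} F(t,x,u)\,du \ge \int_{|u|\le 1}\mu_E(x,u)\,du - \int e^{-|u|^2/8}|f(t,x,u)|\,du \gtrsim e^{-\|\Phi\|_\infty} - (\text{small}) \ge c_*$, and feeding this into the first paragraph's reduction gives $R(f)(t,x,v)\ge \tfrac12 e^{-\Phi(x)}\nu(v)$ for $t\in[\tilde t, T_1)$.

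\textbf{Main obstacle.} The delicate point is not any single estimate but the \emph{ordering of the constants}: I need $\tilde t$ (hence $C_4$) chosen so that $e^{-e^{-\|\Phi\|_\infty}\tilde\nu_0 t}M_0$ is small for $t\ge\tilde t$, then $R$ and $\tilde\epsilon$ chosen so that the self-referential term $\mathcal{S}(\tilde\epsilon,\lambda,R)\bar M^3$ has a coefficient small relative to $\bar M$ (this is fine because the exponent on $\bar M$ in the genuinely nonlinear piece is cubic while the bound we want is linear-ish, exactly as in \cite{DW2019}), and \emph{finally} $\epsilon_0=\epsilon_0(\bar M,T_1)$ chosen last, small enough that $C_{\tilde\epsilon,\lambda,R}[\mathcal{E}(F_0)^{1/2}+\mathcal{E}(F_0)]$ is negligible on $[0,T_1]$. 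One must be careful that $C_4$ is a \emph{generic} constant (independent of $\bar M$, $T_1$, $M_0$), while $\epsilon_0$ is allowed to depend on $\bar M$ and $T_1$; keeping this dependence structure clean throughout the double-Duhamel expansion, and verifying that the non-degeneracy partition constants $M_1,M_2,M_3,\delta_*$ from Lemma \ref{cpl} (which depend on $T_1$ and $R$ but not on $\bar M$) get absorbed into $\epsilon_0$ rather than into $C_4$, is the real work.
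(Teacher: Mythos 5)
Your skeleton matches the paper's: reduce \eqref{Rfestimate} to smallness of $\int_{\mathbb{R}^3}e^{-|u|^2/8}|f(t,x,u)|\,du$, prove that via the mild formulation \eqref{mildsolh11}, let the initial-data term decay past $\tilde t$ (Lemma \ref{L43}, which is where $C_4$ and $\tilde t$ come from), and control the rest by small factors times powers of $\bar M$ plus entropy terms through Lemma \ref{cpl} and Lemma \ref{L1L2cont}, choosing $\epsilon_0(\bar M,T_1)$ last. However, two of the steps you actually commit to would fail as written. First, the intermediate claim that the second and third Duhamel terms give an $L^\infty_{x,v}$ bound of the shape $\mathcal{S}(\tilde\epsilon,\lambda,R)\bar M^3+C[\mathcal{E}(F_0)^{1/2}+\mathcal{E}(F_0)]$ is neither available nor needed at this stage. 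At a fixed $(x,v)$ the single-kernel contributions (one $K_w$, or one $\Gamma$ factor) cannot be made small: there is no free velocity variable left over which to perform the non-degenerate change of variables of Lemma \ref{cpl}, and the machinery that would otherwise supply such an $L^\infty$ bound (Lemmas \ref{L61}, \ref{Linfty1}, \ref{Linfty2}) runs through $S_{G_f}$ and therefore already uses the estimate \eqref{Rfestimate} you are proving, so invoking it here is circular. The paper avoids this entirely by applying Duhamel directly to the Gaussian-weighted velocity integral $\int e^{-|v|^2/8}|h(t,x,v)|\,dv$: the outer $dv$ integration is exactly what makes the change of variables $v\mapsto X(s-t+T_1;T_1,x,v)$ admissible in the single-kernel terms, and the a priori constant $\bar M$ then only ever appears multiplied by $\epsilon$, $1/R$, $e^{-R^2/64}$ or $\tilde\epsilon$.

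Second, your fixed-$x$ splitting is incorrect: on the set $\{|f|\le\sqrt{\mu_E}\}$ the pointwise bound only yields $\int e^{-|u|^2/8}|f|\,du\lesssim\int e^{-3|u|^2/8}\,du$, an order-one constant, not a small one, and the entropy control of Lemma \ref{L1L2cont} is integrated in $x$, so it cannot be applied at a fixed $x$ to rescue this. In the paper the entropy enters only inside the Duhamel terms, after the change of variables produces an integral over $\Omega$ of $|h|^2$ restricted to $|v'|\le 2R$; there one splits according to $|F-\mu_E|\le\mu_E$ versus $|F-\mu_E|>\mu_E$ and uses that on bounded velocities $w\lesssim_R\sqrt{\mu_E}^{-1}$ is harmless, giving the bound \eqref{L2cont} of the form $\frac{C_\Phi}{R}\bar M^2+C_{R,\dots}[\mathcal{E}(F_0)+\mathcal{E}(F_0)^2]$. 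Finally, a smaller point: concluding through a lower bound on $\int_{|u|\le1}F(t,x,u)\,du$ only yields $R(f)\gtrsim c\,e^{-\|\Phi\|_\infty}(1+|v|)^\gamma$ with an unquantified small $c$, not the stated $\frac12 e^{-\Phi(x)}\nu(v)$ that Lemma \ref{LL62} later requires; the paper obtains the exact constant from the identity $R(f)=e^{-\Phi(x)}\nu(v)+e^{-\Phi(x)}\int e^{\Phi/2}B\,\mu^{1/2}f$ together with $\big|\int e^{\Phi/2}B\mu^{1/2}f\big|\le C_5\,\nu(v)\int e^{-|u|^2/8}|f|\,du\le\frac12\nu(v)$, and it is this $C_5$, together with the constant $C_6$ from the weighted-integral estimate, that fixes $C_4=4C_5C_6$ and hence $\tilde t$. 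You state the correct target $\int e^{-|u|^2/8}|f|\,du\le\frac{1}{2C_5}$ in passing, so the fix is to make that the sole reduction and to run the whole Duhamel analysis on the weighted velocity integral rather than on $\|h(t)\|_{L^\infty_{x,v}}$.
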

\begin{proof}
	We recall
	\begin{align*}
		R(f)(t,x,v) &= \int_{\mathbb{R}^3 \cross \mathbb{S}^2} B(v-u,\omega) [\mu_E(x,u) + \mu_E^{\frac{1}{2}}(x,u)f(t,x,u)]d\omega du\\
		& = e^{-\Phi(x)}\left\{\nu(v)+\int_{\mathbb{R}^3 \cross \mathbb{S}^2} e^{\frac{\Phi(x)}{2}}B(v-u,\omega)\mu^{\frac{1}{2}}(u)f(t,x,u) d\omega du\right\}.
	\end{align*}
	Here,
	\begin{align*}
		\left|\int_{\mathbb{R}^3 \cross \mathbb{S}^2} e^{\frac{\Phi(x)}{2}}B(v-u,\omega)\mu^{\frac{1}{2}}(u)f(t,x,u) d\omega du\right| \le C_5 \nu(v) \int_{\mathbb{R}^3} e^{-\frac{|u|^2}{8}}|f(t,x,u)|du
	\end{align*}
	for some constant $C_5$.\\
	If it holds that
	\begin{align} \label{aa61}
		\int_{\mathbb{R}^3} e^{-\frac{|u|^2}{8}}|h(t,x,u)|du\le  \frac{1}{2C_5} \quad \text{for all } t\ge \tilde{t},\ x\in \Omega,
	\end{align}
	where $\tilde{t}>0$ is a constant to be suitably chosen, then we can complete the proof of this lemma. Thus it suffices to show \eqref{aa61}. We set $h(t,x,v) =w(x,v)f(t,x,v)$.
	Then by Duhamel principle, we derive
	\begin{align*}
		\int_{\mathbb{R}^3}e^{-\frac{|v|^2}{8}}|h(t,x,v)|dv
		& \le \int_{\mathbb{R}^3} e^{-\frac{|v|^2}{8}} |\left(S_{G_\nu}(t)h_0\right)(t,x,v)|dv\\
		& \quad + \int_0^{t}  \int_{\mathbb{R}^3}e^{-\frac{|v|^2}{8}}\left|(S_{G_\nu}(t-s)e^{-\Phi}K_wh(s))(t,x,v)\right|dvds\\ 
		& \quad + \int_0^{t} \int_{\mathbb{R}^3}e^{-\frac{|v|^2}{8}}\left|(S_{G_\nu}(t-s)e^{-\frac{\Phi}{2}}w\Gamma(f,f)(s))(t,x,v)\right|dvds\\
		& =: I_1+I_2+I_3.
	\end{align*}
	From Lemma \ref{L43}, we can easily get
	\begin{align} \label{qq31}
		I_1 \le C\|S_{G_\nu}(t)h_0\|_{L^\infty_{x,v}} \le C_\Phi \exp\left\{-e^{-\|\Phi\|_\infty} \frac{\nu_0}{2} t \right\}\|h_0\|_{L^\infty_{x,v}}.
	\end{align}
	From now on, let us estimate $I_2 +I_3$.\\
	$\mathbf{Case\ 1:}$ $|v| \ge R$.\\
	Using Corollary \ref{L43C}, Lemma \ref{Kest}, and Lemma \ref{Gamest}, $I_2+I_3$ in this case is bounded by
	\begin{align*}
		&\int_{|v| \ge R} e^{-\frac{|v|^2}{8}} \int_0^t \left\{|(S_{G_\nu}(t-s)e^{-\Phi}K_wh(s))(t,x,v)|+|(S_{G_\nu}(t-s)e^{-\frac{\Phi}{2}}w\Gamma(f,f)(s))(t,x,v)|\right\}dsdv\\
		& \le \frac{C_\Phi}{R}\sup_{0\le s \le t}\left\{\|h(s)\|_{L^\infty_{x,v}}+\|h(s)\|_{L^\infty_{x,v}}^2\right\}.
	\end{align*}
	It remains to estimate $I_2+I_3$ in the case $|v| \le R$. By Lemma \ref{L41}, we deduce 
	\begin{align*}
		&(S_{G_\nu}(t-s)e^{-\Phi}K_wh(s))(t,x,v)\\
		&=\mathbf{1}_{\{t_1 \le s\}} \exp{-\int_{s}^t e^{-\Phi(X(\tau))}\nu(V(\tau))d\tau} e^{-\Phi(X(s))}K_wh(s,X(s),V(s))\\
		& \quad + \frac{\exp{-\int_{t_1}^t e^{-\Phi(X(\tau))}\nu(V(\tau))d\tau}}{\tilde{w}(x_1,V(t_1))} \sum_{l=1}^{k-1} \int_{\prod_{j=1}^{k-1} \mathcal{V}_j} \mathbf{1}_{\{t_{l+1} \le s <t_l\}} e^{-\Phi(X_l(s))}K_wh(s,X_l(s),V_l(s))d\Sigma_l(s)\\
		& \quad + \frac{\exp{-\int_{t_1}^t e^{-\Phi(X(\tau))}\nu(V(\tau))d\tau}}{\tilde{w}(x_1,V(t_1))}\int_{\prod_{j=1}^{k-1} \mathcal{V}_j}\mathbf{1}_{\{t_k>s\}} (S_{G_\nu}(t-s)e^{-\Phi}K_wh(s))(t_k,x_k,V_{k-1}(t_k))d\Sigma_{k-1}(t_k)\\
		& =: J_{11}+J_{12}+J_{13},
	\end{align*}
	and
	\begin{align*}
		&(S_{G_\nu}(t-s)e^{-\frac{\Phi}{2}}w\Gamma(f,f)(s))(t,x,v)\\
		&=\mathbf{1}_{\{t_1 \le s\}} \exp{-\int_{s}^t e^{-\Phi(X(\tau))}\nu(V(\tau))d\tau} e^{-\frac{\Phi(X(s))}{2}}w\Gamma(f,f)(s,X(s),V(s))\\
		& \quad + \frac{\exp{-\int_{t_1}^t e^{-\Phi(X(\tau))}\nu(V(\tau))d\tau}}{\tilde{w}(x_1,V(t_1))} \sum_{l=1}^{k-1} \int_{\prod_{j=1}^{k-1} \mathcal{V}_j} \mathbf{1}_{\{t_{l+1} \le s <t_l\}} e^{-\frac{\Phi(X_l(s))}{2}}w\Gamma(f,f)(s,X_l(s),V_l(s))d\Sigma_l(s)\\
		& \quad + \frac{\exp{-\int_{t_1}^t e^{-\Phi(X(\tau))}\nu(V(\tau))d\tau}}{\tilde{w}(x_1,V(t_1))}\int_{\prod_{j=1}^{k-1} \mathcal{V}_j}\mathbf{1}_{\{t_k>s\}} (S_{G_\nu}(t-s)e^{-\frac{\Phi}{2}}w\Gamma(f,f)(s))(t_k,x_k,V_{k-1}(t_k))d\Sigma_{k-1}(t_k)\\
		& =: J_{21}+J_{22}+J_{23}.
	\end{align*}
	Now, let us consider $J_{13}$ and $J_{23}$. Let $\epsilon >0$. By Lemma \ref{Lsmall}, we can choose $k = k(\epsilon, T_1)$ large such that
	\begin{align} \label{qq21}
		\int_{\prod_{j=1}^{k-2}}\mathbf{1}_{\{t_{k-1} >s\}} \prod_{j=1}^{k-2}d\sigma_j < \epsilon.
	\end{align} 
	Using Corollary \ref{L43C} and \eqref{qq21}, we obtain
	\begin{align*}
		\int_0^t \int_{|v| \le R} e^{-\frac{|v|^2}{8}} \left(J_{13}+J_{23}\right)dvds
		\le \epsilon \ C_\Phi \sup_{0\le s \le t} \left\{\|h(s)\|_{L^\infty_{x,v}}+\|h(s)\|_{L^\infty_{x,v}}^2\right\}.
	\end{align*}
	Firstly, let us consider $J_{11}$.
	\begin{align*}
		&\int_0^t \int_{|v|\le R} e^{-\frac{|v|^2}{8}}|J_{11}|dvds\\
		& \le \int_0^t \int_{|v| \le R} \exp\left\{-e^{-\|\Phi\|_\infty}\nu_0(t-s)\right\} \mathbf{1}_{\{t_1 \le s\}}e^{-\frac{|v|^2}{8}}\left\{\int_{|v'|\le 2R}+\int_{|v'| \ge 2R}\right\} |k_w(V(s),v')|\\
		& \quad \times |h(s,X(s),v'|dv'dvds
	\end{align*}
	\newline
	$\mathbf{Case\ 2\  of\  J_{11}:}$ $|v| \le R$ and $|v'|\ge 2R$ with $R \gg 2\sqrt{2\|\Phi\|_{\infty}}$.\\
	Note that $|v-v'| \ge R$. From \eqref{sese}, it holds that
	\begin{align*}
		&|V(s)-v'| \ge |v-v'|-|V(s)-v| \ge R-\frac{R}{2}=\frac{R}{2}.
	\end{align*}
	Then we have
	\begin{equation} \label{aa62}
	\begin{aligned}
		&|k_w(V(s),v')| \le e^{-\frac{R^2}{64}}|k_w(V(s),v')|e^{\frac{1}{16}|V(s)-v'|^2}.
	\end{aligned}
	\end{equation}
	This yields from Lemma \ref{Kest},
	\begin{equation} \label{aa63}
	\begin{aligned}
		&\int_{|v'| \ge 2R} |k_w(V(s),v')|e^{\frac{1}{16}|V(s)-v'|^2}dv' < C,
	\end{aligned}
	\end{equation}
	for some constant $C$.
	Thus we use \eqref{aa62} and \eqref{aa63} to obtain
	\begin{equation} \label{ta61}
	\begin{aligned}
		&\int_0^t \int_{|v|\le R} e^{-\frac{|v|^2}{8}}|J_{11}|dvds\\
		& \le e^{-\frac{R^2}{64}} \sup_{0 \le s \le t}\|h(s)\|_{L^\infty_{x,v}}\int_0^t\int_{|v|\le R} \exp\left\{-e^{-\|\Phi\|_\infty}\nu_0(t-s)\right\}e^{-\frac{|v|^2}{8}}\int_{|v'|\ge 2R} |k_w(V(s),v')e^{\frac{|V(s)-v'|^2}{16}}|dv'dvds\\
		& \le C_\Phi e^{-\frac{R^2}{64}} \sup_{0 \le s \le t}\|h(s)\|_{L^\infty_{x,v}}.
	\end{aligned}
	\end{equation}
	$\mathbf{Case\ 3\  of\  J_{11}:}$ $|v| \le R$ and $|v'|\le 2R$.\\
	Since $k_w(v,v')$ has possible integrable singularity of $\frac{1}{|v-v'|}$, we can choose smooth function $k_R(v,v')$ with compact support such that
	\begin{align*}
		\sup_{|v|\le 2R}\int_{|v'| \le 2R} \left|k_R(v,v')-k_w(v,v')\right|dv' \le \frac{1}{R}.
	\end{align*}
	We split 
	\begin{align*}
		k_w(V(s),v') = \{k_w(V(s),v') - k_R(V(s),v')\}+k_R(V(s),v').
	\end{align*}
	Then it follows that
	\begin{align*}
		&\int_0^t \int_{|v|\le R} e^{-\frac{|v|^2}{8}}|J_{11}|dvds\\
		& \le \frac{C}{R} \sup_{0\le s \le t} \|h(s)\|_{L^\infty_{x,v}}\\
		&\quad  + C_{R, \Phi} \int_0^t \int_{|v|\le R} e^{-\frac{|v|^2}{8}} \exp\left\{-e^{-\|\Phi\|_\infty}\nu_0(t-s)\right\}\int_{|v'|\le 2R}|h(s,X(s),v')|dv'dvds\\
		& =: L_{11}+L_{12},
	\end{align*}
	where we have used the fact $|k_R(V(s),v')|\le C_R$.\\
	In this case, we recall that $X(s) = X(s;t,x,v)$.
	Since the potential is time dependent, we have
	\begin{align*}
		X(s;t,x,v) = X(s-t+T_1;T_1,x,v).
	\end{align*}
	for all $0\le s \le t \le T_1$.\\
	By Lemma \ref{cpl}, the term $L_{12}$ becomes
	\begin{align}
		&C_{R,\Phi}\sum_{i_1}^{M_1}\sum_{I_3}^{(M_3)^3} \int_0^{T_1} \mathbf{1}_{\mathcal{P}_{i_1}^{T_1}}(s-t+T_1) \exp\left\{-e^{-\|\Phi\|_\infty}\nu_0(t-s)\right\} \nonumber \\ \label{aa64}
		& \times \int_{|v|\le R, |v'|\le 2R}e^{-\frac{|v|^2}{8}}\mathbf{1}_{\mathcal{P}_{I_3}^v}(v)|h(s,X(s-t+T_1;T_1,x,v),v')|dv'dvds.
	\end{align}
	Let $\tilde{\epsilon}>0$.
	From Lemma \ref{cpl}, we have the following partitions:
	\begin{align*}
		&\left\{(s-t+T_1,x,v)\in \mathcal{P}_{i_1}^{T_1}\cross \mathcal{P}_{I_2}^{\Omega}\cross \mathcal{P}_{I_3}^{v} : \det\left(\frac{dX}{dv}(s-t+T_1;T_1,x,v)\right)=0\right\}\\
		& \subset \bigcup_{j=1}^3 \left\{(s-t+T_1,x,v)\in \mathcal{P}_{i_1}^{T_1}\cross \mathcal{P}_{I_2}^{\Omega}\cross \mathcal{P}_{I_3}^{v} :s-t+T_1\in \left(t_{j,i_1,I_2,I_3}-\frac{\tilde{\epsilon}}{4M_1},t_{j,i_1,I_2,I_3}+\frac{\tilde{\epsilon}}{4M_1}\right)\right\}.
	\end{align*}
	Thus for each $i_1$,$I_2$, and $I_3$, we split $\mathbf{1}_{\mathcal{P}_{i_1}^{T_1}}(s-t+T_1)$ as
	\begin{align} \label{aa65}
		&\mathbf{1}_{\mathcal{P}_{i_1}^{T_1}}(s-t+T_1)\mathbf{1}_{\cup_{j=1}^3(t_{j,i_1,I_2,I_3}-\frac{\tilde{\epsilon}}{4M_1},t_{j,i_1,I_2,I_3}+\frac{\tilde{\epsilon}}{4M_1})}(s-t+T_1)\\ \label{aa66}
		& +\mathbf{1}_{\mathcal{P}_{i_1}^{T_1}}(s-t+T_1)\left\{1-\mathbf{1}_{\cup_{j=1}^3(t_{j,i_1,I_2,I_3}-\frac{\tilde{\epsilon}}{4M_1},t_{j,i_1,I_2,I_3}+\frac{\tilde{\epsilon}}{4M_1})}(s-t+T_1)\right\}.
	\end{align}
	$\mathbf{Case\ 3 \ (\romannumeral 1)\ of\ J_{11} :}$ The integration \eqref{aa64} corresponding to \eqref{aa65} is bounded by
	\begin{align}
		&C_{R,\Phi}\sum_{i_1}^{M_1}\sum_{I_3}^{(M_3)^3} \sum_{j=1}^3 \underbrace{\int_0^{T_1} \mathbf{1}_{\mathcal{P}_{i_1}^{T_1}}(s-t+T_1) \mathbf{1}_{(t_{j,i_1,I_2,I_3}-\frac{\tilde{\epsilon}}{4M_1},t_{j,i_1,I_2,I_3}+\frac{\tilde{\epsilon}}{4M_1})}(s-t+T_1)}_{(*5)}\nonumber \\ \label{aa67}
		& \times   \int_{|v|\le R}\mathbf{1}_{\mathcal{P}_{I_3}^v}(v)\int_{|v'|\le 2R}|h(s,X(s-t+T_1;T_1,x,v),v')|dv'dvds.
	\end{align}
	Here, $(*5)$ is bounded by
	\begin{align} \label{qq22}
		\int_0^{T_1} \mathbf{1}_{\mathcal{P}_{i_1}^{T_1}}(s-t+T_1)\mathbf{1}_{(t_{j,i_1,I_2,I_3}-\frac{\tilde{\epsilon}}{4M_1},t_{j,i_1,I_2,I_3}+\frac{\tilde{\epsilon}}{4M_1})}(s-t+T_1) ds  \le \frac{\tilde{\epsilon}}{2M_1}.
	\end{align}
	From the partition of the velocity domain $[-4R,4R]^3$ in Lemma \ref{cpl}, we have
	\begin{align} \label{qq23}
		\sum_{I^3}^{(M_3)^3} \mathbf{1}_{\mathcal{P}_{I_3}^v}(v)\mathbf{1}_{\{|v|\le R\}}(v) = \mathbf{1}_{\{|v|\le R\}}(v).
	\end{align}
	From \eqref{qq22} and \eqref{qq23}, \eqref{aa67} is bounded by 
	\begin{align*}
		& C_{R,\Phi} \sup_{0\le s \le t} \|h(s)\|_{L^\infty_{x,v}}\ \sum_{i_1}^{M_1}  \int_0^{T_1} \mathbf{1}_{\mathcal{P}_{i_1}^{T_1}}(s-t+T_1)\mathbf{1}_{(t_{j,i_1,I_2,I_3}-\frac{\tilde{\epsilon}}{4M_1},t_{j,i_1,I_2,I_3}+\frac{\tilde{\epsilon}}{4M_1})}(s-t+T_1) ds\\
		&\le \tilde{\epsilon}\ C_{R,\Phi}\ \sup_{0\le s \le t} \|h(s)\|_{L^\infty_{x,v}}.
	\end{align*}
	\newline
	$\mathbf{Case\ 3 \ (\romannumeral 2)\ of  \ J_{11}:}$ The integration \eqref{aa64} corresponding to \eqref{aa66} is bounded by
	\begin{align}
		&C_{R,\Phi}\sum_{i_1}^{M_1}\sum_{I_3}^{(M_3)^3} \int_0^{t} \mathbf{1}_{\mathcal{P}_{i_1}^{T_1}}(s-t+T_1) \left\{1-\mathbf{1}_{\cup_{j=1}^3(t_{j,i_1,I_2,I_3}-\frac{\tilde{\epsilon}}{4M_1},t_{j,i_1,I_2,I_3}+\frac{\tilde{\epsilon}}{4M_1})}(s-t+T_1)\right\} \nonumber \\ \label{aa68}
		&\times \exp\left\{-e^{-\|\Phi\|_\infty}\nu_0(t-s)\right\}\underbrace{\int_{|v|\le R}\mathbf{1}_{\mathcal{P}_{I_3}^v}(v)\int_{|v'|\le 2R}|h(s,X(s-t+T_1;T_1,x,v),v')|dv'dv}_{(\# 5)}ds.
	\end{align}
	By Lemma \ref{cpl}, we have made a change of variables $v \rightarrow y:=X(s-t+T_1;T_1,x,v)$ satisfying 
	\begin{align*}
		\det\left(\frac{dX}{dv}(s-t+T_1;T_1,x,v)\right)> \delta_*
	\end{align*}
	and the term $(\# 5)$ is bounded by
	\begin{align*}
		\int_{|v|\le R}\int_{|v'|\le 2R}|h(s,X(s-t+T_1;T_1,x,v),v')|dv'dv
		&\le \frac{1}{\delta_*}\int_{\Omega} \int_{|v'|\le 2R}|h(s,y,v')|dv'dy\\
		&\le \frac{C_{R,\Phi}}{\delta_*} \left(\int_{\Omega} \int_{|v'|\le 2R}|h(s,y,v')|^2dv'dy\right)^{\frac{1}{2}},
	\end{align*}
	where we have used the Cauchy-Schwarz inequality.
	Then \eqref{aa68} is bounded by
	\begin{align*}
		C_{R,\Phi,M_1,M_3,\delta_*} \int_0^{t}\exp\left\{-e^{-\|\Phi\|_\infty}\nu_0(t-s)\right\}\left(\int_{\Omega} \int_{|v'|\le 2R}|h(s,y,v')|^2dv'dy\right)^{\frac{1}{2}}ds.
	\end{align*}
	From Lemma \ref{L1L2cont} and Young's inequality, we obtain 
	\begin{align}
		&C_{R,\Phi,M_1,M_3,\delta_*} \int_{\Omega} \int_{|v'|\le 2R}|h(s,y,v')|^2dv'dy \nonumber\\
		& \le C_{R,\Phi,M_1,M_3,\delta_*} \left( \int_{\Omega} \int_{|v'|\le 2R}|h(s,y,v')|^2 \mathbf{1}_{|F-\mu_E|\le\mu_E} dv'dy+\int_{\Omega} \int_{|v'|\le 2R}|h(s,y,v')|^2 \mathbf{1}_{|F-\mu_E|>\mu_E} dv'dy\right)\nonumber\\
		& \le C_{R,\Phi,M_1,M_3,\delta_*}\Bigg(\int_{\Omega} \int_{|v'|\le 2R}|f(s,y,v')|^2 \mathbf{1}_{|F-\mu_E|\le\mu_E} dv'dy+\sup_{0\le s\le t} \|h(s)\|_{L^\infty_{x,v}}\nonumber\\
		& \quad \times \int_{\Omega} \int_{|v'|\le 2R}|h(s,y,v')| \mathbf{1}_{|F-\mu_E|>\mu_E} dv'dy\Bigg)\nonumber\\
		& \le C_{R,\Phi,M_1,M_3,\delta_*}\mathcal{E}(F_0) + C_{R,\Phi,M_1,M_3,\delta_*}\sup_{0\le s\le t} \|h(s)\|_{L^\infty_{x,v}}\int_{\Omega} \int_{|v'|\le 2R}\mu_E^{\frac{1}{2}}(y,v')|f(s,y,v')| \mathbf{1}_{|F-\mu_E|>\mu_E} dv'dy\nonumber\\
		& \le  C_{R,\Phi,M_1,M_3,\delta_*}\mathcal{E}(F_0) + C_{R,\Phi,M_1,M_3,\delta_*}\sup_{0\le s\le t} \|h(s)\|_{L^\infty_{x,v}}\mathcal{E}(F_0)\nonumber\\ \label{L2cont}
		& \le \frac{C_\Phi}{R} \sup_{0\le s\le t} \|h(s)\|_{L^\infty_{x,v}}^2 + C_{R,\Phi,M_1,M_3,\delta_*}\left[\mathcal{E}(F_0)+\mathcal{E}(F_0)^2\right]
	\end{align}
	Hence \eqref{aa68} is bounded by
	\begin{align*}
		\frac{C_\Phi}{R} \sup_{0\le s\le t} \|h(s)\|_{L^\infty_{x,v}}^2 + C_{R,\Phi,M_1,M_3,\delta_*}\left[\mathcal{E}(F_0)+\mathcal{E}(F_0)^2\right].
	\end{align*}
	\newline
	Next, let us consider $J_{12}$.
	\begin{align*}
		&\int_0^t \int_{|v|\le R}e^{-\frac{|v|^2}{8}}|J_{12}|dvds\\
		& \le C_\Phi \int_0^t \int_{|v|\le R} e^{-\frac{|v|^2}{8}} \exp\left\{-e^{-\|\Phi\|_\infty}\nu_0(t-s)\right\} \sum_{l=1}^{k-1} \int_{\prod_{j=1}^{l-1}\mathcal{V}_j}\int_{\mathcal{V}_l} \mathbf{1}_{\{t_{l+1} \le s < t_l\}} \int_{\mathbb{R}^3} |k_w(V_l(s),v')|\\
		&\quad \times |h(s,X_l(s),v')|dv' e^{-\frac{|v_l|^2}{8}} dv_l \left\{\prod_{j=1}^{l-1}d\sigma_j\right\}dvds.
	\end{align*}
	Fix $l$. We divide the following term into 3 cases:
	\begin{align}
		&C_\Phi \int_0^t \int_{|v|\le R} e^{-\frac{|v|^2}{8}} \exp\left\{-e^{-\|\Phi\|_\infty}\nu_0(t-s)\right\}  \int_{\prod_{j=1}^{l-1}\mathcal{V}_j}\int_{\mathcal{V}_l} \mathbf{1}_{\{t_{l+1} \le s < t_l\}} \int_{\mathbb{R}^3} |k_w(V_l(s),v')|\nonumber\\ \label{aa611}
		&\quad \times |h(s,X_l(s),v')|dv' e^{-\frac{|v_l|^2}{8}}dv_l \left\{\prod_{j=1}^{l-1}d\sigma_j\right\}dvds.
	\end{align}
	\newline
	$\mathbf{Case\ 2\  of\  J_{12}:}$ $|v_l| \ge R$ with $R \gg 2\sqrt{2\|\Phi\|_{\infty}}$.\\
	By \eqref{sese}, we get
	\begin{align*}
		|V_l(s)| \ge |v_l| -\sqrt{2\|\Phi\|_{\infty}} \ge \frac{R}{2}.
	\end{align*}
	From Lemma \ref{Kest}, we have
	\begin{align*}
		\int_{\mathbb{R}^3}|k_w(V_l(s),v')|dv' \le \frac{C_\Phi}{1+R}.
	\end{align*}
	Then \eqref{aa611} in this case is bounded by
	\begin{equation} \label{aa612}
	\begin{aligned}
		&\frac{C_\Phi}{1+R} \sup_{0\le s \le t}\|h(s)\|_{L^\infty_{x,v}} \int_0^t \int_{|v|\le R} e^{-\frac{|v|^2}{8}}\exp\left\{-e^{-\|\Phi\|_\infty}\nu_0(t-s)\right\}dvds\\
		&\le \frac{C_\Phi}{1+R}\sup_{0\le s \le t}\|h(s)\|_{L^\infty_{x,v}}.
	\end{aligned}
	\end{equation}
	\newline
	$\mathbf{Case\ 3\  of\  J_{12}:}$ $|v_l| \le R$ and $|v'|\ge 2R$.\\
	Note that $|v_l-v'| \ge R$. From \eqref{sese}, it holds that
	\begin{align*}
		&|V_l(s)-v'| \ge |v-v'|-|V_l(s)-v| \ge R-\frac{R}{2}=\frac{R}{2}.
	\end{align*}
	Then we have 
	\begin{equation} \label{aa613}
	\begin{aligned}
		&|k_w(V_l(s),v')| \le e^{-\frac{R^2}{64}}|k_w(V_l(s),v')|e^{\frac{1}{16}|V_l(s)-v'|^2}.
	\end{aligned}
	\end{equation}
	This yields from Lemma \ref{Kest},
	\begin{equation} \label{aa614}
	\begin{aligned}
		&\int_{|v'| \ge 2R} |k_w(V_l(s),v')|e^{\frac{1}{16}|V_l(s)-v'|^2}dv' < C,
	\end{aligned}
	\end{equation}
	for some constant $C$.
	Thus we use \eqref{aa613} and \eqref{aa614} to obtain
	\begin{equation} \label{ta62}
	\begin{aligned}
		\int_0^t \int_{|v|\le R} e^{-\frac{|v|^2}{8}}|J_{12}|dvds \le C_\Phi e^{-\frac{R^2}{64}} \sup_{0 \le s \le t}\|h(s)\|_{L^\infty_{x,v}}.
	\end{aligned}
	\end{equation}
	$\mathbf{Case\ 4\  of\  J_{12}:}$ $|v_l| \le R$ and $|v'|\le 2R$.\\
	Since $k_w(v,v')$ has possible integrable singularity of $\frac{1}{|v-v'|}$, we can choose smooth function $k_R(v,v')$ with compact support such that
	\begin{align*}
		\sup_{|v|\le 2R}\int_{|v'| \le 2R} \left|k_R(v,v')-k_w(v,v')\right|dv' \le \frac{1}{R}.
	\end{align*}
	We split 
	\begin{align*}
		k_w(V_l(s),v') = \{k_w(V_l(s),v') - k_R(V_l(s),v')\}+k_R(V_l(s),v').
	\end{align*}
	Then it follows that
	\begin{align*}
		&\int_0^t \int_{|v|\le R} e^{-\frac{|v|^2}{8}}|J_{12}|dvds\\
		& \le \frac{C}{R} \sup_{0\le s \le t} \|h(s)\|_{L^\infty_{x,v}}\\
		&\quad  + C_{R, \Phi} \int_0^t \int_{|v|\le R}  \exp\left\{-e^{-\|\Phi\|_\infty}\nu_0(t-s)\right\}\int_{\prod_{j=1}^{l-1}\mathcal{V}_j}\int_{|v_l|\le R}\int_{|v'|\le 2R}|h(s,X_l(s),v')|dv'dv_l \left\{\prod_{j=1}^{l-1}d\sigma_j\right\}dvds\\
		& =: L_{21}+L_{22},
	\end{align*}
	where we have used the fact $|k_R(V_l(s),v')|\le C_R$.\\
	In this case, we recall that $X_l(s) = X(s;t_l,x_l,v_l)$.
	Since the potential is time dependent, we have
	\begin{align*}
		X(s;t_l,x_l,v_l) = X(s-t_l+T_1;T_1,x_l,v_l).
	\end{align*}
	for all $0\le s \le t_l \le T_1$.\\
	By Lemma \ref{cpl}, the term $L_{22}$ becomes
	\begin{align}
		&C_{R,\Phi}\sum_{i_1}^{M_1}\sum_{I_3}^{(M_3)^3} \int_0^{T_1} \mathbf{1}_{\mathcal{P}_{i_1}^{T_1}}(s-t_l+T_1) \exp\left\{-e^{-\|\Phi\|_\infty}\nu_0(t-s)\right\}\int_{|v|\le R}\int_{\prod_{j=1}^{l-1}\mathcal{V}_j}\int_{|v_l|\le R}\int_{|v'|\le 2R} \nonumber \\ \label{aa615}
		& \times \mathbf{1}_{\mathcal{P}_{I_3}^v}(v_l)|h(s,X(s-t_l+T_1;T_1,x_l,v_l),v')| dv'dv_l \left\{\prod_{j=1}^{l-1}d\sigma_j\right\}dvds.
	\end{align}
	From Lemma \ref{cpl}, we have the following partitions:
	\begin{align*}
		&\left\{(s-t_l+T_1,x_l,v_l)\in \mathcal{P}_{i_1}^{T_1}\cross \mathcal{P}_{I_2}^{\Omega}\cross \mathcal{P}_{I_3}^{v} : \det\left(\frac{dX}{dv_l}(s-t_l+T_1;T_1,x_l,v_l)\right)=0\right\}\\
		& \subset \bigcup_{j=1}^3 \left\{(s-t_l+T_1,x_l,v_l)\in \mathcal{P}_{i_1}^{T_1}\cross \mathcal{P}_{I_2}^{\Omega}\cross \mathcal{P}_{I_3}^{v} :s-t_l+T_1\in \left(t_{j,i_1,I_2,I_3}-\frac{\tilde{\epsilon}}{4M_1},t_{j,i_1,I_2,I_3}+\frac{\tilde{\epsilon}}{4M_1}\right)\right\}.
	\end{align*}
	Thus for each $i_1$,$I_2$, and $I_3$, we split $\mathbf{1}_{\mathcal{P}_{i_1}^{T_1}}(s-t_l+T_1)$ as
	\begin{align} \label{aa616}
		&\mathbf{1}_{\mathcal{P}_{i_1}^{T_1}}(s-t_l+T_1)\mathbf{1}_{\cup_{j=1}^3(t_{j,i_1,I_2,I_3}-\frac{\tilde{\epsilon}}{4M_1},t_{j,i_1,I_2,I_3}+\frac{\tilde{\epsilon}}{4M_1})}(s-t_l+T_1)\\ \label{aa617}
		& +\mathbf{1}_{\mathcal{P}_{i_1}^{T_1}}(s-t_l+T_1)\left\{1-\mathbf{1}_{\cup_{j=1}^3(t_{j,i_1,I_2,I_3}-\frac{\tilde{\epsilon}}{4M_1},t_{j,i_1,I_2,I_3}+\frac{\tilde{\epsilon}}{4M_1})}(s-t_l+T_1)\right\}.
	\end{align}
	\bigskip
	$\mathbf{Case\ 4 \ (\romannumeral 1)\ of\ J_{12} :}$ The integration \eqref{aa615} corresponding to \eqref{aa616} is bounded by
	\begin{align}
		&C_{R,\Phi}\sum_{i_1}^{M_1}\sum_{I_3}^{(M_3)^3} \sum_{j=1}^3 \underbrace{\int_0^{T_1} \mathbf{1}_{\mathcal{P}_{i_1}^{T_1}}(s-t_l+T_1) \mathbf{1}_{(t_{j,i_1,I_2,I_3}-\frac{\tilde{\epsilon}}{4M_1},t_{j,i_1,I_2,I_3}+\frac{\tilde{\epsilon}}{4M_1})}(s-t_l+T_1)}_{(*6)}\nonumber \\ \label{aa618}
		& \times   \int_{|v|\le R}\int_{\prod_{j=1}^{l-1}\mathcal{V}_j}\int_{|v_l|\le R}\int_{|v'|\le 2R}\mathbf{1}_{\mathcal{P}_{I_3}^v}(v_l)|h(s,X(s-t_l+T_1;T_1,x_l,v_l),v')| dv'dv_l \left\{\prod_{j=1}^{l-1}d\sigma_j\right\}dvds.
	\end{align}
	Here, the term $(*6)$ is bounded by
	\begin{align} \label{qq24}
		\int_0^{T_1} \mathbf{1}_{\mathcal{P}_{i_1}^{T_1}}(s-t_l+T_1)\mathbf{1}_{(t_{j,i_1,I_2,I_3}-\frac{\tilde{\epsilon}}{4M_1},t_{j,i_1,I_2,I_3}+\frac{\tilde{\epsilon}}{4M_1})}(s-t_l+T_1) ds
		 \le \frac{\tilde{\epsilon}}{2M_1}.
	\end{align}
	From the partition of the velocity domain $[-4R,4R]^3$ in Lemma \ref{cpl}, we have
	\begin{align} \label{qq25}
		\sum_{I^3}^{(M_3)^3} \mathbf{1}_{\mathcal{P}_{I_3}^v}(v_l)\mathbf{1}_{\{|v_l|\le R\}}(v_l) = \mathbf{1}_{\{|v_l|\le R\}}(v_l).
	\end{align}
	From \eqref{qq24} and \eqref{qq25}, \eqref{aa618} is bounded by
	\begin{align*}
		& C_{R,\Phi} \sup_{0\le s \le t} \|h(s)\|_{L^\infty_{x,v}}\ \sum_{i_1}^{M_1}  \int_0^{T_1} \mathbf{1}_{\mathcal{P}_{i_1}^{T_1}}(s-t_l+T_1)\mathbf{1}_{(t_{j,i_1,I_2,I_3}-\frac{\tilde{\epsilon}}{4M_1},t_{j,i_1,I_2,I_3}+\frac{\tilde{\epsilon}}{4M_1})}(s-t_l+T_1) ds\\
		&\le \tilde{\epsilon} \ C_{R,\Phi} \sup_{0\le s \le t} \|h(s)\|_{L^\infty_{x,v}}.
	\end{align*}
	\newline
	$\mathbf{Case\ 4 \ (\romannumeral 2)\ of  \ J_{12}:}$ The integration \eqref{aa615} corresponding to \eqref{aa617} is bounded by
	\begin{align}
		&C_{R,\Phi}\sum_{i_1}^{M_1}\sum_{I_3}^{(M_3)^3} \int_0^{t} \mathbf{1}_{\mathcal{P}_{i_1}^{T_1}}(s-t_l+T_1) \left\{1-\mathbf{1}_{\cup_{j=1}^3(t_{j,i_1,I_2,I_3}-\frac{\tilde{\epsilon}}{4M_1},t_{j,i_1,I_2,I_3}+\frac{\tilde{\epsilon}}{4M_1})}(s-t_l+T_1)\right\} \nonumber \\ 
		&\times \exp\left\{-e^{-\|\Phi\|_\infty}\nu_0(t-s)\right\}\int_{|v|\le R}\int_{\prod_{j=1}^{l-1}\mathcal{V}_j}\underbrace{\int_{|v_l|\le R}\int_{|v'|\le 2R}\mathbf{1}_{\mathcal{P}_{I_3}^v}(v_l)|h(s,X(s-t_l+T_1;T_1,x_l,v_l),v')|dv'dv_l}_{(\# 6)} \nonumber \\ \label{aa619}
		&\times  \left\{\prod_{j=1}^{l-1}d\sigma_j\right\}dvds.
	\end{align}
	By Lemma \ref{cpl}, we have made a change of variables $v_l \rightarrow y:=X(s-t_l+T_1;T_1,x_l,v_l)$ satisfying 
	\begin{align*}
		\det\left(\frac{dX}{dv_l}(s-t_l+T_1;T_1,x_l,v_l)\right)> \delta_*
	\end{align*}
	and the term $(\# 6)$ is bounded by
	\begin{align*}
		\int_{|v_l|\le R}\int_{|v'|\le 2R}|h(s,X(s-t_l+T_1;T_1,x_l,v_l),v')|dv'dv_l &\le \frac{1}{\delta_*}\int_{\Omega} \int_{|v'|\le 2R}|h(s,y,v')|dv'dy\\
		&\le \frac{C_{R,\Phi}}{\delta_*} \left(\int_{\Omega} \int_{|v'|\le 2R}|h(s,y,v')|^2dv'dy\right)^{\frac{1}{2}}.
	\end{align*}
	Then \eqref{aa619} is bounded by
	\begin{align*}
		C_{R,\Phi,M_1,M_3,\delta_*} \int_0^{t}\exp\left\{-e^{-\|\Phi\|_\infty}\nu_0(t-s)\right\}\left(\int_{\Omega} \int_{|v'|\le 2R}|h(s,y,v')|^2dv'dy\right)^{\frac{1}{2}}ds.
	\end{align*}
	Hence from \eqref{L2cont}, \eqref{aa619} is bounded by
	\begin{align*}
		\frac{C_\Phi}{R} \sup_{0\le s\le t} \|h(s)\|_{L^\infty_{x,v}}^2 + C_{R,\Phi,M_1,M_3,\delta_*}\left[\mathcal{E}(F_0)+\mathcal{E}(F_0)^2\right].
	\end{align*}
	\newline
	It remains to estimate $J_{21}$ and $J_{22}$. By Lemma \ref{Gam+est} and Cauchy-Schwartz inequality, we deduce that
	\begin{equation} \label{aa620}
	\begin{aligned}
	|w\Gamma(f,f)(s,y,v)| &\le  |w\Gamma_+(f,f)(s,y,v)|+|w\Gamma_-(f,f)(s,y,v)|\\
	& \le C_{\beta}\|h(s)\|_{L^{\infty}_{x,v}} \left(\int_{\mathbb{R}^3} (1+|\eta|)^4 |f(s,y,\eta)|^2 d\eta\right)^{\frac{1}{2}}+C_\beta \nu(v)\|h(s)\|_{L^{\infty}_{x,v}} \int_{\mathbb{R}^3} e^{-\frac{|u|^2}{8}}|f(s,y,u)|du\\
	& \le C_{\beta}\nu(v)\|h(s)\|_{L^\infty_{x,v}}\left(\int_{\mathbb{R}^3}(1+|\eta|)^{-2\beta+4}|h(s,y,\eta)|^2 d\eta \right)^{\frac{1}{2}}.
	\end{aligned}
	\end{equation}
	We use \eqref{aa620} to estimate $J_{21}$.
	\begin{align}
		&\int_0^t \int_{|v|\le R} e^{-\frac{|v|^2}{8}}|J_{21}|dvds\nonumber\\\label{aa621}
		&\le C_\Phi \int_0^t  \exp\left\{-e^{-\|\Phi\|_\infty}\nu_0(t-s)\right\} \|h(s)\|_{L^\infty_{x,v}}\left(\int_{|v|\le R}\int_{\mathbb{R}^3}e^{-\frac{|v|^2}{16}}(1+|\eta|)^{-2\beta+4}|h(s,X(s),\eta)|^2 d\eta dv \right)^{\frac{1}{2}}ds.
	\end{align}
	\newline
	$\mathbf{Case\ 2\  of\  J_{21}:}$ $|\eta| \ge R$.\\
	The term \eqref{aa621} in this case is bounded by
	\begin{align*}
		& C_\Phi \int_0^t  \exp\left\{-e^{-\|\Phi\|_\infty}\nu_0(t-s)\right\} \sup_{0\le s \le t} \|h(s)\|^2_{L^\infty_{x,v}}\left(\int_{|v|\le R}\int_{|\eta|\ge R}e^{-\frac{|v|^2}{16}}(1+|\eta|)^{-2\beta+4} d\eta dv \right)^{\frac{1}{2}}ds\\
		& \le \frac{C_\Phi}{R}\sup_{0\le s \le t} \|h(s)\|^2_{L^\infty_{x,v}}.
	\end{align*}
	\newline
	$\mathbf{Case\ 3\  of\  J_{21}:}$ $|\eta| \le R$.\\
	In this case, we recall that $X(s) = X(s;t,x,v)$.
	Since the potential is time dependent, we have
	\begin{align*}
		X(s;t,x,v) = X(s-t+T_1;T_1,x,v).
	\end{align*}
	for all $0\le s \le t \le T_1$.\\
	By Lemma \ref{cpl}, the term \eqref{aa621} becomes
	\begin{align}
		&C_{R,\Phi}\sum_{i_1}^{M_1}\sum_{I_3}^{(M_3)^3} \int_0^{T_1} \mathbf{1}_{\mathcal{P}_{i_1}^{T_1}}(s-t+T_1) \exp\left\{-e^{-\|\Phi\|_\infty}\nu_0(t-s)\right\} \|h(s)\|_{L^\infty_{x,v}} \nonumber \\ \label{aa622}
		& \times \Bigg(\int_{|v|\le R}\int_{|\eta|\le R} \mathbf{1}_{\mathcal{P}_{I_3}^v}(v) e^{-\frac{|v|^2}{16}}(1+|\eta|)^{-2\beta+4}|h(s,X(s-t+T_1;T_1,x,v),\eta)|^2 d\eta dv \Bigg)^{\frac{1}{2}}ds .
	\end{align}
	From Lemma \ref{cpl}, we have the following partitions:
	\begin{align*}
		&\left\{(s-t+T_1,x,v)\in \mathcal{P}_{i_1}^{T_1}\cross \mathcal{P}_{I_2}^{\Omega}\cross \mathcal{P}_{I_3}^{v} : \det\left(\frac{dX}{dv}(s-t+T_1;T_1,x,v)\right)=0\right\}\\
		& \subset \bigcup_{j=1}^3 \left\{(s-t+T_1,x,v)\in \mathcal{P}_{i_1}^{T_1}\cross \mathcal{P}_{I_2}^{\Omega}\cross \mathcal{P}_{I_3}^{v} :s-t+T_1\in \left(t_{j,i_1,I_2,I_3}-\frac{\tilde{\epsilon}}{4M_1},t_{j,i_1,I_2,I_3}+\frac{\tilde{\epsilon}}{4M_1}\right)\right\}.
	\end{align*}
	Thus for each $i_1$,$I_2$, and $I_3$, we split $\mathbf{1}_{\mathcal{P}_{i_1}^{T_1}}(s-t+T_1)$ as
	\begin{align} \label{aa623}
		&\mathbf{1}_{\mathcal{P}_{i_1}^{T_1}}(s-t+T_1)\mathbf{1}_{\cup_{j=1}^3(t_{j,i_1,I_2,I_3}-\frac{\tilde{\epsilon}}{4M_1},t_{j,i_1,I_2,I_3}+\frac{\tilde{\epsilon}}{4M_1})}(s-t+T_1)\\ \label{aa624}
		& +\mathbf{1}_{\mathcal{P}_{i_1}^{T_1}}(s-t+T_1)\left\{1-\mathbf{1}_{\cup_{j=1}^3(t_{j,i_1,I_2,I_3}-\frac{\tilde{\epsilon}}{4M_1},t_{j,i_1,I_2,I_3}+\frac{\tilde{\epsilon}}{4M_1})}(s-t+T_1)\right\}.
	\end{align}
	\bigskip
	$\mathbf{Case\ 3 \ (\romannumeral 1)\ of\ J_{21} :}$ The integration \eqref{aa622} corresponding to \eqref{aa623} is bounded by
	\begin{align}
		&C_{R,\Phi}\sum_{i_1}^{M_1}\sum_{I_3}^{(M_3)^3} \sum_{j=1}^3 \underbrace{\int_0^{T_1} \mathbf{1}_{\mathcal{P}_{i_1}^{T_1}}(s-t+T_1) \mathbf{1}_{(t_{j,i_1,I_2,I_3}-\frac{\tilde{\epsilon}}{4M_1},t_{j,i_1,I_2,I_3}+\frac{\tilde{\epsilon}}{4M_1})}(s-t+T_1)}_{(*7)}\|h(s)\|_{L^\infty_{x,v}}\nonumber \\ \label{aa625}
		& \times \Bigg(\int_{|v|\le R}\int_{|\eta|\le R} \mathbf{1}_{\mathcal{P}_{I_3}^v}(v) e^{-\frac{|v|^2}{16}}(1+|\eta|)^{-2\beta+4}|h(s,X(s-t+T_1;T_1,x,v),\eta)|^2 d\eta dv \Bigg)^{\frac{1}{2}}ds.
	\end{align}
	Here, the term $(*7)$ is bounded by
	\begin{align}\label{qq26}
		\int_0^{T_1} \mathbf{1}_{\mathcal{P}_{i_1}^{T_1}}(s-t+T_1)\mathbf{1}_{(t_{j,i_1,I_2,I_3}-\frac{\tilde{\epsilon}}{4M_1},t_{j,i_1,I_2,I_3}+\frac{\tilde{\epsilon}}{4M_1})}(s-t+T_1) ds\le \frac{\tilde{\epsilon}}{2M_1}.
	\end{align}
	From the partition of the velocity domain $[-4R,4R]^3$ in Lemma \ref{cpl}, we have
	\begin{align}\label{qq27}
		\sum_{I^3}^{(M_3)^3} \mathbf{1}_{\mathcal{P}_{I_3}^v}(v)\mathbf{1}_{\{|v|\le R\}}(v) = \mathbf{1}_{\{|v|\le R\}}(v).
	\end{align}
	From \eqref{qq26} and \eqref{qq27}, \eqref{aa625} is bounded by
	\begin{align*}
		& C_{R,\Phi} \sup_{0\le s \le t} \|h(s)\|_{L^\infty_{x,v}}^2\ \sum_{i_1}^{M_1}  \int_0^{T_1} \mathbf{1}_{\mathcal{P}_{i_1}^{T_1}}(s-t+T_1)\mathbf{1}_{(t_{j,i_1,I_2,I_3}-\frac{\tilde{\epsilon}}{4M_1},t_{j,i_1,I_2,I_3}+\frac{\tilde{\epsilon}}{4M_1})}(s-t+T_1) ds\\
		&\le \tilde{\epsilon} \ C_{R,\Phi} \sup_{0\le s \le t} \|h(s)\|_{L^\infty_{x,v}}^2.
	\end{align*}
	\newline
	$\mathbf{Case\ 3 \ (\romannumeral 2)\ of  \ J_{21}:}$ The integration \eqref{aa622} corresponding to \eqref{aa624} is bounded by
	\begin{align}
		&C_{R,\Phi}\sum_{i_1}^{M_1}\sum_{I_3}^{(M_3)^3} \int_0^{t} \mathbf{1}_{\mathcal{P}_{i_1}^{T_1}}(s-t+T_1) \left\{1-\mathbf{1}_{\cup_{j=1}^3(t_{j,i_1,I_2,I_3}-\frac{\tilde{\epsilon}}{4M_1},t_{j,i_1,I_2,I_3}+\frac{\tilde{\epsilon}}{4M_1})}(s-t+T_1)\right\} \nonumber \\ \label{aa626}
		&\times \exp\left\{-e^{-\|\Phi\|_\infty}\nu_0(t-s)\right\} \|h(s)\|_{L^\infty_{x,v}} \Bigg(\int_{|v|\le R}\int_{|\eta|\le R} \mathbf{1}_{\mathcal{P}_{I_3}^v}(v) e^{-\frac{|v|^2}{16}}|h(s,X(s-t+T_1;T_1,x,v),\eta)|^2 d\eta dv \Bigg)^{\frac{1}{2}}ds.
	\end{align}
	By Lemma \ref{cpl}, we have made a change of variables $v \rightarrow y:=X(s-t+T_1;T_1,x,v)$ satisfying 
	\begin{align*}
		\det\left(\frac{dX}{dv}(s-t+T_1;T_1,x,v)\right)> \delta_*
	\end{align*}
	and \eqref{aa626} is bounded by
	\begin{align*}
		C_{R,\Phi,M_1,M_3,\delta_*} \int_0^{t}\exp\left\{-e^{-\|\Phi\|_\infty}\nu_0(t-s)\right\}\|h(s)\|_{L^\infty_{x,v}}\left(\int_{\Omega} \int_{|\eta|\le R}|h(s,y,\eta)|^2d\eta dy\right)^{\frac{1}{2}}ds.
	\end{align*}
	Hence from \eqref{L2cont}, \eqref{aa626} is bounded by
	\begin{align*}
		&C_{R,\Phi,M_1,M_3,\delta_*} \int_0^{t}\exp\left\{-e^{-\|\Phi\|_\infty}\nu_0(t-s)\right\}\left[\sup_{0\le s\le t} \|h(s)\|_{L^\infty_{x,v}}\mathcal{E}(F_0)^{\frac{1}{2}} + \sup_{0\le s\le t} \|h(s)\|^{\frac{3}{2}}_{L^\infty_{x,v}}\mathcal{E}(F_0)^{\frac{1}{2}}\right]ds\\
		& \le C_{R,\Phi,M_1,M_3,\delta_*}\sup_{0\le s\le t} \|h(s)\|_{L^\infty_{x,v}}\mathcal{E}(F_0)^{\frac{1}{2}} +C_{R,\Phi,M_1,M_3,\delta_*} \sup_{0\le s\le t} \|h(s)\|^{\frac{3}{2}}_{L^\infty_{x,v}}\mathcal{E}(F_0)^{\frac{1}{2}}\\
		& \le \frac{C_\Phi}{R}\left[\sup_{0\le s\le t} \|h(s)\|_{L^\infty_{x,v}}^2+\sup_{0\le s\le t} \|h(s)\|^{3}_{L^\infty_{x,v}}\right]+C_{R,\Phi,M_1,M_3,\delta_*}\mathcal{E}(F_0).
	\end{align*}
	\newline
	We use \eqref{aa620} to estimate $J_{22}$:
	\begin{align*}
		\int_0^t \int_{|v|\le R} e^{-\frac{|v|^2}{8}}|J_{22}|dvds 
		&\le C_\Phi \int_0^t  \exp\left\{-e^{-\|\Phi\|_\infty}\nu_0(t-s)\right\} \int_{|v| \le R}e^{-\frac{|v|^2}{8}}\sum_{l=1}^{k-1}\int_{\prod_{j=1}^{l-1}\mathcal{V}_j}\int_{\mathcal{V}_l}e^{-\frac{|v_l|^2}{16}}\|h(s)\|_{L^\infty_{x,v}} \\ 
		&\quad \times \left(\int_{\mathbb{R}^3}(1+|\eta|)^{-2\beta+4}|h(s,X_l(s),\eta)|^2 d\eta \right)^{\frac{1}{2}} dv_l \left\{\prod_{j=1}^{l-1}d\sigma_j\right\}dvds.
	\end{align*}
	Fix $l$. We divide the following term into 3 cases:
	\begin{align}
		& C_\Phi \int_0^t  \exp\left\{-e^{-\|\Phi\|_\infty}\nu_0(t-s)\right\} \int_{|v| \le R}e^{-\frac{|v|^2}{8}}\int_{\prod_{j=1}^{l-1}\mathcal{V}_j}\int_{\mathcal{V}_l}e^{-\frac{|v_l|^2}{16}}\|h(s)\|_{L^\infty_{x,v}} \nonumber \\ \label{aa631}
		&\quad \times \left(\int_{\mathbb{R}^3}(1+|\eta|)^{-2\beta+4}|h(s,X_l(s),\eta)|^2 d\eta \right)^{\frac{1}{2}} dv_l \left\{\prod_{j=1}^{l-1}d\sigma_j\right\}dvds.
	\end{align}
	$\mathbf{Case\ 2\  of\  J_{22}:}$ $|v_l| \ge R$.\\
	The term \eqref{aa631} in this case is bounded by
	\begin{align*}
		\frac{C_\Phi}{R}\sup_{0 \le s \le t}\|h(s)\|^2_{L^\infty_{x,v}}.
	\end{align*}
	\newline
	$\mathbf{Case\ 3\  of\  J_{22}:}$ $|v_l| \le R$ and $|\eta| \ge R$.\\
	In a similar way in \eqref{ta62}, the term \eqref{aa631} in this case is bounded by
	\begin{align*}
		 \frac{C_\Phi}{R}\sup_{0 \le s \le t}\|h(s)\|^2_{L^\infty_{x,v}}.
	\end{align*}
	\newline
	$\mathbf{Case\ 4\  of\  J_{22}:}$ $|v_l| \le R$ and $|\eta| \le R$.\\
	In this case, we recall that $X_l(s) = X(s;t_l,x_l,v_l)$.
	Since the potential is time dependent, we have
	\begin{align*}
		X(s;t_l,x_l,v_l) = X(s-t_l+T_1;T_1,x_l,v_l)
	\end{align*}
	for all $0\le s \le t_l \le T_1$.\\
	By Lemma \ref{cpl}, the term \eqref{aa631} becomes
	\begin{align}
		&C_{R,\Phi}\sum_{i_1}^{M_1}\sum_{I_3}^{(M_3)^3} \int_0^{t} \mathbf{1}_{\mathcal{P}_{i_1}^{T_1}}(s-t_l+T_1) \exp\left\{-e^{-\|\Phi\|_\infty}\nu_0(t-s)\right\} \|h(s)\|_{L^\infty_{x,v}} \nonumber \\ 
		& \times \int_{|v| \le R}e^{-\frac{|v|^2}{8}}\int_{\prod_{j=1}^{l-1}\mathcal{V}_j}\int_{|v_l| \le R}\mathbf{1}_{\mathcal{P}_{I_3}^v}(v_l) e^{-\frac{|v_l|^2}{16}} \nonumber \\ \label{aa632}
		&\times \left(\int_{|\eta|\le R}(1+|\eta|)^{-2\beta+4}|h(s, X(s-t_l+T_1;T_1,x_l,v_l),\eta)|^2 d\eta \right)^{\frac{1}{2}} dv_l \left\{\prod_{j=1}^{l-1}d\sigma_j\right\}dvds .
	\end{align}
	From Lemma \ref{cpl}, we have the following partitions:
	\begin{align*}
		&\left\{(s-t_l+T_1,x_l,v_l)\in \mathcal{P}_{i_1}^{T_1}\cross \mathcal{P}_{I_2}^{\Omega}\cross \mathcal{P}_{I_3}^{v} : \det\left(\frac{dX}{dv_l}(s-t_l+T_1;T_1,x_l,v_l)\right)=0\right\}\\
		& \subset \bigcup_{j=1}^3 \left\{(s-t_l+T_1,x_l,v_l)\in \mathcal{P}_{i_1}^{T_1}\cross \mathcal{P}_{I_2}^{\Omega}\cross \mathcal{P}_{I_3}^{v} :s-t_l+T_1\in \left(t_{j,i_1,I_2,I_3}-\frac{\tilde{\epsilon}}{4M_1},t_{j,i_1,I_2,I_3}+\frac{\tilde{\epsilon}}{4M_1}\right)\right\}.
	\end{align*}
	Thus for each $i_1$,$I_2$, and $I_3$, we split $\mathbf{1}_{\mathcal{P}_{i_1}^{T_1}}(s-t_l+T_1)$ as
	\begin{align} \label{aa633}
		&\mathbf{1}_{\mathcal{P}_{i_1}^{T_1}}(s-t_l+T_1)\mathbf{1}_{\cup_{j=1}^3(t_{j,i_1,I_2,I_3}-\frac{\tilde{\epsilon}}{4M_1},t_{j,i_1,I_2,I_3}+\frac{\tilde{\epsilon}}{4M_1})}(s-t_l+T_1)\\ \label{aa634}
		& +\mathbf{1}_{\mathcal{P}_{i_1}^{T_1}}(s-t_l+T_1)\left\{1-\mathbf{1}_{\cup_{j=1}^3(t_{j,i_1,I_2,I_3}-\frac{\tilde{\epsilon}}{4M_1},t_{j,i_1,I_2,I_3}+\frac{\tilde{\epsilon}}{4M_1})}(s-t_l+T_1)\right\}.
	\end{align}
	\bigskip
	$\mathbf{Case\ 4 \ (\romannumeral 1)\ of\ J_{22} :}$ The integration \eqref{aa632} corresponding to \eqref{aa633} is bounded by
	\begin{align}
		&C_{R,\Phi}\sum_{i_1}^{M_1}\sum_{I_3}^{(M_3)^3} \sum_{j=1}^3 \underbrace{\int_0^{T_1} \mathbf{1}_{\mathcal{P}_{i_1}^{T_1}}(s-t_l+T_1)  \mathbf{1}_{(t_{j,i_1,I_2,I_3}-\frac{\tilde{\epsilon}}{4M_1},t_{j,i_1,I_2,I_3}+\frac{\tilde{\epsilon}}{4M_1})}(s-t_l+T_1)}_{(*8)}  \nonumber \\ 
		& \times  \|h(s)\|_{L^\infty_{x,v}}\int_{|v| \le R}e^{-\frac{|v|^2}{8}}\int_{\prod_{j=1}^{l-1}\mathcal{V}_j}\int_{|v_l| \le R}\mathbf{1}_{\mathcal{P}_{I_3}^v}(v_l) e^{-\frac{|v_l|^2}{16}} \nonumber \\ \label{aa635}
		&\times \left(\int_{|\eta|\le R}(1+|\eta|)^{-2\beta+4}|h(s, X(s-t_l+T_1;T_1,x_l,v_l),\eta)|^2 d\eta \right)^{\frac{1}{2}} dv_l \left\{\prod_{j=1}^{l-1}d\sigma_j\right\}dvds.
	\end{align}
	Here, $(*8)$ is bounded by
	\begin{align} \label{qq28}
		\int_0^{T_1} \mathbf{1}_{\mathcal{P}_{i_1}^{T_1}}(s-t_l+T_1)\mathbf{1}_{(t_{j,i_1,I_2,I_3}-\frac{\tilde{\epsilon}}{4M_1},t_{j,i_1,I_2,I_3}+\frac{\tilde{\epsilon}}{4M_1})}(s-t_l+T_1) ds
		\le \frac{\tilde{\epsilon}}{2M_1}.
	\end{align}
	From the partition of the velocity domain $[-4R,4R]^3$ in Lemma \ref{cpl}, we have
	\begin{align} \label{qq29}
		\sum_{I^3}^{(M_3)^3} \mathbf{1}_{\mathcal{P}_{I_3}^v}(v_l)\mathbf{1}_{\{|v_l|\le R\}}(v_l) = \mathbf{1}_{\{|v_l|\le R\}}(v_l).
	\end{align}
	From \eqref{qq28} and \eqref{qq29}, \eqref{aa635} is bounded by
	\begin{align*}
		& C_{R,\Phi} \sup_{0\le s \le t} \|h(s)\|_{L^\infty_{x,v}}^2\ \sum_{i_1}^{M_1}  \int_0^{T_1} \mathbf{1}_{\mathcal{P}_{i_1}^{T_1}}(s-t+T_1)\mathbf{1}_{(t_{j,i_1,I_2,I_3}-\frac{\tilde{\epsilon}}{4M_1},t_{j,i_1,I_2,I_3}+\frac{\tilde{\epsilon}}{4M_1})}(s-t+T_1) ds\\
		&\le \tilde{\epsilon} \ C_{R,\Phi} \sup_{0\le s \le t} \|h(s)\|_{L^\infty_{x,v}}^2.
	\end{align*}
	\newline
	$\mathbf{Case\ 4 \ (\romannumeral 2)\ of  \ J_{22}:}$ The integration \eqref{aa632} corresponding to \eqref{aa634} is bounded by
	\begin{align}
		&C_{R,\Phi}\sum_{i_1}^{M_1}\sum_{I_3}^{(M_3)^3} \int_0^{t} \mathbf{1}_{\mathcal{P}_{i_1}^{T_1}}(s-t_l+T_1) \left\{1-\mathbf{1}_{\cup_{j=1}^3(t_{j,i_1,I_2,I_3}-\frac{\tilde{\epsilon}}{4M_1},t_{j,i_1,I_2,I_3}+\frac{\tilde{\epsilon}}{4M_1})}(s-t_l+T_1)\right\} \nonumber \\ 
		&\quad  \times \exp\left\{-e^{-\|\Phi\|_\infty}\nu_0(t-s)\right\} \|h(s)\|_{L^\infty_{x,v}}\int_{|v| \le R}e^{-\frac{|v|^2}{8}}\int_{\prod_{j=1}^{l-1}\mathcal{V}_j}\int_{|v_l| \le R} \nonumber \\ 
		&\quad \times \left(\int_{|\eta|\le R}|h(s, X(s-t_l+T_1;T_1,x_l,v_l),\eta)|^2 d\eta \right)^{\frac{1}{2}} dv_l \left\{\prod_{j=1}^{l-1}d\sigma_j\right\}dvds\nonumber\\
		& \le C_{R,\Phi}\sum_{i_1}^{M_1}\sum_{I_3}^{(M_3)^3} \int_0^{t} \mathbf{1}_{\mathcal{P}_{i_1}^{T_1}}(s-t_l+T_1) \left\{1-\mathbf{1}_{\cup_{j=1}^3(t_{j,i_1,I_2,I_3}-\frac{\tilde{\epsilon}}{4M_1},t_{j,i_1,I_2,I_3}+\frac{\tilde{\epsilon}}{4M_1})}(s-t_l+T_1)\right\} \nonumber \\ 
		& \quad \times \exp\left\{-e^{-\|\Phi\|_\infty}\nu_0(t-s)\right\} \|h(s)\|_{L^\infty_{x,v}}\int_{|v| \le R}e^{-\frac{|v|^2}{8}}\int_{\prod_{j=1}^{l-1}\mathcal{V}_j} \nonumber \\ \label{aa636}
		&\quad \times \left(\int_{|v_l| \le R}\int_{|\eta|\le R}|h(s, X(s-t_l+T_1;T_1,x_l,v_l),\eta)|^2 d\eta dv_l\right)^{\frac{1}{2}}  \left\{\prod_{j=1}^{l-1}d\sigma_j\right\}dvds.
		\end{align}
	By Lemma \ref{cpl}, we have made a change of variables $v_l \rightarrow y:=X(s-t_l+T_1;T_1,x_l,v_l)$ satisfying 
	\begin{align*}
		\det\left(\frac{dX}{dv_l}(s-t_l+T_1;T_1,x_l,v_l)\right)> \delta_*
	\end{align*}
	and \eqref{aa636} is bounded by
	\begin{align*}
		C_{R,\Phi,M_1,M_3,\delta_*} \int_0^{t}\exp\left\{-e^{-\|\Phi\|_\infty}\nu_0(t-s)\right\}\|h(s)\|_{L^\infty_{x,v}}\left(\int_{\Omega} \int_{|\eta|\le R}|h(s,y,\eta)|^2d\eta dy\right)^{\frac{1}{2}}ds.
	\end{align*}
	Hence from \eqref{L2cont}, \eqref{aa636} is bounded by
	\begin{align*}
		&C_{R,\Phi,M_1,M_3,\delta_*} \int_0^{t}\exp\left\{-e^{-\|\Phi\|_\infty}\nu_0(t-s)\right\}\left[\sup_{0\le s\le t} \|h(s)\|_{L^\infty_{x,v}}\mathcal{E}(F_0)^{\frac{1}{2}} + \sup_{0\le s\le t} \|h(s)\|^{\frac{3}{2}}_{L^\infty_{x,v}}\mathcal{E}(F_0)^{\frac{1}{2}}\right]ds\\
		& \le C_{R,\Phi,M_1,M_3,\delta_*}\sup_{0\le s\le t} \|h(s)\|_{L^\infty_{x,v}}\mathcal{E}(F_0)^{\frac{1}{2}} +C_{R,\Phi,M_1,M_3,\delta_*} \sup_{0\le s\le t} \|h(s)\|^{\frac{3}{2}}_{L^\infty_{x,v}}\mathcal{E}(F_0)^{\frac{1}{2}}\\
		& \le \frac{C_\Phi}{R}\left[\sup_{0\le s\le t} \|h(s)\|_{L^\infty_{x,v}}^2+\sup_{0\le s\le t} \|h(s)\|^{3}_{L^\infty_{x,v}}\right]+C_{R,\Phi,M_1,M_3,\delta_*}\mathcal{E}(F_0).
	\end{align*}
	\newline
	Summing over $1\le l \le k(\epsilon,T_1)-1$ and combining the estimates of all cases, we obtain
	\begin{align}
		&\int_{\mathbb{R}^3}e^{-\frac{|v|^2}{8}}|h(t,x,v)|dv\nonumber\\
		&\le C_6 \exp\left\{-e^{-\|\Phi\|_\infty} \frac{\nu_0}{2} t \right\}\|h_0\|_{L^\infty_{x,v}}\nonumber\\
		& \quad +C_6\left(\epsilon+\frac{C_{\Phi,\epsilon,T_1}}{R}+\tilde{\epsilon} \ C_{R,\Phi,\epsilon,T_1}  \right)\left[\sup_{0\le s\le t} \|h(s)\|_{L^\infty_{x,v}}+\sup_{0\le s\le t} \|h(s)\|_{L^\infty_{x,v}}^2+\sup_{0\le s\le t} \|h(s)\|_{L^\infty_{x,v}}^3\right]\nonumber \\ \label{ta660} 
		& \quad +C_{R,\Phi,M_1,M_3,\delta_*,\epsilon,T_1}\left[\mathcal{E}(F_0)^{\frac{1}{2}}+\mathcal{E}(F_0)\right],
	\end{align}
	where $C_6$ is a generic constant.\\
	Set $\tilde{t}:= \frac{2}{e^{-\|\Phi\|_\infty}\nu_0} \log (4C_5C_6 M_0)$ and $C_4:=4C_5C_6$. It follow that
	\begin{align*}
		C_6 \exp\left\{-e^{-\|\Phi\|_\infty} \frac{\nu_0}{2} t \right\}\|h_0\|_{L^\infty_{x,v}} \le C_6 \exp\left\{-e^{-\|\Phi\|_\infty} \frac{\nu_0}{2} t \right\}M_0 \le \frac{1}{4C_5}
	\end{align*}
	for all $t \ge \tilde{t}$. From \eqref{Ape} and the assumption $\mathcal{E}(F_0) \le \epsilon_0$, \eqref{ta660} implies that
	\begin{align}
		&\int_{\mathbb{R}^3}e^{-\frac{|v|^2}{8}}|h(t,x,v)|dv\nonumber\\ \label{ta661} 
		&\le \frac{1}{4C_5}+3C_6\left(\epsilon+\frac{C_{\Phi,\epsilon,T_1}}{R}+\tilde{\epsilon} \ C_{R,\Phi,\epsilon,T_1} \right)\bar{M}^3 +C_{R,\Phi,M_1,M_3,\delta_*,\epsilon,T_1}\left[\epsilon_0^{\frac{1}{2}}+\epsilon_0\right]
	\end{align}
	for all $\tilde{t} \le t \le T_1$. First, we take $\epsilon >0$  small enough, $R>0$ large enough, then we choose sufficiently small $\tilde{\epsilon}>0$, and we choose $\epsilon_0$ small enough so that
	\begin{align*}
		3C_6\left(\epsilon+\frac{C_{\Phi,\epsilon,T_1}}{R}+\tilde{\epsilon} \ C_{R,\Phi,\epsilon,T_1} \right)\bar{M}^3 +C_{R,\Phi,M_1,M_3,\delta_*,\epsilon,T_1}\left[\epsilon_0^{\frac{1}{2}}+\epsilon_0\right] \le \frac{1}{4C_5}.
	\end{align*}
	Therefore we conclude that
	\begin{align*}
		&\int_{\mathbb{R}^3}e^{-\frac{|v|^2}{8}}|h(t,x,v)|dv \le \frac{1}{2C_5} \quad \text{for all } (t,x) \in [\tilde{t},T_1) \cross \Omega.
	\end{align*}
	We complete the proof of this Lemma.
\end{proof}

\bigskip
\subsection{$L^\infty$ estimate}

In this subsection, we will produce the $L^\infty_{x,v}$ estimate in terms of a solution $h$ to \eqref{PBE}. In order to estimate $h$ in $L^\infty_{x,v}$, we first need to estimate the semigroup $S_{G_f}$. Thus when $\mathcal{E}(F_0)$ is sufficiently small, we will estimate the semigroup $S_{G_f}$ by using the $R(f)$ {\it estimate} \eqref{Rfestimate}.
\begin{lemma} \label{L61}
	Assume the a priori assumption \eqref{Ape}. Let $f$ be a solution of \eqref{PBE} with initial datum $f_0$ and the boundary condition \eqref{PDRBC} and $h(t,x,v)=w(x,v)f(t,x,v)$.  Then there exists a constant $C_\Phi>0$ so that if $\mathcal{E}(F_0) \le \epsilon_0$, where $\epsilon_0 = \epsilon_0(\bar{M},T_1)$ is determined in Lemma \ref{Rfest},  
	\begin{align*}
		\|S_{G_f}(t-s)h(s)\|_{L^\infty_{x,v}} \le C_\Phi \exp{e^{-\|\Phi\|_\infty}\frac{3}{4}\nu_0 \tilde{t}} \exp\left\{-e^{-\|\Phi\|_\infty} \frac{\nu_0}{4} (t-s) \right\} \|h(s)\|_{L^\infty_{x,v}}
	\end{align*}
	for all $0\le s \le t \le T_1$.
\end{lemma}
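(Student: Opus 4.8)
The plan is to obtain the estimate by splitting the evolution at the threshold time $\tilde t$ from Lemma \ref{Rfest} and combining two facts already available: on a time window of length at most $\rho$ the propagator of \eqref{WLBBE} is controlled \emph{without} any lower bound on $R(f)$ (Lemma \ref{Phiest}, which uses only $R(f)\ge 0$, valid here because $F=\mu_E+\mu_E^{1/2}f\ge 0$ and $B\ge 0$), while once $R(f)(t,x,v)\ge\tfrac12 e^{-\Phi(x)}\nu(v)$ holds the propagator decays exponentially with rate $e^{-\|\Phi\|_\infty}\tfrac{\nu_0}{4}$ (Lemma \ref{LL62}). The obstruction is that Lemma \ref{Rfest} only delivers the pointwise lower bound for $t\in[\tilde t,T_1)$, whereas Lemma \ref{LL62} asks for it at all times; reconciling this is exactly where the prefactor $\exp\{\tfrac34 e^{-\|\Phi\|_\infty}\nu_0\tilde t\}$ is produced.

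Fix $0\le s\le t\le T_1$ and set $s_\ast:=\min\{\max\{s,\tilde t\},t\}$, so that $s\le s_\ast\le t$, $s_\ast-s\le\tilde t$, and $[s_\ast,t]\subseteq[\tilde t,T_1)$. By uniqueness of solutions to \eqref{WLBBE} the solution operator splits as $S_{G_f}(t-s)h(s)=S_{G_f}(t-s_\ast)\big(S_{G_f}(s_\ast-s)h(s)\big)$. For the inner factor, after the time translation $\tau\mapsto s+\tau$ (which leaves \eqref{WLBBE} of the same form and preserves \eqref{phc}) we are on a window of length $s_\ast-s\le\tilde t$; writing $s_\ast-s=N\rho+\sigma'$ with $N\le\lfloor\tilde t/\rho\rfloor$ and $0\le\sigma'<\rho$, composing $N+1$ propagators of length $\le\rho$ and applying Lemma \ref{Phiest} to each gives operator norm $\le(C_3\rho^{5/4})^{N+1}$, and by the choice \eqref{rhoest} of $\rho$ this is $\le\exp\{e^{-\|\Phi\|_\infty}\tfrac{\nu_0}{2}\rho\}\exp\{e^{-\|\Phi\|_\infty}\tfrac{\nu_0}{2}\tilde t\}$. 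Hence $\|S_{G_f}(s_\ast-s)h(s)\|_{L^\infty_{x,v}}\le C_\Phi\exp\{e^{-\|\Phi\|_\infty}\tfrac{\nu_0}{2}\tilde t\}\|h(s)\|_{L^\infty_{x,v}}$.

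For the outer factor I would first replace $f$ by a truncation $\hat f$ that agrees with $f$ on $[\tilde t,T_1)$ and is frozen to $f(\tilde t)$ for $\tau<\tilde t$ and for $\tau\ge T_1$; then $\hat f$ still satisfies \eqref{phc}, by Lemma \ref{Rfest} one has $R(\hat f)\ge\tfrac12 e^{-\Phi}\nu$ at \emph{all} times, and the propagator of \eqref{WLBBE} for $\hat f$ coincides with that for $f$ on $[\tilde t,T_1)\supseteq[s_\ast,t]$ (the coefficient $R(\cdot)$ being unchanged there). Applying Lemma \ref{LL62} to $S_{G_{\hat f}}$ (after the time translation $\tau\mapsto s_\ast+\tau$, which again preserves the hypotheses, the constant $C_{\rho,\Phi,\beta}$ being independent of the frozen function) yields $\|S_{G_f}(t-s_\ast)g\|_{L^\infty_{x,v}}\le C_{\rho,\Phi,\beta}\exp\{-e^{-\|\Phi\|_\infty}\tfrac{\nu_0}{4}(t-s_\ast)\}\|g\|_{L^\infty_{x,v}}$.

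Composing the two bounds with $g=S_{G_f}(s_\ast-s)h(s)$ and using $t-s_\ast=(t-s)-(s_\ast-s)\ge(t-s)-\tilde t$ to write $\exp\{-e^{-\|\Phi\|_\infty}\tfrac{\nu_0}{4}(t-s_\ast)\}\le\exp\{e^{-\|\Phi\|_\infty}\tfrac{\nu_0}{4}\tilde t\}\exp\{-e^{-\|\Phi\|_\infty}\tfrac{\nu_0}{4}(t-s)\}$, the two surviving exponents in $\tilde t$ add to $(\tfrac{\nu_0}{2}+\tfrac{\nu_0}{4})\tilde t=\tfrac34\nu_0\tilde t$, which is exactly the asserted bound after absorbing $C_{\rho,\Phi,\beta}$ and $\exp\{e^{-\|\Phi\|_\infty}\tfrac{\nu_0}{2}\rho\}$ into the generic constant $C_\Phi$ (legitimate since $\rho,\beta$ are fixed). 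The only genuinely delicate point is the interval mismatch between $[\tilde t,T_1)$ and the ``for all $t$'' hypothesis of Lemma \ref{LL62}; the split at $\tilde t$ together with the $\hat f$-truncation handles it, and the remainder is routine bookkeeping of the two exponential factors.
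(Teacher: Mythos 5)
Your proposal is correct and follows essentially the same route as the paper: split the evolution at the threshold time $\tilde t$, iterate Lemma \ref{Phiest} together with \eqref{rhoest} on the initial layer of length at most $\tilde t$ to get the prefactor $\exp\{e^{-\|\Phi\|_\infty}\tfrac{\nu_0}{2}\tilde t\}$, and apply Lemma \ref{LL62} on $[\tilde t,T_1)$ for the exponential decay, paying an extra $\exp\{e^{-\|\Phi\|_\infty}\tfrac{\nu_0}{4}\tilde t\}$ when converting to decay in $t-s$. Your truncation $\hat f$ and the explicit split point $s_\ast$ merely make rigorous what the paper does implicitly when it invokes Lemma \ref{LL62} even though Lemma \ref{Rfest} only provides the lower bound on $[\tilde t,T_1)$, so this is a more careful write-up of the same argument rather than a different one.
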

\begin{proof}
	Suppose that $\mathcal{E}(F_0) \le \epsilon_0(\bar{M},T_1)$. By Lemma \ref{Rfest}, we have
	\begin{equation} \label{LL1}
	\begin{aligned}
		R(f)(t,x,v) \ge 
		\begin{cases}
			0 & \text{if }t\in [0,\tilde{t}), \\
			\frac{1}{2}e^{-\Phi(x)}\nu(v) \quad & \text{if }t\in [\tilde{t},T_1),
		\end{cases}
	\end{aligned}
	\end{equation}
	for all $(x,v)\in \Omega\cross \mathbb{R}^3$.\\
	\newline
	$\mathbf{Case\ of \ t \in [0,\tilde{t}]:}$ We know that $t/ \rho \in [m,m+1)$ for some $m \in \{0,1,...\ , [\tilde{t}/\rho]\}$. Thus it follows from Lemma \ref{Phiest} and \eqref{rhoest} that
	\begin{align*}
		\|S_{G_f}(t)h_0\|_{L^\infty_{x,v}} &\le C_3\rho^{\frac{5}{4}}\|S_{G_f}(m\rho)h_0\|_{L^\infty_{x,v}} \le C_3 \rho^{\frac{5}{4}}\left(C_3\rho^{\frac{5}{4}}\right)^m \|h_0\|_{L^\infty_{x,v}} \le C_3 \rho^{\frac{5}{4}}\left(C_3\rho^{\frac{5}{4}}\right)^{\frac{\tilde{t}}{\rho}}\|h_0\|_{L^\infty_{x,v}}\\
		& \le C \exp{e^{-\|\Phi\|_\infty}\frac{\nu_0}{2}\tilde{t}}\|h_0\|_{L^\infty_{x,v}},
	\end{align*}
	and we derive
	\begin{align*}
		\|S_{G_f}(t-s)h(s)\|_{L^\infty_{x,v}} \le C \exp{e^{-\|\Phi\|_\infty}\frac{3}{4}\nu_0 \tilde{t}} \exp\left\{-e^{-\|\Phi\|_\infty} \frac{\nu_0}{4} t \right\} \|h_0\|_{L^\infty_{x,v}}.
	\end{align*}
	\newline
	$\mathbf{Case\ of \ t \in [\tilde{t},T_1):}$ We note that $S_{G_f}(t)h_0 = S_{G_f}(t-\tilde{t})S_{G_f}(\tilde{t})h_0$. From \eqref{LL1} and Lemma \ref{LL62}, we obtain
	\begin{align*}
		\|S_{G_f}(t)h_0\|_{L^\infty_{x,v}} &\le C \exp{-e^{-\|\Phi\|_\infty}\frac{\nu_0}{4}(t-\tilde{t})}\|S_{G_f}(\tilde{t})h_0\|_{L^\infty_{x,v}}.
	\end{align*}
	We use the previous case to get
	\begin{align*}
		\|S_{G_f}(t)h_0\|_{L^\infty_{x,v}}  \le C_{\Phi} \exp{e^{-\|\Phi\|_\infty}\frac{3}{4}\nu_0 \tilde{t}} \exp\left\{-e^{-\|\Phi\|_\infty} \frac{\nu_0}{4} t \right\}\|h_0\|_{L^\infty_{x,v}}.
	\end{align*}
	\newline
	Gathering two cases, we conclude that
	\begin{align*}
		\|S_{G_f}(t)h_0\|_{L^\infty_{x,v}} \le C_\Phi \exp{e^{-\|\Phi\|_\infty}\frac{3}{4}\nu_0 \tilde{t}} \exp\left\{-e^{-\|\Phi\|_\infty} \frac{\nu_0}{4} t \right\} \|h_0\|_{L^\infty_{x,v}}
	\end{align*}
	for all $0 \le t \le T_1$.
\end{proof}

\bigskip
The following two lemmas provides the $L^\infty$ estimate of a solution of the full perturbed Boltzmann equation \eqref{WPBE} with the boundary condition \eqref{eBEBC}. These two estimates play a crucial role in achieving our main goal. Recall the definition \ref{Backwardexit}, especially \eqref{timecycle}, \eqref{Xtrajcycle}, and \eqref{Vtrajcycle}, as well as the definition of the iterated integral \eqref{iterint}.
\begin{lemma} \label{Linfty1}
	Assume that $\mathcal{E}(F_0) \le \epsilon_0$, where $\epsilon_0 = \epsilon_0(\bar{M},T_1)$ is determined in Lemma \ref{Rfest}. Let $h(t,x,v)$ be a solution to the equation \eqref{WPBBE} with initial datum $h_0$ and the diffuse reflection boundary condition \eqref{WWPBEBC}. Let $(t,x,v) \in (0,T_1] \cross \Omega \cross \mathbb{R}^3$. Then it holds that
	\begin{align*}
		|h(t,x,v)| &\le \exp{e^{-\|\Phi\|_\infty}\frac{\nu_0}{4}\tilde{t}} \int_0^t \mathbf{1}_{\{t_1 \le s\}} \exp{-e^{-\|\Phi\|_\infty}\frac{\nu_0}{4}(t-s)}\left|K_wh(s,X(s),V(s))\right| ds\\
		&\quad + \exp{e^{-\|\Phi\|_\infty}\frac{\nu_0}{4}\tilde{t}} \int_0^t \mathbf{1}_{\{t_1 \le s\}} \exp{-e^{-\|\Phi\|_\infty}\frac{\nu_0}{4}(t-s)}\left|w\Gamma_+(f,f)(s,X(s),V(s))\right| ds\\
		&\quad +C_\Phi \exp\left\{e^{-\|\Phi\|_\infty}\nu_0\tilde{t}\right\}\exp\left\{-e^{-\|\Phi\|_\infty} \frac{\nu_0}{4} t \right\}\|h_0\|_{L^\infty_{x,v}}\\
		&\quad+ C_\Phi \exp\left\{e^{-\|\Phi\|_\infty}\nu_0\tilde{t}\right\}\left(\epsilon+\frac{C_{\epsilon,T_1}}{R}+\tilde{\epsilon} \ C_{R,\epsilon,T_1} \right)\sup_{0\le s \le t} \left[\|h(s)\|_{L^\infty_{x,v}}+\|h(s)\|_{L^\infty_{x,v}}^2+\|h(s)\|_{L^\infty_{x,v}}^3\right]\\
		&\quad+C_{\Phi,R, \delta_*,\epsilon,\tilde{\epsilon},T_1} \exp\left\{e^{-\|\Phi\|_\infty}\nu_0\tilde{t}\right\}\left[\mathcal{E}(F_0)^{\frac{1}{2}}+\mathcal{E}(F_0)\right],
	\end{align*}
	where $\epsilon>0$ and $\tilde{\epsilon}>0$ are arbitrary small, and $R>0$ is sufficiently large.
\end{lemma}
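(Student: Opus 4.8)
\textbf{Proof proposal for Lemma \ref{Linfty1}.}

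The plan is to run a double Duhamel iteration along the backward characteristic, using the semigroup $S_{G_f}$ associated with $R(f)$ rather than $S_{G_\nu}$, so that the $R(f)$ \emph{estimate} \eqref{Rfestimate} can be invoked to produce the exponential factor $\exp\{-e^{-\|\Phi\|_\infty}\tfrac{\nu_0}{4}(t-s)\}$. First I would write the mild formulation from \eqref{WPBBE}:
\begin{align*}
	h(t) = S_{G_f}(t)h_0 + \int_0^t S_{G_f}(t-s)\left(e^{-\Phi}K_wh(s)\right)ds + \int_0^t S_{G_f}(t-s)\left(e^{-\frac{\Phi}{2}}w\Gamma_+\left(\tfrac{h}{w},\tfrac{h}{w}\right)(s)\right)ds.
\end{align*}
By Lemma \ref{L61}, the first term is bounded by $C_\Phi\exp\{e^{-\|\Phi\|_\infty}\tfrac{3}{4}\nu_0\tilde t\}\exp\{-e^{-\|\Phi\|_\infty}\tfrac{\nu_0}{4}t\}\|h_0\|_{L^\infty_{x,v}}$, which is absorbed into the third line of the claimed bound (with room to spare in the exponent of $\tilde t$). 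For the $K_w$ and $\Gamma_+$ integral terms I would use the analogue of Lemma \ref{L41} for the operator $R(f)$ (the ``deviation of Lemma \ref{L41}'' already used in Lemma \ref{Phiest}) to split each along the back-time cycle into: (i) the no-bounce piece $\mathbf{1}_{\{t_1\le s\}}$, which after applying Lemma \ref{L61} directly to bound $S_{G_f}(t-s)$ in $L^\infty$ of the source gives exactly the first two lines of the statement, with the exponent weakened from $\tfrac{\nu_0}{2}$ to $\tfrac{\nu_0}{4}$ to leave decay margin; (ii) the finitely many intermediate bounce terms $\sum_{l=1}^{k-1}\int_{t_{l+1}}^{t_l}\cdots d\Sigma_l$; and (iii) the $k$-th bounce remainder $\mathbf{1}_{\{t_k>s\}}$.

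For the remainder term (iii), since $t\le T_1$, Lemma \ref{Lsmall} lets me choose $k=k(\epsilon,T_1)$ so that $\int_{\prod_{j=1}^{k-1}\mathcal{V}_j}\mathbf{1}_{\{t_k>s\}}\prod d\sigma_j\le\epsilon$, and Lemma \ref{L61} applied to $S_{G_f}(t-s)$ of the $K_w$- and $\Gamma_+$-sources (the latter controlled by Lemma \ref{Gam+est}) shows this contributes $\le C_\Phi\exp\{e^{-\|\Phi\|_\infty}\nu_0\tilde t\}\,\epsilon\,(\|h\|+\|h\|^2)$. For the intermediate terms (ii) I would insert a second Duhamel expansion of $K_wh(s)$ and $w\Gamma_+(\cdot,\cdot)(s)$ along a new back-time cycle, producing an inner velocity integral with kernel $k_w$. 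The resulting structure is identical to the terms $R_1$–$R_4$ in the proof of Theorem \ref{LT} and to $J_{11}$–$J_{22}$ in the proof of Lemma \ref{Rfest}: I split into the large-velocity cases ($|v|\ge R$ or the relevant intermediate $|v_l|\ge R$), which yield $\tfrac{C}{1+R}$ or $e^{-R^2/64}$ factors via Lemma \ref{Kest} and \eqref{sese}; the large transferred-momentum cases, which again give $e^{-R^2/64}$; and the fully bounded case $|v|,|v'|,|v''|\le R$, where I approximate $k_w$ by a smooth compactly supported $k_R$ up to $\tfrac1R$ and apply Lemma \ref{cpl} together with the time shift $X(s';s,X(s;\cdot),v')=X(s'-s+T_0;T_0,X(T_0;\cdot),v')$, splitting into the $\tfrac{\tilde\epsilon}{4M_1}$-neighborhoods of the degeneracy times (giving a factor $\tilde\epsilon\,C_{R,\epsilon,T_1}$) and the complement (giving a non-degenerate change of variables $v'\mapsto y$ with Jacobian bounded below by $\delta_*$, hence an $L^2_{x,v}$ bound on $h$). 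On the complement I would convert $\int_\Omega\int_{|v''|\le R}|h(s,y,v'')|^2\,dv''dy$ via Lemma \ref{L1L2cont} and Young's inequality into $\tfrac{C_\Phi}{R}\sup_s\|h(s)\|_{L^\infty_{x,v}}^2 + C_{R,\Phi,\delta_*}[\mathcal{E}(F_0)+\mathcal{E}(F_0)^2]$, exactly as in \eqref{L2cont}; for the $\Gamma_+$ piece the extra weight from Lemma \ref{Gam+est} produces the half-power $\mathcal{E}(F_0)^{1/2}$ and the cubic power $\|h\|^3$ that appear in the statement. Summing over $1\le l\le k-1$ (a $k(\epsilon,T_1)$-dependent constant), each exponential in $d\Sigma_l$ bounded by $1$ on $[0,\tilde t]$ and by the $R(f)$-decay on $[\tilde t,T_1)$, and carrying the uniform prefactor $\exp\{e^{-\|\Phi\|_\infty}\nu_0\tilde t\}$ coming from Lemma \ref{L61}, I arrive at precisely the asserted inequality.

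The main obstacle I anticipate is \emph{bookkeeping the exponential $\tilde t$-prefactor consistently}: every application of $S_{G_f}$ via Lemma \ref{L61} costs a factor $\exp\{e^{-\|\Phi\|_\infty}\tfrac{3}{4}\nu_0\tilde t\}$, and since the double Duhamel iteration composes two such semigroups in the intermediate and remainder terms, one must check that the total is no worse than $\exp\{e^{-\|\Phi\|_\infty}\nu_0\tilde t\}$ (which is why the time decay rate is degraded from $\tfrac{\nu_0}{2}$ to $\tfrac{\nu_0}{4}$: one keeps $\tfrac{\nu_0}{4}$ of decay in reserve to absorb the extra $\exp\{e^{-\|\Phi\|_\infty}\tfrac{\nu_0}{4}\tilde t\}$ in the cross terms). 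A secondary technical point is that Lemma \ref{L41} was stated for the operator $e^{-\Phi}\nu$, so I must first record its straightforward adaptation to the operator $R(f)$ (replacing $\exp\{-\int e^{-\Phi}\nu\}$ by $\exp\{-\int R(f)\}$ throughout $d\Sigma_l$), and then note that \eqref{Rfestimate} bounds the latter exponential from above by $\exp\{-e^{-\|\Phi\|_\infty}\tfrac{\nu_0}{2}(t_1-s)\}$ on $[\tilde t,T_1)$ and by $1$ on $[0,\tilde t)$ — this is the only place the entropy smallness $\mathcal{E}(F_0)\le\epsilon_0$ enters, via Lemma \ref{Rfest}.
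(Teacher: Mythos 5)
Your overall architecture is the same as the paper's: the mild form with $S_{G_f}$, Lemma \ref{L61} for the $h_0$-term, the $R(f)$-version of Lemma \ref{L41} to expand the two source integrals along the back-time cycle, Lemma \ref{Lsmall} for the $k$-th remainder, and, for the bounce terms, the velocity case splitting with Lemma \ref{Kest}, the $k_R$-approximation, the non-degenerate change of variables from Lemma \ref{cpl}, and the entropy control \eqref{L2cont} via Lemma \ref{L1L2cont}, with Lemma \ref{Gam+est} producing the $\mathcal{E}(F_0)^{1/2}$ and $\|h\|^3$ contributions. Two steps as you describe them, however, would not deliver the inequality as stated. First, for the no-bounce piece you propose to apply Lemma \ref{L61} and bound the source in $L^\infty$; that replaces $|K_wh(s,X(s),V(s))|$ and $|w\Gamma_+(f,f)(s,X(s),V(s))|$ by their $L^\infty_{x,v}$ norms, which is a strictly larger right-hand side than the first two lines of the lemma, and the pointwise evaluation along the characteristic is exactly what must be retained (Lemma \ref{Linfty2} later iterates on it and performs the change of variables in $v'$ there). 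The correct move, which the paper makes and which your closing remark about \eqref{Rfestimate} nearly is, is to keep the explicit no-bounce representation and bound only the damping factor $\exp\{-\int_s^t R(f)\,d\tau\}$ by $\exp\{e^{-\|\Phi\|_\infty}\tfrac{\nu_0}{4}\tilde t\}\exp\{-e^{-\|\Phi\|_\infty}\tfrac{\nu_0}{4}(t-s)\}$ as in \eqref{aa1}; no semigroup estimate is needed for that piece.

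Second, for the intermediate bounce terms you propose a second Duhamel expansion modelled on $R_1$--$R_4$ in the proof of Theorem \ref{LT}. This is unnecessary and off-target here: the source $K_wh(s,X_l(s),V_l(s))$ already carries one kernel $k_w$, and the diffuse-reflection measure $d\sigma_l$ already supplies the velocity integration in $v_l$ that feeds the non-degenerate change of variables $v_l\mapsto X_l(s)$; the paper simply repeats the single-kernel analysis of $J_{12}$ and $J_{22}$ from Lemma \ref{Rfest} (the schemes around \eqref{aa611} and \eqref{aa631}). If you genuinely expand $h$ once more inside these terms, the $K_w$-$K_w$ and $\Gamma_+$-$\Gamma_+$ compositions generate powers of $\|h(s)\|_{L^\infty_{x,v}}$ up to four and five, which do not appear in the bound you are asked to prove; that double expansion is exactly the content of the proof of Lemma \ref{Linfty2}, where the pointwise terms of the present lemma are fed back in. With these two corrections your plan coincides with the paper's proof; your bookkeeping of the $\exp\{e^{-\|\Phi\|_\infty}\nu_0\tilde t\}$ prefactor and of the degradation of the decay rate to $\nu_0/4$ is accurate.
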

\begin{proof}
	Let $(t,x,v) \in (0,T_1] \cross \Omega \cross \mathbb{R}^3$. By Duhamel principle, \eqref{WPBBE} implies that
	\begin{align*}
		h(t,x,v) = S_{G_f}(t)h_0+\int_0^t S_{G_f}(t-s)e^{-\Phi}K_wh(s)ds + \int_0^t S_{G_f}(t-s)e^{-\frac{\Phi}{2}}w\Gamma_+(f,f)(s)ds.
	\end{align*}
	First, by Lemma \ref{L61}, we obtain
	\begin{align} \label{aa4}
		|S_{G_f}(t)h_0| \le C_\Phi \exp{e^{-\|\Phi\|_\infty}\frac{3}{4}\nu_0 \tilde{t}} \exp\left\{-e^{-\|\Phi\|_\infty} \frac{\nu_0}{4} t \right\} \|h_0\|_{L^\infty_{x,v}}.
	\end{align}
	We note that
	\begin{align*}
		&S_{G_f}(t-s)e^{-\Phi}K_wh(s)\\
		& = \mathbf{1}_{\{t_1 \le s\}} \exp{-\int_s^t R(f)(\tau,X(\tau),V(\tau))d\tau}e^{-\Phi(X(s))}K_wh(s,X(s),V(s))\\
		&\quad +\frac{\exp{-\int_{t_1}^t R(f)(\tau, X(\tau),V(\tau))d\tau}}{\tilde{w}(x_1,V(t_1))} \sum_{l=1}^{k-1}\int_{\prod_{j=1}^{k-1}\mathcal{V}_j}\mathbf{1}_{\{t_{l+1}\le s < t_l\}} e^{-\Phi(X_l(s))}
		K_wh(s,X_l(s),V_l(s))d\Sigma^f_l(s)\\
		&\quad +\frac{\exp{-\int_{t_1}^t R(f)(\tau, X(\tau),V(\tau))d\tau}}{\tilde{w}(x_1,V(t_1))}\int_{\prod_{j=1}^{k-1}\mathcal{V}_j}\mathbf{1}_{\left\{t_k>s\right\}}\left(S_{G_f}(t-s)e^{-\Phi}K_wh(s)\right)(t_k,x_k,V_{k-1}(t_k))d\Sigma^f_{k-1}(t_k)\\
		&=:I_{11}+I_{12}+I_{13},
	\end{align*}
	where
	\begin{align*}
		d\Sigma^f_l(s) &= \left\{\prod_{j=l+1}^{k-1}d\sigma_j\right\}\left\{\exp{-\int_s^{t_l} R(f)(\tau,X_l(\tau).V_l(\tau))d\tau} \tilde{w}(x_l,v_l)d\sigma_l\right\}\\
		&\quad \cross \prod_{j=1}^{l-1}\left\{\exp{-\int_{t_{j+1}}^{t_j} R(f)(\tau, X_j(\tau),V_j(\tau))d\tau}d\sigma_j\right\}.
	\end{align*}
	By Lemma \ref{Rfest}, we have
	\begin{align} \label{aa1}
		&\exp{-\int_s^t R(f)(\tau,X(\tau),V(\tau))d\tau} \le \exp{e^{-\|\Phi\|_\infty}\frac{\nu_0}{4}\tilde{t}}\exp{-e^{-\|\Phi\|_\infty}\frac{\nu_0}{4}(t-s)}, \\
		&\exp{-\int_{t_1}^t R(f)(\tau,X(\tau),V(\tau))d\tau-\int_{t_2}^{t_1} R(f)(\tau,X_1(\tau),V_1(\tau))d\tau-\cdots-\int_{s}^{t_l} R(f)(\tau,X_l(\tau),V_l(\tau))d\tau}\nonumber\\ \label{aa2}
		&\le \exp{e^{-\|\Phi\|_\infty}\frac{\nu_0}{4}\tilde{t}}\exp{-e^{-\|\Phi\|_\infty}\frac{\nu_0}{4}(t-s)}.
	\end{align}
	For $I_{11}$, using \eqref{aa1}, we obtain
	\begin{align*}
		I_{11} \le \int_0^t\mathbf{1}_{\{t_1\le s\}}\exp{e^{-\|\Phi\|_\infty}\frac{\nu_0}{4}\tilde{t}}\exp{-e^{-\|\Phi\|_\infty}\frac{\nu_0}{4}(t-s)}\left|K_wh(s,X(s),V(s))\right|ds.
	\end{align*}
	Now, let us consider $I_{13}$. From Lemma \ref{L61}, we have
	\begin{align}
		&\left|\left(S_{G_f}(t-s)e^{-\Phi}K_wh(s)\right)(t_k,x_k,V_{k-1}(t_k))\right|\nonumber\\ \label{aa3}
		& \le C_\Phi \exp{e^{-\|\Phi\|_\infty}\frac{3}{4}\nu_0 \tilde{t}}\exp{-e^{-\|\Phi\|_\infty}\frac{\nu_0}{4}(t_k-s)}\|h(s)\|_{L^\infty_{x,v}}.
	\end{align}
	By Lemma \ref{Lsmall}, \eqref{aa2}, and \eqref{aa3}, $I_{13}$ is bounded by
	\begin{align*}
		&C_\Phi  \exp{e^{-\|\Phi\|_\infty}\nu_0 \tilde{t}}\exp{-e^{-\|\Phi\|_\infty}\frac{\nu_0}{4}(t-s)}\|h(s)\|_{L^\infty_{x,v}} \int_{\prod_{j=1}^{k-1}\mathcal{V}_{j}}\mathbf{1}_{\{t_k>s\}} \tilde{w}(x_k,v_{k})d\sigma_{k-1}d\sigma_{k-2}\cdots d\sigma_1\\ 
		& \le \epsilon \ C_\Phi  \exp{e^{-\|\Phi\|_\infty}\nu_0 \tilde{t}}\exp{-e^{-\|\Phi\|_\infty}\frac{\nu_0}{4}(t-s)}\|h(s)\|_{L^\infty_{x,v}},
	\end{align*}
	where we have taken $k = k(\epsilon,T_1)+1$.\\
	Let us consider $I_{12}$. We use a scheme similar to \eqref{aa611} to obtain
	\begin{align*}
		&\int_0^tI_{12} ds\\
		& \le \frac{C_{\epsilon,T_1}}{R} \exp{e^{-\|\Phi\|_\infty}\frac{\nu_0}{4}\tilde{t}}\sup_{0\le s \le t}\|h(s)\|_{L^\infty_{x,v}}+\tilde{\epsilon} \  C_{R,\Phi,\epsilon,T_1} \exp{e^{-\|\Phi\|_\infty}\frac{\nu_0}{4}\tilde{t}}  \sup_{0\le s \le t}\|h(s)\|_{L^\infty_{x,v}}\\
		& \quad + C_{R,\Phi, \epsilon,\tilde{\epsilon},T_1}\exp{e^{-\|\Phi\|_\infty}\frac{\nu_0}{4}\tilde{t}} \left[\mathcal{E}(F_0)^{\frac{1}{2}}+\mathcal{E}(F_0)\right].
	\end{align*}
	Hence it follows that
	\begin{align}
		&\left|\int_0^tS_{G_f}(t-s)e^{-\Phi}K_wh(s)ds\right| \nonumber\\
		& \le \int_0^t\mathbf{1}_{\{t_1\le s\}}\exp{e^{-\|\Phi\|_\infty}\frac{\nu_0}{4}\tilde{t}}\exp{-e^{-\|\Phi\|_\infty}\frac{\nu_0}{4}(t-s)}\left|K_wh(s,X(s),V(s))\right|ds\nonumber\\
		& \quad + C_\Phi \left(\epsilon+\frac{C_{\epsilon,T_1}}{R}+\tilde{\epsilon} \ C_{R,\epsilon,T_1} \right)\exp{e^{-\|\Phi\|_\infty}\nu_0\tilde{t}}\sup_{0\le s\le t}\|h(s)\|_{L^\infty_{x,v}}\nonumber\\ \label{aa5}
		& \quad + C_{R, \Phi, \epsilon, \tilde{\epsilon}, T_1}\exp{e^{-\|\Phi\|_\infty}\nu_0\tilde{t}}\left[\mathcal{E}(F_0)^{\frac{1}{2}}+\mathcal{E}(F_0)\right].
	\end{align}
	We note that
	\begin{align*}
		&S_{G_f}(t-s)e^{-\frac{\Phi}{2}}w\Gamma_+(f,f)(s)\\
		& = \mathbf{1}_{\{t_1 \le s\}} \exp{-\int_s^t R(f)(\tau,X(\tau),V(\tau))d\tau}e^{-\frac{{\Phi(X(s))}}{2}}w\Gamma_+(f,f)(s,X(s),V(s))\\
		&\quad +\frac{\exp{-\int_{t_1}^t R(f)(\tau, X(\tau),V(\tau))d\tau}}{\tilde{w}(x_1,V(t_1))} \sum_{l=1}^{k-1}\int_{\prod_{j=1}^{k-1}\mathcal{V}_j}\mathbf{1}_{\{t_{l+1}\le s < t_l\}} e^{-\frac{{\Phi(X_l(s))}}{2}}
		w\Gamma_+(f,f)(s,X_l(s),V_l(s))d\Sigma^f_l(s)\\
		&\quad +\frac{\exp{-\int_{t_1}^t R(f)(\tau, X(\tau),V(\tau))d\tau}}{\tilde{w}(x_1,V(t_1))}\int_{\prod_{j=1}^{k-1}\mathcal{V}_j}\mathbf{1}_{\left\{t_k>s\right\}}\left(S_{G_f}(t-s)e^{-\frac{\Phi}{2}}w\Gamma_+(f,f)(s)\right)(t_k,x_k,V_{k-1}(t_k))\\
		&\qquad \times d\Sigma^f_{k-1}(t_k)\\
		&=:I_{21}+I_{22}+I_{23}.
	\end{align*}
	For $I_{21}$, using \eqref{aa1}, we obtain
	\begin{align*}
		I_{21} \le \int_0^t\mathbf{1}_{\{t_1\le s\}}\exp{e^{-\|\Phi\|_\infty}\frac{\nu_0}{4}\tilde{t}}\exp{-e^{-\|\Phi\|_\infty}\frac{\nu_0}{4}(t-s)}\left|w\Gamma_+(f,f)(s,X(s),V(s))\right|ds.
	\end{align*}
	Now, let us consider $I_{23}$. From Lemma \ref{L61}, we have
	\begin{align}
		&\left|\left(S_{G_f}(t-s)e^{-\frac{\Phi}{2}}w\Gamma_+(f,f)(s)\right)(t_k,x_k,V_{k-1}(t_k))\right|\nonumber\\ \label{aa3}
		& \le C_\Phi \exp{e^{-\|\Phi\|_\infty}\frac{3}{4}\nu_0 \tilde{t}}\exp{-e^{-\|\Phi\|_\infty}\frac{\nu_0}{4}(t_k-s)}\|h(s)\|_{L^\infty_{x,v}}^2.
	\end{align}
	By Lemma \ref{Lsmall}, \eqref{aa2}, and \eqref{aa3}, $I_{23}$ is bounded by
	\begin{align*}
		&C_\Phi  \exp{e^{-\|\Phi\|_\infty}\nu_0 \tilde{t}}\exp{-e^{-\|\Phi\|_\infty}\frac{\nu_0}{4}(t-s)}\|h(s)\|_{L^\infty_{x,v}}^2 \int_{\prod_{j=1}^{k-1}\mathcal{V}_{j}}\mathbf{1}_{\{t_k>s\}} \tilde{w}(x_k,v_{k})d\sigma_{k-1}d\sigma_{k-2}\cdots d\sigma_1\\
		& \le\epsilon \ C_\Phi \exp{e^{-\|\Phi\|_\infty}\nu_0 \tilde{t}}\exp{-e^{-\|\Phi\|_\infty}\frac{\nu_0}{4}(t-s)}\|h(s)\|_{L^\infty_{x,v}}^2,
	\end{align*}
	where we have taken $k = k(\epsilon,T_1)+1$.\\
	Let us consider $I_{22}$. We use a scheme similar to \eqref{aa631} to obtain
	\begin{align*}
		&\int_0^t I_{22}ds\\
		& \le \frac{C_{\epsilon,T_1}}{R} \exp{e^{-\|\Phi\|_\infty}\frac{\nu_0}{4}\tilde{t}}\sup_{0\le s \le t}\left[\|h(s)\|^2_{L^\infty_{x,v}}+\|h(s)\|^3_{L^\infty_{x,v}}\right]+\tilde{\epsilon} \ C_{R,\Phi,\epsilon,T_1} \exp{e^{-\|\Phi\|_\infty}\frac{\nu_0}{4}\tilde{t}}  \sup_{0\le s \le t}\|h(s)\|_{L^\infty_{x,v}}^2\\
		& \quad + C_{R,\Phi, \epsilon,\tilde{\epsilon},T_1}\exp{e^{-\|\Phi\|_\infty}\frac{\nu_0}{4}\tilde{t}} \mathcal{E}(F_0).
	\end{align*}
	Hence it follows that
	\begin{align}
		&\left|\int_0^tS_{G_f}(t-s)e^{-\frac{\Phi}{2}}w\Gamma_+(f,f)(s)ds\right|\nonumber\\
		& \le \int_0^t\mathbf{1}_{\{t_1\le s\}}\exp{e^{-\|\Phi\|_\infty}\frac{\nu_0}{4}\tilde{t}}\exp{-e^{-\|\Phi\|_\infty}\frac{\nu_0}{4}(t-s)}\left|w\Gamma_+(f,f)(s,X(s),V(s))\right|ds\nonumber\\
		& \quad + C_\Phi \left(\epsilon+\frac{C_{\epsilon,T_1}}{R}+\tilde{\epsilon} \ C_{R,\epsilon,T_1} \right)\exp{e^{-\|\Phi\|_\infty}\nu_0\tilde{t}}\sup_{0\le s \le t}\left[\|h(s)\|^2_{L^\infty_{x,v}}+\|h(s)\|^3_{L^\infty_{x,v}}\right]\nonumber\\ \label{aa6}
		& \quad + C_{R, \Phi, \epsilon, \tilde{\epsilon}, T_1}\exp{e^{-\|\Phi\|_\infty}\nu_0\tilde{t}}\mathcal{E}(F_0).
	\end{align}
	Combining \eqref{aa4}, \eqref{aa5}, and \eqref{aa6}, the proof of Lemma \ref{Linfty1} is completed. 
\end{proof}
\bigskip
\begin{lemma} \label{Linfty2}
	Assume that $\mathcal{E}(F_0) \le \epsilon_0$, where $\epsilon_0 = \epsilon_0(\bar{M},T_1)$ is determined in Lemma \ref{Rfest}. Let $h(t,x,v)$ be a solution to the equation \eqref{WPBBE} with initial datum $h_0$ and the diffuse reflection boundary condition \eqref{WWPBEBC}. There exists a generic constant $C_7 \ge 1$ such that
	\begin{align*}
		\|h(t)\|_{L^\infty_{x,v}} &\le C_7 \exp{2e^{-\|\Phi\|_\infty}\nu_0 \tilde{t}}\|h_0\|_{L^\infty_{x,v}}\left(1+\int_0^t\|h(s)\|_{L^\infty_{x,v}}ds\right)\exp\left\{-e^{-\|\Phi\|_\infty} \frac{\nu_0}{8} t \right\}\\
		& \quad +C_7 \exp{2e^{-\|\Phi\|_\infty}\nu_0 \tilde{t}}\left(\epsilon+\frac{C_{\epsilon,T_1}}{R}+\tilde{\epsilon} \ C_{R,\epsilon,T_1} \right) \sup_{0\le s \le t} \Big[\|h(s)\|_{L^\infty_{x,v}}+\|h(s)\|_{L^\infty_{x,v}}^2+\|h(s)\|_{L^\infty_{x,v}}^3\\
		&\qquad +\|h(s)\|_{L^\infty_{x,v}}^4+\|h(s)\|_{L^\infty_{x,v}}^5\Big]\\
		&\quad + C_{\Phi,R, \delta_*,\epsilon,\tilde{\epsilon},T_1}\exp{2e^{-\|\Phi\|_\infty}\nu_0 \tilde{t}}\left[\mathcal{E}(F_0)^{\frac{1}{2}}+\mathcal{E}(F_0)+\mathcal{E}(F_0)^2\right]
	\end{align*}
	for all $0\le t \le T_1$, where $\epsilon>0$ and $\tilde{\epsilon}>0$ are arbitrarily small, and $R>0$ is sufficiently large.
\end{lemma}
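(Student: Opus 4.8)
The plan is to bootstrap Lemma \ref{Linfty1} by inserting its own output back into the kernel term $K_wh$ and the gain term $w\Gamma_+(f,f)$ that still survive on the right-hand side; this is the same "double Duhamel / iterate once more" device already used in the proof of Theorem \ref{LT}. First I would start from the conclusion of Lemma \ref{Linfty1}, which has already absorbed the semigroup $S_{G_f}$ and produced an error of the form
\begin{align*}
	\exp\{e^{-\|\Phi\|_\infty}\nu_0\tilde t\}\Big[\tfrac{C_{\epsilon,T_1}}{R}+\epsilon+\tilde\epsilon C_{R,\epsilon,T_1}\Big]\sup_{0\le s\le t}\big[\|h(s)\|+\|h(s)\|^2+\|h(s)\|^3\big]+\text{(entropy terms)}+\text{(initial datum)},
\end{align*}
plus the two "good" integrals $\int_0^t \mathbf 1_{\{t_1\le s\}}e^{-e^{-\|\Phi\|_\infty}\frac{\nu_0}{4}(t-s)}\big(|K_wh(s,X(s),V(s))|+|w\Gamma_+(f,f)(s,X(s),V(s))|\big)\,ds$, all carrying the prefactor $\exp\{e^{-\|\Phi\|_\infty}\frac{\nu_0}{4}\tilde t\}$. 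Into each of these two surviving integrals I would substitute, at the interior point $(s,X(s),V(s))$, the bound for $|h(s,\cdot,\cdot)|$ itself given again by Lemma \ref{Linfty1} (now applied with $h(s)$ as data and time run from $s$): this replaces $|K_wh(s')|$ by a convolution-in-$v$ of $|K_w(K_wh)|$, of $|K_w(w\Gamma_+)|$, and of the already-controlled lower-order pieces.

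The key steps, in order: (i) write $K_w h = \int k_w(v,v')h\,dv'$ via Lemma \ref{Kest} and plug the Lemma \ref{Linfty1} representation of $h(s,X(s),v')$ back in, producing a double-kernel term $\int\int |k_w(V(s),v')||k_w(V'(s'),v'')||h(s',X'(s'),v'')|\,dv''dv'$ exactly of the type $R_1$ in the proof of Theorem \ref{LT}; (ii) handle this term by the same three-case decomposition --- Case 1 $|v|\ge R$ (gain $\tfrac1R$ from Lemma \ref{Kest}), Case 2 large relative velocity (gain $e^{-R^2/64}$ via the $e^{\frac1{16}|v-v'|^2}$ weight in Lemma \ref{Kest}), Case 3 all velocities bounded, where one replaces $k_w$ by a smooth compactly supported $k_R$ up to an $\tfrac1R$ error and then uses Lemma \ref{cpl} to change variables $v'\mapsto X(s'-s+T_0;T_0,\dots,v')$ with Jacobian bounded below by $\delta_*$ away from a $\tilde\epsilon$-neighbourhood of the degenerate times --- which turns the leftover piece into $\frac{C_{R,\dots}}{\delta_*}\int_0^{T_1}\big(\int_\Omega\int_{|v''|\le 3R}|h(s',y,v'')|^2\big)^{1/2}ds'$, and then by Lemma \ref{L1L2cont} and Young's inequality this $L^2$ quantity is $\le \frac{C_\Phi}{R}\sup\|h\|^2+C_{R,\dots}[\mathcal E(F_0)+\mathcal E(F_0)^2]$; (iii) do the entirely parallel computation for the $K_w\!\cdot w\Gamma_+$ cross term and the $w\Gamma_+$-on-$w\Gamma_+$ term, using Lemma \ref{Gam+est} to write $|w\Gamma_+(f,f)|\lesssim \frac{\|h\|}{1+|v|}\big(\int (1+|\eta|)^4|f|^2\big)^{1/2}$ --- each extra factor of $\|h\|$ raises the polynomial degree, which is exactly why degrees up to $\|h\|^5$ appear: $K_w\to K_w$ gives degree $1$, $K_w\to\Gamma_+$ gives degree $2$, $\Gamma_+\to K_w$ gives degree $3$, $\Gamma_+\to\Gamma_+$ gives degree up to $5$; (iv) the $t_1>s$ boundary-bounce contributions of the second application of Lemma \ref{Linfty1} are handled by Lemma \ref{Lsmall}, choosing $k=k(\epsilon,T_1)+1$ so the iterated measure is $\le\epsilon$, as in $I_{13}$, $I_{23}$ of Lemma \ref{Linfty1}; (v) the time integrals $\int_0^t e^{-e^{-\|\Phi\|_\infty}\frac{\nu_0}{4}(t-s)}\big(\cdots\big)ds$ are absorbed: the "$\sup\|h(s)\|$ times a finite integral" pieces keep the $\exp\{-e^{-\|\Phi\|_\infty}\frac{\nu_0}{8}t\}$ rate after splitting one copy of the exponential, and the single clean copy of $|K_wh(s,X(s),V(s))|$ remaining can be bounded by $\nu(V(s))\|h(s)\|_{L^\infty_{x,v}}$, contributing the $\big(1+\int_0^t\|h(s)\|ds\big)$ factor after one more use of $\nu(V(s))\le C(1+|v|)^\gamma$ absorbed into the exponential --- this is where the $(1+\int_0^t\|h\|ds)\exp\{-e^{-\|\Phi\|_\infty}\frac{\nu_0}{8}t\}$ structure of the stated bound comes from. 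Collecting all prefactors, the worst is $\exp\{e^{-\|\Phi\|_\infty}\frac{\nu_0}{4}\tilde t\}\cdot\exp\{e^{-\|\Phi\|_\infty}\frac{3\nu_0}{4}\tilde t\}=\exp\{e^{-\|\Phi\|_\infty}\nu_0\tilde t\}$ from one iteration and a second factor of the same from the second, giving $\exp\{2e^{-\|\Phi\|_\infty}\nu_0\tilde t\}$ as in the statement.

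I expect the main obstacle to be bookkeeping the polynomial growth in $\bar M$ (through $\|h\|$) consistently while keeping the small coefficient $\epsilon+\frac{C_{\epsilon,T_1}}{R}+\tilde\epsilon C_{R,\epsilon,T_1}$ genuinely free of $\bar M$: every time one substitutes Lemma \ref{Linfty1} into itself, its own error term (which already contains $\sup\|h\|^3$) gets multiplied by another $\|h\|$ or $\|h\|^2$ coming from the outer $K_w$ or $\Gamma_+$, so one must verify that the "small" coefficient does not pick up a hidden power of $\bar M$ from, e.g., the Jensen/Young step $\mathcal E(F_0)\cdot\|h\|\le\frac1R\|h\|^2+C_R\mathcal E(F_0)^2$ --- this is the step that forces $\mathcal E(F_0)^2$ into the final bound and must be done before one splits off the $\bar M$ dependence. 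A secondary subtlety is that Lemma \ref{L61}'s estimate for $S_{G_f}$ holds only on $[0,T_1]$ and carries the factor $\exp\{e^{-\|\Phi\|_\infty}\frac34\nu_0\tilde t\}$ rather than a clean contraction; since $\tilde t$ depends only on $M_0$ (via $\tilde t=\frac{2}{\nu_0e^{-\|\Phi\|_\infty}}\log(C_4M_0)$ in Lemma \ref{Rfest}) and not on $T_1$ or the solution, this factor is a harmless $M_0$-dependent constant, but one must be careful that the double iteration does not accidentally raise it to a $T_1$-dependent power --- it does not, because exactly two applications of Lemma \ref{Linfty1} suffice to eliminate every remaining occurrence of $h$ except the single clean $K_wh$ term whose time-exponential is integrable.

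Once this lemma is in hand, the remainder of Section \ref{LASEC} (the nonlinear asymptotic stability, Theorem \ref{mainresult2}) follows by feeding the a priori assumption \eqref{Ape}, $\sup_{0\le t\le T}\|h(t)\|_{L^\infty_{x,v}}\le\bar M$, into the right-hand side: choosing first $\epsilon,\tilde\epsilon$ small and $R$ large so that $C_7\exp\{2e^{-\|\Phi\|_\infty}\nu_0\tilde t\}(\epsilon+\frac{C_{\epsilon,T_1}}{R}+\tilde\epsilon C_{R,\epsilon,T_1})\bar M^5\le \frac{\bar M}{4}$ and then $\mathcal E(F_0)\le\bar\epsilon_0$ small enough that the entropy terms are $\le\frac{\bar M}{4}$, one obtains a Grönwall-type inequality $\|h(t)\|\le C_{M_0}(1+\int_0^t\|h(s)\|ds)\exp\{-e^{-\|\Phi\|_\infty}\frac{\nu_0}{8}t\}+E$ with $E$ small, which closes the a priori bound with a strictly better constant and, after a sufficiently large time $T_1$, brings $\|h\|$ below the small-data threshold $\delta_0$ of Theorem \ref{mainresult1}; combining with Theorem \ref{LocExist} for short-time existence and Theorem \ref{mainresult1} for $t\ge T_1$ yields global existence and the stated decay $\|wf(t)\|_{L^\infty_{x,v}}\le \tilde C_L M_0^5\exp\{\tilde C_L M_0^5/(\nu_0 e^{-\|\Phi\|_\infty})\}e^{-\lambda_L t}$.
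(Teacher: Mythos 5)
Your overall plan is the paper's proof: bootstrap Lemma \ref{Linfty1} by substituting it into the surviving $K_wh$ and $w\Gamma_+(f,f)$ integrals, run the three-case kernel decomposition from Lemma \ref{Kest} (gains $1/R$, $e^{-R^2/64}$, and $\tilde\epsilon$ via Lemma \ref{cpl} with Jacobian $\delta_*$), control the leftover $L^2$ quantity by Lemma \ref{L1L2cont} plus Young exactly as in \eqref{L2cont}, use Lemma \ref{Lsmall} for the bounce terms, and track the polynomial degrees up to $\|h\|_{L^\infty_{x,v}}^5$; your accounting of the prefactor $\exp\{2e^{-\|\Phi\|_\infty}\nu_0\tilde t\}$ is also consistent with the paper. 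However, step (v) contains a genuine misstep. No ``single clean copy of $K_wh$'' survives the second application of Lemma \ref{Linfty1} (your own steps (i)--(ii) expand it into the double-kernel term), and if you did leave it and bounded it by $\nu(V(s))\|h(s)\|_{L^\infty_{x,v}}$, the time integral would produce a term $C\sup_{0\le s\le t}\|h(s)\|_{L^\infty_{x,v}}$ with an $O(1)$ constant: such a term is absent from the statement of Lemma \ref{Linfty2}, where every power of $\sup\|h\|$ carries the small factor $\epsilon+\frac{C_{\epsilon,T_1}}{R}+\tilde\epsilon\,C_{R,\epsilon,T_1}$, and it would make the absorption in Theorem \ref{mainresult2} impossible since $\bar M$ is large. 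Moreover, such a bound carries no factor $\|h_0\|_{L^\infty_{x,v}}$, so it cannot be the source of the first term in the stated estimate.

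The correct mechanism, which your proposal misses, is in the gain-term contribution $I_2$: by Lemma \ref{Gam+est}, $|w\Gamma_+(f,f)(s,X(s),V(s))|\lesssim \|h(s)\|_{L^\infty_{x,v}}\big(\int_{\mathbb{R}^3}(1+|\eta|)^{-2\beta+4}|h(s,X(s),\eta)|^2d\eta\big)^{1/2}$, and when Lemma \ref{Linfty1} is applied to the inner $h(s,X(s),\eta)$ (for $|\eta|\le R$), its initial-datum piece $C\exp\{e^{-\|\Phi\|_\infty}\nu_0\tilde t\}\exp\{-e^{-\|\Phi\|_\infty}\frac{\nu_0}{4}s\}\|h_0\|_{L^\infty_{x,v}}$ gets multiplied by the outer factor $\|h(s)\|_{L^\infty_{x,v}}$; integrating $\int_0^t \exp\{-e^{-\|\Phi\|_\infty}\frac{\nu_0}{4}(t-s)\}\,\|h(s)\|_{L^\infty_{x,v}}\exp\{-e^{-\|\Phi\|_\infty}\frac{\nu_0}{4}s\}\,ds\le \exp\{-e^{-\|\Phi\|_\infty}\frac{\nu_0}{4}t\}\int_0^t\|h(s)\|_{L^\infty_{x,v}}ds$ is precisely what produces the $\|h_0\|_{L^\infty_{x,v}}\big(1+\int_0^t\|h(s)\|_{L^\infty_{x,v}}ds\big)\exp\{-e^{-\|\Phi\|_\infty}\frac{\nu_0}{8}t\}$ term (the companion ``$1$'' coming from the $K_w$-branch $I_{11}$, where the inner initial-datum piece is integrated against $\int|k_w|dv'\le C$ rather than against $\|h(s)\|$). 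With step (v) replaced by this argument --- i.e.\ the inner $K_wh$ and $w\Gamma_+$ pieces of the second application are again run through the casework and the entropy bound \eqref{L2cont}, never through a crude $\nu\|h\|$ bound --- your proof coincides with the paper's.
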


\begin{proof}
	Let $(t,x,v) \in (0,T_1] \cross \Omega \cross \mathbb{R}^3$. In Lemma \ref{Linfty1}, we have to estimate the following terms:
	\begin{align*}
		&I_1:=  \int_0^t \mathbf{1}_{\{t_1 \le s\}} \exp{-e^{-\|\Phi\|_\infty}\frac{\nu_0}{4}(t-s)}\left|K_wh(s,X(s),V(s))\right| ds,\\
		& I_2:= \int_0^t \mathbf{1}_{\{t_1 \le s\}} \exp{-e^{-\|\Phi\|_\infty}\frac{\nu_0}{4}(t-s)}\left|w\Gamma_+(f,f)(s,X(s),V(s))\right| ds.
	\end{align*}
	First, let us estimate $I_1$.
	\begin{align} \label{aa7}
		I_1 \le  \int_0^t \mathbf{1}_{\{t_1 \le s\}} \exp{-e^{-\|\Phi\|_\infty}\frac{\nu_0}{4}(t-s)}\int_{\mathbb{R}^3}|k_w(V(s),v')||h(s,X(s),v')|dv'ds.
	\end{align}
	Applying Lemma \ref{Linfty1} to $|h(s,X(s),v')|$ in \eqref{aa7}, we obtain
	\begin{align*}
		I_1 &\le C_\Phi  \exp\left\{e^{-\|\Phi\|_\infty}\nu_0\tilde{t}\right\}\int_0^t \exp{-e^{-\|\Phi\|_\infty}\frac{\nu_0}{4}(t-s)} \bigg[\exp\left\{-e^{-\|\Phi\|_\infty} \frac{\nu_0}{4} s \right\}\|h_0\|_{L^\infty_{x,v}}\\
		&\qquad +\left(\epsilon+\frac{C_{\epsilon,T_1}}{R}+C_{R,\epsilon,T_1}\ \tilde{\epsilon}\right)\sup_{0\le s \le t} \left[\|h(s)\|_{L^\infty_{x,v}}+\|h(s)\|_{L^\infty_{x,v}}^2+\|h(s)\|_{L^\infty_{x,v}}^3\right]\\
		& \qquad + C_{\Phi,R, \delta_*,\epsilon,\tilde{\epsilon},T_1}\left[\mathcal{E}(F_0)^{\frac{1}{2}}+\mathcal{E}(F_0)\right] \bigg]ds\\
		& \quad +C_\Phi \exp{e^{-\|\Phi\|_\infty}\frac{\nu_0}{4}\tilde{t}} \int_0^t \int_0^s \int_{\mathbb{R}^3}\int_{\mathbb{R}^3} \mathbf{1}_{\{t_1 \le s\}} \mathbf{1}_{\{t_1' \le s'\}}\exp{-e^{-\|\Phi\|_\infty}\frac{\nu_0}{4}(t-s')} |k_w(V(s),v')|\\
		& \qquad \times |k_w(V'(s'),v'')||h(s',X'(s'),v'')|dv''dv'ds'ds\\
		& \quad + C_\Phi \exp{e^{-\|\Phi\|_\infty}\frac{\nu_0}{4}\tilde{t}}\int_0^t \int_0^s \int_{\mathbb{R}^3}\mathbf{1}_{\{t_1 \le s\}} \mathbf{1}_{\{t_1' \le s'\}}\exp{-e^{-\|\Phi\|_\infty}\frac{\nu_0}{4}(t-s')} |k_w(V(s),v')|\\
		& \qquad \times \left|w\Gamma_+(f,f)(s',X'(s'),V'(s')\right|dv'ds'ds\\
		& =: I_{11}+I_{12}+I_{13},
	\end{align*}
	where $t_1' = t_1(s,X(s),V(s))$.\\
	By simple computation, we deduce
	\begin{align}
		I_{11} &\le  C_\Phi  \exp\left\{e^{-\|\Phi\|_\infty}\nu_0\tilde{t}\right\}  \bigg[\exp{-e^{-\|\Phi\|_\infty}\frac{\nu_0}{8}t}\|h_0\|_{L^\infty_{x,v}}\nonumber\\
		& \qquad +\left(\epsilon+\frac{C_{\epsilon,T_1}}{R}+C_{R,\epsilon,T_1}\ \tilde{\epsilon}\right)\sup_{0\le s \le t} \left[\|h(s)\|_{L^\infty_{x,v}}+\|h(s)\|_{L^\infty_{x,v}}^2+\|h(s)\|_{L^\infty_{x,v}}^3\right]\nonumber\\\label{aa8}
		& \qquad + C_{\Phi,R, \delta_*,\epsilon,\tilde{\epsilon},T_1}\left[\mathcal{E}(F_0)^{\frac{1}{2}}+\mathcal{E}(F_0)\right] \bigg].
	\end{align}
	We divide three cases to estimate $I_{12}$.\\
	\newline
	$\mathbf{Case\ 1:}$ $|v| \ge R$.\\
	From Lemma \ref{Kest}, we compute that
	\begin{align*}
		I_{12} \le \frac{C_\Phi}{R}\exp\left\{e^{-\|\Phi\|_\infty} \frac{\nu_0}{4} \tilde{t} \right\}\sup_{0\le s \le t}\|h(s)\|_{L^\infty_{x,v}}.
	\end{align*}
	\newline
	$\mathbf{Case\ 2:}$ $|v| \le R$, $|v'| \ge 2R$ or $|v'|\le 2R$, $|v''| \ge 3R$ with $R \gg 2\sqrt{2\|\Phi\|_\infty}$.\\
	Note that either $|v-v'| \ge R$ or $|v'-v''| \ge R$. From \eqref{sese}, either one of the followings holds:
	\begin{align*}
		&|V(s)-v'| \ge |v-v'|-|V(s)-v| \ge R-\frac{R}{2}=\frac{R}{2},\\
		&|V'(s')-v''| \ge |v'-v''|-|V'(s')-v'| \ge R-\frac{R}{2}=\frac{R}{2}.
	\end{align*}
	Then we have either one of the followings:
	\begin{equation} \label{aa9}
	\begin{aligned}
		&|k_w(V(s),v')| \le e^{-\frac{R^2}{64}}|k_w(V(s),v')|e^{\frac{1}{16}|V(s)-v'|^2},\\
		&|k_w(V'(s'),v'')| \le e^{-\frac{R^2}{64}}|k_w(V'(s'),v'')|e^{\frac{1}{16}|V'(s')-v''|^2}.
	\end{aligned}
	\end{equation}
	This yields from Lemma \ref{Kest},
	\begin{equation} \label{aa10}
	\begin{aligned}
		&\int_{|v'| \ge 2R} |k_w(V(s),v')|e^{\frac{1}{16}|V(s)-v'|^2}dv' < C,\\
		&\int_{|v''| \ge 3R} |k_w(V'(s'),v'')|e^{\frac{1}{16}|V'(s')-v''|^2}dv'' < C,
	\end{aligned}
	\end{equation}
	for some constant $C$.\\
	It follows from \eqref{aa9} and \eqref{aa10} that
	\begin{align*}
		I_{12} \le C_\Phi e^{-\frac{R^2}{64}}\exp{e^{-\|\Phi\|_\infty} \frac{\nu_0}{4}\tilde{t}}\sup_{0\le s \le t}\|h(s)\|_{L^\infty_{x,v}}.
	\end{align*}
	\newline
	$\mathbf{Case\ 3:}$ $|v| \le R$, $|v'| \le 2R$, $|v''| \le 3R$\\
	Since $k_w(v,v')$ has possible integrable singularity of $\frac{1}{|v-v'|}$, we can choose smooth function $k_R(v,v')$ with compact support such that
	\begin{align*}
		\sup_{|v|\le 3R}\int_{|v'| \le 3R} \left|k_R(v,v')-k_w(v,v')\right|dv' \le \frac{1}{R}.
	\end{align*}
	We split 
	\begin{align*}
		k_w(V(s),v')k_w(V'(s'),v'') &= \left\{k_w(V(s),v')-k_R(V(s),v')\right\}k_w(V'(s'),v'')\\&\quad + \{k_w(V'(s'),v'')-k_R(V'(s'),v'')\}k_R(V(s),v')\\
		&\quad +k_R(V(s),v')k_R(V'(s'),v'').
	\end{align*}
	Then $I_{12}$ in this case is bounded by 
	\begin{align}
		&\frac{C_\Phi}{R}\exp{e^{-\|\Phi\|_\infty} \frac{\nu_0}{4}\tilde{t}}\sup_{0 \le s \le t}\|h(s')\|_{L^\infty_{x,v}} \nonumber \\ \label{aa11} 
		&\quad + C_{R,\Phi}\exp{e^{-\|\Phi\|_\infty} \frac{\nu_0}{4}\tilde{t}}\int_0^t \int_0^s \exp\left\{-e^{-\|\Phi\|_\infty}\frac{\nu_0}{4}(t-s')\right\} \int_{|v'|\le 2R, |v''|\le 3R}|h(s',X'(s'),v'')|dv''dv'ds'ds,
	\end{align}
	where we have used the fact $|k_R(V(s),v')||k_R(V'(s'),v'')| \le C_R$.\\
	In this case, we recall that $X'(s') = X(s';s,X(s;t,x,v),v')$.
	Since the potential is time dependent, we have
	\begin{align*}
		X(s';s,X(s;t,x,v),v') = X(s'-s+T_1;T_1,X(T_1;t-s+T_1,x,v),v')
	\end{align*}
	for all $0\le s' \le s \le t$.\\
	By Lemma \ref{cpl}, the term \eqref{aa11} becomes
	\begin{align} 
		&C_{R,\Phi}\exp{e^{-\|\Phi\|_\infty} \frac{\nu_0}{4}\tilde{t}}\sum_{i_1}^{M_1}\sum_{I_2}^{(M_2)^3}\sum_{I_3}^{(M_3)^3} \int_0^t \mathbf{1}_{\{X(T_1;t-s+T_1,x,v)\in \mathcal{P}_{I_2}^{\Omega}\}}(s)\int_0^s\mathbf{1}_{\mathcal{P}_{i_1}^{T_1}}(s'-s+T_1) \exp\left\{-e^{-\|\Phi\|_\infty}\frac{\nu_0}{4}(t-s')\right\} \nonumber \\ \label{aa12}
		& \quad \times \int_{|v'|\le 2R, |v''|\le 3R}\mathbf{1}_{\mathcal{P}_{I_3}^v}(v')|h(s',X(s'-s+T_1;T_1,X(T_1;t-s+T_1,x,v),v'),v'')|dv''dv'ds'ds.
	\end{align}
	From Lemma \ref{cpl}, we have the following partitions:
	\begin{align*}
		&\bigg\{(s'-s+T_1,X(T_1;t-s+T_1,x,v),v')\in \mathcal{P}_{i_1}^{T_1}\cross \mathcal{P}_{I_2}^{\Omega}\cross \mathcal{P}_{I_3}^{v}\\
		&\qquad : \det\left(\frac{dX}{dv'}(s'-s+T_1;T_1,X(T_1;t-s+T_1,x,v),v')\right)=0\bigg\}\\
		& \subset \bigcup_{j=1}^3 \bigg\{(s'-s+T_1,X(T_1;t-s+T_1,x,v),v')\in \mathcal{P}_{i_1}^{T_1}\cross \mathcal{P}_{I_2}^{\Omega}\cross \mathcal{P}_{I_3}^{v}\\
		& \qquad :s'-s+T_1\in \left(t_{j,i_1,I_2,I_3}-\frac{\tilde{\epsilon}}{4M_1},t_{j,i_1,I_2,I_3}+\frac{\tilde{\epsilon}}{4M_1}\right)\bigg\}.
	\end{align*}
	Thus for each $i_1$,$I_2$, and $I_3$, we split $\mathbf{1}_{\mathcal{P}_{i_1}^{T_1}}(s'-s+T_1)$ as
	\begin{align} \label{aa13}
		&\mathbf{1}_{\mathcal{P}_{i_1}^{T_1}}(s'-s+T_1)\mathbf{1}_{\cup_{j=1}^3(t_{j,i_1,I_2,I_3}-\frac{\tilde{\epsilon}}{4M_1},t_{j,i_1,I_2,I_3}+\frac{\tilde{\epsilon}}{4M_1})}(s'-s+T_1)\\ \label{aa14}
		& +\mathbf{1}_{\mathcal{P}_{i_1}^{T_1}}(s'-s+T_1)\left\{1-\mathbf{1}_{\cup_{j=1}^3(t_{j,i_1,I_2,I_3}-\frac{\tilde{\epsilon}}{4M_1},t_{j,i_1,I_2,I_3}+\frac{\tilde{\epsilon}}{4M_1})}(s'-s+T_1)\right\}.
	\end{align}
	$\mathbf{Case\ 3 \ (\romannumeral 1):}$ The integration \eqref{aa12} corresponding to \eqref{aa13} is bounded by
	\begin{align}
		&C_{R,\Phi}\exp{e^{-\|\Phi\|_\infty} \frac{\nu_0}{4}\tilde{t}}\sum_{i_1}^{M_1}\sum_{I_2}^{(M_2)^3}\sum_{I_3}^{(M_3)^3} \sum_{j=1}^3 \int_0^t \mathbf{1}_{\{X(T_1;t-s+T_1,x,v)\in \mathcal{P}_{I_2}^{\Omega}\}}(s) \nonumber \\
		&\quad \times \underbrace{\int_0^s\mathbf{1}_{\mathcal{P}_{i_1}^{T_1}}(s'-s+T_1)\mathbf{1}_{(t_{j,i_1,I_2,I_3}-\frac{\tilde{\epsilon}}{4M_1},t_{j,i_1,I_2,I_3}+\frac{\tilde{\epsilon}}{4M_1})}(s'-s+T_1)}_{(*9)} \exp\left\{-e^{-\|\Phi\|_\infty} \frac{\nu_0}{4} (t-s') \right\} \nonumber \\ \label{aa15}
		&\quad \times \int_{|v'|\le 2R}\mathbf{1}_{\mathcal{P}_{I_3}^v}(v')\int_{|v''|\le 3R}|h(s',X(s'-s+T_1;T_1,X(T_1;t-s+T_1,x,v),v'),v'')|dv''dv'ds'ds.
	\end{align}
	Here, the term $(*9)$ is bounded by
	\begin{align} \label{qq41}
		\int_0^s \mathbf{1}_{\mathcal{P}_{i_1}^{T_1}}(s'-s+T_1)\mathbf{1}_{(t_{j,i_1,I_2,I_3}-\frac{\tilde{\epsilon}}{4M_1},t_{j,i_1,I_2,I_3}+\frac{\tilde{\epsilon}}{4M_1})}(s'-s+T_1)ds'  \le \frac{\tilde{\epsilon}}{2M_1}.
	\end{align}
	From the partition of the time interval $[0,T_1]$ and the velocity domain $[-4R,4R]^3$ in Lemma \ref{cpl}, we have
	\begin{equation} \label{qq42}
	\begin{aligned}
		&\sum_{I_2}^{(M_2)^3}\mathbf{1}_{\{X(t-s+T_1,x,v)\in \mathcal{P}_{I_2}^{\Omega}\}}(s) \le \mathbf{1}_{\{0\le s \le T_1\}}(s),\\
		&\sum_{I_3}^{(M_3)^3} \mathbf{1}_{\mathcal{P}_{I_3}^v}(v')\mathbf{1}_{\{|v'|\le 2R\}}(v') = \mathbf{1}_{\{|v'|\le 2R\}}(v').
	\end{aligned}
	\end{equation}
	From \eqref{qq41} and \eqref{qq42}, \eqref{aa15} is bounded by
	\begin{align*}
		\tilde{\epsilon} \ C_{R,\Phi}\exp{e^{-\|\Phi\|_\infty} \frac{\nu_0}{4}\tilde{t}} \sup_{0\le s \le t} \|h(s)\|_{L^\infty_{x,v}}.
	\end{align*}
	\newline
	$\mathbf{Case\ 3 \ (\romannumeral 2):}$ The integration \eqref{aa12} corresponding to \eqref{aa14} is bounded by
	\begin{align}
		&C_{R,\Phi}\exp{e^{-\|\Phi\|_\infty} \frac{\nu_0}{4}\tilde{t}}\sum_{i_1}^{M_1}\sum_{I_2}^{(M_2)^3}\sum_{I_3}^{(M_3)^3} \int_0^t \mathbf{1}_{\{X(T_1;t-s+T_1,x,v)\in \mathcal{P}_{I_2}^{\Omega}\}}(s)\int_0^s\mathbf{1}_{\mathcal{P}_{i_1}^{T_1}}(s'-s+T_1) \nonumber \\
		& \times \left\{1-\mathbf{1}_{\cup_{j=1}^3(t_{j,i_1,I_2,I_3}-\frac{\tilde{\epsilon}}{4M_1},t_{j,i_1,I_2,I_3}+\frac{\tilde{\epsilon}}{4M_1})}(s'-s+T_1)\right\}\exp\left\{-e^{-\|\Phi\|_\infty} \frac{\nu_0}{4} (t-s') \right\} \nonumber \\ \label{aa16}
		&\times \underbrace{\int_{|v'|\le 2R}\mathbf{1}_{\mathcal{P}_{I_3}^v}(v')\int_{|v''|\le 3R}|h(s',X(s'-s+T_1;T_1,X(T_1;t-s+T_1,x,v),v'),v'')|dv''dv'}_{(\# 9)}ds'ds.
	\end{align}
	By Lemma \ref{cpl}, we have made a change of variables $v' \rightarrow y:=X(s'-s+T_1;T_1,X(T_1;t-s+T_1,x,v),v')$ satisfying 
	\begin{align*}
		\det\left(\frac{dX}{dv'}(s'-s+T_1;T_1,X(T_1;t-s+T_1,x,v),v')\right)> \delta_*
	\end{align*}
	and the term $(\# 9)$ is bounded by
	\begin{align*}
		&\int_{|v'|\le 2R}\int_{|v''|\le 3R}|h(s',X(s'-s+T_1;T_1,X(T_1;t-s+T_1,x,v),v'),v'')|dv''dv'\\
		& \le \frac{C_{R,\Phi}}{\delta_*} \left(\int_{\Omega} \int_{|v''|\le 3R}|h(s',y,v'')|^2dv''dy\right)^{\frac{1}{2}}.
	\end{align*}
	Thus \eqref{aa16} is bounded by
	\begin{align*}
		C_{R,\Phi,M_1,M_2,M_3,\delta_*} \exp{e^{-\|\Phi\|_\infty} \frac{\nu_0}{4}\tilde{t}}\int_0^{t}\int_0^s\exp\left\{-e^{-\|\Phi\|_\infty}\frac{\nu_0}{4}(t-s')\right\}\left(\int_{\Omega} \int_{|v''|\le 3R}|h(s',y,v'')|^2dv''dy\right)^{\frac{1}{2}}ds'ds.
	\end{align*}
	Hence from \eqref{L2cont}, \eqref{aa16} is bounded by
	\begin{align*}
		\frac{C_\Phi}{R} \exp{e^{-\|\Phi\|_\infty} \frac{\nu_0}{4}\tilde{t}}\sup_{0\le s\le t} \|h(s)\|_{L^\infty_{x,v}} + C_{R,\Phi,M_1,M_2,M_3,\delta_*}\exp{e^{-\|\Phi\|_\infty} \frac{\nu_0}{4}\tilde{t}}\left[\mathcal{E}(F_0)^{\frac{1}{2}}+\mathcal{E}(F_0)\right].
	\end{align*}
	\newline
	Next, we divide four cases to estimate $I_{13}$. By Lemma \ref{Gam+est}, we have
	\begin{align*}
		I_{13} &\le C_\Phi \exp{e^{-\|\Phi\|_\infty} \frac{\nu_0}{4}\tilde{t}} \int_0^t\int_0^s\int_{\mathbb{R}^3}\mathbf{1}_{\{t_1 \le s\}} \mathbf{1}_{\{t_1' \le s'\}}\exp{-e^{-\|\Phi\|_\infty}\frac{\nu_0}{4}(t-s')} |k_w(V(s),v')|\\
		& \qquad \times \sup_{0\le s \le t}\|h(s)\|_{L^\infty_{x,v}}\left(\int_{\mathbb{R}^3}(1+|\eta|)^{-2\beta+4}|h(s',X'(s'),\eta)|^2d\eta\right)^{\frac{1}{2}} dv'ds'ds.
	\end{align*}
	\newline
	$\mathbf{Case\ 1:}$ $|v| \ge R$.\\
	From Lemma \ref{Kest}, we compute that
	\begin{align*}
		I_{13} \le \frac{C_\Phi}{R}\exp\left\{e^{-\|\Phi\|_\infty} \frac{\nu_0}{4} \tilde{t} \right\}\sup_{0\le s \le t}\|h(s)\|_{L^\infty_{x,v}}^2.
	\end{align*}
	\newline
	$\mathbf{Case\ 2:}$ $|v| \le R$, $|v'| \ge 2R$.\\
	Note that $|v-v'| \ge R$. From \eqref{sese}, it holds that
	\begin{align*}
		&|V(s)-v'| \ge |v-v'|-|V(s)-v| \ge R-\frac{R}{2}=\frac{R}{2}.
	\end{align*}
	Then we have 
	\begin{equation} \label{aa17}
	\begin{aligned}
		&|k_w(V(s),v')| \le e^{-\frac{R^2}{64}}|k_w(V(s),v')|e^{\frac{1}{16}|V(s)-v'|^2}.
	\end{aligned}
	\end{equation}
	This yields from Lemma \ref{Kest},
	\begin{equation} \label{aa18}
	\begin{aligned}
		&\int_{|v'| \ge 2R} |k_w(V(s),v')|e^{\frac{1}{16}|V(s)-v'|^2}dv' < C,
	\end{aligned}
	\end{equation}
	for some constant $C>0$.\\
	It follows from \eqref{aa17} and \eqref{aa18} that
	\begin{align*}
		I_{13} \le C_\Phi e^{-\frac{R^2}{64}}\exp{e^{-\|\Phi\|_\infty} \frac{\nu_0}{4}\tilde{t}}\sup_{0\le s \le t}\|h(s)\|_{L^\infty_{x,v}}^2.
	\end{align*}
	\newline
	$\mathbf{Case\ 3:}$ $|v| \le R$, $|v'| \le 2R$, $|\eta| \ge R$.\\
	It follows from Lemma \ref{Kest} that
	\begin{align*}
		I_{13} &\le C_\Phi \exp{e^{-\|\Phi\|_\infty} \frac{\nu_0}{4}\tilde{t}}\sup_{0\le s \le t}\|h(s)\|_{L^\infty_{x,v}}^2\left(\int_{|\eta|\ge R}(1+|\eta|)^{-2\beta+4}d\eta\right)^{\frac{1}{2}}\\
		& \le \frac{C_\Phi}{R} \exp{e^{-\|\Phi\|_\infty} \frac{\nu_0}{4}\tilde{t}}\sup_{0\le s \le t}\|h(s)\|_{L^\infty_{x,v}}^2.
	\end{align*}
	\newline
	$\mathbf{Case\ 4:}$ $|v| \le R$, $|v'| \le 2R$, $|\eta| \le R$.\\
	Since $k_w(v,v')$ has possible integrable singularity of $\frac{1}{|v-v'|}$, we can choose smooth function $k_R(v,v')$ with compact support such that
	\begin{align*}
		\sup_{|v|\le 2R}\int_{|v'| \le 2R} \left|k_R(v,v')-k_w(v,v')\right|dv' \le \frac{1}{R}.
	\end{align*}
	We split 
	\begin{align*}
		k_w(V(s),v') = \{k_w(V(s),v') - k_R(V(s),v')\}+k_R(V(s),v').
	\end{align*}
	Then $I_{13}$ in this case is bounded by
	\begin{align}
		&\frac{C_\Phi}{R}\exp{e^{-\|\Phi\|_\infty} \frac{\nu_0}{4}\tilde{t}}\sup_{0 \le s \le t}\|h(s)\|_{L^\infty_{x,v}}^2 \nonumber \\  
		&\quad + C_{R,\Phi}\exp{e^{-\|\Phi\|_\infty} \frac{\nu_0}{4}\tilde{t}}\sup_{0\le s \le t} \|h(s)\|_{L^\infty_{x,v}}\int_0^t \int_0^s \exp\left\{-e^{-\|\Phi\|_\infty}\frac{\nu_0}{4}(t-s')\right\}\nonumber\\ \label{aa19}
		& \qquad \times \int_{|v'|\le 2R} \left(\int_{|\eta|\le R}(1+|\eta|)^{-2\beta+4}|h(s',X'(s'),\eta)|^2d\eta\right)^{\frac{1}{2}}dv'ds'ds,
	\end{align}
	where we have used the fact $|k_R(V(s),v')| \le C_R$.\\
	In this case, we recall that $X'(s') = X(s';s,X(s;t,x,v),v')$.
	Since the potential is time dependent, we have
	\begin{align*}
		X(s';s,X(s;t,x,v),v') = X(s'-s+T_1;T_1,X(T_1;t-s+T_1,x,v),v')
	\end{align*}
	for all $0\le s' \le s \le t$.\\
	By Lemma \ref{cpl}, the term \eqref{aa19} becomes
	\begin{align} 
		&C_{R,\Phi}\exp{e^{-\|\Phi\|_\infty} \frac{\nu_0}{4}\tilde{t}}\sup_{0\le s \le t} \|h(s)\|_{L^\infty_{x,v}}\sum_{i_1}^{M_1}\sum_{I_2}^{(M_2)^3}\sum_{I_3}^{(M_3)^3} \int_0^t \mathbf{1}_{\{X(T_1;t-s+T_1,x,v)\in \mathcal{P}_{I_2}^{\Omega}\}}(s) \nonumber \\ 
		& \quad \times \int_0^s\mathbf{1}_{\mathcal{P}_{i_1}^{T_1}}(s'-s+T_1) \exp\left\{-e^{-\|\Phi\|_\infty}\frac{\nu_0}{4}(t-s')\right\} \int_{|v'|\le 2R}\mathbf{1}_{\mathcal{P}_{I_3}^v}(v')\nonumber\\ \label{aa20}
		& \quad \times \left(\int_{|\eta|\le R}|h(s',X(s'-s+T_1;T_1,X(T_1;t-s+T_1,x,v),v'),\eta)|^2d\eta\right)^{\frac{1}{2}}dv'ds'ds.
	\end{align}
	From Lemma \ref{cpl}, we have the following partitions:
	\begin{align*}
		&\bigg\{(s'-s+T_1,X(T_1;t-s+T_1,x,v),v')\in \mathcal{P}_{i_1}^{T_1}\cross \mathcal{P}_{I_2}^{\Omega}\cross \mathcal{P}_{I_3}^{v}\\
		&\qquad : \det\left(\frac{dX}{dv'}(s'-s+T_1;T_1,X(T_1;t-s+T_1,x,v),v')\right)=0\bigg\}\\
		& \subset \bigcup_{j=1}^3 \bigg\{(s'-s+T_1,X(T_1;t-s+T_1,x,v),v')\in \mathcal{P}_{i_1}^{T_1}\cross \mathcal{P}_{I_2}^{\Omega}\cross \mathcal{P}_{I_3}^{v}\\
		& \qquad :s'-s+T_1\in \left(t_{j,i_1,I_2,I_3}-\frac{\tilde{\epsilon}}{4M_1},t_{j,i_1,I_2,I_3}+\frac{\tilde{\epsilon}}{4M_1}\right)\bigg\}.
	\end{align*}
	Thus for each $i_1$,$I_2$, and $I_3$, we split $\mathbf{1}_{\mathcal{P}_{i_1}^{T_1}}(s'-s+T_1)$ as
	\begin{align} \label{aa21}
		&\mathbf{1}_{\mathcal{P}_{i_1}^{T_1}}(s'-s+T_1)\mathbf{1}_{\cup_{j=1}^3(t_{j,i_1,I_2,I_3}-\frac{\tilde{\epsilon}}{4M_1},t_{j,i_1,I_2,I_3}+\frac{\tilde{\epsilon}}{4M_1})}(s'-s+T_1)\\ \label{aa22}
		& +\mathbf{1}_{\mathcal{P}_{i_1}^{T_1}}(s'-s+T_1)\left\{1-\mathbf{1}_{\cup_{j=1}^3(t_{j,i_1,I_2,I_3}-\frac{\tilde{\epsilon}}{4M_1},t_{j,i_1,I_2,I_3}+\frac{\tilde{\epsilon}}{4M_1})}(s'-s+T_1)\right\}.
	\end{align}
	\newline
	$\mathbf{Case\ 4 \ (\romannumeral 1):}$ The integration \eqref{aa20} corresponding to \eqref{aa21} is bounded by
	\begin{align}
		&C_{R,\Phi}\exp{e^{-\|\Phi\|_\infty} \frac{\nu_0}{4}\tilde{t}}\sup_{0\le s \le t} \|h(s)\|_{L^\infty_{x,v}}\sum_{i_1}^{M_1}\sum_{I_2}^{(M_2)^3}\sum_{I_3}^{(M_3)^3} \sum_{j=1}^3 \int_0^t \mathbf{1}_{\{X(T_1;t-s+T_1,x,v)\in \mathcal{P}_{I_2}^{\Omega}\}}(s) \nonumber \\
		&\quad \times  \underbrace{\int_0^s\mathbf{1}_{\mathcal{P}_{i_1}^{T_1}}(s'-s+T_1)\mathbf{1}_{(t_{j,i_1,I_2,I_3}-\frac{\tilde{\epsilon}}{4M_1},t_{j,i_1,I_2,I_3}+\frac{\tilde{\epsilon}}{4M_1})}(s'-s+T_1)}_{(*10)} \exp\left\{-e^{-\|\Phi\|_\infty} \frac{\nu_0}{4} (t-s') \right\} \nonumber \\ \label{aa23}
		&\quad \times \int_{|v'|\le 2R}\mathbf{1}_{\mathcal{P}_{I_3}^v}(v')\left(\int_{|\eta|\le R}|h(s',X(s'-s+T_1;T_1,X(T_1;t-s+T_1,x,v),v'),\eta)|^2d\eta\right)^{\frac{1}{2}}dv'ds'ds.
	\end{align}
	Here, $(*10)$ is bounded by
	\begin{align} \label{qq43}
		\int_0^s \mathbf{1}_{\mathcal{P}_{i_1}^{T_1}}(s'-s+T_1)\mathbf{1}_{(t_{j,i_1,I_2,I_3}-\frac{\tilde{\epsilon}}{4M_1},t_{j,i_1,I_2,I_3}+\frac{\tilde{\epsilon}}{4M_1})}(s'-s+T_1)ds'
		 \le \frac{\tilde{\epsilon}}{2M_1}.
	\end{align}
	From the partition of the time interval $[0,T_1]$ and the velocity domain $[-4R,4R]^3$ in Lemma \ref{cpl}, we have
	\begin{equation} \label{qq44}
	\begin{aligned}
		&\sum_{I_2}^{(M_2)^3}\mathbf{1}_{\{X(t-s+T_1,x,v)\in \mathcal{P}_{I_2}^{\Omega}\}}(s) \le \mathbf{1}_{\{0\le s \le T_1\}}(s),\\
		&\sum_{I_3}^{(M_3)^3} \mathbf{1}_{\mathcal{P}_{I_3}^v}(v')\mathbf{1}_{\{|v'|\le 2R\}}(v') = \mathbf{1}_{\{|v'|\le 2R\}}(v').
	\end{aligned}
	\end{equation}
	From \eqref{qq43} and \eqref{qq44}, \eqref{aa23} is bounded by
	\begin{align*}
		\tilde{\epsilon} \ C_{R,\Phi}\exp{e^{-\|\Phi\|_\infty} \frac{\nu_0}{4}\tilde{t}} \sup_{0\le s \le t} \|h(s)\|_{L^\infty_{x,v}}^2.
	\end{align*}
	\newline
	$\mathbf{Case\ 4 \ (\romannumeral 2):}$ The integration \eqref{aa20} corresponding to \eqref{aa22} is bounded by
	\begin{align}
		&C_{R,\Phi}\exp{e^{-\|\Phi\|_\infty} \frac{\nu_0}{4}\tilde{t}}\sup_{0\le s \le t} \|h(s)\|_{L^\infty_{x,v}}\sum_{i_1}^{M_1}\sum_{I_2}^{(M_2)^3}\sum_{I_3}^{(M_3)^3} \int_0^t \mathbf{1}_{\{X(T_1;t-s+T_1,x,v)\in \mathcal{P}_{I_2}^{\Omega}\}}(s)\int_0^s\mathbf{1}_{\mathcal{P}_{i_1}^{T_1}}(s'-s+T_1) \nonumber \\
		& \times \left\{1-\mathbf{1}_{\cup_{j=1}^3(t_{j,i_1,I_2,I_3}-\frac{\tilde{\epsilon}}{4M_1},t_{j,i_1,I_2,I_3}+\frac{\tilde{\epsilon}}{4M_1})}(s'-s+T_1)\right\}\exp\left\{-e^{-\|\Phi\|_\infty} \frac{\nu_0}{4} (t-s') \right\} \nonumber \\ \label{aa24}
		&\times \left(\int_{|v'|\le 2R}\int_{|\eta|\le R}|h(s',X(s'-s+T_1;T_1,X(T_1;t-s+T_1,x,v),v'),\eta)|^2d\eta dv'\right)^{\frac{1}{2}}ds'ds.
	\end{align}
	By Lemma \ref{cpl}, we have made a change of variables $v' \rightarrow y:=X(s'-s+T_1;T_1,X(T_1;t-s+T_1,x,v),v')$ satisfying 
	\begin{align*}
		\det\left(\frac{dX}{dv'}(s'-s+T_1;T_1,X(T_1;t-s+T_1,x,v),v')\right)> \delta_*
	\end{align*}
	and  \eqref{aa24} is bounded by
	\begin{align*}
		&C_{R,\Phi,M_1,M_2,M_3,\delta_*} \exp{e^{-\|\Phi\|_\infty} \frac{\nu_0}{4}\tilde{t}}\sup_{0\le s \le t} \|h(s)\|_{L^\infty_{x,v}}\int_0^{t}\int_0^s\exp\left\{-e^{-\|\Phi\|_\infty}\frac{\nu_0}{4}(t-s')\right\}\\
		& \quad \times \left(\int_{\Omega} \int_{|\eta|\le R}|h(s',y,\eta)|^2d\eta dy\right)^{\frac{1}{2}}ds'ds.
	\end{align*}
	Hence from \eqref{L2cont}, \eqref{aa24} is bounded by
	\begin{align*}
		\frac{C_\Phi}{R} \exp{e^{-\|\Phi\|_\infty} \frac{\nu_0}{4}\tilde{t}}\sup_{0\le s\le t} \left[\|h(s)\|_{L^\infty_{x,v}}^2+ \|h(s)\|^{3}_{L^\infty_{x,v}}\right] + C_{R,\Phi,M_1,M_2,M_3,\delta_*}\exp{e^{-\|\Phi\|_\infty} \frac{\nu_0}{4}\tilde{t}} \mathcal{E}(F_0).
	\end{align*}
	\newline
	Combining all cases of $I_1$, we get
	\begin{align}
		&\exp{e^{-\|\Phi\|_\infty} \frac{\nu_0}{4}\tilde{t}}I_1 \nonumber\\
		&\le C_\Phi \exp{e^{-\|\Phi\|_\infty} \frac{5}{4}\nu_0\tilde{t}}\exp\left\{-e^{-\|\Phi\|_\infty} \frac{\nu_0}{8} t \right\}\|h_0\|_{L^\infty_{x,v}}\nonumber\\
		& \quad +C_\Phi \exp{e^{-\|\Phi\|_\infty} \frac{5}{4}\nu_0\tilde{t}}\left(\epsilon+\frac{C_{\epsilon,T_1}}{R}+C_{R,\epsilon,T_1}\ \tilde{\epsilon}\right)\sup_{0\le s \le t} \Big[\|h(s)\|_{L^\infty_{x,v}}+\|h(s)\|_{L^\infty_{x,v}}^2+\|h(s)\|_{L^\infty_{x,v}}^3\Big]\nonumber\\ \label{aa25}
		& \quad + C_{\Phi,R, \delta_*,\epsilon,\tilde{\epsilon},T_1} \exp{e^{-\|\Phi\|_\infty} \frac{5}{4}\nu_0\tilde{t}}\left[\mathcal{E}(F_0)^{\frac{1}{2}}+\mathcal{E}(F_0)\right].
	\end{align}
	\newline
	It remains to estimate $I_2$. By Lemma \ref{Gam+est}, we have
	\begin{align} \label{aa26}
		I_2 \le C_\Phi \int_0^t \mathbf{1}_{\{t_1 \le s\}} \exp{-e^{-\|\Phi\|_\infty}\frac{\nu_0}{4}(t-s)}\|h(s)\|_{L^\infty_{x,v}}\left(\int_{\mathbb{R}^3}(1+|\eta|)^{-2\beta+4}|h(s,X(s),\eta)|^2d\eta\right)^{\frac{1}{2}} ds.
	\end{align}
	\newline
	$\mathbf{Case\ 1:}$ $|\eta| \ge R$.\\
	It is straightforward to get
	\begin{align*}
		I_2 \le \frac{C_\Phi}{R} \sup_{0\le s \le t}\|h(s)\|_{L^\infty_{x,v}}^2.
	\end{align*}
	\newline
	$\mathbf{Case\ 2:}$ $|\eta| \le R$.\\
	Applying Lemma \ref{Linfty1} to $|h(s,X(s), \eta)|$ in \eqref{aa26}, we deduce
	\begin{align*}
		&\left(\int_{|\eta|\le R}(1+|\eta|)^{-2\beta+4}|h(s,X(s),\eta)|^2d\eta\right)^{\frac{1}{2}}\\
		& \le C_\Phi \exp\left\{e^{-\|\Phi\|_\infty}\nu_0\tilde{t}\right\}\exp\left\{-e^{-\|\Phi\|_\infty} \frac{\nu_0}{4} s \right\}\|h_0\|_{L^\infty_{x,v}}\\
		&\quad+ C_\Phi \exp\left\{e^{-\|\Phi\|_\infty}\nu_0\tilde{t}\right\}\left(\epsilon+\frac{C_{\epsilon,T_1}}{R}+C_{R,\epsilon,T_1}\ \tilde{\epsilon}\right)\sup_{0\le s \le t} \left[\|h(s)\|_{L^\infty_{x,v}}+\|h(s)\|_{L^\infty_{x,v}}^2+\|h(s)\|_{L^\infty_{x,v}}^3\right]\\
		&\quad+C_{\Phi,R, \delta_*,\epsilon,\tilde{\epsilon},T_1} \exp\left\{e^{-\|\Phi\|_\infty}\nu_0\tilde{t}\right\}\left[\mathcal{E}(F_0)^{\frac{1}{2}}+\mathcal{E}(F_0)\right]\\
		&\quad + \exp{e^{-\|\Phi\|_\infty}\frac{\nu_0}{4}\tilde{t}} \\
		& \qquad  \times \left(\int_{|\eta|\le R} (1+|\eta|)^{-2\beta +4}\left|\int_0^s \mathbf{1}_{\{t_1' \le s'\}} \exp{-e^{-\|\Phi\|_\infty}\frac{\nu_0}{4}(s-s')}\left|K_wh(s',X'(s'),\eta(s'))\right| ds'\right|^2d\eta \right)^{\frac{1}{2}}\\
		&\quad + \exp{e^{-\|\Phi\|_\infty}\frac{\nu_0}{4}\tilde{t}} \\
		& \qquad  \times\left(\int_{|\eta|\le R} (1+|\eta|)^{-2\beta +4}\left|\int_0^s \mathbf{1}_{\{t_1' \le s'\}} \exp{-e^{-\|\Phi\|_\infty}\frac{\nu_0}{4}(s-s')}\left|w\Gamma_+(f,f)(s',X'(s'),\eta(s'))\right| ds'\right|^2d\eta \right)^{\frac{1}{2}},
	\end{align*}
	where $\eta(s') = \eta - \int^s_{s'}\nabla_x \Phi(X(\tau))d\tau$.\\
	Applying the above inequality to \eqref{aa26} and using the Cauchy-Schwarz inequality and Lemma \ref{Gam+est}, $I_2$ is bounded by
	\begin{align}
		&C_\Phi \exp\left\{e^{-\|\Phi\|_\infty}\nu_0\tilde{t}\right\}\exp\left\{-e^{-\|\Phi\|_\infty} \frac{\nu_0}{4} t \right\}\|h_0\|_{L^\infty_{x,v}}\int_0^t\|h(s)\|_{L^\infty_{x,v}}ds \nonumber\\
		&+ C_\Phi \exp\left\{e^{-\|\Phi\|_\infty}\nu_0\tilde{t}\right\}\left(\epsilon+\frac{C_{\epsilon,T_1}}{R}+C_{R,\epsilon,T_1}\ \tilde{\epsilon}\right)\sup_{0\le s \le t} \left[\|h(s)\|_{L^\infty_{x,v}}^2+\|h(s)\|_{L^\infty_{x,v}}^3+\|h(s)\|_{L^\infty_{x,v}}^4\right] \nonumber\\
		&+C_{\Phi,R, \delta_*,\epsilon,\tilde{\epsilon},T_1} \exp\left\{e^{-\|\Phi\|_\infty}\nu_0\tilde{t}\right\}\left[\mathcal{E}(F_0)+\mathcal{E}(F_0)^2\right]\nonumber\\
		&+ C_\Phi \exp{e^{-\|\Phi\|_\infty}\frac{\nu_0}{4}\tilde{t}}\int_0^t \mathbf{1}_{\{t_1 \le s\}} \exp{-e^{-\|\Phi\|_\infty}\frac{\nu_0}{4}(t-s)}\|h(s)\|_{L^\infty_{x,v}}\Bigg(\int_0^s \mathbf{1}_{\{t_1' \le s'\}} \exp{-e^{-\|\Phi\|_\infty}\frac{\nu_0}{4}(s-s')}\nonumber\\ \label{aa27}
		& \quad \times \int_{|\eta|\le R} (1+|\eta|)^{-2\beta +4}\left|\int_{\mathbb{R}^3}k_w(\eta(s'),v'')h(s',X'(s'),v'')dv''\right|^2 d\eta ds' \Bigg)^{\frac{1}{2}}ds \\ 
		& +C_\Phi \exp{e^{-\|\Phi\|_\infty}\frac{\nu_0}{4}\tilde{t}}\int_0^t \mathbf{1}_{\{t_1 \le s\}} \exp{-e^{-\|\Phi\|_\infty}\frac{\nu_0}{4}(t-s)}\|h(s)\|_{L^\infty_{x,v}}\Bigg(\int_0^s \mathbf{1}_{\{t_1' \le s'\}} \exp{-e^{-\|\Phi\|_\infty}\frac{\nu_0}{4}(s-s')} \nonumber\\ \label{aa28}
		& \quad \times \|h(s')\|_{L^\infty_{x,v}}^2\int_{|\eta|\le R} (1+|\eta|)^{-2\beta +4}\int_{\mathbb{R}^3}(1+|v''|)^{-2\beta+4}|h(s',X'(s'),v'')|^2 dv'' d\eta ds' \Bigg)^{\frac{1}{2}}ds \\
		& =: I_{21}+I_{22}+I_{23}+I_{24}+I_{25}. \nonumber
	\end{align}
	From now on, we estimate the terms $I_{24}$ and $I_{25}$. We decompose the term $I_{24}$ into two cases $|v''| \le 2R$ and $|v''| \ge 2R$. From \eqref{aa17} and \eqref{aa18}, we derive
	\begin{align*}
		&\Bigg(\int_0^s \mathbf{1}_{\{t_1' \le s'\}} \exp{-e^{-\|\Phi\|_\infty}\frac{\nu_0}{4}(s-s')}\int_{|\eta|\le R} (1+|\eta|)^{-2\beta +4}\left|\int_{\mathbb{R}^3}k_w(\eta(s'),v'')h(s',X'(s'),v'')dv''\right|^2 d\eta ds' \Bigg)^{\frac{1}{2}}\\
		& \le C_\Phi e^{-\frac{R^2}{64}}\sup_{0\le s \le t}\|h(s)\|_{L^\infty_{x,v}}\\
		& \quad + \Bigg(\int_0^s \mathbf{1}_{\{t_1' \le s'\}} \exp{-e^{-\|\Phi\|_\infty}\frac{\nu_0}{4}(s-s')}\int_{|\eta|\le R} (1+|\eta|)^{-2\beta +4}\left|\int_{|v''|\le 2R}k_w(\eta(s'),v'')h(s',X'(s'),v'')dv''\right|^2 d\eta ds' \Bigg)^{\frac{1}{2}}\\
		& \le C_\Phi e^{-\frac{R^2}{64}}\sup_{0\le s \le t}\|h(s)\|_{L^\infty_{x,v}}\\
		& \quad +C_\Phi \Bigg(\int_0^s \mathbf{1}_{\{t_1' \le s'\}} \exp{-e^{-\|\Phi\|_\infty}\frac{\nu_0}{4}(s-s')}\int_{|\eta|\le R} \int_{|v''|\le 2R}|h(s',X'(s'),v'')|^2dv'' d\eta ds' \Bigg)^{\frac{1}{2}},
	\end{align*}
	where we have used the Cauchy-Schwarz inequality and $\int_{\mathbb{R}^3}|k_w(\eta(s'),v'')|^2 dv''$ is finite.\\
	Using the Cauchy-Schwarz inequality and the above estimate, the term $I_{24}$ is bounded by 
	\begin{align*}
		&C_\Phi e^{-\frac{R^2}{64}}\exp{e^{-\|\Phi\|_\infty}\frac{\nu_0}{4}\tilde{t}}\sup_{0\le s \le t}\|h(s)\|_{L^\infty_{x,v}}^2\\
		&+C_\Phi \exp{e^{-\|\Phi\|_\infty}\frac{\nu_0}{4}\tilde{t}}\Bigg(\int_0^t \mathbf{1}_{\{t_1 \le s\}} \exp{-e^{-\|\Phi\|_\infty}\frac{\nu_0}{2}(t-s)}\|h(s)\|_{L^\infty_{x,v}}^2\int_0^s \mathbf{1}_{\{t_1' \le s'\}} \exp{-e^{-\|\Phi\|_\infty}\frac{\nu_0}{4}(s-s')}\\
		&\quad \times \int_{|\eta|\le R} \int_{|v''|\le 2R}|h(s',X'(s'),v'')|^2dv'' d\eta ds' ds\Bigg)^{\frac{1}{2}}.
	\end{align*}
	By Lemma \ref{cpl} and a similar scheme to estimate \eqref{aa19}, we can bound the term $I_{24}$ by
	\begin{align*}
		C_\Phi \exp{e^{-\|\Phi\|_\infty}\frac{\nu_0}{4}\tilde{t}} \left(\frac{1}{R}+C_{R}\ \tilde{\epsilon}\right)\sup_{0\le s \le t}\left[\|h(s)\|^2_{L^\infty_{x,v}}+\|h(s)\|^3_{L^\infty_{x,v}}\right] + C_{R,\Phi,T_1,\tilde{\epsilon},\delta_*}\exp{e^{-\|\Phi\|_\infty}\frac{\nu_0}{4}\tilde{t}} \mathcal{E}(F_0).
	\end{align*}
	 We decompose the term $I_{25}$ into two cases $|v''| \le 2R$ and $|v''| \ge 2R$. Then we have
	\begin{align*}
		&\Bigg(\int_0^s \mathbf{1}_{\{t_1' \le s'\}} \exp{-e^{-\|\Phi\|_\infty}\frac{\nu_0}{4}(s-s')}\|h(s')\|_{L^\infty_{x,v}}^2 \int_{|\eta|\le R} (1+|\eta|)^{-2\beta +4}\int_{\mathbb{R}^3}(1+|v''|)^{-2\beta+4} \\
		& \quad \times |h(s',X'(s'),v'')|^2 dv'' d\eta ds' \Bigg)^{\frac{1}{2}}\\
		& \le \frac{C_\Phi}{R} \sup_{0\le s \le t}\|h(s)\|_{L^\infty_{x,v}}^2\\
		& \quad +C \Bigg(\int_0^s \mathbf{1}_{\{t_1' \le s'\}} \exp{-e^{-\|\Phi\|_\infty}\frac{\nu_0}{4}(s-s')}\|h(s')\|_{L^\infty_{x,v}}^2 \int_{|\eta|\le R} \int_{|v''|\le 2R}|h(s',X'(s'),v'')|^2 dv'' d\eta ds' \Bigg)^{\frac{1}{2}}
	\end{align*}
	Using the Cauchy-Schwarz inequality and the above estimate, the term $I_{25}$ is bounded by 
	\begin{align*}
		&\frac{C_\Phi}{R} \exp{e^{-\|\Phi\|_\infty}\frac{\nu_0}{4}\tilde{t}}\sup_{0\le s \le t}\|h(s)\|_{L^\infty_{x,v}}^3\\
		&  +C_\Phi \exp{e^{-\|\Phi\|_\infty}\frac{\nu_0}{4}\tilde{t}}\Bigg(\int_0^t \mathbf{1}_{\{t_1 \le s\}} \exp{-e^{-\|\Phi\|_\infty}\frac{\nu_0}{2}(t-s)}\|h(s)\|_{L^\infty_{x,v}}^2\int_0^s \mathbf{1}_{\{t_1' \le s'\}} \exp{-e^{-\|\Phi\|_\infty}\frac{\nu_0}{4}(s-s')}\\
		& \quad \times \|h(s')\|_{L^\infty_{x,v}}^2 \int_{|\eta|\le R} \int_{|v''|\le 2R}|h(s',X'(s'),v'')|^2 dv'' d\eta ds' ds\Bigg)^{\frac{1}{2}}.
	\end{align*}
	By Lemma \ref{cpl} and a similar scheme to estimate \eqref{aa19}, we can bound the term $I_{25}$ by
	\begin{align*}
		&C_\Phi \exp{e^{-\|\Phi\|_\infty}\frac{\nu_0}{4}\tilde{t}} \left(\frac{1}{R}+C_{R}\ \tilde{\epsilon}\right)\sup_{0\le s \le t}\left[\|h(s)\|^3_{L^\infty_{x,v}}+\|h(s)\|^4_{L^\infty_{x,v}}+\|h(s)\|^5_{L^\infty_{x,v}}\right]\\
		& + C_{R,\Phi,T_1,\tilde{\epsilon},\delta_*}\exp{e^{-\|\Phi\|_\infty}\frac{\nu_0}{4}\tilde{t}} \mathcal{E}(F_0).
	\end{align*}
	Combining all cases of $I_2$, we get
	\begin{align}
		&\exp{e^{-\|\Phi\|_\infty} \frac{\nu_0}{4}\tilde{t}}I_2 \nonumber\\
		&\le C_\Phi \exp{e^{-\|\Phi\|_\infty} \frac{5}{4}\nu_0\tilde{t}}\exp\left\{-e^{-\|\Phi\|_\infty} \frac{\nu_0}{8} t \right\}\|h_0\|_{L^\infty_{x,v}}\int_0^t\|h(s)\|_{L^\infty_{x,v}}ds\nonumber\\
		& \quad +C_\Phi \exp{e^{-\|\Phi\|_\infty} \frac{5}{4}\nu_0\tilde{t}}\left(\epsilon+\frac{C_{\epsilon,T_1}}{R}+C_{R,\epsilon,T_1}\ \tilde{\epsilon}\right)\sup_{0\le s \le t} \Big[\|h(s)\|_{L^\infty_{x,v}}^2+\|h(s)\|_{L^\infty_{x,v}}^3+\|h(s)\|_{L^\infty_{x,v}}^4+\|h(s)\|_{L^\infty_{x,v}}^5\Big]\nonumber\\ \label{aa29}
		& \quad + C_{\Phi,R, \delta_*,\epsilon,\tilde{\epsilon},T_1} \exp{e^{-\|\Phi\|_\infty} \frac{5}{4}\nu_0\tilde{t}}\left[\mathcal{E}(F_0)+\mathcal{E}(F_0)^2\right].
	\end{align}
	\newline
	Combining $I_1$, $I_2$, and Lemma \ref{Linfty1}, we therfore conclude that
	\begin{align*}
		|h(t,x,v)| &\le C_7 \exp{2e^{-\|\Phi\|_\infty}\nu_0 \tilde{t}}\|h_0\|_{L^\infty_{x,v}}\left(1+\int_0^t\|h(s)\|_{L^\infty_{x,v}}ds\right)\exp\left\{-e^{-\|\Phi\|_\infty} \frac{\nu_0}{8} t \right\}\\
		& \quad +C_7 \exp{2e^{-\|\Phi\|_\infty}\nu_0 \tilde{t}}\left(\epsilon+\frac{C_{\epsilon,T_1}}{R}+C_{R,\epsilon,T_1}\ \tilde{\epsilon}\right) \sup_{0\le s \le t} \Big[\|h(s)\|_{L^\infty_{x,v}}+\|h(s)\|_{L^\infty_{x,v}}^2+\|h(s)\|_{L^\infty_{x,v}}^3\\
		&\qquad +\|h(s)\|_{L^\infty_{x,v}}^4+\|h(s)\|_{L^\infty_{x,v}}^5\Big]\\
		&\quad + C_{\Phi,R, \delta_*,\epsilon,\tilde{\epsilon},T_1}\exp{2e^{-\|\Phi\|_\infty}\nu_0 \tilde{t}}\left[\mathcal{E}(F_0)^{\frac{1}{2}}+\mathcal{E}(F_0)+\mathcal{E}(F_0)^2\right]
	\end{align*}
	for some constant $C_7$.
	\end{proof}
	
	\bigskip
	
	\subsection{Nonlinear Asymptotic Stability of Large Amplitude solution} \label{NASLAS}
	\begin{proof}[\textbf{Proof of Theorem \ref{mainresult2}}]
		Take $C_8 := \max\left\{C_0, C_3 \rho^{\frac{5}{4}}, C_7 \right\} >1$, and let
		\begin{align} \label{aa30}
			\bar{M} := 8 C_8^2 C_4^4 M_0^5 \exp{\frac{8}{\nu_0e^{-\|\Phi\|_\infty}}C_8 C_4^4 M_0^5}.
		\end{align}
		and
		\begin{align} \label{aa31}
			T_1 := \frac{16}{\nu_0e^{-\|\Phi\|_\infty}}\left(\log\bar{M}+|\log\delta_0|\right).
		\end{align}
		Assume that $\mathcal{E}(F_0) \le \epsilon_0(\bar{M},T_1)$, which is determined by Lemma \ref{Rfest}.
		By the a priori assumption \eqref{Ape} and Lemma \ref{Linfty2}, we get
		\begin{align} \label{aa32}
			\|h(t)\|_{L^\infty_{x,v}} \le C_8 C_4^4 M_0^5 \left(1+\int_0^t \|h(s)\|_{L^\infty_{x,v}}ds\right)\exp{-e^{-\|\Phi\|_\infty} \frac{\nu_0}{8}t}+E \quad \text{for all } 0\le t \le T_1,
		\end{align}
		where
		\begin{align*}
			E:&= C_8 C_4^4 M_0^4 \bigg\{\left(\epsilon +\frac{C_{\epsilon,T_1}}{R}+C_{R,\Phi, \epsilon, T_1}\ \tilde{\epsilon}\right)\left[\bar{M}+\bar{M}^2+\bar{M}^3+\bar{M}^4+\bar{M}^5\right]\\
			& \qquad  +C_{R, \Phi, \delta_*, \epsilon,T_1 ,\tilde{\epsilon}} \left[\mathcal{E}(F_0)^{\frac{1}{2}}+\mathcal{E}(F_0)+\mathcal{E}(F_0)^2 \right] \bigg\}.
		\end{align*}
		We define
		\begin{align*}
			G(t) := 1+\int_0^t \|h(s)\|_{L^\infty_{x,v}}ds.
		\end{align*}
		Then the inequality \eqref{aa32} becomes
		\begin{align} \label{aa33}
			G'(t) \le C_8 C_4^4 M_0^5 \exp{-e^{-\|\Phi\|_\infty}\frac{\nu_0}{8}t} G(t) + E.
		\end{align}
		By Gr\"{o}nwall's inequality, \eqref{aa33} implies that
		\begin{equation} \label{aa34}
		\begin{aligned}
			G(t) &\le (1+Et) \exp{\frac{8}{\nu_0 e^{-\|\Phi\|_\infty}}C_8 C_4^4 M_0^5 \left(1-\exp{-e^{-\|\Phi\|_\infty}\frac{\nu_0}{8}t}\right) }\\
			& \le (1+Et)\exp{\frac{8}{\nu_0 e^{-\|\Phi\|_\infty}}C_8 C_4^4 M_0^5 }
		\end{aligned}
		\end{equation}
		for all $0\le t \le T_1$.\\
		Substituting \eqref{aa34} into \eqref{aa32}, we deduce for $0\le t \le T_1$
		\begin{align*}
			\|h(t)\|_{L^\infty_{x,v}} &\le C_8 C_4^4 M_0^5 \exp{\frac{8}{\nu_0 e^{-\|\Phi\|_\infty}}C_8 C_4^4 M_0^5 }(1+Et)\exp{-e^{-\|\Phi\|_\infty} \frac{\nu_0}{8}t}+E\\
			& \le \frac{1}{8C_8} \bar{M} (1+Et)\exp{-e^{-\|\Phi\|_\infty} \frac{\nu_0}{8}t}+E\\
			& \le \frac{1}{8C_8} \bar{M} \left(1+\frac{16}{\nu_0 e^{-\|\Phi\|_\infty}}E \right)\exp{-e^{-\|\Phi\|_\infty} \frac{\nu_0}{16}t} +E.
		\end{align*}
		We first choose $\epsilon >0$ small enough, then choose $R>0$ sufficiently large, and take $\tilde{\epsilon}>0$ small enough and assume $\mathcal{E}(F_0) \le \epsilon_1(\delta, M_0)$ with small enough $\epsilon_1(\delta, M_0)$ so that
		\begin{align*}
			E \le \min\left\{\frac{\nu_0e^{-\|\Phi\|_\infty}}{32} , \frac{\delta_0}{4}\right\},
		\end{align*}
		and it follows that
		\begin{align} \label{aa35}
			\|h(t)\|_{L^\infty_{x,v}} \le \frac{3}{16C_8}\bar{M}\exp{-e^{-\|\Phi\|_\infty} \frac{\nu_0}{16}t} + \frac{\delta_0}{4} \le \frac{1}{2C_8} \bar{M}
		\end{align}
		for all $0\le t \le T_1$.\\
		Hence we have shown the a priori assumption over $t \in [0,T_1]$ if $\mathcal{E}(F_0) \le \bar{\epsilon}_0:= \min\left\{\epsilon_0, \epsilon_1 \right\}$.\\
		\indent We claim that a solution to the Boltzmann equation \eqref{BE} extends into time interval $[0,T_1]$. From Theorem \ref{LocExist}, there exists the Boltzmann solution $F(t) \ge 0$ to \eqref{BE} on $[0,\hat{t}_0]$ such that
		\begin{align} \label{aa36}
			\sup_{0\le t \le  \hat{t}_0} \|h(t)\|_{L^\infty_{x,v}} \le 2C_8 \|h_0\|_{L^\infty_{x,v}} \le \frac{1}{2C_8} \bar{M}.
		\end{align}
		We define $t_*:=\left(\hat{C}_\rho\left[1+(2C_8)^{-1}\bar{M}\right]\right)^{-1}>0$, where $\hat{C}_\rho$ is a constant in Theorem \ref{LocExist}. Taking $t=\hat{t}_0$ as the initial time, it follows from \eqref{aa36} and Theorem \ref{LocExist} that we can extend the Boltzmann equation solution $F(t) \ge 0$ into time interval $[0, \hat{t}_0+t_*]$ satisfying
		\begin{align*}
			\sup_{\hat{t}_0 \le t \le \hat{t}_0+t_*} \|h(t)\|_{L^\infty_{x,v}} \le 2C_8 \|h(\hat{t}_0)\|_{L^\infty_{x,v}} \le \bar{M}.
		\end{align*}
		Thus we have
		\begin{align} \label{aa37}
			\sup_{0 \le t \le \hat{t}_0+t_*} \|h(t)\|_{L^\infty_{x,v}} \le \bar{M}.
		\end{align}
		Note that \eqref{aa37} means $h(t)$ satisfies the a priori assumption \eqref{Ape} over $[0,\hat{t}_0+t_*]$.
		From \eqref{aa35}, we can obtain
		\begin{align*}
			\sup_{0 \le t \le \hat{t}_0+t_*} \|h(t)\|_{L^\infty_{x,v}} \le\frac{1}{2C_8}\bar{M}.
		\end{align*}
		Repeating the same process for finite times, we can derive that there exists the Boltzmann equation solution $F(t)\ge 0$ on the time interval $[0,T_1]$ such that
		\begin{align*}
			\sup_{0\le t \le T_1} \|h(t)\|_{L^\infty_{x,v}} \le \frac{1}{2C_8} \bar{M}.
		\end{align*}
		Let us consider the case $[T_1,\infty)$. From \eqref{aa35}, we get
		\begin{align*}
			\|h(T_1)\|_{L^\infty_{x,v}} \le \frac{3}{16C_8}\bar{M}\exp{-e^{-\|\Phi\|_\infty} \frac{\nu_0}{16}T_1} + \frac{\delta_0}{4} \le \frac{3 \delta_0}{16C_8}+\frac{\delta_0}{4} < \frac{\delta_0}{2}.
		\end{align*}
		Taking $t=T_1$ as the initial time and using Theorem \ref{LT}, we conclude that there exists the Boltzmann equation solution $F(t)\ge 0$ on $[0,\infty)$. Therefore, we have proven the global existence and uniqueness of the Boltzmann equation \eqref{BE}.\\
		\indent It remains to show the exponential decay of the Boltzmann solution $f(t)$ in $L_{x,v}^\infty(w)$ space. By Theorem \ref{mainresult1}, for all $t \ge T_1$,
		\begin{align} \label{aa38}
			\|h(t)\|_{L^\infty_{x,v}} \le C_0 \|h(T_1)\|_{L^\infty_{x,v}} e^{-\lambda_0(t-T_1)} \le C_0 \delta_0 e^{-\lambda_0(t-T_1)}.
		\end{align}
		 Taking $\tilde{C}_L := 8C_8^3C_4^4$ and $\lambda_L:= \min \left\{\lambda_0,e^{-\|\Phi\|_\infty}\frac{\nu_0}{16}\right\}$, it follows from \eqref{aa35} and \eqref{aa38} that 
		 \begin{align*}
		 	\|h(t)\|_{L^\infty_{x,v}} &\le \max\left\{\frac{1}{2},C_0\right\}\bar{M}e^{-\lambda_L t} \le C_8 \bar{M} e^{-\lambda_L t}\\
		 	& \le 8 C_8^3 C_4^4 M_0^5 \exp{\frac{8}{\nu_0e^{-\|\Phi\|_\infty}}C_8 C_4^4 M_0^5}e^{-\lambda_L t}\\
		 	& \le \tilde{C}_L M_0^5\exp{\frac{\tilde{C}_L M_0^5}{\nu_0e^{-\|\Phi\|_\infty}} }e^{-\lambda_L t}
		 \end{align*}
		 for all $t \ge 0$.
	\end{proof}
	
\bigskip

\section{Appendix} \label{Appendix}
In this section, we present the proof of Lemma \ref{coer}. By choosing suitable test functions in a weak formulation and using the elliptic estimate, we will demonstrate this lemma.
\begin{proof}[\textbf{Proof of Lemma \ref{coer}}]
	We will choose suitable test functions $\psi=\psi(t,x,v) \in H_{x,v}^1$. We can deduce from the equation \eqref{SLBE} and the Green's identity that
	\begin{equation} \label{dwwf}
	\begin{aligned}
		&\int_{\Omega \cross \mathbb{R}^3} \psi(t)f(t)dxdv - \int_{\Omega \cross \mathbb{R}^3} \psi(0)f(0)dxdv\\
		& = \int_0^t\frac{d}{ds}\left(\int_{\Omega \cross \mathbb{R}^3}\psi(s)f(s)dxdv\right)ds\\
		& = \int_0^t \int_{\Omega \cross \mathbb{R}^3} f \left(v\cdot \nabla_x \psi \right)dxdvds -\int_0^t\int_{\gamma} \psi f \{n(x) \cdot v\}dS(x)dvds-\int_0^t \int_{\Omega \cross \mathbb{R}^3}e^{-\Phi(x)}L(f) \psi dxdvds\\
		& \quad +\int_0^t \int_{\Omega \cross \mathbb{R}^3}g\psi dxdvds +\int_0^t \int_{\Omega \cross \mathbb{R}^3} f (\partial_t \psi)dxdvds - \int_0^t\int_{\Omega \cross \mathbb{R}^3} f\left(\nabla_x \Phi(x) \cdot \nabla_v \psi\right)dxdvds.
	\end{aligned}
	\end{equation}
	We decompose $f=P_{L}(f)+\left(I-P_{L}\right)(f)$.\\
	From the fact $L\left(P_L(f)\right) = 0$, we obtain the weak form of the equation
	\begin{equation} \label{wfwf}
	\begin{aligned}
		&-\int_0^t \int_{\Omega \cross \mathbb{R}^3}\left(v\cdot \nabla_x \psi \right) P_{L}(f) dxdvds\\
		& =\int_{\Omega \cross \mathbb{R}^3} \psi(0) f(0) dxdv - \int_{\Omega \cross \mathbb{R}^3} \psi(t)f(t) dxdv+\int_0^t \int_{\Omega \cross \mathbb{R}^3} \left(v\cdot \nabla_x \psi \right) (I-P_{L})(f) dxdvds\\
		& \quad -\int_0^t \int_{\Omega \cross \mathbb{R}^3} e^{-\Phi(x)}L\left(\left(I-P_{L}\right)(f)\right) \psi dxdvds -\int_0^t \int_{\gamma} \psi f \{n(x) \cdot v\}	dS(x)dvds\\
		& \quad +\int_0^t \int_{\Omega \cross \mathbb{R}^3} f\left(\partial_t\psi\right)dxdvds +\int_0^t \int_{\Omega \cross \mathbb{R}^3} \psi g dxdvds\\
		& \quad -\int_0^t \int_{\Omega \cross \mathbb{R}^3} (\nabla_x \Phi(x) \cdot \nabla_v \psi) (I-P_{L})(f) dxdvds-\int_0^t \int_{\Omega \cross \mathbb{R}^3} (\nabla_x \Phi(x) \cdot \nabla_v \psi) P_{L}(f) dxdvds.
	\end{aligned}
	\end{equation}
	In \eqref{LPro}, we denote $P_L(f)(t,x,v) = a(t,x) \mu^{\frac{1}{2}}(v) + b(t,x)\cdot v\mu^{\frac{1}{2}}(v) + c(t,x)\frac{|v|^2-3}{\sqrt{6}}\mu^{\frac{1}{2}}(v)$. From now on, we will derive the estimates for $a(t,x),b(t,x)$, and $c(t,x)$.\\
	\newline
	$\mathbf{Estimate \;of \; c.}$\\
	We can choose the test function
	\begin{align*}
		\psi = \psi_c(t,x,v) = \left(|v|^2-\beta_c\right)\mu^{\frac{1}{2}}(v)v\cdot \nabla_x \phi_c(t,x),
	\end{align*}
	where the function $\phi_c$ satisfies 
	\begin{align*}
		\begin{cases}
		-\Delta_x \phi_c(t,x) = c(t,x)\\
		\phi_c |_{\partial \Omega}=0
	\end{cases}
	\end{align*}
	and $\beta_c>0$ is chosen such that
	\begin{align*}
		\int_{\mathbb{R}^3} \left(|v|^2-\beta_c\right)\mu(v)v_i^2dv=0 \quad \text{for} \quad i=1,2,3.
	\end{align*}
	We can get $\beta_c = 5$ from the simple computation.\\
	From the standard elliptic estimate, we get $\|\phi_c(t)\|_{H_x^2} \lesssim \|c(t)\|_{L_x^2}$ for all $t\ge 0$.\\
	First, we will deduce the estimate for the first term of (RHS) to the weak form \eqref{wfwf}.\\
	Let $G_f^{c}(s) = -\int_{\Omega \cross \mathbb{R}^3} \psi(s)f(s)dxdv$. Using the H\"{o}lder inequality and the elliptic estimate,
	\begin{align*}
		\left|G_f^c(s)\right| \lesssim  \|f(s)\|_{L^2_{x,v}}\|\phi_c(s)\|_{H_x^2} \lesssim  \|f(s)\|_{L^2_{x,v}}\|c(s)\|_{L_x^2}\lesssim  \|f(s)\|_{L^2_{x,v}}^2.
	\end{align*}
	Next, we will deduce the estimate for the second term of (RHS) to the weak form \eqref{wfwf}.\\
	Using the H\"{o}lder inequality, we obtain
	\begin{align*}
		&\left|\int_{\Omega \cross \mathbb{R}^3} \left(v\cdot \nabla_x \psi \right) (I-P_{L})(f)(t) dxdv\right|\\
		& \le \sum_{i,j=1}^3\left(\int_{\Omega \cross \mathbb{R}^3}\left|\left(|v|^2-\beta_c\right)\mu^{\frac{1}{2}}(v)v_iv_j\partial_{ij}\phi_c(t,x)\right|^2dxdv\right)^{\frac{1}{2}}\left\|(I-P_{L})(f)(t)\right\|_{L^2_{x,v}}\\
		& \lesssim\|\phi_c(t)\|_{H_x^2}\left\|(I-P_{L})(f)(t)\right\|_{L^2_{x,v}}\\
		& \lesssim \|c(t)\|_{L^2_x}\left\|(I-P_{L})(f)(t)\right\|_{L^2_{x,v}},
	\end{align*}
	where $\int_{\mathbb{R}^3}\left|\left(|v|^2-\beta_c\right)\mu^{\frac{1}{2}}(v)v_iv_j\right|^2dv$ is finite.\\
	Thus, we deduce that
	\begin{align} \label{c1}
		\left|\int_0^t \int_{\Omega \cross \mathbb{R}^3} \left(v\cdot \nabla_x \psi \right) (I-P_{L})(f) dxdvds\right| \lesssim \int_0^t \|c(s)\|_{L^2_x}\left\|(I-P_{L})(f)(s)\right\|_{L^2_{x,v}}ds.
	\end{align}
	Third, we will deduce the estimate for the third term of (RHS) to the weak form \eqref{wfwf}.\\
	From $L =\nu(v)-K$, the third term of (RHS) is bounded by
	\begin{align*}
		&\left|\int_{\Omega \cross \mathbb{R}^3}\psi\nu(v)(I-P_{L})(f)(t)dxdv\right| + \left|\int_{\Omega \cross \mathbb{R}^3}\psi K\left((I-P_{L})(f)(t)\right)dxdv\right|\\
		&=: I_1+I_2.
	\end{align*}
	We can easily get
	\begin{align*}
		I_1 \lesssim \|c(t)\|_{L_x^2}\left\|(I-P_{L})(f)(t)\right\|_{L^2_{x,v}}.
	\end{align*}
	Since $K$ is bounded in $L^2_{x,v}$, we use H\"older inequality to get
	\begin{align*}
		I_2 \lesssim \|\phi_c(t)\|_{H_x^2}\left\|(I-P_{L})(f)(t)\right\|_{L^2_{x,v}} \lesssim \|c(t)\|_{L^2_x}\left\|(I-P_{L})(f)(t)\right\|_{L^2_{x,v}}.
	\end{align*}
	Gathering $I_1$, $I_2$ and integrating from $0$ to $t$, we obtain
	\begin{align} \label{c2}
	\left|\int_0^t\int_{\Omega \cross \mathbb{R}^3} \psi e^{-\Phi(x)} L \left[(I-P_{L})(f)\right]dxdvds\right| \lesssim \int_0^t \|c(s)\|_{L^2_x}\left\|(I-P_{L})(f)(s)\right\|_{L^2_{x,v}}ds.
	\end{align}
	Fourth, we will deduce the estimate for the fourth term of (RHS) to the weak form \eqref{wfwf}. We can decompose the fourth term of (RHS) into two terms:
	\begin{align*}
		\int_{\partial \Omega \cross \mathbb{R}^3}\psi f\{n(x) \cdot v\}dS(x)dv = \int_{\gamma_+}\psi f\{n(x) \cdot v\}dS(x)dv +\int_{\gamma_-}\psi f\{n(x) \cdot v\}dS(x)dv.
	\end{align*}
	Decomposing $f = P_\gamma f + (I-P_\gamma)f$, we get
	\begin{align*}
		\int_{\partial \Omega \cross \mathbb{R}^3} \psi f \{n(x) \cdot v\}dS(x)dv =\int_{\gamma_+} \psi \left[(I-P_\gamma)f\right]\{n(x) \cdot v\}dS(x)dv + \int_{\gamma} \psi \left(P_\gamma f\right)\{n(x) \cdot v\}dS(x)dv.
	\end{align*}
	Setting $z(t,x) =c_\mu \int_{n(x) \cdot v' >0}f(x,v')\mu^{\frac{1}{2}}(v')\{n(x)\cdot v'\}dv'$, we obtain 
	\begin{align*}
		\int_\gamma \psi (P_\gamma f)\{n(x) \cdot v\}dS(x)dv
		&=\sum_{i=1}^3 \left(\int_{\mathbb{R}^3} \left(|v|^2-\beta_c\right)\mu(v)v_i^2 dv\right)\left(\int_{\partial \Omega}\partial_{x_i} \phi_c(t,x)z(t,x)n_i(x)dS(x)\right)=0,
	\end{align*}
	where we have used the oddness for integral and the definition of $\beta_c$.\\
	Then we can simplify
	\begin{align*}
		\int_{\partial \Omega \cross \mathbb{R}^3} \psi f \{n(x) \cdot v\}dS(x)dv
		&\le \sum_{i,k=1}^3\left(\int_{\gamma_+} \left|\left(|v|^2-\beta_c\right)\mu^{\frac{1}{2}}(v)v_iv_k n_k(x)\partial_{x_i} \phi_c(t,x)\right|^2 dS(x)dv\right)^{\frac{1}{2}}\|(I-P_\gamma)f(t)\|_{L^2_{\gamma_+}}\\
		&\lesssim \left\|\partial_{x_i} \phi_c(t)\right\|_{L^2(\partial\Omega)}\|(I-P_\gamma)f(t)\|_{L^2_{\gamma_+}},
	\end{align*}
	where $\int_{\mathbb{R}^3}\left|\left(|v|^2-\beta_c\right)\mu^{\frac{1}{2}}(v)v_iv_k\right|^2dv$ is finite.\\
	By the trace theorem, the above is bounded by
	\begin{align*}
		\left\|\phi_c(t)\right\|_{H^2_x}\|(I-P_\gamma)f(t)\|_{L^2_{\gamma_+}} \lesssim \|c(t)\|_{L^2_x}\|(I-P_\gamma)f(t)\|_{L^2_{\gamma_+}}.
	\end{align*}
	Thus, we deduce
	\begin{align} \label{c3}
		\left|\int_0^t\int_{\gamma}\psi f\{n(x)\cdot v\}dS(x)dvds\right|\lesssim \int_0^t \|c(s)\|_{L^2_x}\|(I-P_\gamma)f(s)\|_{L^2_{\gamma_+}}ds.
	\end{align}
	Fifth, we will deduce the estimate for the sixth term of (RHS) to the weak form \eqref{wfwf}.
	We apply the Cauchy-Schwarz inequality and the elliptic estimate to obtain
	\begin{equation} \label{c4}
	\begin{aligned} 
		\left|\int_{\Omega \cross \mathbb{R}^3}\psi g dxdv\right| \lesssim \|c(t)\|_{L^2_x}\left\|g(t)\right\|_{L^2_{x,v}}.
	\end{aligned}
	\end{equation}
	Sixth, we will deduce the estimate for (LHS) to the weak form \eqref{wfwf}.
	From the construction for $\beta_c$ and the oddness of integration in $v$, we deduce that
	\begin{align*}
		\int_0^t \int_{\Omega \cross \mathbb{R}^3}\left(v\cdot \nabla_x \psi \right) P_{L}(f) dxdvds= \sum_{i=1}^3 \int_0^t \left(\int_{\mathbb{R}^3} \left(|v|^2-\beta_c\right)\mu(v)v_i^2\frac{|v|^2-3}{\sqrt{6}}dv\right)\left(\int_{\Omega}\left[\partial_{ii}\phi_c(s,x)\right] c(s,x)dx\right)ds.
	\end{align*}
	Here, for all $i=1,2,3$,
	\begin{align*}
		\int_{\mathbb{R}^3} \left(|v|^2-\beta_c\right)\mu(v)v_i^2\frac{|v|^2-3}{\sqrt{6}}dv =\frac{1}{3\sqrt{6}}(7\cdot5\cdot3-8\cdot5\cdot3+15\cdot3)(2\pi)^{\frac{3}{2}}=:A>0.
	\end{align*}
	This yields
	\begin{align} \label{c5}
		-\int_0^t \int_{\Omega \cross \mathbb{R}^3}\left(v\cdot \nabla_x \psi \right) P_{L}(f) dxdvds &=-A\sum_{i=1}^3 \int_0^t \left(\int_{\Omega}\left[\partial_{ii}\phi_c(s,x)\right] c(s,x)dx\right)ds =A\int_0^t\|c(s)\|_{L_x^2}^2ds. 
	\end{align}
	Seventh, we will deduce the estimate for the seventh term of (RHS) to the weak form \eqref{wfwf}. Using the H\"older inequality, we obtain
	\begin{align*}
		 &\left|\int_{\Omega \cross \mathbb{R}^3} (\nabla_x \Phi(x) \cdot \nabla_v \psi) (I-P_{L})(f)(t) dxdv\right|\\
		 &\le \sum_{i,j=1}^3\left(\int_{\Omega \cross \mathbb{R}^3}\left|\partial_{x_i} \Phi(x) \partial_{x_j}\phi_c(t,x)\partial_{v_i}\left( (|v|^2-\beta_c)\mu^{\frac{1}{2}}(v)v_j\right)\right|^2dxdv\right)^{\frac{1}{2}}\left\|(I-P_{L})(f)(t)\right\|_{L^2_{x,v}}\\
		 &\le C_{\Phi} \|\phi_c(t)\|_{H_x^2} \left\|(I-P_{L})(f)(t)\right\|_{L^2_{x,v}}\\
		 &\le C_{\Phi} \|c(t)\|_{L^2_x} \left\|(I-P_{L})(f)(t)\right\|_{L^2_{x,v}},
	\end{align*}
	where $\int_{\mathbb{R}^3}\left| \partial_{v_i}\left( (|v|^2-\beta_c)\mu^{\frac{1}{2}}(v)v_j\right)\right|^2dv$ is finite.\\
	Thus, we deduce that
	\begin{align} \label{c6}
		\left|\int_0^t \int_{\Omega \cross \mathbb{R}^3} \left(\nabla_x \Phi(x) \cdot \nabla_v \psi\right) (I-P_{L})(f)(s) dxdvds\right| \le C_{\Phi}\int_0^t \|c(s)\|_{L^2_x}\left\|(I-P_{L})(f)(s)\right\|_{L^2_{x,v}}ds.
	\end{align}
	Eighth, we will deduce the estimate for the eighth term of (RHS) to the weak form \eqref{wfwf}. Using the H\"older inequality, we obtain
	\begin{align*}
		&\left|\int_{\Omega \cross \mathbb{R}^3} (\nabla_x \Phi(x) \cdot \nabla_v \psi) P_{L}(f)(t) dxdv\right|\\ 
		&\le C_{\Phi}\sum_{i,j=1}^3\left(\int_{\Omega}\left| \partial_{x_j}\phi_c(t,x) \right|^2 \int_{\mathbb{R}^3}\left| \partial_{v_i}\left( (|v|^2-\beta_c)\mu^{\frac{1}{2}}(v)v_j\right)\right|^2dvdx\right)^{\frac{1}{2}}\left\|P_{L}(f)(t)\right\|_{L^2_{x,v}}\\
		&\le C_{\Phi}\|wf(t)\|_{L^\infty_{x,v}} \left\|P_{L}(f)(t)\right\|_{L^2_{x,v}}^2.
	\end{align*}
	Thus, we deduce that
	\begin{align} \label{c7}
		\left|\int_0^t \int_{\Omega \cross \mathbb{R}^3} \left(\nabla_x \Phi(x) \cdot \nabla_v \psi\right) P_{L}(f)(s) dxdvds\right| \le C_{\Phi}\int_0^t \|wf(s)\|_{L^\infty_{x,v}} \left\|P_{L}(f)(s)\right\|_{L^2_{x,v}}^2ds.
	\end{align}
	Lastly, we will deduce the estimate for the fifth term of (RHS) to the weak form \eqref{wfwf}.\\
	We decompose $f=P_{L}(f)+\left(I-P_{L}\right)(f)$ to get
	\begin{align*}
		&\int_0^t \int_{\Omega \cross \mathbb{R}^3} f\left(\partial_t\psi\right)dxdvds\\ 
		&=\sum_{i=1}^3\int_0^t\int_{\Omega \cross \mathbb{R}^3}\left(|v|^2-\beta_c\right)\mu(v)v_i\left[\partial_t\partial_{x_i}\phi_c\right]\left[a(s,x)+b(s,x)\cdot v+c(s,x)\frac{|v|^2-3}{\sqrt{6}}\right]dxdvds\\
		&\quad +\sum_{i=1}^3\int_0^t\int_{\Omega \cross \mathbb{R}^3} \left(|v|^2-\beta_c\right)\mu^{\frac{1}{2}}(v)v_i\left[\partial_t\partial_{x_i}\phi_c\right](I-P_{L})(f)(s)dxdvds.
	\end{align*}
	From the construction for $\beta_c$ and the oddness of integration in $v$, the above expression becomes
	\begin{align*}
		\sum_{i=1}^3\int_0^t\int_{\Omega \cross \mathbb{R}^3} \left(|v|^2-\beta_c\right)\mu^{\frac{1}{2}}(v)v_i\left[\partial_t\partial_{x_i}\phi_c\right](I-P_{L})(f)(s)dxdvds.
	\end{align*}
	Lately, we will demonstrate the estimate of $\nabla_x \partial_t \phi_c$ as following :
	\begin{align*}
		\left\|\nabla_x \partial_t \phi_c(t)\right\|_{L^2_x} \lesssim \|b(t)\|_{L_x^2}+\|(I-P_{L})(f)(t)\|_{L_{x,v}^2}+\|g(t)\|_{L^2_{x,v}}.
	\end{align*}
	By the estimate of $\nabla_x \partial_t \phi_c$, we get
	\begin{equation} \label{c8}
	\begin{aligned}
		&\int_0^t\int_{\Omega \cross \mathbb{R}^3} \left(|v|^2-\beta_c\right)\mu^{\frac{1}{2}}(v)v_i\left[\partial_t\partial_{x_i}\phi_c\right](I-P_{L})(f)(s)dxdvds \\
		& \lesssim \int_0^t \left\|\nabla_x \partial_t \phi_c(s)\right\|_{L^2_x}\|(I-P_{L})(f)(s)\|_{L_{x,v}^2}ds\\
		& \le \epsilon \int_0^t\|b(s)\|_{L_x^2}^2ds+C(\epsilon)\int_0^t\|(I-P_{L})(f)(s)\|_{L_{x,v}^2}^2ds + C'(\epsilon) \int_0^t\|g(s)\|_{L^2_{x,v}}^2ds, 
	\end{aligned}
	\end{equation}
	where we have used the Young's inequality.\\
	Gathering \eqref{c1}, \eqref{c2}, \eqref{c3}, \eqref{c4}, \eqref{c5}, \eqref{c6}, \eqref{c7} and \eqref{c8}, we obtain
	\begin{align*}
		A\int_0^t\|c(s)\|_{L_x^2}^2ds 
		&\le  G_f^c(t)-G_f^c(0) +\epsilon\int_0^t \|c(s)\|_{L^2_x}^2ds + \epsilon \int_0^t\|b(s)\|_{L_x^2}^2ds\\
		&\quad +C_1(\epsilon)\int_0^t\|(I-P_{L})(f)(s)\|_{L_{x,v}^2}^2ds+C_2(\epsilon)\int_0^t\left\|(I-P_{\gamma})f(s)\right\|_{L^2_{\gamma_+}}^2ds\\ 
		&\quad + C_3(\epsilon) \int_0^t\|g(s)\|_{L^2_{x,v}}^2ds +C_{\Phi}\int_0^t \|wf(s)\|_{L^\infty_{x,v}} \left\|P_{L}(f)(s)\right\|_{L^2_{x,v}}^2ds ,
	\end{align*}
	where we have used the Young's inequality.\\
	Thus, choosing sufficiently small $\epsilon >0$, we conclude that
	\begin{equation} \label{co1}
	\begin{aligned}
		\int_0^t\|c(s)\|_{L_x^2}^2ds &\le G_f^c(t)-G_f^c(0) +\epsilon \int_0^t\|b(s)\|_{L_x^2}^2ds \\
		&\quad +C_1(\epsilon)\int_0^t\|(I-P_{L})(f)(s)\|_{L_{x,v}^2}^2ds +C_2(\epsilon)\int_0^t\left\|(I-P_{\gamma})f(s)\right\|_{L^2_{\gamma_+}}^2ds  \\ 
		&\quad +C_3(\epsilon) \int_0^t\|g(s)\|_{L^2_{x,v}}^2ds+C_{\Phi}\int_0^t \|wf(s)\|_{L^\infty_{x,v}} \left\|P_{L}(f)(s)\right\|_{L^2_{x,v}}^2ds . 
	\end{aligned}
	\end{equation}
	\newline
	$\mathbf{Estimate \;of \; \nabla_x\partial_t\phi_c.}$\\
	We consider the weak formulation over $[t,t+\epsilon]$:
	\begin{equation} \label{dwf}
	\begin{aligned} 
		&\int_{\Omega \cross \mathbb{R}^3} \psi(x,v)f(t+\epsilon)dxdv - \int_{\Omega \cross \mathbb{R}^3} \psi(x,v)f(t)dxdv\\
		& = \int_t^{t+\epsilon} \int_{\Omega \cross \mathbb{R}^3} f\left(v\cdot \nabla_x \psi \right)dxdvds -\int_t^{t+\epsilon}\int_{\gamma} \psi f \{n(x) \cdot v\}dS(x)dvds\\
		&\quad -\int_t^{t+\epsilon} \int_{\Omega \cross \mathbb{R}^3}e^{-\Phi(x)}L(f) \psi dxdvds +\int_t^{t+\epsilon} \int_{\Omega \cross \mathbb{R}^3}g\psi dxdvds\\
		&\quad +\int_t^{t+\epsilon} \int_{\Omega \cross \mathbb{R}^3} f(\partial_t \psi)dxdvds -\int_t^{t+\epsilon} \int_{\Omega \cross \mathbb{R}^3} (\nabla_x \Phi(x) \cdot \nabla_v \psi) f dxdvds.
	\end{aligned}
	\end{equation}
	We choose the test function
	\begin{align*}
		\psi = \psi(x,v) = \phi(x)\frac{|v|^2-3}{\sqrt{6}}\mu^{\frac{1}{2}}(v),
	\end{align*}
	where $\phi(x)$ depends only on $x$.\\
	Note that
	\begin{align*}
		&\int_{\mathbb{R}^3} \mu(v) \frac{|v|^2-3}{\sqrt{6}}dv=0, \quad \int_{\mathbb{R}^3} \mu(v) v_iv_j\frac{|v|^2-3}{\sqrt{6}}dv = \frac{\sqrt{6}}{3}(2\pi)^{\frac{3}{2}}\delta_{ij}, \quad \int_{\mathbb{R}^3} \mu(v)\left(\frac{|v|^2-3}{\sqrt{6}}\right)^2dv = (2\pi)^{\frac{3}{2}},
	\end{align*}
	where $\delta_{ij}$ is 1 if $i=j$ and 0 otherwise.\\
	First of all, we will deduce the estimate for (LHS) to the weak formulation \eqref{dwf}.
	\begin{align*}
		\int_{\Omega \cross \mathbb{R}^3} \phi(x) \frac{|v|^2-3}{\sqrt{6}}\mu^{\frac{1}{2}}(v)f(t+\epsilon)dxdv = \int_{\Omega}\phi(x)c(t+\epsilon,x)dx, \quad \int_{\Omega \cross \mathbb{R}^3} \psi(x,v)f(t)dxdv = \int_{\Omega} \phi(x)c(t,x)dx.
	\end{align*}
	Next, we will deduce the estimate for the first term of (RHS) to the weak formulation \eqref{dwf}. We decompose $f=P_{L}(f)+\left(I-P_{L}\right)(f)$ to get
	\begin{align*}
		\int_{t}^{t+\epsilon} \int_{\Omega \cross \mathbb{R}^3}f(v \cdot \nabla_x\psi)dxdvds &= \sum_{i=1}^3	\int_t^{t+\epsilon}\left(\int_\Omega\frac{\sqrt{6}}{3}(2\pi)^{\frac{3}{2}} \partial_{x_i} \phi(x) b_i(s,x)dx\right)ds\\
		&\quad + \int_{t}^{t+\epsilon} \int_{\Omega \cross \mathbb{R}^3} v \cdot \nabla_x \phi(x)\frac{|v|^2-3}{\sqrt{6}}\mu^{\frac{1}{2}}(v)\left(I-P_{L}\right)(f)(s)dxdvds,
	\end{align*}
	where we have used the oddness of integration in $v$.\\
	Third, we will deduce the estimate for the third term of (RHS) to the weak formulation \eqref{dwf}. Since $L$	is self-adjoint, it holds that
	\begin{align*}
		\int_{\mathbb{R}^3} L(f)\frac{|v|^2-3}{\sqrt{6}} \mu^{\frac{1}{2}}(v)dv=0,
	\end{align*}
	and we obtain
	\begin{align*}
		\int_t^{t+\epsilon}\int_{\Omega \cross \mathbb{R}^3} e^{-\Phi(x)}L(f)\psi dxdvds = 0.
	\end{align*}
	Fourth, we easily get
	\begin{align*}
		\int_t^{t+\epsilon}\int_{\Omega \cross \mathbb{R}^3}(\partial_t \psi)fdxdvds=0
	\end{align*}
	since $\psi$ is independent of $t$.\\
	Fifth, we will deduce the estimate for second term of (RHS) to the weak formulation \eqref{dwf}.
	For fixed $t>0$, define $\phi(x) = \Phi_c(x)$ with
	\begin{align*}
		\begin{cases}
			-\Delta_x \Phi_c(x) = \partial_t c(t,x)\\
			\Phi_c|_{\partial \Omega} = 0.
		\end{cases}
	\end{align*}
	Then we have for fixed t,
	\begin{align*}
		\Phi_c (x) = -\Delta_x ^{-1}\partial_t c(t,x) = \partial_t \phi_c(t,x).
	\end{align*}
	From the fact $\Phi_c|_{\partial \Omega} = 0$, the second term of (RHS) to the weak formulation becomes
	\begin{align*}
		\int_t^{t+\epsilon}\int_{\gamma} \psi f \{n(x) \cdot v\}dS(x)dvds=0.
	\end{align*}
	Sixth, we will deduce the estimate for sixth term of (RHS) to the weak formulation \eqref{dwf}. We decompose $f = P_L(f) + (I-P_L)(f)$ to get
	\begin{align*}
		\int_t^{t+\epsilon} \int_{\Omega \cross \mathbb{R}^3} (\nabla_x \Phi(x) \cdot \nabla_v \psi) f dxdvds
		& = \sum_{i=1}^3 \int_t^{t+\epsilon} \left(\int_{\Omega} \frac{\sqrt{6}}{3}(2\pi)^{\frac{3}{2}}\phi(x)\partial_{x_i}\Phi(x)b_i(s,x)dx\right)ds\\
		& \quad + \int_t^{t+\epsilon} \int_{\Omega \cross \mathbb{R}^3} (\nabla_x \Phi(x) \cdot \nabla_v \psi) (I-P_L)(f)(s) dxdvds,
	\end{align*}
	where we have used the oddness of integration in $v$.\\
 	Combing the above process and taking the difference quotient in \eqref{dwf}, for all $t \ge 0$,
	\begin{align*}
		\int_{\Omega} \phi(x) \partial_t c(t,x) dx &= \frac{\sqrt{6}}{3}(2\pi)^{\frac{3}{2}}\int_{\Omega}b(t,x)\cdot \nabla_x \phi(x) dx+ \int_{\Omega \cross \mathbb{R}^3} v \cdot \nabla_x \phi(x) \frac{|v|^2-3}{\sqrt{6}}\mu^{\frac{1}{2}}(v) \left(I-P_{L}\right)(f)(t)dxdv\\
		&\quad + \int_{\Omega \cross \mathbb{R}^3}g(t,x,v) \phi(x)\frac{|v|^2-3}{\sqrt{6}}\mu^{\frac{1}{2}}(v)dxdv + \frac{\sqrt{6}}{3}(2\pi)^{\frac{3}{2}}\int_{\Omega}\phi(x) b(t,x)\cdot \nabla_x\Phi(x)dx\\
		&\quad + \int_{\Omega \cross \mathbb{R}^3} (\nabla_x \Phi(x) \cdot \nabla_v \psi) (I-P_L)(f)(t) dxdv.
	\end{align*}
	From the above equality, for all $t \ge 0$,
	\begin{align*}
		\|\nabla_x \partial_t \phi_c(t)\|_{L^2_x}^2 &= \int_\Omega \left|\nabla_x \Phi_c(x)\right|^2 dx = -\int_\Omega \Phi_c(x) \left(\Delta_x \Phi_c(x)\right)dx= \int_\Omega \phi(x) \partial_t c(t,x) dx\\
		& \lesssim \epsilon \left\{\|\nabla_x \Phi_c\|_{L^2_x}^2+\|\Phi_c\|_{L^2_x}^2\right\} + \|b(t)\|_{L^2_x}^2+\| \left(I-P_{L}\right)(f)(t)\|_{L^2_{x,v}}^2+\|g(t)\|_{L^2_{x,v}}^2,
	\end{align*}
	where we have used the integration by parts and the Young's inequality.\\
	We use the Poincar\'e inequality to obtain
	\begin{align*}
		\|\nabla_x \partial_t \phi_c(t)\|_{L^2_x}^2 \lesssim \epsilon \|\nabla_x \Phi_c\|_{L^2_x}^2 + \|b(t)\|_{L^2_x}^2+\| \left(I-P_{L}\right)(f)(t)\|_{L^2_{x,v}}^2+\|g(t)\|_{L^2_{x,v}}^2.
	\end{align*}
	For sufficiently small $\epsilon >0$, we have for all $t \ge 0$,
	\begin{align*}
		\|\nabla_x \partial_t \phi_c(t)\|_{L^2_x} \lesssim \|b(t)\|_{L^2_x}+\| \left(I-P_{L}\right)(f)(t)\|_{L^2_{x,v}}+\|g(t)\|_{L^2_{x,v}}.
	\end{align*}
	\newline
	$\mathbf{Estimate \;of \; b.}$\\
	Firstly, we will estimate $(\partial_{x_i} \partial_{x_j} \Delta_x^{-1}b_j)b_i$ for $i,j =1,2,3$. Fix $i,j$.\\
	We choose the test function
	\begin{align*}
		\psi = \psi_b^{i,j}(t,x,v) = (v_i^2-\beta_b)\mu^{\frac{1}{2}}(v) \partial_{x_j}\phi_b^j(t,x),
	\end{align*}
	where
	\begin{align*}
		\begin{cases}
			-\Delta_x\phi_b^j(t,x) = b_j(t,x)\\
			\phi_b^j|_{\partial \Omega}=0
		\end{cases}
	\end{align*}
	and $\beta_b>0$ is chosen such that for all $i=1,2,3$,
	\begin{align*}
		\int_{\mathbb{R}^3}\left[(v_i)^2-\beta_b\right]\mu(v)dv = \frac{1}{3}\int_{\mathbb{R}^3}\left(|v|^2-3\beta_b\right)\mu(v)dv=0.
	\end{align*}
	Then we note that for all $i \not= k$,
	\begin{align*}
		&\int_{\mathbb{R}^3}(v_i^2-\beta_b)v_k^2\mu(v)dv = \int_{\mathbb{R}^3}(v_1^2-1)v_2^2\mu(v)dv=0,\\
		&\int_{\mathbb{R}^3}(v_i^2-\beta_b)v_i^2\mu(v)dv = 2\pi\int_{\mathbb{R}}(v_1^4-v_1^2)e^{-\frac{v_1^2}{2}}dv_1=2(2\pi)^{\frac{3}{2}}.
	\end{align*}
	From the standard elliptic estimate, we get $\|\phi_b^j(t)\|_{H_x^2} \lesssim \|b_j(t)\|_{L_x^2}$ for all $t\ge 0$.\\
	First of all, we will deduce the estimate for the first term of (RHS) to the weak form \eqref{wfwf}.\\
	Let $G_f^{b_{ij}}(s) = -\int_{\Omega \cross \mathbb{R}^3} \psi(s)f(s)dxdv$. Using the H\"{o}lder inequality and the elliptic estimate, we obtain
	\begin{align*}
		\left|G_f^{b_{ij}}(s)\right| \lesssim \|f(s)\|_{L^2_{x,v}}^2.
	\end{align*}
	Next, we will deduce the estimate for the second term of (RHS) to the weak form \eqref{wfwf}.\\
	Using the H\"{o}lder inequality and the elliptic estimate, we obtain
	\begin{align} \label{b1}
		\left|\int_0^t \int_{\Omega \cross \mathbb{R}^3} \left(v\cdot \nabla_x \psi \right) (I-P_{L})(f) dxdvds\right| \lesssim \int_0^t \|b(s)\|_{L^2_x}\left\|(I-P_{L})(f)(s)\right\|_{L^2_{x,v}}ds.
	\end{align}
	Third, we will deduce the estimate for the third term of (RHS) to the weak form \eqref{wfwf}.\\
	From $L =\nu(v)-K$, the third term of (RHS) is bounded by
	\begin{align*}
		&\left|\int_{\Omega \cross \mathbb{R}^3}\psi\nu(v)(I-P_{L})(f)(t)dxdv\right| + \left|\int_{\Omega \cross \mathbb{R}^3}\psi K\left((I-P_{L})(f)(t)\right)dxdv\right| \lesssim \|b(t)\|_{L_x^2}\left\|(I-P_{L})(f)(t)\right\|_{L^2_{x,v}}.
	\end{align*}
	Integrating from $0$ to $t$, we obtain
	\begin{align} \label{b2}
		\left|\int_0^t\int_{\Omega \cross \mathbb{R}^3} \psi e^{-\Phi(x)}L \left[(I-P_{L})(f)\right]dxdvds\right| \lesssim \int_0^t \|b(s)\|_{L^2_x}\left\|(I-P_{L})(f)(s)\right\|_{L^2_{x,v}}ds.
	\end{align}
	Fourth, we will deduce the estimate for the fourth term of (RHS) to the weak form \eqref{wfwf}. We can decompose the fourth term of (RHS) into two terms:
	\begin{align*}
		\int_{\partial \Omega \cross \mathbb{R}^3}\psi f\{n(x) \cdot v\}dS(x)dv =\int_{\gamma_+} \psi \left[(I-P_\gamma)f\right]\{n(x) \cdot v\}dS(x)dv + \int_{\gamma} \psi \left(P_\gamma f\right)\{n(x) \cdot v\}dS(x)dv.
	\end{align*}
	Setting $z(t,x) =c_\mu \int_{n(x) \cdot v' >0}f(x,v')\mu^{\frac{1}{2}}(v')\{n(x)\cdot v'\}dv'$, we obtain 
	\begin{align*}
		\int_\gamma \psi (P_\gamma f)\{n(x) \cdot v\}dS(x)dv =\sum_{k=1}^3 \left(\int_{\mathbb{R}^3} (v_i^2-\beta_b)\mu(v)v_k dv\right)\left(\int_{\partial \Omega}\partial_{x_j}\phi_b^j(t,x)z(t,x)n_k(x)dS(x)\right)=0,
	\end{align*}
	where we have used the oddness of integration in $v$.\\
	Thus we can simplify
	\begin{align*}
		&\int_{\partial \Omega \cross \mathbb{R}^3} \psi f \{n(x) \cdot v\}dS(x)dv\\
		&\le \sum_{i,k=1}^3\left(\int_{\partial \Omega } \left|n_k(x)\partial_{x_j} \phi_b^j(t,x)\right|^2\left(\int_{\mathbb{R}^3}\left|\left(v_i^2-\beta_b\right)\mu^{\frac{1}{2}}(v)v_k\right|^2dv\right) dS(x)\right)^{\frac{1}{2}}\|(I-P_\gamma)f(t)\|_{L^2_{\gamma_+}}\\
		&\lesssim \left\|\partial_{x_j} \phi_b^j(t)\right\|_{L^2(\partial\Omega)}\|(I-P_\gamma)f(t)\|_{L^2_{\gamma_+}},
	\end{align*}
	where $\int_{\mathbb{R}^3}\left|\left(v_i^2-\beta_b\right)\mu^{\frac{1}{2}}(v)v_k\right|^2dv$ is finite.\\
	By the trace theorem, the above is bounded by
	\begin{align*}
		\left\|\phi_b^j(t)\right\|_{H^2_x}\|(I-P_\gamma)f(t)\|_{L^2_{\gamma_+}} \lesssim \|b(t)\|_{L^2_x}\|(I-P_\gamma)f(t)\|_{L^2_{\gamma_+}}.
	\end{align*}
	Thus, we deduce
	\begin{align} \label{b3}
		\left|\int_0^t\int_{\gamma}\psi f\{n(x)\cdot v\}dS(x)dvds\right|\lesssim \int_0^t \|b(s)\|_{L^2_x}\|(I-P_\gamma)f(s)\|_{L^2_{\gamma_+}}ds.
	\end{align}
	Fifth, we will deduce the estimate for the sixth term of (RHS) to the weak form \eqref{wfwf}.
	We apply the Cauchy-Schwarz inequality to obtain
	\begin{equation} \label{b4}
	\begin{aligned}
		\left|\int_{\Omega \cross \mathbb{R}^3}\psi g dxdv\right| 		
		 \lesssim \|\phi_b^j(t)\|_{H_x^2}\left\|g(t)\right\|_{L^2_{x,v}} \lesssim \|b(t)\|_{L^2_x}\left\|g(t)\right\|_{L^2_{x,v}}.
	\end{aligned}
	\end{equation}
	Sixth, we will deduce the estimate for (LHS) to the weak form \eqref{wfwf}.
	From the construction for $\beta_b$ and the oddness of integration in $v$, we deduce that
	\begin{equation} \label{b5}
	\begin{aligned}
		\int_0^t \int_{\Omega \cross \mathbb{R}^3}\left(v\cdot \nabla_x \psi \right) P_{L}(f) dxdvds
		& = \sum_{k=1}^3 \int_0^t \int_{\Omega \cross \mathbb{R}^3}\left(v_i^2-\beta_b\right)\mu(v)v_k^2 \left[\partial_{kj}\phi_b^j(s,x)\right]b_k(s,x)dxdvds\\
		& = -2(2\pi)^{\frac{3}{2}}\int_0^t \left(\int_{\Omega}\left(\partial_{ij}\Delta_x^{-1} b_j\right)(s,x) b_i(s,x)dx\right)ds.
	\end{aligned}
	\end{equation}
	Seventh, we will deduce the estimate for the seventh term of (RHS) to the weak form \eqref{wfwf}.\\
	Using the H\"older inequality and the elliptic estimate, we obtain
	\begin{align} \label{b6}
		\left|\int_0^t \int_{\Omega \cross \mathbb{R}^3} \left(\nabla_x \Phi(x) \cdot \nabla_v \psi\right) (I-P_{L})(f)(s) dxdvds\right| \le C_{\Phi}\int_0^t \|b(s)\|_{L^2_x}\left\|(I-P_{L})(f)(s)\right\|_{L^2_{x,v}}ds.
	\end{align}
	Eighth, we will deduce the estimate for the eighth term of (RHS) to the weak form \eqref{wfwf}.\\
	By a similar way in \eqref{c7}, we obtain
	\begin{align} \label{b7}
		\left|\int_0^t \int_{\Omega \cross \mathbb{R}^3} \left(\nabla_x \Phi(x) \cdot \nabla_v \psi\right) P_{L}(f)(s) dxdvds\right| \le C_{\Phi}\int_0^t\|wf(s)\|_{L^\infty_{x,v}} \left\|P_{L}(f)(s)\right\|_{L^2_{x,v}}^2ds.
	\end{align}
	Lastly, we will deduce the estimate for the fifth term of (RHS) to the weak form \eqref{wfwf}.\\
	We decompose $f=P_{L}(f)+\left(I-P_{L}\right)(f)$ to get
	\begin{align*}
		\int_0^t \int_{\Omega \cross \mathbb{R}^3} f\left(\partial_t\psi\right)dxdvds &=\frac{2}{\sqrt{6}}(2\pi)^{\frac{3}{2}}\int_0^t\int_{\Omega \cross \mathbb{R}^3} \left[\partial_t\partial_{x_j}\phi_b^j(s,x)\right]c(s,x)dxdvds\\
		&\quad +\int_0^t\int_{\Omega \cross \mathbb{R}^3} (v_i^2-\beta_b)\mu^{\frac{1}{2}}(v)\left[\partial_t\partial_{x_j}\phi_b^j\right](I-P_{L})(f)(s)dxdvds,
	\end{align*}
	where we have used the construction for $\beta_b$ and the oddness of integration in $v$.\\
	Lately, we will demonstrate the estimate of $\nabla_x \partial_t \phi_b^j$ as following :
	\begin{align*}
		\left\|\nabla_x \partial_t \phi_b^i(t)\right\|_{L^2_x} \lesssim \|a(t)\|_{L_x^2}+\|c(t)\|_{L_x^2}+\|(I-P_{L})(f)(t)\|_{L_{x,v}^2}+\|g(t)\|_{L^2_{x,v}}.
	\end{align*}
	By the estimate of $\nabla_x \partial_t \phi_b^j$, we get
	\begin{equation} \label{b8}
	\begin{aligned}
		\int_0^t \int_{\Omega \cross \mathbb{R}^3} f\left(\partial_t\psi\right)dxdvds
		& \lesssim \int_0^t \left\|\nabla_x \partial_t \phi_b^j(s)\right\|_{L_x^2} \left(\|(I-P_L)(f)(s)\|_{L_{x,v}^2} + \|c(s)\|_{L_x^2}\right)ds\\
		& \le \epsilon \int_0^t \|a(s)\|^2_{L_x^2}ds + C(\epsilon)\int_0^t\|c(s)\|^2_{L_x^2}ds + C'(\epsilon)\int_0^t \|(I-P_{L})(f)(s)\|_{L_{x,v}^2}^2ds \\
		&\quad + C''(\epsilon)\int_0^t \|g(s)\|_{L_x^2}^2ds ,
	\end{aligned}
	\end{equation}
	where we have used the Young's inequality.\\
	Gathering \eqref{b1}, \eqref{b2}, \eqref{b3}, \eqref{b4}, \eqref{b5}, \eqref{b6}, \eqref{b7}, and \eqref{b8}, we obtain
	\begin{align*}
		&\int_0^t\int_{\Omega}\left(\partial_{x_i}\partial_{x_j} \Delta_x^{-1} b_j\right)(s,x)b_i(s,x)dxds\\
		&\le G_f^{b_{ij}}(t)-G_f^{b_{ij}}(0) +\epsilon\int_0^t \|b(s)\|_{L^2_x}^2ds + \epsilon \int_0^t\|a(s)\|_{L_x^2}^2ds\\
		&\quad +C_1(\epsilon)\int_0^t\|(I-P_{L})(f)(s)\|_{L_{x,v}^2}^2ds+C_2(\epsilon)\int_0^t\left\|(I-P_{\gamma})f(s)\right\|_{L^2_{\gamma_+}}^2ds + C_3(\epsilon) \int_0^t\|g(s)\|_{L^2_{x,v}}^2ds\\
		&\quad +C_4(\epsilon)\int_0^t \|c(s)\|_{L^2_{x}}^2ds +C_{\Phi}\int_0^t \|wf(s)\|_{L^\infty_{x,v}} \left\|P_{L}(f)(s)\right\|_{L^2_{x,v}}^2ds,
	\end{align*}
	where we have used the Young's inequality.\\
	\newline
	Now, we will estimate $(\partial_{x_j}\partial_{x_j}\Delta_x^{-1}b_i)b_i$ for $i \not= j$.\\
	We choose the test function
	\begin{align*}
		\psi = \psi_b^{i,j}(t,x,v) = |v|^2v_iv_j \mu^{\frac{1}{2}}(v) \partial_{x_j}\phi_b^i(t,x),
	\end{align*}
	where
	\begin{align*}
		\begin{cases}
			-\Delta_x \phi_b^i(t,x) = b_i(t,x)\\
			\phi_b^i |_{\partial \Omega}=0
		\end{cases}.
	\end{align*}
	From the standard elliptic estimate, we get $\|\phi_b^i(t)\|_{H_x^2} \lesssim \|b_i(t)\|_{L_x^2}$ for all $t\ge 0$.\\
	First of all, we will deduce the estimate for the first term of (RHS) to the weak form \eqref{wfwf}.\\
	Let $G_f^b(s) = -\int_{\Omega \cross \mathbb{R}^3} \psi(s)f(s)dxdv$. Using the H\"{o}lder inequality and the elliptic estimate,
	\begin{align*}
		\left|G_f^{b}(s)\right|
		\lesssim  \|f(s)\|_{L^2_{x,v}}\|\phi_b^i(s)\|_{H_x^2} \lesssim \|f(s)\|_{L^2_{x,v}}\|b_i(s)\|_{L_x^2} \lesssim \|f(s)\|_{L^2_{x,v}}^2.
	\end{align*}
	Next, we can bound the second term of (RHS) to the weak form \eqref{wfwf} by
	\begin{align} \label{b9}
		\left|\int_0^t \int_{\Omega \cross \mathbb{R}^3} \left(v\cdot \nabla_x \psi \right) (I-P_{L})(f) dxdvds\right| \lesssim \int_0^t \|b(s)\|_{L^2_x}\left\|(I-P_{L})(f)(s)\right\|_{L^2_{x,v}}ds. 
	\end{align}
	Third, from a similar way in \eqref{b2}, we can bound the third term of (RHS) to the weak form \eqref{wfwf} by
	\begin{align} \label{b10}
		\left|\int_0^t\int_{\Omega \cross \mathbb{R}^3} \psi e^{-\Phi(x)}L \left[(I-P_{L})(f)\right]dxdvds\right| \lesssim \int_0^t \|b(s)\|_{L^2_x}\left\|(I-P_{L})(f)(s)\right\|_{L^2_{x,v}}ds.
	\end{align}
	Fourth, we will deduce the estimate for the fourth term of (RHS) to the weak form \eqref{wfwf}. We can decompose the fourth term of (RHS) into two terms:
	\begin{align*}
		\int_{\partial \Omega \cross \mathbb{R}^3} \psi f \{n(x) \cdot v\}dS(x)dv=\int_{\gamma_+} \psi \left[(I-P_\gamma)f\right]\{n(x) \cdot v\}dS(x)dv + \int_{\gamma} \psi \left(P_\gamma f\right)\{n(x) \cdot v\}dS(x)dv.
	\end{align*}
	Setting $z(t,x) =c_\mu \int_{n(x) \cdot v' >0}f(x,v')\mu^{\frac{1}{2}}(v')\{n(x)\cdot v'\}dv'$, we obtain 
	\begin{align*}
		\int_\gamma \psi (P_\gamma f)\{n(x) \cdot v\}dS(x)dv
		=\sum_{k=1}^3 \left(\int_{\mathbb{R}^3} |v|^2v_iv_jv_k\mu(v) dv\right)\left(\int_{\partial \Omega}\partial_{x_j}\phi_b^i(t,x)z(t,x)n_k(x)dS(x)\right)=0,
	\end{align*}
	where we have used the oddness of integration in $v$.\\
	Thus we use the trace theorem to obtain
	\begin{align} \label{b11}
		\left|\int_0^t\int_{\gamma}\psi f\{n(x)\cdot v\}dS(x)dvds\right|&\lesssim \int_0^t\left\|\partial_{x_j} \phi_b^i(s)\right\|_{L^2(\partial\Omega)}\|(I-P_\gamma)f(s)\|_{L^2_{\gamma_+}}ds\nonumber\\
		&\lesssim \int_0^t \|b(s)\|_{L^2_x}\|(I-P_\gamma)f(s)\|_{L^2_{\gamma_+}}ds.
	\end{align}
	Fifth, we will deduce the estimate for the sixth term of (RHS) to the weak form \eqref{wfwf}.
	We apply the Cauchy-Schwarz inequality to obtain
	\begin{equation} \label{b12}
	\begin{aligned}
		\left|\int_{\Omega \cross \mathbb{R}^3}\psi g dxdv\right| \lesssim \|\phi_b^i(t)\|_{H_x^2}\left\|g(t)\right\|_{L^2_{x,v}} \lesssim \|b(t)\|_{L^2_x}\left\|g(t)\right\|_{L^2_{x,v}}.
	\end{aligned}
	\end{equation}
	Sixth, we will deduce the estimate for (LHS) to the weak form \eqref{wfwf}.
	From the oddness of integration in $v$, we deduce that
	\begin{equation} \label{b13}
	\begin{aligned}
		\int_0^t \int_{\Omega \cross \mathbb{R}^3}\left(v\cdot \nabla_x \psi \right) P_{L}(f) dxdvds
		& = 7(2\pi)^{\frac{3}{2}}\int_0^t \int_{\Omega}\partial_{ij}\phi^i_b(s,x) b_j(s,x) + \partial_{jj} \phi_b^i(s,x)b_i(s,x)dxds\\
		& = -7(2\pi)^{\frac{3}{2}}\int_0^t \int_{\Omega}\left(\partial_{ij}\Delta_x^{-1} b_i\right)(s,x) b_j(s,x)+\left(\partial_{jj}\Delta_x^{-1}b_i\right)(s,x)b_i(s,x)dxds.
	\end{aligned}
	\end{equation}
	Seventh, we will deduce the estimate for the seventh term of (RHS) to the weak form \eqref{wfwf}. By a similar way in \eqref{b9}, we obtain
	\begin{align} \label{b14}
		\left|\int_0^t \int_{\Omega \cross \mathbb{R}^3} \left(\nabla_x \Phi(x) \cdot \nabla_v \psi\right) (I-P_{L})(f)(s) dxdvds\right| \le C_{\Phi}\int_0^t \|b(s)\|_{L^2_x}\left\|(I-P_{L})(f)(s)\right\|_{L^2_{x,v}}ds.
	\end{align}
	Eighth, we will deduce the estimate for the eighth term of (RHS) to the weak form \eqref{wfwf}. In a similar way in \eqref{c7}, we obtain
	\begin{align} \label{b15}
		\left|\int_0^t \int_{\Omega \cross \mathbb{R}^3} \left(\nabla_x \Phi(x) \cdot \nabla_v \psi\right) P_{L}(f)(s) dxdvds\right| \le C_{\Phi}\int_0^t\|wf(s)\|_{L^\infty_{x,v}} \left\|P_{L}(f)(s)\right\|_{L^2_{x,v}}^2ds.
	\end{align}
	Lastly, we will deduce the estimate for the fifth term of (RHS) to the weak form \eqref{wfwf}. We decompose $f=P_{L}(f)+\left(I-P_{L}\right)(f)$ to get
	\begin{align*}
		\int_0^t \int_{\Omega \cross \mathbb{R}^3} f\left(\partial_t\psi\right)dxdvds
		=\int_0^t\int_{\Omega \cross \mathbb{R}^3} |v|^2v_iv_j\mu^{\frac{1}{2}}(v)\left[\partial_t\partial_{x_j}\phi_b^i\right](I-P_{L})(f)(s)dxdvds,
	\end{align*}
	where we have used the oddness of integration in $v$.\\	
	Lately, we will demonstrate the estimate of $\nabla_x \partial_t \phi_b^i$ as following :
	\begin{align*} 
		\left\|\nabla_x \partial_t \phi_b^i(t)\right\|_{L^2_x} \lesssim \|a(t)\|_{L_x^2}+\|c(t)\|_{L_x^2}+\|(I-P_{L})(f)(t)\|_{L_{x,v}^2}+\|g(t)\|_{L^2_{x,v}}.
	\end{align*}
	By the estimate of $\nabla_x \partial_t \phi_b^i$, we get
	\begin{equation} \label{b16}
	\begin{aligned}
		\int_0^t \int_{\Omega \cross \mathbb{R}^3} f\left(\partial_t\psi\right)dxdvds
		& \lesssim \int_0^t \left\|\nabla_x \partial_t \phi_b^i(s)\right\|_{L_x^2}\|(I-P_{L})(f)(s)\|_{L_{x,v}^2} ds\\
		& \le \epsilon \int_0^t \|a(s)\|^2_{L_x^2}ds + \epsilon \int_0^t\|c(s)\|^2_{L_x^2}ds + C(\epsilon)\int_0^t \|(I-P_{L})(f)(s)\|_{L_{x,v}^2}^2ds \\
		&\quad + C'(\epsilon)\int_0^t \|g(s)\|_{L_x^2}^2ds,
	\end{aligned}
	\end{equation}
	where we have used the Young's inequality.\\
	Gathering \eqref{b9}, \eqref{b10}, \eqref{b11}, \eqref{b12}, \eqref{b13}, \eqref{b14}, \eqref{b15}, and \eqref{b16}, we obtain for all $i\not= j$,
	\begin{align*}
		&\int_0^t\int_{\Omega}\left(\partial_{ij}\Delta_x^{-1} b_i\right)(s,x) b_j(s,x)+\left(\partial_{jj}\Delta_x^{-1}b_i\right)(s,x)b_i(s,x)dxds\\ 
		&\le G_f^{b}(t)-G_f^{b}(0) +\epsilon\int_0^t \|b(s)\|_{L^2_x}^2ds + \epsilon \int_0^t\|a(s)\|_{L_x^2}^2ds + \epsilon\int_0^t \|c(s)\|_{L^2_x}^2ds\\
		&\quad +C_1(\epsilon)\int_0^t\|(I-P_{L})(f)(s)\|_{L_{x,v}^2}^2ds+C_2(\epsilon)\int_0^t\left\|(I-P_{\gamma})f(s)\right\|_{L^2_{\gamma_+}}^2ds + C_3(\epsilon) \int_0^t\|g(s)\|_{L^2_{x,v}}^2ds\\
		&\quad +C_{\Phi}\int_0^t \|wf(s)\|_{L^\infty_{x,v}} \left\|P_{L}(f)(s)\right\|_{L^2_{x,v}}^2ds,
	\end{align*}
	where we have used the Young's inequality.\\
	Combining the above estimate with the estimate of $\partial_{x_i}\partial_{x_j}\left(\Delta_x^{-1}b_j\right)b_i$, for all $i \not= j$,
	\begin{align*}
		&\int_0^t \int_{\Omega}\left(\partial_{jj}\Delta_x^{-1}b_i\right)(s,x)b_i(s,x)dxds\\
		&\le G_f^{b}(t)-G_f^{b}(0) +\epsilon\int_0^t \|b(s)\|_{L^2_x}^2ds + \epsilon \int_0^t\|a(s)\|_{L_x^2}^2ds + \left(\epsilon+C_4(\epsilon)\right)\int_0^t \|c(s)\|_{L^2_x}^2ds\\
		&\quad +C_1(\epsilon)\int_0^t\|(I-P_{L})(f)(s)\|_{L_{x,v}^2}^2ds+C_2(\epsilon)\int_0^t\left\|(I-P_{\gamma})f(s)\right\|_{L^2_{\gamma_+}}^2ds + C_3(\epsilon) \int_0^t\|g(s)\|_{L^2_{x,v}}^2ds\\
		&\quad +C_{\Phi}\int_0^t \|wf(s)\|_{L^\infty_{x,v}} \left\|P_{L}(f)(s)\right\|_{L^2_{x,v}}^2ds.
	\end{align*}
	From the estimates of $\partial_{jj}(\Delta_x^{-1}b_j)b_j$ and $\partial_{jj}\left(\Delta_x^{-1}b_i\right)b_i$, summing over $j=1,2,3$,
	\begin{equation} \label{co2}
	\begin{aligned}
		\int_0^t \|b(s)\|_{L_x^2}^2ds &\le G_f^{b}(t)-G_f^{b}(0)+\epsilon\int_0^t \|b(s)\|_{L^2_x}^2ds + \epsilon \int_0^t\|a(s)\|_{L_x^2}^2ds \\ 
		& \quad + \left(\epsilon+C_4(\epsilon)\right)\int_0^t \|c(s)\|_{L^2_x}^2ds +C_1(\epsilon)\int_0^t\|(I-P_{L})(f)(s)\|_{L_{x,v}^2}^2ds\\
		& \quad +C_2(\epsilon)\int_0^t\left\|(I-P_{\gamma})f(s)\right\|_{L^2_{\gamma_+}}^2ds + C_3(\epsilon) \int_0^t\|g(s)\|_{L^2_{x,v}}^2ds\\
		&\quad +C_{\Phi}\int_0^t \|wf(s)\|_{L^\infty_{x,v}} \left\|P_{L}(f)(s)\right\|_{L^2_{x,v}}^2ds
	\end{aligned}
	\end{equation}
	\newline
	$\mathbf{Estimate \;of \; \nabla_x\partial_t\phi_b^i.}$\\
	We consider the weak formulation over $[t,t+\epsilon]$. We choose the test function
	\begin{align*}
		\psi = \psi(x,v) = \phi(x)v_i\mu^{\frac{1}{2}}(v),
	\end{align*}
	where $\phi(x)$ depends only on $x$.\\
	We note that
	\begin{align*}
		\int_{\mathbb{R}^3}v_iv_j\mu(v)dv = (2\pi)^{\frac{3}{2}}\delta_{ij},\quad \int_{\mathbb{R}^3}v_iv_j\frac{|v|^2-3}{\sqrt{6}}\mu(v)dv = \frac{\sqrt{6}}{3}(2\pi)^{\frac{3}{2}}\delta_{ij},
	\end{align*}
	 where $\delta_{ij}$ is 1 if $i=j$ and 0 otherwise.\\
	First of all, we will deduce the estimate for (LHS) to the weak formulation \eqref{dwf}.
	\begin{align*}
		\int_{\Omega \cross \mathbb{R}^3} \phi(x) v_i\mu^{\frac{1}{2}}(v)f(t+\epsilon)dxdv = \int_{\Omega}\phi(x)b_i(t+\epsilon,x)dx, \quad \int_{\Omega \cross \mathbb{R}^3} \psi(x,v)f(t)dxdv = \int_{\Omega} \phi(x)b_i(t,x)dx.
	\end{align*}
	Next, we will deduce the estimate for the first term of (RHS) to the weak formulation \eqref{dwf}. We decompose $f=P_{L}(f)+\left(I-P_{L}\right)(f)$ to get
	\begin{align*}
		\int_{t}^{t+\epsilon} \int_{\Omega \cross \mathbb{R}^3}f(v \cdot \nabla_x\psi)dxdvds
		&=  (2\pi)^{\frac{3}{2}}\int_t^{t+\epsilon}\int_\Omega \partial_{x_i} \phi(x) \left[a(s,x)+\frac{\sqrt{6}}{3}c(s,x)\right]dxds\\
		&\quad + \int_{t}^{t+\epsilon} \int_{\Omega \cross \mathbb{R}^3} v \cdot \nabla_x \phi(x)v_i\mu^{\frac{1}{2}}(v)\left(I-P_{L}\right)(f)(s)dxdvds,
	\end{align*}
	where we have used the oddness of integration in $v$.\\
	We easily get
	\begin{align*}
		\int_t^{t+\epsilon}\int_{\Omega \cross \mathbb{R}^3} e^{-\Phi(x)}L(f)\psi dxdvds = 0, \quad \int_t^{t+\epsilon}\int_{\Omega \cross \mathbb{R}^3}(\partial_t \psi)fdxdvds=0.
	\end{align*}
	Third, we will deduce the estimate for second term of (RHS) to the weak formulation \eqref{dwf}.
	For fixed $t>0$, define $\phi(x) = \Phi_b^i(x)$ with
	\begin{align*}
		\begin{cases}
			-\Delta_x \Phi_b^i(x) = \partial_t b_i(t,x)\\
			\Phi_b^i|_{\partial \Omega} = 0
		\end{cases}.
	\end{align*}
	Then we have for fixed t,
	\begin{align*}
		\Phi_b^i (x) = -\Delta_x ^{-1}\partial_t b_i(t,x) = \partial_t \phi_b^i(t,x).
	\end{align*}
	The second term of (RHS) to the weak formulation becomes
	\begin{align*}
		\int_t^{t+\epsilon}\int_{\gamma} \psi f \{n(x) \cdot v\}dS(x)dvds= \int_t^{t+\epsilon}\int_{\mathbb{R}^3}v_i\mu^{\frac{1}{2}}(v) \left(\int_{\partial \Omega}f(s,x,v)\Phi_b^i(x) \{n(x)\cdot v\}dS(x)\right)dvds=0.
	\end{align*}
	At last, we will deduce the estimate for sixth term of (RHS) to the weak formulation \eqref{dwf}. We decompose $f = P_L(f) + (I-P_L)(f)$ to get
	\begin{align*}
		&\int_t^{t+\epsilon} \int_{\Omega \cross \mathbb{R}^3} (\nabla_x \Phi(x) \cdot \nabla_v \psi) f dxdvds\\
		& = \int_t^{t+\epsilon} \left(\int_{\Omega} \frac{1}{2}(2\pi)^{\frac{3}{2}}\phi(x)\partial_{x_i}\Phi(x)a(s,x)dx\right)ds - \int_t^{t+\epsilon} \left(\int_{\Omega} \frac{1}{\sqrt{6}}(2\pi)^{\frac{3}{2}}\phi(x)\partial_{x_i}\Phi(x)c(s,x)dx\right)ds\\
		& \quad + \int_t^{t+\epsilon} \int_{\Omega \cross \mathbb{R}^3} (\nabla_x \Phi(x) \cdot \nabla_v \psi) (I-P_L)(f)(s) dxdvds,
	\end{align*}
	where we have used the oddness of integration in $v$.\\
	Combing the above process and taking the difference quotient in \eqref{dwf}, for all $t \ge 0$,
	\begin{align*}
		\int_{\Omega} \phi(x) \partial_t b_i(t,x) dx &= (2\pi)^{\frac{3}{2}}\int_{\Omega}\partial_{x_i} \phi(x) \left[a(t,x)+\frac{\sqrt{6}}{3}c(t,x)\right] dx\\
		& \quad + \int_{\Omega \cross \mathbb{R}^3} v \cdot \nabla_x \phi(x) v_i\mu^{\frac{1}{2}}(v) \left(I-P_{L}\right)(f)(t)dxdv+ \int_{\Omega \cross \mathbb{R}^3}g(t,x,v) \phi(x)v_i\mu^{\frac{1}{2}}(v)dxdv\\
		&\quad + \frac{1}{2}(2\pi)^{\frac{3}{2}}\int_{\Omega} \phi(x)\partial_{x_i}\Phi(x)a(t,x)dx - \frac{1}{\sqrt{6}}(2\pi)^{\frac{3}{2}}\int_{\Omega} \phi(x)\partial_{x_i}\Phi(x)c(t,x)dx.
	\end{align*}
	From the above equality, for all $t \ge 0$,
	\begin{align*}
		\|\nabla_x \partial_t \phi_b^i(t)\|_{L^2_x}^2 &= \int_\Omega \left|\nabla_x \Phi_b^i(x)\right|^2 dx= -\int_\Omega \Phi_b^i(x) \left(\Delta_x \Phi_b^i(x)\right)dx= \int_\Omega \phi(x) \partial_t b_i(t,x) dx\\
		& \lesssim \epsilon \left\{\|\nabla_x \Phi_b^i\|_{L^2_x}^2+\|\Phi_b^i\|_{L^2_x}^2\right\} + \|a(t)\|_{L^2_x}^2+ \|c(t)\|_{L^2_x}^2+\| \left(I-P_{L}\right)(f)(t)\|_{L^2_{x,v}}^2+\|g(t)\|_{L^2_{x,v}}^2,
	\end{align*}
	where we have used the integration by parts and the Young's inequality.\\
	We use the Poincar\'e inequality to obtain
	\begin{align*}
		\|\nabla_x \partial_t \phi_b^i(t)\|_{L^2_x}^2 \lesssim \epsilon \|\nabla_x \Phi_b^i\|_{L^2_x}^2 + \|a(t)\|_{L^2_x}^2+\|c(t)\|_{L^2_x}^2+\| \left(I-P_{L}\right)(f)(t)\|_{L^2_{x,v}}^2+\|g(t)\|_{L^2_{x,v}}^2.
	\end{align*}
	For sufficiently small $\epsilon >0$, we have for all $t \ge 0$,
	\begin{align*}
		\|\nabla_x \partial_t \phi_b^i(t)\|_{L^2_x} \lesssim \|a(t)\|_{L^2_x}+\|c(t)\|_{L^2_x}+\| \left(I-P_{L}\right)(f)(t)\|_{L^2_{x,v}}+\|g(t)\|_{L^2_{x,v}}.
	\end{align*}
	\newline
	$\mathbf{Estimate \;of \; a.}$\\
	Since $\int_\Omega a(t,x)dx = \int_{\Omega \cross \mathbb{R}^3} \mu^{\frac{1}{2}}(v)f(t,x,v)dxdv = 0$ by the mass conservation, we can choose the test function
	\begin{align*}
		\psi = \psi_a(t,x,v) = \left(|v|^2-\beta_a\right)\mu^{\frac{1}{2}}(v)v\cdot\nabla_x \phi_a(t,x),
	\end{align*}
	where
	\begin{align*}
		\begin{cases}
			-\Delta_x \phi_a(t,x) = a(t,x)\\
			\frac{\partial}{\partial n}\phi_a |_{\partial \Omega} = 0
		\end{cases}
	\end{align*}
	and $\beta_a>0$ is chosen such that
	\begin{align*}
		\int_{\mathbb{R}^3}\left(|v|^2-\beta_a\right)\frac{|v|^2-3}{\sqrt{6}}v_i^2\mu(v)dv = 0 \quad \text{for all }i=1,2,3.
	\end{align*}
	From the standard elliptic estimate, we get $\|\phi_a(t)\|_{H_x^2} \lesssim \|a(t)\|_{L_x^2}$ for all $t\ge 0$.\\
	First, we will deduce the estimate for the first term of (RHS) to the weak form \eqref{wfwf}.\\ Let $G_f^{a}(s) = \int_{\Omega \cross \mathbb{R}^3} \psi(s)f(s)dxdv$. Then we have $\left|G_f^a(s)\right| \lesssim  \|f(s)\|_{L^2_{x,v}}^2$.\\
	Next, we will deduce the estimate for the second term of (RHS) to the weak form \eqref{wfwf}. Using the H\"{o}lder inequality and the elliptic estimate, we obtain
	\begin{align} \label{a1}
		\left|\int_0^t \int_{\Omega \cross \mathbb{R}^3} \left(v\cdot \nabla_x \psi \right) (I-P_{L})(f) dxdvds\right| \le C\int_0^t \|a(s)\|_{L^2_x}\left\|(I-P_{L})(f)(s)\right\|_{L^2_{x,v}}ds.
	\end{align}
	Third, we will deduce the estimate for the third term of (RHS) to the weak form \eqref{wfwf}. This case is similar to the case \eqref{c2}.
	We deduce that
	\begin{align} \label{a2}
		\left|\int_0^t\int_{\Omega \cross \mathbb{R}^3} \psi e^{-\Phi(x)}L \left[(I-P_{L})(f)\right]dxdvds\right| \lesssim \int_0^t \|a(s)\|_{L^2_x}\left\|(I-P_{L})(f)(s)\right\|_{L^2_{x,v}}ds.
	\end{align}
	Fourth, we will deduce the estimate for the fourth term of (RHS) to the weak form \eqref{wfwf}. We can decompose the fourth term of (RHS) into two terms:
	\begin{align*}
		\int_{\partial \Omega \cross \mathbb{R}^3}\psi f\{n(x) \cdot v\}dS(x)dv =\int_{\gamma_+} \psi \left[(I-P_\gamma)f\right]\{n(x) \cdot v\}dS(x)dv + \int_{\gamma} \psi \left(P_\gamma f\right)\{n(x) \cdot v\}dS(x)dv.
	\end{align*}
	We split $v = v_{\parallel}+v_\perp$, where $\begin{cases}
		v_{\parallel} = \{n(x) \cdot v\}n(x)\\
		v_{\perp} =v- v_{\parallel}	\end{cases}$.\\	
	Setting $z(t,x) =c_\mu e^{-\Phi(x)}\int_{n(x) \cdot v' >0}f(x,v')\mu^{\frac{1}{2}}(v')\{n(x)\cdot v'\}dv'$, we obtain 
	\begin{align*}
		\int_\gamma \psi (P_\gamma f)\{n(x) \cdot v\}dS(x)dv
		&=\sum_{i=1}^3 \int_{\gamma}\left(|v|^2-\beta_a\right)\left(\frac{\partial}{\partial n}\phi_a(t,x)\right)\mu(v) z(t,x)\left\{n(x) \cdot v \right\}^2dS(x)dv\\
		&\quad + \sum_{i,j=1}^3\int_{\partial \Omega}\partial_{x_i} \phi_a(t,x)z(t,x)n_j(x)\left(\int_{\mathbb{R}^3}\left(|v|^2-\beta_a\right)(v_\perp)_iv_j\mu(v)dv\right)dS(x)\\
		&=0,
	\end{align*}
	where we have used the oddness of integration in $v$ and $\frac{\partial}{\partial n}\phi_a(t,x)=0$.\\
	Using the trace theorem, we can simplify
	\begin{align} \label{a3}
		\left|\int_0^t\int_{\gamma}\psi f\{n(x)\cdot v\}dS(x)dvds\right|\lesssim \int_0^t \|a(s)\|_{L^2_x}\|(I-P_\gamma)f(s)\|_{L^2_{\gamma_+}}ds.
	\end{align}
	Fifth, we will deduce the estimate for the sixth term of (RHS) to the weak form \eqref{wfwf}.
	We apply the Cauchy-Schwarz inequality to obtain
	\begin{equation} \label{a4}
	\begin{aligned}
		\left|\int_{\Omega \cross \mathbb{R}^3}\psi g dxdv\right| 
		\ \lesssim \|\phi_a(t)\|_{H_x^2}\left\|g(t)\right\|_{L^2_{x,v}} \lesssim \|a(t)\|_{L^2_x}\left\|g(t)\right\|_{L^2_{x,v}}.
	\end{aligned}
	\end{equation}
	Sixth, we will deduce the estimate for (LHS) to the weak form \eqref{wfwf}.
	From the construction for $\beta_a$ and the oddness of integration in $v$, we deduce that
	\begin{equation} \label{a5}
	\begin{aligned}
		\int_0^t \int_{\Omega \cross \mathbb{R}^3}\left(v\cdot \nabla_x \psi \right) P_{L}(f) dxdvds
		= 5\int_0^t\int_{\Omega}\left(-\Delta_x\phi_a(s,x)\right)a(s,x)dxds
		= 5\int_0^t \left\|a(s)\right\|_{L_x^2}^2ds.
	\end{aligned}
	\end{equation}
	Seventh, we will deduce the estimate for the seventh term of (RHS) to the weak form \eqref{wfwf}. Using the H\"older inequality, we obtain
	\begin{align} \label{a6}
		\left|\int_0^t \int_{\Omega \cross \mathbb{R}^3} \left(\nabla_x \Phi(x) \cdot \nabla_v \psi\right) (I-P_{L})(f)(s) dxdvds\right| \le C_{\Phi}\int_0^t \|a(s)\|_{L^2_x}\left\|(I-P_{L})(f)(s)\right\|_{L^2_{x,v}}ds.
	\end{align}
	Eighth, we will deduce the estimate for the eighth term of (RHS) to the weak form \eqref{wfwf}.\\
	By a similar way in \eqref{c7}, we deduce that
	\begin{align} \label{a7}
		\left|\int_0^t \int_{\Omega \cross \mathbb{R}^3} \left(\nabla_x \Phi(x) \cdot \nabla_v \psi\right) P_{L}(f)(s) dxdvdd\right| \le C_{\Phi}\int_0^t \|wf(s)\|_{L^\infty_{x,v}} \left\|P_{L}(f)(s)\right\|_{L^2_{x,v}}^2ds.
	\end{align}
	Lastly, we will deduce the estimate for the fifth term of (RHS) to the weak form \eqref{wfwf}.\\
	We decompose $f=P_{L}(f)+\left(I-P_{L}\right)(f)$ to get
	\begin{align*}
		\int_0^t \int_{\Omega \cross \mathbb{R}^3} f\left(\partial_t\psi\right)dxdvds
		& = -5(2\pi)^{\frac{3}{2}} \sum_{i=1}^3 \int_0^t \int_{\Omega}\partial_t\partial_{x_i} \phi_a(s,x)b_i(s,x)dxds  \\
		&\quad +\sum_{i=1}^3\int_0^t\int_{\Omega \cross \mathbb{R}^3} \left(|v|^2-\beta_a\right)\mu^{\frac{1}{2}}(v)v_i\left[\partial_t\partial_{x_i}\phi_a\right](I-P_{L})(f)(s)dxdvds,
	\end{align*}
	where we have used the construction for $\beta_a$ and the oddness of integration in $v$.\\
	Lately, we will demonstrate the estimate of $\nabla_x \partial_t \phi_a$ as following :
	\begin{align*}
		\left\|\nabla_x \partial_t \phi_a(t)\right\|_{L^2_x} \lesssim \|b(t)\|_{L_x^2}+\|g(t)\|_{L^2_{x,v}}.
	\end{align*}
	By the estimate of $\nabla_x \partial_t \phi_a$, we get
	\begin{equation} \label{a8}
	\begin{aligned}
		\int_0^t \int_{\Omega \cross \mathbb{R}^3} f\left(\partial_t\psi\right)dxdvds
		& \lesssim \int_0^t \left\|\nabla_x \partial_t \phi_a(s)\right\|_{L^2_x}\left(\|(I-P_{L})(f)(s)\|_{L_{x,v}^2}+\|b(s)\|_{L^2_x}\right)ds\\		
		& \lesssim  \int_0^t\|b(s)\|_{L_x^2}^2ds+\int_0^t\|(I-P_{L})(f)(s)\|_{L_{x,v}^2}^2ds +  \int_0^t\|g(s)\|_{L^2_{x,v}}^2ds,
	\end{aligned}
	\end{equation}
	where we have used the Young's inequality.\\
	Gathering \eqref{a1}, \eqref{a2}, \eqref{a3}, \eqref{a4}, \eqref{a5}, \eqref{a6}, \eqref{a7}, and \eqref{a8}, we obtain
	\begin{align*}
		\int_0^t\|a(s)\|_{L_x^2}^2ds &\le  G_f^a(t)-G_f^a(0) +\epsilon\int_0^t \|a(s)\|_{L^2_x}^2ds + C \int_0^t\|b(s)\|_{L_x^2}^2ds\\
		&\quad +C_1(\epsilon)\int_0^t\|(I-P_{L})(f)(s)\|_{L_{x,v}^2}^2ds+C_2(\epsilon)\int_0^t\left\|(I-P_{\gamma})f(s)\right\|_{L^2_{\gamma_+}}^2ds\\ 
		&\quad + C_3(\epsilon) \int_0^t\|g(s)\|_{L^2_{x,v}}^2ds +C_{\Phi}\int_0^t \|wf(s)\|_{L^\infty_{x,v}} \left\|P_{L}(f)(s)\right\|_{L^2_{x,v}}^2ds,
	\end{align*}
	where we have used the Young's inequality.\\
	Thus, choosing sufficiently small $\epsilon >0$, we conclude that
	\begin{equation} \label{co3}
	\begin{aligned}
		\int_0^t\|a(s)\|_{L_x^2}^2ds &\le G_f^a(t)-G_f^a(0) +C \int_0^t\|b(s)\|_{L_x^2}^2ds\\
		&\quad +C_1(\epsilon)\int_0^t\|(I-P_{L})(f)(s)\|_{L_{x,v}^2}^2ds+C_2(\epsilon)\int_0^t\left\|(I-P_{\gamma})f(s)\right\|_{L^2_{\gamma_+}}^2ds\\ 
		&\quad +C_3(\epsilon) \int_0^t\|g(s)\|_{L^2_{x,v}}^2ds+C_{\Phi}\int_0^t \|wf(s)\|_{L^\infty_{x,v}} \left\|P_{L}(f)(s)\right\|_{L^2_{x,v}}^2ds.
	\end{aligned}
	\end{equation}
	\newline
	$\mathbf{Estimate \;of \; \nabla_x\partial_t\phi_a.}$\\
	We consider the weak formulation over $[t,t+\epsilon]$.
	We choose the test function
	\begin{align*}
		\psi = \psi(x,v) = \phi(x) \mu^{\frac{1}{2}}(v),
	\end{align*}
	where $\phi(x)$ depends only on $x$.\\
	First of all, we will deduce the estimate for (LHS) to the weak formulation \eqref{dwf}.
	\begin{align*}
		\int_{\Omega \cross \mathbb{R}^3} \phi(x) \mu^{\frac{1}{2}}(v)f(t+\epsilon)dxdv = \int_{\Omega}\phi(x)a(t+\epsilon,x)dx, \quad \int_{\Omega \cross \mathbb{R}^3} \psi(x,v)f(t)dxdv = \int_{\Omega} \phi(x)a(t,x)dx.
	\end{align*}
	Next, we will deduce the estimate for the first term of (RHS) to the weak formulation \eqref{dwf}. Decompose $f=P_{L}(f)+\left(I-P_{L}\right)(f)$ to get
	\begin{align*}
		\int_{t}^{t+\epsilon} \int_{\Omega \cross \mathbb{R}^3}f(v \cdot \nabla_x\psi)dxdvds= \sum_{i=1}^3	\int_t^{t+\epsilon}\left(\int_\Omega \partial_{x_i} \phi(x) b_i(s,x)dx\right)ds,
	\end{align*}
	where we have used the oddness of integration in $v$ and $\int_{\mathbb{R}^3}v_i\mu^{\frac{1}{2}}(v)\left(I-P_{L}\right)(f)(s)dv=0$.\\
	We easily get
	\begin{align*}
		\int_t^{t+\epsilon}\int_{\Omega \cross \mathbb{R}^3} e^{-\Phi(x)}L(f)\psi dxdvds = 0, \quad \int_t^{t+\epsilon}\int_{\Omega \cross \mathbb{R}^3}(\partial_t \psi)fdxdvds=0.
	\end{align*}
	Third, we will deduce the estimate for second term of (RHS) to the weak formulation \eqref{dwf}.
		The second term of (RHS) to the weak formulation becomes
	\begin{align*}
		\int_{\gamma} \psi f \{n(x) \cdot v\}dS(x)dv=\int_{\gamma_+} \psi \left[\left(I-P_\gamma\right)f\right]\{n(x) \cdot v\}dS(x)dv + \int_{\gamma} \psi \left(P_\gamma f\right)\{n(x) \cdot v\}dS(x)dv.
	\end{align*}
	By the oddness of integration in $v$, we get
	\begin{align*}
		\int_{\gamma} \psi \left(P_\gamma f\right)\{n(x) \cdot v\}dS(x)dv &= \sum_{i=1}^3 \int_{\partial \Omega} \phi(x)z(t,x)n_i(x)\left(\int_{\mathbb{R}^3}v_i\mu(v)dv\right)dS(x)=0,
	\end{align*}
	where $z(t,x) =c_\mu \int_{n(x) \cdot v' >0}f(x,v')\mu^{\frac{1}{2}}(v')\{n(x)\cdot v'\}dv'$.\\
	From the fact $(I-P_\gamma) \perp P_\gamma$, we obtain
	\begin{align*}
		\int_{\gamma_+} \psi \left[\left(I-P_\gamma\right)f\right]\{n(x) \cdot v\}dS(x)dv
		=\int_{\partial \Omega} \phi(x) \left(\int_{n(x) \cdot v >0} \mu^{\frac{1}{2}}(v) \left[\left(I-P_\gamma\right)f\right]\{n(x) \cdot v\}dv\right)dS(x)=0.
	\end{align*}
	This yields
	\begin{align*}
		&\int_t^{t+\epsilon}\int_{\gamma} \psi f \{n(x) \cdot v\}dS(x)dvds =0.
	\end{align*}
	Fourth, we will deduce the estimate for sixth term of (RHS) to the weak formulation \eqref{dwf}. We decompose $f = P_L(f) + (I-P_L)(f)$ to get
	\begin{align*}
		&\int_t^{t+\epsilon} \int_{\Omega \cross \mathbb{R}^3} (\nabla_x \Phi(x) \cdot \nabla_v \psi) f dxdvds = \sum_{i=1}^3 \int_t^{t+\epsilon} \left(\int_{\Omega} (2\pi)^{\frac{3}{2}}\phi(x)\partial_{x_i}\Phi(x)b_i(s,x)dx\right)ds,
	\end{align*}
	where we have used the oddness of integration in $v$.\\
	Combing the above process and taking the difference quotient in \eqref{dwf}, for all $t \ge 0$,
	\begin{align*}
		\int_{\Omega} \phi(x) \partial_t a(t,x) dx &= \int_{\Omega}b(t,x)\cdot \nabla_x \phi(x) dx+ \int_{\Omega \cross \mathbb{R}^3}g(t,x,v)\mu^{\frac{1}{2}}(v) \phi(x)dxdv\\
		&\quad +(2\pi)^{\frac{3}{2}}\int_{\Omega} \phi(x)b(t,x)\cdot \nabla_x \Phi(x) dx.
	\end{align*}
	From the above equality, for all $t \ge 0$,
	\begin{align*}
		&\int_\Omega \phi(x) \partial_t a(t,x) dx \lesssim \|b(t)\|_{L_x^2} \|\nabla_x \phi\|_{L_x^2}+\|g(t)\|_{L^2_{x,v}}\|\phi\|_{L^2_x},
	\end{align*}
	where we have used the Cauchy-Schwartz inequality.\\
	We use the Poincar\'e inequality to obtain
	\begin{align*}
		\int_\Omega \phi(x) \partial_t a(t,x) dx &\lesssim \left(\|b(t)\|_{L_x^2} +\|g(t)\|_{L^2_{x,v}}\right)	\|\nabla_x \phi\|_{L_x^2}\lesssim \left(\|b(t)\|_{L_x^2} +\|g(t)\|_{L^2_{x,v}}\right) \|\phi\|_{H_x^1}
	\end{align*}
	Thus we get
	\begin{align*}
		\|\partial_t a(t)\|_{(H_x^1)^*} \lesssim \|b(t)\|_{L_x^2} +\|g(t)\|_{L^2_{x,v}},
	\end{align*}
	where $(H_x^1)^*$ is dual space of $H_x^1$ with respect to the dual pair $(f,g)=\int_{\Omega}f(x)g(x)dx$ for $f \in H^1_x$ and $g \in (H^1_x)^*$.\\
	Since $\int_{\Omega} \partial_t a(t,x)dx=0$ by the mass conservation, for fixed $t>0$, define $\phi(x) = \Phi_a(x)$ with
	\begin{align*}
		\begin{cases}
			-\Delta_x \Phi_a(x) = \partial_t a(t,x)\\
			\frac{\partial}{\partial n}\Phi_a|_{\partial \Omega} = 0.
		\end{cases}
	\end{align*}
	Then we have for fixed t,
	\begin{align*}
		\Phi_a (x) = -\Delta_x ^{-1}\partial_t a(t,x) = \partial_t \phi_a(t,x).
	\end{align*}
	Hence, we use the standard elliptic estimate to obtain
	\begin{align*}
		\|\nabla_x \partial_t \phi_a(t)\|_{L^2_x} &= \|\nabla_x \Delta_x^{-1}\partial_t a(t)\|_{L^2_x}= \|\Delta_x^{-1}\partial_t a(t)\|_{H_x^1}=\|\Phi_a(t)\|_{H_x^1} \\
		& \lesssim \|\partial_t a(t)\|_{(H_x^1)^*}\lesssim \|b(t)\|_{L^2_x} +\|g(t)\|_{L^2_{x,v}}.
	\end{align*}
	\newline
	$\mathbf{Conclusion.}$\\
	From \eqref{co1}, \eqref{co2}, and \eqref{co3}, for $\eta,\delta >0$, we have
	\begin{align*}
		\int_0^t\left(\|a(s)\|_{L_x^2}^2+\eta\|b(s)\|_{L_x^2}^2+\delta\|c(s)\|_{L_x^2}^2\right)ds
		&\le G_f(t)-G_f(0) + \eta \epsilon_b\int_0^t\|a(s)\|_{L_x^2}^2ds + \left(\delta \epsilon_c + C(\epsilon_a)\right)\int_0^t\|b(s)\|_{L_x^2}^2ds \\
		&\quad + \eta \left(\epsilon_b+C(\epsilon_b)\right)\int_0^t\|c(s)\|_{L_x^2}^2ds\\
		&\quad +C\int_0^t\left\|(I-P_{L})(f)(s)\right\|_{L_{x,v}^2}^2ds + C\int_0^t \|(I-P_\gamma)f(s)\|_{L^2_{\gamma_+}}^2ds\\
		&\quad +C\int_0^t \|g(t)\|_{L^2_{x,v}}^2ds+C_{\Phi}\int_0^t \|wf(s)\|_{L^\infty_{x,v}} \left\|P_{L}(f)(s)\right\|_{L^2_{x,v}}^2ds.
	\end{align*}
	Fix $\epsilon_a >0$. First, we choose large $\eta>0$ such that $\eta > C(\epsilon_a)$, and then choose small $\epsilon_b >0$ such that $1>\eta \epsilon_b$. Next, we choose large $\delta >0$ such that $\delta > \eta\left(\epsilon_b + C(\epsilon_b)\right)$. Lastly, we choose small $\epsilon_c>0$ such that $\eta > C(\epsilon_a) + \delta \epsilon_c$. Therefore, we conclude that
	\begin{align*}
		\int_0^t\left\|P_{L}(f)(s)\right\|^2_{L^2_{x,v}}ds &\le G_f(t) - G_f(0) +C_\perp\int_0^t\left[\left\|\left(I-P_{L}\right)(f)(s)\right\|^2_{L^2_{x,v}}+\left\|\left(I-P_\gamma\right)f(s)\right\|^2_{L_{\gamma+}^2} \right] ds\\
		&\quad +C_\perp\int_0^t\left\|g(s)\right\|^2_{L^2_{x,v}}ds+C_\perp\int_0^t \|wf(s)\|_{L^\infty_{x,v}} \left\|P_{L}(f)(s)\right\|_{L^2_{x,v}}^2ds.
	\end{align*}
\end{proof}

	\noindent{\bf Data availability:} No data was used for the research described in the article.
	\newline
	
	\noindent{\bf Conflict of interest:} The authors declare that they have no conflict of interest.\newline
	
	\noindent{\bf Acknowledgement}
	J. Kim and D. Lee are supported by the National Research Foundation of Korea(NRF) grant funded by the Korea government(MSIT)(No.RS-2023-00212304 and No.RS-2023-00219980). 
	
	\nocite{}
	\bibliographystyle{abbrv}
	\bibliography{kji.bib}
\end{document}